\numberwithin{equation}{section}
\newcommand{\sig}{\sigma}
\newcommand{\ch}{\cosh}
\newcommand{\hth}{\tanh}
\newcommand{\e}{\mathbb E}
\newcommand{\Prob}{\mathbb P}
\newcommand{\p}{\mathbb P}
\newcommand{\1}{\mathbbm 1}
\newcommand{\eps}{\varepsilon}
\newcommand{\la}{\langle}
\newcommand{\ra}{\rangle}
\newcommand{\calA}{\mathcal{A}}
\newcommand{\bsig}{\boldsymbol{\sigma}}
\newcommand{\VB}{\mbox{\tiny PVB}}
\newcommand{\B}{\mbox{\tiny VB}}
\newcommand{\calB}{\mathcal{B}}
\theoremstyle{definition}
\newtheorem{theorem}{Theorem}[section]
\newtheorem{corollary}[theorem]{Corollary}
\newtheorem{proposition}[theorem]{Proposition}
\newtheorem{lemma}[theorem]{Lemma}
\newtheorem{remark}[theorem]{Remark}
\newtheorem{definition}[theorem]{Definition}
\begin{document}

\title{Some Rigorous Results on the L\'evy Spin Glass Model}	

 \author{Wei-Kuo Chen\thanks{Email: wkchen@umn.edu. Partly supported by NSF grants DMS-1752184 and DMS-2246715 and Simons Foundation grant 1027727 } \and  Heejune Kim \thanks{ Email: kim01154@umn.edu. Partly supported by NSF grants DMS-1752184 and DMS-2246715}  \and Arnab Sen \thanks{Email: arnab@umn.edu. Partly supported by Simons Foundation grant MP-TSM-00002716} }

\maketitle 
\vspace{-3em}
\begin{center}
University of Minnesota  
\end{center}

\medskip

\begin{abstract}
We study the L\'evy spin glass model, a fully connected model on $N$ vertices with heavy-tailed interactions governed by a power law distribution of order $0<\alpha<2.$ Our investigation is divided into three cases $0<\alpha<1$, $\alpha=1$, and $1<\alpha<2.$ When $1<\alpha<2,$ we identify a high temperature regime, in which the limit and fluctuation of the free energy are explicitly obtained and the site and bond overlaps are shown to exhibit concentration, interestingly, while the former is concentrated around zero, the latter obeys a positivity behavior. At any temperature, we further establish the existence of the limiting free energy and derive a variational formula analogous to Panchenko's framework in the setting of the Poissonian Viana-Bray model. 
For $\alpha=1$, the free energy scales super-linearly and converges to a constant proportional to $\beta$ in probability at any temperature.
In the case of $0<\alpha<1$, the scaling for the free energy is again super-linear, however, it converges weakly to the sum of a Poisson Point Process at any temperature. 
Additionally, we show that the Gibbs measure puts most of its mass on the configurations that align with signs of the polynomially many heaviest edge weights. 

%we determine the proper normalization of the log-partition function, %which turns out to be $N^{1/\alpha}$ instead of $N$, and we identify %its limiting distribution. Additionally, we analyze the structure of %Gibbs measure. We show that for $1<\alpha<2$ and in high temperature, %the site overlap is concentrated at $0$ while the bond overlap is %concentrated at a positive value. This is in agreement with the %result that for $0<\alpha<1$, the spins align according to the %disorder of the heaviest edges.

\end{abstract}

	\tableofcontents
	
\section{Introduction}

The study of spin glasses has drawn attention from both physicists and mathematicians for decades, in particular, a lot of efforts have been made to understand the  Sherrington-Kirkpatrick mean-field model \cite{Sherrington1975} as well as the Edwards-Anderson short-range model \cite{Edwards1975}. Whereas the investigation on the former has achieved tremendous success (see, e.g., \cite{MPV,panchenkobook,Talagrand2013vol1, Talagrand2013vol2}), the latter was scarcely understood due to its rigid geometry until recent years, see \cite{Chatterjee} and the references therein. In the hope of bridging the gap between the mean-field model and realistic models of short-range interactions, diluted variants of the Sherrington-Kirkpatrick model have been introduced, such as the spin glass model on a Bethe lattice, the Viana-Bray model, and the  diluted $p$-spin models (see \cite{Kanter1987,bethelattice,Viana1985}) that possess a bounded average number of spin interactions at any site as opposed to the diverging connectivity in the fully-connected models. For recent progress on the diluted and related models, see \cite{Achlioptas2005, Alberici,BCS,Coja-Oghlan2018, Coja-Oghlan2019,Ding2016,Sly2022,Franz, Guerra,bethelattice, talagrand2004, Starr}. 

In the present paper, we aim to consider the L\'evy spin glass proposed by Cizeau-Bouchaud \cite{Cizeau1993}, which is a Sherrington-Kirkpatrick-type model, however, the spin interactions are formulated through a heavy-tailed distribution with a power law density $2^{-1}\alpha {|x|}^{-(1+\alpha)}$ for $|x| \geq 1$ with stable exponent $0
<\alpha<2.$  A major motivation for considering the heavy-tailed interactions stems from its relevance in the study of the Ruderman–Kittel–Kasuya–Yosida interaction in the experimental metallic spin glass \cite{Klein}, see the elaboration in \cite{Cizeau1993}. On the other hand, it is also an intermediate model that bridges the mean-field and the diluted models, since even though the L\'evy spin glass is a fully connected model, the strong bonds have a diluted structure. 
In physics literature, the investigation of the L\'evy model started from \cite{Cizeau1993} by means of the cavity method, and
subsequently, several papers (\cite{Mezard, Hartmann, Metz})  revisited this model with different tools, such as the replica method. Notably, \cite{Mezard} introduced one way to understand the L\'evy model through the Viana-Bray model by adopting a cutoff to divide the interactions into two groups of strong bonds and weak bonds and then sending this cutoff to zero to restore the original problem. 

Besides spin glass, symmetric matrices with heavy-tailed distribution have also been  studied extensively both in physics and mathematics, see \cite{Auffinger2009, BenArous2008, Cizeau1994, tarquini2016level} and references therein. After a series of impressive works \cite{aggarwal2022mobility, aggarwal2018goe, bordenave2013localization, bordenave2017delocalization}, it is now known that for $1< \alpha < 2$, the eigenvectors are completely delocalized. For $0< \alpha<1$, the model exhibits a phase transition similar to Anderson localization, i.e., the eigenvector
transitions from being completely delocalized to being localized as its associated eigenvalue crosses some threshold (mobility edge). 

In contrast, the mathematical investigation of the L\'evy model remains very scarce. The key  difficulty is attributed to the fact that the power-law distribution is not additive.
%as opposed to the advantages of the %Gaussianity and Poissonianity in the %formulation of spin interactions in the SK %and diluted variants. 
As a result, many well-known techniques and tools such as the smart path method and integration by parts are no longer available in this model. To the best of our knowledge, the only rigorous result in the L\'evy model is the existence of the limiting free energy established recently in \cite{Jagannath} for $1<\alpha<2$ and a special choice of the heavy-tailed distribution.

Our study for the L\'evy model considers the entire domain $0<\alpha<2$ and general distributions.
In the case of $1<\alpha<2,$ we identify a high temperature regime through a critical temperature $\beta_\alpha$ by the second moment method and show that in this regime, the free energy and its fluctuation can be explicitly computed. More importantly, we show that the site and bond overlaps are concentrated for different structural reasons - while the individual spins do not have favored directions, the heavy weights on the edges do influence the behavior of the spin interactions as a whole. As a result, the site overlap is shown to be concentrated around zero; on the contrary, the bond overlap is concentrated around a positive quantity. Another main theme of this paper carries out the cutoff framework proposed in \cite{Mezard}, based on which we obtain quantitative controls between the L\'evy model and the Viana-Bray model as well as its Poissonian variant. Within this framework, we establish the superadditivity for the free energy and conclude the existence of the limiting free energy as in \cite{Jagannath}. Furthermore, we show that the limiting free energy admits a variational representation analogous to the formulation in Panchenko's treatment for the Poissonian Viana-Bray model, see~\cite{panchenko2013}.

Interestingly,  the behavior of the L\'evy model is significantly different when $\alpha=1$ and $0<\alpha < 1.$ In both cases, the free energy scales super-linearly, instead of $N$ as in the case of $1<\alpha<2$. For $\alpha=1$, the scaling is slower than $N^{1+\eps}$ for any $\eps>0$, and at any inverse temperature $\beta$, the normalized free energy concentrates around $\beta$ in probability. For $0<\alpha<1$, the normalization in the free energy is roughly like $N^{1/\alpha}$. We show that at any temperature, the normalized free energy always converges weakly to the sum of a Poisson Point Process parametrized by the heavy tailed distribution. Furthermore, we prove that the Gibbs measure carries most of its weights on the spin configurations, for which the signs of the spin interactions agree with those of $N^{\min(1/2,1-\alpha)-}$ many leading heavy weights.

We close this section by adding that there are many fundamentally important questions left unaddressed in the present work (see Section \ref{op}) - our article can be viewed just as a beginning step towards the rigorous understanding of the L\'evy model.

\section{Main Results and Open Problems} \label{Sec:results}

For $0<\alpha<2,$ let $  X$ be a symmetric random variable with tail probability
\begin{equation*}
%\label{eq: tail probability of X}
    \p(|  X|>x)= \frac{L(x)}{x^\alpha},\ \forall x>0,
\end{equation*} where $L:(0,\infty)\to (0,\infty)$ is a slowly varying function at $\infty$, i.e., for any $t>0$, \[\lim_{x\to\infty} \frac{L(tx)}{L(x)}=1.\] 
For each $N\geq 1,$ define 
\begin{equation}\label{def:J}
    J= a_N^{-1} X,
\end{equation}
where $a_N :=\inf\bigl\{x>0: N\p(|  X|>x)\le 1 \bigr\}.$
 An important case of $X$ is the (symmetric) Pareto distribution that has the density
\begin{equation*}
%\label{pareto}
    \frac{\alpha}{2} |x|^{-(\alpha+1)}\1_{|x|\ge 1}
\end{equation*}  
so that $a_N=N^{1/\alpha}$ and 
 the density of $J=N^{-1/\alpha}X$ is equal to
\begin{equation}
\frac{\alpha}{2N}\frac{\1_{\{|x|\geq N^{-1/\alpha}\}}}{|x|^{\alpha+1}}dx. \label{density of J}
\end{equation}
This is the disorder for the original L\'evy model considered by Cizeau-Bouchaud \cite{Cizeau1993}.

For a given (inverse) temperature $\beta>0$, the Hamiltonian of the L\'evy model is defined as
\begin{equation*}
%\label{hamiltonian:eq1}
	-H_N(\sigma)=\beta\sum_{1\leq i<j\leq N}J_{ij}\sigma_i\sigma_j
\end{equation*}
for $\sigma\in \{-1,1\}^N,$ where $(J_{ij})_{1\leq i<j\leq N}$ are i.i.d. copies of $J.$ Set the partition function as $$
Z_N=Z_N(\beta)=\sum_{\sigma\in \{-1,1\}^N}e^{-H_N(\sigma)}.
$$
Define the free energy by $\ln Z_N$ and the Gibbs measure by  $G_N(\sigma)=e^{-H_N(\sigma)}Z_N^{-1}$ for $\sigma\in\{-1,1\}^N.$
We denote by $\sigma,\sigma^1,\sigma^2,\ldots$ the i.i.d.\ samples (replicas) from $G_N$ and by $\la \cdot\ra$ the Gibbs expectation with respect to these random variables. There are two important parameters naturally associated with the L\'evy model. The first is the {site (multiple) overlap} defined as \[R_k=R(\sigma^1,\dots,\sigma^{k}):= \frac{1}{N}\sum_{i=1}^N \sigma^1_i \sigma^2_i\dots \sigma^{k}_i\]
for $k\geq 1,$ which measures the average similarities of the replicas over each spin site. Another parameter is the bond overlap that is concerned with the average similarities of two replicas on the  edges, whose absolute disorder strengths are at least $K$, namely, for $K\geq 0,$
\[Q_K=Q_K(\sigma^1,\sigma^2):= 
\begin{cases}
	\frac{1}{M}\sum_{i<j}\1_{\{|J_{ij}|\ge K\}} \sigma^1_i\sigma^1_j\sigma^2_i\sigma^2_j, & \text{if $M\ge 1$},\\
	0, & \text{if $M=0$}.
\end{cases}\]
Here, $M:=\sum_{i<j}\1_{\{|J_{ij}|\ge K\}}$ is 
% Binomial$(N(N-1)/2,\min(1,K^{-\alpha}/N))$
 Binomial$(N(N-1)/2,\min(1,\p(|J|\ge K) ))$
, so $M$ is of order $N$ with high probability if $K>0$ and of order $N^2$ if $K=0$. We now state our main results in two major cases: $0<\alpha\le 1$ and $1<\alpha<2$.

\subsection{Case $1<\alpha<2$}

\subsubsection{High Temperature Behavior}

We proceed to state our main results on the high-temperature behavior of the L\'evy model. For $0<\alpha <2,$ define 
\begin{equation}\label{beta_critical}
\beta_{\alpha} \colonequals \Bigl(\alpha \int_{0}^{\infty} \frac{\hth^2(x)}{x^{\alpha +1}}dx\Bigr)^{-1/\alpha},
\end{equation}
where this integral is finite since $|\tanh(x)|\leq \min(|x|,1).$ We say that the model is in the high-temperature regime if $0<\beta<\beta_\alpha.$ For $1<\alpha<2,$ the following theorem establishes the limiting free energy inside this regime.

\begin{theorem}\label{freeenergy}
	Assume $1<\alpha <2$ and $0<\beta<\beta_{\alpha}$. For every $0<p<\alpha,$ we have
 \[ \lim_{N\to\infty}\e \Bigl|\frac{1}{N}\ln Z_N-\Bigl(\ln 2+\frac{\alpha}{2} \int_{0}^{\infty} \frac{\ln \ch (\beta x)}{x^{\alpha +1} }dx\Bigr)\Bigr|^p =0.\]
\end{theorem}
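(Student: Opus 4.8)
The starting point is the exact identity, obtained from $e^{\beta J_{ij}\sigma_i\sigma_j}=\ch(\beta J_{ij})\bigl(1+\sigma_i\sigma_j\hth(\beta J_{ij})\bigr)$,
\[
\ln Z_N=\sum_{i<j}\ln\ch(\beta J_{ij})+N\ln 2+\ln W_N,\qquad
W_N:=\frac{1}{2^N}\sum_{\sigma}\,\prod_{i<j}\bigl(1+\sigma_i\sigma_j\hth(\beta J_{ij})\bigr),
\]
which reduces the theorem to the two assertions $\frac1N\sum_{i<j}\ln\ch(\beta J_{ij})\to c:=\frac\alpha2\int_0^\infty x^{-\alpha-1}\ln\ch(\beta x)\,dx$ and $\frac1N\ln W_N\to 0$, both in $L^p$ for every $0<p<\alpha$. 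The first is a triangular--array weak law of large numbers: from \eqref{density of J} one gets $\binom N2\,\e[\ln\ch(\beta J)]=Nc+o(N)$ (the integral is finite: $\alpha<2$ handles the origin, $\alpha>1$ handles infinity), and since $\ln\ch(\beta x)$ shares the tail index $\alpha$ of $|x|$ we have $\e[(\ln\ch(\beta J))^{p_1}]=O(1/N)$ for any $p_1\in(1,\alpha)$, so the von Bahr--Esseen inequality gives $\e\bigl|\sum_{i<j}\ln\ch(\beta J_{ij})-\e[\,\cdot\,]\bigr|^{p_1}\le C\binom N2\,O(1/N)=O(N)$, hence convergence in $L^{p_1}$; monotonicity of $L^p$--norms then covers $0<p\le1$.

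For $\frac1N\ln W_N\to 0$ I would run a first-- and second--moment analysis. Symmetry of $J$ gives $\e[\hth(\beta J)]=0$, hence $\e[W_N]=1$, so Markov's inequality gives $\p\bigl(\tfrac1N\ln W_N>\eps\bigr)=\p\bigl(W_N>e^{\eps N}\bigr)\le e^{-\eps N}$, whence $\limsup_N\frac1N\ln W_N\le 0$. For the matching lower bound, introduce a second replica $\tau$, put $\eps_i:=\sigma_i\tau_i$ and $\theta_N:=\e[\hth^2(\beta J)]$, and observe that $\e[W_N^2]=\frac1{2^N}\sum_{k=0}^{N}\binom Nk(1+\theta_N)^{\binom k2+\binom{N-k}2}(1-\theta_N)^{k(N-k)}$; substituting $u=\beta x$ in $\theta_N$ and comparing with \eqref{beta_critical} shows $N\theta_N\to\lambda:=(\beta/\beta_\alpha)^\alpha$, with $\lambda<1$ exactly in the high--temperature regime. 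Taylor--expanding $\ln(1\pm\theta_N)$, the logarithm of the $k$--th summand is $\theta_N\frac{(2k-N)^2-N}2-\frac{\theta_N^2}2\binom N2+O(N^2\theta_N^3)$, so with $k/N\to x$ the $k$--th term behaves like $e^{N\phi(x)+O(1)}$ where $\phi(x)=h(x)-\ln2+\frac\lambda2(2x-1)^2$ and $h$ is the binary entropy. When $\lambda<1$, $\phi''<0$ everywhere, so $\phi\le\phi(1/2)=0$ with equality only at $x=1/2$; the sum is therefore dominated by the window $|2k-N|=O(\sqrt N)$, on which a central--limit computation yields $\e[W_N^2]\to e^{-\lambda/2-\lambda^2/4}(1-\lambda)^{-1/2}<\infty$. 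Paley--Zygmund then gives $\p(W_N\ge1/2)\ge\delta$ for some $\delta>0$ and all large $N$.

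To convert this positive--probability bound into genuine convergence --- the delicate step, since the heavy tails preclude bounded--difference concentration of $\ln Z_N$ --- I would truncate the couplings at level $b_N:=\ln N$, writing $\hat Z_N,\hat W_N$ for the resulting quantities. Resampling one truncated coupling changes $\ln\hat W_N$ by $O\bigl(\beta(|\hat J_{ij}|+|\hat J_{ij}'|)\bigr)$ and $\binom N2\,\e[\hat J^2]=O(Nb_N^{2-\alpha})$, so the Efron--Stein inequality gives $\Var\bigl(\frac1N\ln\hat W_N\bigr)\le C\,b_N^{2-\alpha}/N\to 0$; also $\frac1N|\ln W_N-\ln\hat W_N|\le\frac{2\beta}N\sum_{|J_{ij}|\ge b_N}|J_{ij}|\to 0$ in $L^1$, since $b_N\to\infty$ makes $\binom N2\,\e\bigl[|J|\1_{\{|J|\ge b_N\}}\bigr]=o(N)$. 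Hence $\frac1N\ln W_N-a_N\to 0$ in probability, where $a_N:=\frac1N\e[\ln\hat W_N]$ is bounded, $-c-o(1)\le a_N\le 0$. Along any subsequence with $a_{N_k}\to L$ we get $\frac1{N_k}\ln W_{N_k}\to L$ in probability, and then Markov forces $L\le 0$ while Paley--Zygmund forces $L\ge 0$; so $L=0$, whence $a_N\to 0$, $\frac1N\ln W_N\to 0$ in probability, and $\frac1N\ln Z_N\to\ln2+c$ in probability. To upgrade to $L^p$, note $0\le\ln Z_N\le\sum_{i<j}\ln\ch(\beta J_{ij})+N\ln2+W_N$; combined with the $L^{p_1}$ law of large numbers above and $\e[W_N^2]=O(1)$, this gives $\sup_N\|\frac1N\ln Z_N\|_{p_1}<\infty$ for any $p_1\in(\max(1,p),\alpha)$, so $\bigl(|\frac1N\ln Z_N|^p\bigr)_N$ is uniformly integrable and convergence in probability upgrades to $L^p$.

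The principal obstacle is the second--moment estimate together with its interaction with the concentration step: producing a bounded --- let alone convergent --- $\e[W_N^2]$ needs a careful Laplace analysis of the sum over $k$, with uniform control of the Taylor remainders and a clean separation of the $O(\sqrt N)$ window from the exponentially suppressed bulk, and this is precisely where $\beta<\beta_\alpha$ (i.e.\ $\lambda<1$) enters --- for $\lambda\ge1$ the bracket $\phi$ becomes positive somewhere and $\e[W_N^2]$ blows up exponentially, which is exactly why $\beta_\alpha$ is the natural threshold. The second, more structural, point is the idea of pinning the limit down from a mere positive--probability Paley--Zygmund event by invoking concentration, rather than from a high--probability lower bound --- the latter being unavailable for the untruncated, heavy--tailed $\ln Z_N$.
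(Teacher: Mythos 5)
Your proposal is correct and follows the same overall strategy as the paper: the $\cosh/\tanh$ factorization $Z_N=\bar Z_N\widehat Z_N$, a law of large numbers for the scalar part, the first/second moment method with Paley--Zygmund for the effective part (with the same identification of $N\e\tanh^2(\beta J)\to(\beta/\beta_\alpha)^\alpha<1$ as the operative condition), and concentration to convert the positive-probability lower bound into a statement about the deterministic mean. The differences are in the tools. For the second moment, the paper avoids your Laplace analysis of the overlap sum entirely: it bounds $\prod_{i<j}(1+\eps_i\eps_j\theta_N)\le\exp\bigl(\tfrac12(\sum_i\eps_i)^2\theta_N\bigr)$ and invokes the Rademacher exponential-moment estimate to get $\e W_N^2\le(1-N\theta_N)^{-1/2}$ in two lines; your exact asymptotics are a valid but much heavier route to the same boundedness. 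For concentration, the paper does not truncate: it proves $L^p$ concentration of $N^{-1}\ln Z_N$, $N^{-1}\ln\bar Z_N$, $N^{-1}\ln\widehat Z_N$ for $2\alpha/(1+\alpha)<p<\alpha$ directly via martingale differences and Burkholder--Davis--Gundy, which only requires $\e|J|^p<\infty$ for $p<\alpha$ and so handles the heavy tails without your $b_N=\ln N$ cutoff and Efron--Stein/$L^1$-error bookkeeping (your remark that bounded-difference inequalities fail is correct for McDiarmid, but the BDG route sidesteps this). This stronger concentration also makes your final uniform-integrability step unnecessary, since the $L^p$ convergence of $N^{-1}\ln Z_N$ to its mean is already in hand. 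One small presentational caveat: your claim $\e[(\ln\ch(\beta J))^{p_1}]=O(1/N)$ is true, but not merely because $\ln\ch(\beta x)$ shares the tail index of $|x|$ --- one also needs the quadratic behavior of $\ln\ch$ near the origin (so that $2p_1>\alpha$) to control the contribution from $|J|\le 1$; as stated, $\e|J|^{p_1}$ itself is only $O(N^{-p_1/\alpha})$.
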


Following the convergence of the free energy, we establish the fluctuation of the free energy, which holds for all $0<\alpha<2$ and $0<\beta<\beta_\alpha.$ For notational brevity, define
\begin{equation}\label{def:bN}
    b_N =a_N^{-1}a_{{N\choose 2}}. 
\end{equation}
\begin{theorem}
	\label{fluctuationfreeenergy}
	Assume $0<\alpha<2$ and $0<\beta<\beta_\alpha.$ We have that
	\begin{equation*}
		\frac{1}{b_N}\Bigl(\ln Z_N-N\ln 2-{N\choose 2}\e \Bigl(\ln \ch (\beta J) \1_{\{\ln\ch(\beta J)\le \beta b_N\}}\Bigr)\Bigr)\stackrel{d}{\to}
		\beta Y_\alpha,
	\end{equation*}
  where the characteristic function of $Y_\alpha$ is given by \[t\mapsto\exp\Bigl( \int_0 ^\infty (e^{itx}-1-it x\1_{\{x\le 1\}})\alpha x^{-(\alpha+1)}dx\Bigr).\]
\end{theorem}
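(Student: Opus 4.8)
The plan is to start from the high-temperature (loop) expansion of the partition function. Setting $t_{ij}:=\hth(\beta J_{ij})$ and using $e^{\beta J_{ij}\sigma_i\sigma_j}=\ch(\beta J_{ij})(1+\sigma_i\sigma_j t_{ij})$ yields the exact identity
\[
Z_N=2^N\prod_{i<j}\ch(\beta J_{ij})\cdot W_N,\qquad W_N:=\frac{1}{2^N}\sum_{\sigma\in\{-1,1\}^N}\prod_{i<j}(1+\sigma_i\sigma_j t_{ij})=\sum_{E\text{ even}}\ \prod_{(i,j)\in E}t_{ij},
\]
where the last sum runs over even subgraphs $E$ of the complete graph $K_N$, so that
\[
\ln Z_N-N\ln 2=\sum_{i<j}\ln\ch(\beta J_{ij})+\ln W_N .
\]
Since $\hth$ is bounded, the heavy tail of $J$ enters only through the first sum, while the loop term should be of lower order. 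I would therefore reduce the theorem to two claims: (i) $N^{-1/\alpha}\bigl(\sum_{i<j}\ln\ch(\beta J_{ij})-C_N\bigr)\stackrel{d}{\to}\frac{\beta Y_\alpha}{2^{1/\alpha}}$, where $C_N$ denotes the centering appearing in the statement; and (ii) $N^{-1/\alpha}\ln W_N\to 0$ in probability.

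For (i), note that $\sum_{i<j}\ln\ch(\beta J_{ij})$ is a sum of $m=\binom{N}{2}$ i.i.d.\ nonnegative copies of $X_N:=\ln\ch(\beta J_N)$ with $J_N$ of density \eqref{density of J}. Since $\ln\ch(\beta x)=\beta|x|-\ln 2+o(1)$ as $|x|\to\infty$, a direct computation gives, with $b_N:=\frac{\beta}{2^{1/\alpha}}N^{1/\alpha}$,
\[
m\,\p\bigl(X_N>b_Nx\bigr)\longrightarrow x^{-\alpha}\quad(x>0),\qquad \frac{m}{b_N^2}\,\e\bigl[X_N^2\1_{\{X_N\le b_N\}}\bigr]\longrightarrow\frac{\alpha}{2-\alpha},
\]
together with no mass on the negative axis. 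The triangular-array stable limit theorem then gives $b_N^{-1}\bigl(\sum_{i<j}\ln\ch(\beta J_{ij})-m\,\e[X_N\1_{\{X_N\le b_N\}}]\bigr)\stackrel{d}{\to}Y_\alpha$, the $\alpha$-stable law with Lévy measure $\alpha x^{-(\alpha+1)}\,dx$ on $(0,\infty)$. It then remains to verify $C_N=m\,\e[X_N\1_{\{X_N\le b_N\}}]+o(N^{1/\alpha})$, which is elementary bookkeeping: $\{X_N\le b_N\}=\{|J_N|\le\tfrac1\beta\operatorname{arccosh}(e^{b_N})\}$ with $\tfrac1\beta\operatorname{arccosh}(e^{b_N})=(N/2)^{1/\alpha}+O(1)$, and replacing this cutoff by $((N-1)/2)^{1/\alpha}$ and the prefactor $N-1$ by $N$ changes the integral by $O(1)$ when $1<\alpha<2$ and by $O(N^{1/\alpha-1})$ when $0<\alpha<1$, both $o(N^{1/\alpha})$.

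For (ii), observe first that $|t_{ij}|<1$ a.s.\ gives $W_N>0$, while $\e\hth(\beta J_N)=0$ (by symmetry of $J$) and independence give $\e W_N=1$; Markov's inequality then handles the upper tail of $\ln W_N$. For the lower tail the essential ingredient is the second moment, which by independence and $\e\hth(\beta J_N)=0$ collapses to
\[
\e W_N^2=\sum_{E\text{ even}}q_N^{|E|}=\frac{1}{2^N}\sum_{k=0}^N\binom{N}{k}(1+q_N)^{\binom{k}{2}+\binom{N-k}{2}}(1-q_N)^{k(N-k)},\qquad q_N:=\e\hth^2(\beta J_N),
\]
and a change of variables gives $Nq_N\to\alpha\beta^\alpha\int_0^\infty\hth^2(x)\,x^{-(\alpha+1)}\,dx=(\beta/\beta_\alpha)^\alpha$, which is $<1$ \emph{precisely because} $\beta<\beta_\alpha$; a Laplace estimate of the displayed sum (whose mass concentrates near $k=N/2$) then shows $\e W_N^2$ is bounded, indeed convergent. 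To pass from $L^2$-boundedness to a genuine lower bound on $W_N$, I would argue that $W_N\stackrel{d}{\to}W_\infty$ with $W_\infty>0$ almost surely: in the even-subgraph expansion the subgraphs with boundedly many edges dominate and are disjoint unions of short cycles, whose weighted counts converge jointly to independent Poisson-type statistics, and the resulting small-subgraph-conditioning limit is an almost surely positive convergent product. This yields $\ln W_N=O_{\p}(1)=o_{\p}(N^{1/\alpha})$, completing the argument.

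The hard part, I expect, is step (ii), and within it the upgrade from $\e W_N^2=O(1)$ to a lower bound on $W_N$: the second moment alone does not preclude $W_N$ being occasionally as small as $e^{-\eps N^{1/\alpha}}$, and crude substitutes — a union bound over the $\Theta(N^2)$ light edges, or the Jensen bound $\ln W_N\ge-\sum_{i<j}\ln\ch(\beta J_{ij})$ — are all too lossy, the latter because $\sum_{i<j}\ln\ch(\beta J_{ij})$ is itself of order at least $N^{1/\alpha}$. The small-subgraph-conditioning analysis is also the place where the heavy tails genuinely interact with the combinatorics, since short cycles carrying a single large weight contribute at the same order as unweighted ones. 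Everything else — the stable limit theorem in (i) and the identification of $C_N$ — is routine once the loop expansion is in hand, and the high-temperature hypothesis $\beta<\beta_\alpha$ is used only to keep $\e W_N^2$ bounded.
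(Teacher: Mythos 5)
Your proposal follows essentially the same route as the paper: the split $\ln Z_N = \sum_{i<j}\ln\cosh(\beta J_{ij}) + N\ln 2 + \ln W_N$ with a stable limit theorem for the $\cosh$ sum, and the Aizenman--Lebowitz--Ruelle even-subgraph expansion for $W_N=2^{-N}\widehat Z_N$ (large graphs killed in $L^2$ by orthogonality and $Nq_N\to(\beta/\beta_\alpha)^\alpha<1$, short vertex-disjoint cycles converging to compound-Poisson statistics, hence $W_N\stackrel{d}{\to}W_\infty>0$ a.s.\ and $\ln W_N=O_{\p}(1)$), exactly as in the paper's Theorems on $\ln\bar Z_N$ and $\widehat Z_N$. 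The only cosmetic difference is in the first half, where the paper splits the $\cosh$ sum into a bounded part (Lindeberg CLT, contributing $O(\sqrt N)=o(N^{1/\alpha})$) and a heavy part (random-index i.i.d.\ stable CLT), whereas you invoke the triangular-array stable limit theorem directly; both are routine and yield the same $Y_\alpha$ and centering up to $o(N^{1/\alpha})$.
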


    %  {\color{red}When $\alpha\ge 1$, the centering term is of order $\gg N^{1/\alpha}$.
    % On the other hand, when $0<\alpha<1$, the order of the centering term is also $N^{1/\alpha}$, so it can be absorbed in the limiting distribution and this allows us to get a distributional convergence of $(\ln Z_N)a_N/ a_{{N\choose 2}}$.}
    
    For any $0< \alpha < 2$, the normalizing factor $b_N$ scales like $N^{1/\alpha}$ times some slowly varying function. When $1 < \alpha < 2$, the centering term is linear in $N$ and hence, is of order 
    $\gg b_N$. For $\alpha=1$, the centering term is again of order $\gg b_N$. However, when $0<\alpha<1$, the order of the centering term is also $b_N$, so it can be absorbed in the limiting distribution and this allows us to get a distributional convergence of $b_N^{-1}\ln Z_N$.
    However, as we see in Theorem \ref{thm:free_energy_below_one}, the same convergence continues to hold, with a more illuminating description, for any temperature.

    %\rm In fact, Theorem \ref{fluctuationfreeenergy} holds for all $0<\alpha<2$. When $0<\alpha<1$, we have a simpler description and it holds at all temperature.

\begin{remark}
$Y_\alpha$ is a stable distribution with stable index $\alpha$ and skewness parameter $\kappa = 1$ (see Equations (3.8.10) and (3.8.11) in \cite{durrett_2019}). As a result,  its right tail decays polynomially as $\mathbb{P}( Y_\alpha > x) \sim c x^{-\alpha}$ as $x \to \infty$ for some positive constant $c=c(\alpha)$.  When $0< \alpha < 1$, the infimum of the support is finite, while if $1 \le \alpha < 2$, the support of $Y_\alpha$ is the entire $\mathbb{R}$, in which case, the left tail decays faster than a power law. See, e.g., \cite[Theorem 1.2 and Proposition 3.1]{nolan2020univariate}.
\end{remark}

Next, we study the behavior of the site and bond overlaps.

\begin{theorem} \label{hightempoverlap}
	Assume $1<\alpha<2$. For any $k\geq 1,$ the site overlap satisfies
	\begin{equation}\label{siteoverlapzero}
		\lim_{N\to\infty}\e\bigl\la R_{2k}^2\bigr\ra=0
	\end{equation}
 for any $0<\beta<\beta_\alpha$
 and 
 \begin{equation}\label{eq: overlap and temperature}
            \lim_{\beta \to \infty} \liminf_{N \to \infty} \e \la R_{2k}^2 \ra =1.
    \end{equation}
Furthermore, for any $0<\beta<\beta_\alpha$ and $K>0,$ the bond overlap is concentrated at a positive number,
  \begin{equation}\label{bondoverlapnonzero}
		\lim_{N\to\infty}\e \Bigl\la \Bigl(Q_K - \alpha K^{\alpha} \int_{K}^\infty \frac{\hth^2(\beta x)}{x^{1+\alpha}}dx\Bigr)^2\Bigr\ra =0.
	\end{equation}
\end{theorem}

We comment that the behavior of the overlap in \eqref{siteoverlapzero} essentially says that the spins do not have favored directions on each site, whereas the positivity of the bond overlap in \eqref{bondoverlapnonzero} indicates that the spins on each edge tend to align in certain directions so that $\sigma_i\sigma_j=\mbox{sign}(J_{ij})$ as long as the absolute disorder exceeds the level $K.$ 
If we do not restrict ourselves on the heavy edges, i.e., set $K=0$ instead, then $Q_K=N(N-1)^{-1}R_2^2-(N-1)^{-1}$ and from \eqref{siteoverlapzero}, this quantity vanishes under $\e\la \cdot\ra$ as $N\to\infty.$ 

From \eqref{eq: overlap and temperature}, it seems reasonable to expect that the site overlap should exhibit a phase transition in temperature in the sense that there exists a critical temperature that distinguishes when the site overlap is a Dirac measure at zero or has a nontrivial (symmetric) distribution. See the second open problem in Section~\ref{op}.

\subsubsection{Superadditivity of the Free Energy}\label{sec:superadditivity}

Our next result establishes the superadditivity and convergence of the free energy in the L\'evy model with Pareto distributions and $1<\alpha<2.$ Note that the convergence in high temperature is already apparent as seen in Theorem \ref{freeenergy}.

\begin{theorem}\label{superadditivity}
Let $1<\alpha<2$ and consider the Pareto distribution. For any $\beta>0,$ there exists a constant $C>0$ such that for any $M,N\geq 1$ with $N/2\leq M\leq 2N,$
	\begin{equation*}
	\e \ln Z_{M+N} \geq \e \ln Z_M+\e \ln Z_N -C\phi(M+N)
	\end{equation*}
	for $
	\phi(x)=x^{1-(2-\alpha)/(1+3\alpha)}.
	$ Furthermore, $\lim_{N\to\infty}N^{-1}\e \ln Z_N$ exists and is the limit of $N^{-1}\ln Z_N$ in any $L^p$ norm for $1\le p<\alpha$.
\end{theorem}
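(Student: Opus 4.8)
The plan is to run the cutoff scheme of \cite{Mezard} and, for the purpose of superadditivity, reduce the model to a Poissonian Viana--Bray model. Fix a threshold $K\in(0,1]$ and split $-H_n=-H_n^{>K}-H_n^{<K}$ with $-H_n^{>K}=\beta\sum_{i<j}J_{ij}\1_{\{|J_{ij}|\ge K\}}\sigma_i\sigma_j$. The key point is that, since the tail is an exact power law, the strong bonds of $Z_n^{>K}:=\sum_\sigma e^{-H_n^{>K}(\sigma)}$ carry an \emph{$n$-independent} weight: each pair $\{i,j\}$ is active independently with probability $\min(1,K^{-\alpha}/n)$, and an active bond has weight $\beta K\,\eps_{ij}P_{ij}$ with $\eps_{ij}=\pm1$ symmetric and $P_{ij}$ standard Pareto of index $\alpha$. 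Thus $Z_n^{>K}$ is (a Bernoulli version of) a Viana--Bray model of intensity $\asymp K^{-\alpha}$ per vertex, whose bond weight has finite mean $\beta K\alpha/(\alpha-1)$ because $\alpha>1$. The cutoff is essential: without it, $Z_{M+N}$ restricted to a block of $M$ vertices is distributed not as $Z_M$ but as $Z_M$ at the rescaled inverse temperature $\beta(M/(M+N))^{1/\alpha}$ --- an $O(1)$ change of $\beta$, which moves $\e\ln Z_M$ by an extensive amount.

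There are then two quantitative ingredients. First, for $1<\alpha<2$ the weak bonds are negligible: writing $\ln Z_n=\ln Z_n^{>K}+\ln\la e^{-H_n^{<K}}\ra_{>K}$, Jensen gives $\e\ln\la e^{-H_n^{<K}}\ra_{>K}\ge0$ (the weak bonds are symmetric and independent of $\la\cdot\ra_{>K}$), while the reverse Jensen inequality bounds the same quantity by $\binom n2\ln\e\cosh\!\big(\beta J\1_{\{|J|<K\}}\big)$, and the estimate $\e[(J\1_{\{|J|<K\}})^2]\le\alpha K^{2-\alpha}/((2-\alpha)n)$ turns this into $0\le\e\ln Z_n-\e\ln Z_n^{>K}\le C(\alpha,\beta)\,nK^{2-\alpha}$ for $K\le1$; taking $K=1$ also yields the a priori bound $N\ln2\le\e\ln Z_N\le CN$, which naive annealing cannot give since $\e Z_N=\infty$. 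Second, the Viana--Bray free energy is superadditive: after Poissonizing the sprinkling (collision error $\asymp K^{-2\alpha}$) one interpolates between $Z_{M+N}^{>K}$ and the decoupled $Z_M^{>K}Z_N^{>K}$ by continuously converting cross-block bonds into within-block bonds, arranging that the \emph{total} bond intensity is conserved; the derivative of $\e\ln Z_t$ is a weighted combination of $D_{XY}(t)=\e\ln\la e^{\xi\sigma_i\sigma_j}\ra_t$ over a fresh bond $(i,j)$ uniform in block $XY\in\{AA,BB,AB\}$ with fresh symmetric weight $\xi$, each $\ge0$ by Jensen. Its block-independent part $\e\ln\cosh\xi\asymp K^\alpha$ enters with a coefficient equal to $-\tfrac12K^{-\alpha}$, hence contributes only $O_\beta(1)$; the block-dependent remainders $\tfrac12\e\ln\!\big(1-\la\sigma_i\sigma_j\ra_t^2\tanh^2\xi\big)$ coincide across blocks at $t=0$ by vertex exchangeability of the symmetric reference $Z_{M+N}^{>K}$ and, for $t>0$, are shown to differ by $O(1/n)$ since the $t$-model perturbs that reference by only $\asymp tnK^{-\alpha}$ bonds. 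Collecting these contributions gives $\e\ln Z_{M+N}^{>K}\ge\e\ln Z_M^{>K}+\e\ln Z_N^{>K}-E(M+N,K)$ with $E$ growing polynomially as $K\downarrow0$; combining with the cutoff estimate and balancing the two errors (so that $K\asymp(M+N)^{-1/(1+3\alpha)}$) yields $\e\ln Z_{M+N}\ge\e\ln Z_M+\e\ln Z_N-C(M+N)^{1-(2-\alpha)/(1+3\alpha)}$ for $N/2\le M\le2N$.

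Existence of $\lim_N N^{-1}\e\ln Z_N$ then follows from this approximate superadditivity by the Fekete-type argument of \cite{Jagannath}: the restriction to comparable sizes lets one first handle $N=2^k$, along which $2^{-k}\e\ln Z_{2^k}$ is almost increasing because $\sum_k\phi(2^k)/2^k=\sum_k2^{-k(2-\alpha)/(1+3\alpha)}<\infty$ and the terms are bounded by the a priori bound, and then fill in general $N$ by repeated halving. For the $L^p$ convergence, $0<p<\alpha$, I would prove concentration using the martingale-difference decomposition of $\ln Z_N-\e\ln Z_N$ obtained by revealing the couplings one at a time: since $|\partial_{J_{ij}}\ln Z_N|=\beta|\la\sigma_i\sigma_j\ra|\le\beta$, the increments satisfy $|d_k|\le\beta(|J_k|+\e|J|)$, hence $\e|d_k|^p\le C\,\e|J|^p\asymp C\,N^{-p/\alpha}$ for $p<\alpha$; a martingale $L^p$ moment bound over the $\binom N2$ increments gives $\e|\ln Z_N-\e\ln Z_N|^p\lesssim N^{2-p/\alpha}$, so $N^{-p}\e|\ln Z_N-\e\ln Z_N|^p\to0$ whenever $2\alpha/(\alpha+1)<p<\alpha$; since $L^{p'}$ convergence implies $L^p$ convergence for $p\le p'$, letting $p'\uparrow\alpha$ covers all $0<p<\alpha$.

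I expect the hard part to be the second paragraph: making the reduction to Viana--Bray quantitative, and, inside it, controlling the block-dependent part of the interpolation derivative at \emph{arbitrary} temperature --- where the overlaps need not concentrate, so one relies only on the exchangeability of the symmetric reference model together with the exact cancellation of the $K^{-\alpha}$-sized block-independent terms that conserving the bond count provides.
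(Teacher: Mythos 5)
Your overall architecture matches the paper's: cutoff at level $K$, reduction to a (Poissonized) Viana--Bray model, superadditivity of the diluted model via an interpolation, balancing $K\asymp (M+N)^{-1/(1+3\alpha)}$, a de Bruijn--Erd\H{o}s argument for existence of the limit, and the martingale/Burkholder--Davis--Gundy concentration for the $L^p$ statement. The peripheral estimates are fine; in particular your annealed bound $0\le \e\ln Z_n-\e\ln Z_n^{>K}\le C nK^{2-\alpha}$ is a legitimate alternative to the interpolation bound the paper uses for the weak bonds and gives the same order.

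The gap is in the step you yourself flag as hard: superadditivity of the diluted free energy. In your interpolation the derivative has the form $c_{AA}D_{AA}+c_{BB}D_{BB}+c_{AB}D_{AB}$ with $c_{AA},c_{BB}>0$ totalling $\asymp nK^{-\alpha}$ over the whole interpolation, so the crude bound $D_{XY}\le \e\ln\cosh(\beta\xi)\asymp K^{\alpha}$ only yields an extensive $O(n)$ error; you therefore genuinely need the block-dependent remainders $\tfrac12\e\ln\bigl(1-\tanh^2(\beta\xi)\la\sigma_i\sigma_j\ra_t^2\bigr)$ to agree across the three blocks up to $o(K^{\alpha})$, uniformly in $t$. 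Your justification --- that the $t$-model differs from the exchangeable reference by only $\asymp tnK^{-\alpha}$ bonds, hence two-point functions differ by $O(1/n)$ --- does not hold: $tnK^{-\alpha}$ bonds is an \emph{extensive} perturbation, and extensive perturbations move local correlation functions by $O(1)$. Indeed at $t=1$ a cross-block pair satisfies $\la\sigma_i\sigma_j\ra=\la\sigma_i\ra\la\sigma_j\ra$ while a within-block pair carries the full $Z_M^{>K}$ correlation; at low temperature these differ by $O(1)$, so the required near-cancellation fails precisely where the theorem is nontrivial. The device that rescues the argument (and is what the paper uses, following Franz--Leone and Guerra--Toninelli) is not exchangeability but convexity: expand $\ln\bigl(1+\tanh(\beta\xi)\la\sigma_i\sigma_j\ra_t\bigr)$ in powers of $\tanh(\beta\xi)$, use the symmetry of $\xi$ to kill the odd terms, average the bond location over its block to turn $\e\la\sigma_i\sigma_j\ra_t^{2r}$ into $\e\la R(\sigma^1,\dots,\sigma^{2r})^2\ra_t$ up to $O(1/n)$, and then use that the combined-system multi-overlap is the convex combination $\frac{M}{M+N}R^{(A)}+\frac{N}{M+N}R^{(B)}$, so that $(R^{(A\cup B)})^2\le \frac{M}{M+N}(R^{(A)})^2+\frac{N}{M+N}(R^{(B)})^2$. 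This produces the correct sign term by term at every temperature, with leftover finite-size corrections bounded by $C\gamma\e|y|\asymp K^{1-\alpha}$, which is negligible against $\phi(M+N)$ for your choice of $K$. With that replacement the rest of your outline goes through.
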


% {\color{red}COMMENT PARETO AND COMBINED WITH UNIVERSALITY...}

Our approach to this superadditivity relies on making a connection to a special case of the so-called Viana-Bray as well as its Poissonian generalization, which are diluted 2-spin models defined on the sparse Erd\H{o}s-R\'enyi and Poisson (multi)graph, respectively. For convenience, they are called the VB and PVB models for the rest of the paper. First of all, note that for any given $\eps>0,$ by splitting the disorder in the Hamiltonian $H_N$ as $J\1_{\{|J|< \eps\}}+J\1_{\{|J|\geq \eps\}}$ for each pair and dropping the first components, as we shall prove, the resulting free energy will not deviate away from the original one too much. Now, on the one hand, since in distribution $J\1_{\{|J|\geq \eps\}}=Bg_\eps$ for some independent $B\sim$Ber$(\eps^{-\alpha}/N)$ and $g_\eps\sim 2^{-1}\alpha\eps^\alpha |x|^{-(1+\alpha)}\1_{\{|x|\geq \eps\}}$ and $g_\eps$ has a finite first moment, the free energy corresponding to the truncated Hamiltonian indeed equals that of the VB model, whose Hamiltonian is defined as
\begin{equation*}
	-H_{N,\eps}^{\B}(\sigma)=\beta \sum_{1\leq i<j\leq N}B_{ij}g_{\eps,ij}\sigma_i\sigma_j,
\end{equation*}
where $(B_{ij},g_{\eps,ij})_{1\leq i<j\leq N}$ are i.i.d.\ copies of $(B,g_\eps).$ On the other hand, since each $B_{ij}$ is asymptotically equal to the Poisson random variable with mean $\eps^{-\alpha}/N$, we can further argue that the free energy of the VB model is essentially the same as that of the PVB model, whose Hamiltonian is defined as
\begin{equation}\label{pvb}
	-H_{N,\eps}^{\VB}(\sigma)=\beta \sum_{k=1}^{\pi(\eps^{-\alpha}N)}g_{\eps,k}\sigma_{I(1,k)}\sigma_{I(2,k)},
\end{equation}
where $\pi(\eps^{-\alpha}N)$ is Poisson with mean $\eps^{-\alpha}N$, $(g_{\eps,i})_{i\geq 1}$ are i.i.d. copies of $g_\eps,$ $(I(1,k),I(2,k))_{k\geq 1}$ are i.i.d. sampled uniformly from $\{(i,j):1\leq i<j\leq N\},$ and these are all independent of each other. These three approximations are obtained quantitatively in terms of $\eps$ and $N$, based on which we prove Theorem \ref{superadditivity} by using a key fact, the free energy of the PVB model is superadditive, that will be established in this paper by an adoption of the Franz-Leone type interpolation in \cite{Franz}.
The existence of the limiting free energy in the L\'evy model was also established in  a recent work \cite{Jagannath} relying again on the superadditivity argument, where the authors adopted the combinatorial interpolation strategy in \cite{BGT}, different from our present approach.
While no characterization of the limiting free energy was given in \cite{Jagannath}, one of the main contributions of our paper is to provide a variational formula for this limit in Theorem \ref{main:thm2} below.

%\begin{remark}
%	\rm We emphasize that the L\'evy model, the %VB, and the PVB models are connected altogether %as far as the computation of the limiting free %energy is concerned, but this does not mean %their Gibbs measures would share similar %behaviors.
%\end{remark}

\subsubsection{Invariance Principle and Variational Formula}\label{sec:inv+var}

In this subsection, we will establish an invariance principle and a variational representation for the limiting free energy in the case of the Pareto distribution.

Our formulation is closely connected to the framework in the PVB model established by Panchenko \cite{panchenko2013}. To begin with, denote by $\mathcal{S}$ the set of all measurable functions $\sigma:[-1,1]^4\to\{-1,1\}$. Recall the PVB model from \eqref{pvb}. Consider the perturbed PVB Hamiltonian defined in \eqref{perturbedVB} associated with the choice of the parameter \eqref{input} and denote by $(\sigma^l)_{l\geq 1}$ the replicas from the corresponding Gibbs measure. The Gibbs expectation with respect to these replicas is denoted by $\la \cdot\ra^{\VB}_\eps$. Observe that  for any $N\geq 1,$ the replicas $(\sigma^l)_{l\geq 1}$ enjoy two symmetries, in the sense that under $\e\la \cdot\ra^{\VB}_\eps$, their joint distribution is invariant under any finite permutations of the spin indices $i$ and the replica indices $l.$ If, along a subsequence as $N\to\infty$, their joint distribution converges to a certain infinite array $(s_i^l)_{i,l\geq 1}$, then the same symmetries will still be preserved, namely, $(s_i^l)_{i,l\geq 1}$ is invariant with respect to any finite row or column permutations. By the Aldous-Hoover representation \cite{Aldous1985,Hoover1982}, we can then express $$s_i^l=\sigma(w,u_l,v_i,x_{i,l})$$ for all $i,l\geq 1$ for some $\sigma\in \mathcal{S}$. Denote by $\mathcal{M}_\eps$ the collection of all $\sigma$ that arise from the Aldous-Hoover representation for any possible weak limits of the replicas. In \cite{panchenko2013}, it was shown that for any $\sigma\in \mathcal{M}_\eps$, the corresponding spin distributions $(s_i^l)_{i,l\geq 1}$ satisfy an invariant principle and the free energy can be expressed as a variational formula in terms of $\mathcal{M}_\eps$, see the detailed review in Section \ref{VBM}. 

Our main results below validate the analogous framework in the setting of the L\'evy model, where the key ingredient in our formulation is played by the set  $\mathcal{N}$ consisting of all $\sigma\in \mathcal{S}$ that can be obtained as the limit of some sequence, $(\sigma_m)_{m\geq 1}$, in the finite-dimensional distribution sense (see Definition \ref{def:fdd} and Remark \ref{welldefiniteness}), where $\sigma_m\in \mathcal M_{\eps_m}$ for all $m\geq 1$ and $\eps_m\downarrow 0$. Before stating our results, we introduce the following notation.
Consider the measure $\mu$ defined on $\mathbb{R}\setminus\{0\}$ with density
\begin{equation}\label{intensitymeasure}
\mu(dx)=\frac{\alpha dx}{2|x|^{1+\alpha}}.
\end{equation}
Let $\lambda>0.$ Denote by $(\xi_k)_{k\geq 1}$ and $(\xi_k')_{k\geq 1}$  Poisson Point Processes (PPP) with mean measures $\mu,$ and $\lambda\mu/2,$ respectively.  Assume that their absolute values are ordered in the decreasing manner. Denote by $(\xi_{i,k})_{k\geq 1}$ and  $(\xi_{i,k}')_{k\geq 1}$ independent copies of $(\xi_k)_{k\geq 1}$ and $(\xi_k')_{k\geq 1}$, respectively, for all $i\geq 1$. Let $\delta$ and $(\delta_i)_{i\geq 1}$ be i.i.d.\ Rademacher random variables. Let $w,u,(u_l)_{l\geq 1}$ be i.i.d. uniform on $[0,1].$ Denote by $v_I,\hat v_I,x_I,\hat x_I$ i.i.d. uniform random variables on $[0,1]$ for any $r\geq 1$ and $I=(i_1,\ldots,i_r)\in \mathbb{N}^r.$ Here we assume all these sources of randomness are independent of each other.  Denote by $\e_{u,\delta,x}$ the expectation with respect to $u,$ $\delta$, $\delta_i$ for all $i\geq 1$ and $x_I,\hat x_I$ for all $n\geq 1$ and $I\in \mathbb{N}^n.$ 
For $\sigma\in \mathcal{S}$ and $I\in \mathbb{N}^r,$ denote
\begin{equation}
	\label{add:sec1.1:eq1}
        s_I=\sigma(w,u,v_{I},x_I)\,\,\mbox{and}\,\,
		\hat s_I=\sigma(w,u,\hat v_l,\hat x_l).
\end{equation}
For each $\sigma\in \mathcal{S},$ we associate an infinite array $(s_i^l)_{i,l\geq 1}$ to $\sigma$ by letting $s_i^l=\sigma(w,u_l,v_i,x_{i,l})$ for all $ i,l\geq 1.$ 
Let $m,n,q\geq 1$ and $m\ge n.$ Consider $q$ replicas and for each $1\leq l\leq q,$ let $C_l\subseteq \{1,2,\ldots,m\}$ be the collection of spin coordinates corresponding to the $l$-th replica. We further divide $C_l$ into the cavity coordinates and non-cavity coordinates, respectively, as 
\begin{equation*}
	C_l ^1=C_l \cap \{1,\dots,n\}\,\,\mbox{and}\,\,
	C_l ^2 = C_l \cap \{n+1,\dots,m\}.
\end{equation*}
For each $\sigma\in \mathcal{S},$ we define two functionals
\begin{align*}
	\begin{split}
		{U}_{l,\lambda}(\sigma)&=\e_{u,\delta,x}\Bigl(\prod_{i \in C_l^1}\delta_i\Bigr)\Bigl(\prod_{i\leq n}\prod_{k\geq 1}\bigl(1+\tanh(\beta \xi_{i,k})s_{i,k}\delta_i\bigr)\Bigr)\\
		&\qquad\qquad\Bigl(\prod_{i\in C_l^2}s_i\Bigr)\Bigl( \prod_{k\geq 1}\bigl(1+\tanh(\beta \xi_{k}')\hat s_{1,k}\hat s_{2,k}\bigr)\Bigr)
	\end{split}
\end{align*}
and
\begin{equation*}	{V}_\lambda(\sigma)=\e_{u,\delta,x}\Bigl(\prod_{i\leq n}\prod_{k\geq 1}\bigl(1+\tanh(\beta \xi_{i,k})s_{i,k}\delta_i\bigr)\Bigr)\Bigl( \prod_{k\geq 1}\bigl(1+\tanh(\beta \xi_{k}')\hat s_{1,k}\hat s_{2,k}\bigr)\Bigr).
\end{equation*}
Note that the definitions of $U_{l,\lambda}(\sigma)$ and $V_\lambda(\sigma)$ involve infinite products. Their well-definedness is ensured by Theorem \ref{invariancestep1} below. The following theorem states that the spin distribution $(s_i^l)_{i,l\geq 1}$ associated with $\sigma\in \mathcal{N}$ satisfies a system of consistent equations.

\begin{theorem}\label{main:thm1}
Let $1<\alpha<2$ and consider the Pareto distribution. Any $\sigma\in \mathcal{N}$ satisfies the following equations that  for any $\lambda\geq 0$, $n,m,q\geq 1$, and the sets $C_l\subseteq\{1,\ldots,m\}$,
	\begin{equation}
	    \label{main:thm1:eq1}
		\e \prod_{l\leq q}\prod_{i\in C_l}s_i^l=\e\prod_{l\leq q}\e_{u,x}\prod_{i\in C_l}s_i=\e\frac{\prod_{l\leq q}U_{l,\lambda}(\sigma)}{V_\lambda(\sigma)^q}.
	\end{equation}
\end{theorem}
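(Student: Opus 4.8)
The plan is to establish the chain of two equalities in \eqref{main:thm1:eq1} separately. The first equality, $\e \prod_{l\leq q}\prod_{i\in C_l}s_i^l=\e\prod_{l\leq q}\e_{u,x}\prod_{i\in C_l}s_i$, is a structural consequence of the Aldous--Hoover representation of $(s_i^l)$ together with the conditional independence it encodes: given the ``environment'' randomness $w$, the rows indexed by distinct replicas $l$ are conditionally i.i.d., so the joint expectation factorizes into a product over $l$ of conditional expectations, each of which is $\e_{u_l,x}\prod_{i\in C_l}\sigma(w,u_l,v_i,x_{i,l})$; relabeling $u_l\mapsto u$ inside the product and taking the outer expectation over $w$ and the $v_i$'s gives the stated formula. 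This part should be essentially immediate from the way $\mathcal N$ and the array $(s_i^l)$ are defined.

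**The main content is the second equality.** This is the cavity/invariance identity, and the plan is to obtain it by passing to the limit $\eps_m\downarrow 0$ in the corresponding identity for the PVB model. First, I would recall from Panchenko's treatment of the perturbed PVB model (Section~\ref{VBM}, to be reviewed) that for each $\sigma_m\in\mathcal M_{\eps_m}$ the analogous cavity equations hold: with the PVB connectivity parameter chosen so that $\lambda$ appears as the ratio of the ``boundary'' Poisson intensity to the ``cavity'' intensity, one has $\e\prod_{l\leq q}\prod_{i\in C_l}s_i^l=\e\, U_{l,\lambda}^{\VB}(\sigma_m)^{\cdot}/V_\lambda^{\VB}(\sigma_m)^q$, where the PVB functionals $U^{\VB},V^{\VB}$ are built from the \emph{finite} Poisson number of bonds attached to the cavity coordinates, with weights $\tanh(\beta B g_{\eps_m})$. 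The key step is then to identify the $\eps_m\to 0$ limit of these finite-bond PVB functionals with the infinite-product functionals $U_{l,\lambda}(\sigma),V_\lambda(\sigma)$: the point is that a $\mathrm{Ber}(\eps^{-\alpha}/N)$-thinned collection of $g_\eps$-weights, when one sends $\eps\to 0$, converges (after the $N\to\infty$ limit is already in place, or jointly) to the Poisson point process with intensity $\mu$ of \eqref{intensitymeasure}, so the finite products $\prod(1+\tanh(\beta B g_\eps)\,s\,\delta)$ over the attached bonds converge to the infinite products $\prod_{k\geq1}(1+\tanh(\beta\xi_{i,k})s_{i,k}\delta_i)$, and similarly for the primed process at rate $\lambda\mu/2$. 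Finite-dimensional-distribution convergence $\sigma_m\to\sigma$ (Definition~\ref{def:fdd}) transfers the spin variables $s_{i,k},\hat s_{j,k}$ inside these expressions.

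**Convergence and integrability issues.** Before the limit can be taken one must justify (i) that the infinite products defining $U_{l,\lambda}(\sigma),V_\lambda(\sigma)$ converge and are positive --- this is exactly what Theorem~\ref{invariancestep1} supplies, via $\sum_k \tanh^2(\beta\xi_k)<\infty$ a.s. (which uses $\alpha<2$ so that $\int \tanh^2(\beta x)\,\mu(dx)<\infty$), so $V_\lambda(\sigma)$ is bounded away from zero by a random constant and the ratio $U_{l,\lambda}/V_\lambda^q$ is a bounded random variable; and (ii) uniform integrability along the sequence $m$, so that convergence in distribution of the (bounded-by-one) integrands, together with convergence of the random normalizers $V_\lambda^{\VB}(\sigma_m)\to V_\lambda(\sigma)$ and a lower bound on $V_\lambda^{\VB}(\sigma_m)$ that is uniform in $m$, yields convergence of the expectations. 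The delicate quantitative point --- and what I expect to be the main obstacle --- is this uniform lower bound on the normalizer $V_\lambda^{\VB}(\sigma_m)$ as $\eps_m\to0$: the number of bonds attached to a cavity vertex is $\mathrm{Poisson}(\eps_m^{-\alpha})$, which \emph{diverges}, and although most of those bonds carry tiny weights $g_{\eps_m}\approx\eps_m$ with $\tanh(\beta g_{\eps_m})\approx 0$, one needs that the product $\prod_k(1+\tanh(\beta g_{\eps_m,k})\cdots)$ does not collapse to $0$; controlling this requires the same second-moment/truncation estimates used to pass from the L\'evy model to the VB model in Section~\ref{sec:superadditivity}, now applied at the level of a single vertex's neighborhood. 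Once that tightness-type bound is in hand, one takes $\eps_m\to0$ in the PVB identity, uses the fdd convergence $\sigma_m\to\sigma\in\mathcal N$, and the dominated-convergence argument closes the proof; a final remark is that the identity is then independent of the approximating sequence, which is consistent with Remark~\ref{welldefiniteness}.
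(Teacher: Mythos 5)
Your proposal is correct and follows essentially the same route as the paper: the identity is first taken from Panchenko's cavity equations for the perturbed PVB model at truncation level $\eps_m$ (Corollary~\ref{add:cor1}), and then passed to the limit using the uniform-in-$\sigma$ convergence of the functionals as $\eps\downarrow 0$ (Theorem~\ref{invariancestep1}), their continuity under f.d.d.\ convergence (Theorem~\ref{invariancestep2}), and a three-epsilon argument (Lemma~\ref{functional3epsilon}). The technical obstacles you flag --- a.s.\ convergence and positivity of the infinite products and the uniform control of the normalizer $V$ --- are exactly the content the paper isolates in Theorem~\ref{invariancestep1} and its appendix proof.
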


Let $\mathcal{N}_{\mathrm{inv}}$ be the collection of all $\sigma\in \mathcal{S}$ such that the invariant equations in Theorem  \ref{main:thm1} hold. Set the functional
\begin{equation*}
	Q(\sigma)=\ln 2+\e \ln \e_{u,x}\cosh\Bigl(\beta\sum_{k\geq 1}\xi_ks_k\Bigr)-\e\ln \e_{u,x}\exp\Bigl(\beta\sum_{k\geq 1}\xi_k's_{1,k}s_{2,k}\Bigr),
\end{equation*}
where in the second PPP $(\xi_k')_{k\geq 1},$ we fix $\lambda=1.$
Note that this functional is always finite as also guaranteed by Theorem \ref{invariancestep1} below. Our next result shows that  the limiting free energy can be expressed as a variational formula.

\begin{theorem}\label{main:thm2} Assume $1<\alpha<2.$ Let $\beta>0$ and $1\le p<\alpha.$ For the  Pareto distribution, the following convergence holds with respect to the $L^p$ norm,
	\begin{equation*}
		%\label{main:thm2:eq1}
		\lim_{N\to\infty}\frac{1}{N}\ln Z_N=\inf_{\sigma\in \mathcal{N}}Q(\sigma)=\inf_{\sigma\in \mathcal{N}_{\mathrm{inv}}}Q(\sigma).
	\end{equation*}
\end{theorem}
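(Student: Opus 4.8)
The plan is to run the cutoff scheme of Section~\ref{sec:superadditivity}, apply Panchenko's variational formula to each resulting PVB model, and then pass to the limit as the cutoff $\eps\downarrow0$. Note that by Theorem~\ref{superadditivity} the quantity $N^{-1}\ln Z_N$ already converges in $L^p$ for every $0<p<\alpha$ to the deterministic limit $\mathcal F:=\lim_N N^{-1}\e\ln Z_N$, so the entire content of Theorem~\ref{main:thm2} is to identify $\mathcal F$ with the two infima. For the reduction, I would use the three quantitative approximations of Section~\ref{sec:superadditivity} --- dropping the bonds with $|J|<\eps$, identifying the truncated free energy with that of the VB Hamiltonian $-H^{\B}_{N,\eps}$, and comparing the VB and PVB free energies --- together with the smallness of the perturbation \eqref{perturbedVB}, to obtain $\bigl|N^{-1}\ln Z_N-N^{-1}\ln Z^{\VB}_{N,\eps}\bigr|\le r(\eps,N)$ in $L^p$ with $\limsup_N r(\eps,N)\to0$ as $\eps\downarrow0$. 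Hence $\mathcal F_\eps:=\lim_N N^{-1}\e\ln Z^{\VB}_{N,\eps}$ exists for each $\eps>0$ and $\mathcal F=\lim_{\eps\downarrow0}\mathcal F_\eps$.

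Next, for fixed $\eps>0$ the PVB model has bounded connectivity $\eps^{-\alpha}$ and integrable weights $g_\eps$, so Panchenko's results \cite{panchenko2013} give $\mathcal F_\eps=\inf_{\sigma\in\mathcal M_\eps}Q_\eps(\sigma)$, where $Q_\eps$ is the $\eps$-analogue of $Q$ (the PPPs $(\xi_k),(\xi_k')$ being replaced by the corresponding Poisson sums of $g_\eps$'s), the infimum being attained: $\mathcal F_\eps=Q_\eps(\sigma^\ast_\eps)$ for any genuine asymptotic Gibbs measure $\sigma^\ast_\eps\in\mathcal M_\eps$. The crucial step is then a continuity lemma: if $\eps_m\downarrow0$ and $\sigma_m\in\mathcal M_{\eps_m}$ converge to $\sigma$ in the finite-dimensional-distribution sense, then $Q_{\eps_m}(\sigma_m)\to Q(\sigma)$ (and likewise for $U_{l,\lambda},V_\lambda$). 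This rests on the elementary convergence of the weighted Poisson sums of $g_\eps$'s to the PPP sums $\sum_k\xi_k(\cdot)$ --- the mean measure $\tfrac\alpha2\eps^\alpha|x|^{-1-\alpha}\1_{\{|x|\ge\eps\}}\,dx$ increases to $\mu$ --- together with uniform integrability and tail bounds for the infinite products, supplied by Theorem~\ref{invariancestep1}; this is exactly where $1<\alpha<2$ and $|\tanh x|\le\min(|x|,1)$ are used. Granting this and the compactness of the space of fdd-laws of $\{-1,1\}$-valued arrays, the interchange of infimum and limit is routine: for "$\le$'' each $\sigma\in\mathcal N$ is an fdd-limit of some $\sigma_m\in\mathcal M_{\eps_m}$, whence $\mathcal F=\lim_m\mathcal F_{\eps_m}\le\lim_m Q_{\eps_m}(\sigma_m)=Q(\sigma)$; for "$\ge$'' take near-minimizers $\sigma_m\in\mathcal M_{\eps_m}$ of $Q_{\eps_m}$, extract an fdd-convergent subsequence with limit $\sigma\in\mathcal N$, and conclude $Q(\sigma)=\lim_j Q_{\eps_{m_j}}(\sigma_{m_j})\le\lim_j(\mathcal F_{\eps_{m_j}}+1/m_j)=\mathcal F$. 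Thus $\mathcal F=\inf_{\mathcal N}Q$, and the choice $\sigma_m=\sigma^\ast_{\eps_m}$ exhibits a minimizer $\sigma^\ast\in\mathcal N$.

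It remains to replace $\mathcal N$ by $\mathcal N_{inv}$. By Theorem~\ref{main:thm1}, $\mathcal N\subseteq\mathcal N_{inv}$, so $\inf_{\mathcal N_{inv}}Q\le\inf_{\mathcal N}Q=\mathcal F$. For the reverse inequality I would prove a Franz--Leone-type bound: $Q(\sigma)\ge\mathcal F$ for every $\sigma\in\mathcal N_{inv}$. Interpolating between the (cutoff) PVB Gibbs measure and a decoupled trial system whose cavity fields are built from the array $(s_i^l)$ attached to $\sigma$, the $t$-derivative of the interpolating free energy splits into terms that vanish termwise precisely because $\sigma$ satisfies the invariance equations of Theorem~\ref{main:thm1} for all $\lambda\ge0$; what survives has a definite sign and integrates to $Q_\eps(\sigma)-\mathcal F_\eps+o_\eps(1)$, which tends to $Q(\sigma)-\mathcal F$ by the continuity lemma. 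This gives $\inf_{\mathcal N_{inv}}Q\ge\mathcal F$ and closes the chain $\mathcal F\le\inf_{\mathcal N_{inv}}Q\le\inf_{\mathcal N}Q=Q(\sigma^\ast)=\mathcal F$.

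The main obstacle is the continuity/interchange step: because the PVB connectivity $\eps^{-\alpha}$ diverges as $\eps\downarrow0$, the heavy-tailed contributions to the cavity fields must be controlled uniformly in $\eps$ so that the infinite products defining $U_{l,\lambda},V_\lambda,Q$ (and their $\eps$-analogues) are continuous under the joint fdd-and-$\eps$ limit and so that the infimum commutes with the limit; securing this uniform control is the technical heart of the argument --- it is what Theorem~\ref{invariancestep1} packages and what confines the result to $1<\alpha<2$. A secondary difficulty is the bookkeeping in the interpolation of the last step, in particular reconciling the free parameter $\lambda$ appearing in the invariance equations with the value $\lambda=1$ fixed inside $Q$.
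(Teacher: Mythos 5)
Your proposal follows essentially the same route as the paper: reduce to $\e\ln Z_N/N$ via the concentration of Theorem \ref{con}, pass through the cutoff chain L\'evy $\to$ VB $\to$ PVB (Theorem \ref{whytruncation}), invoke Panchenko's variational formula for each fixed $\eps$, use the uniform convergence and f.d.d.-continuity of $Q_\eps$, $U_{l,\eps,\lambda}$, $V_{\eps,\lambda}$ (Theorems \ref{invariancestep1} and \ref{invariancestep2}) together with sequential compactness of $\mathcal S$ to interchange $\inf$ and $\lim_{\eps\downarrow0}$, and finally obtain $\mathcal F\le Q(\sigma)$ for $\sigma\in\mathcal N_{inv}$ from the Franz--Leone-type upper bound of \cite{panchenko2013} combined with the invariance equations (the paper phrases this as the telescoping identities $\e\ln(\mathcal A_{n+1}/\mathcal A_n)=\e\ln\mathcal A_1$, proved from \eqref{main:thm1:eq1} with $\lambda=0$). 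The argument is correct and matches the paper's proof in all essential respects.
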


Later on, we will show that the limiting free energy does  not change if we replace the Pareto distribution with the general L\'evy distribution, see \eqref{thm:universality}. The proposition below carries out the structure of $\sigma\in \mathcal{N}$ at high temperature.

\begin{proposition}\label{hightemp}
	Assume $1<\alpha<2$ and the Pareto  distribution. Let $0<\beta<\beta_\alpha.$ For any $\sigma\in \mathcal{N}$, we have that $\e_x\sigma(w,u,v,x)=0$. 
\end{proposition}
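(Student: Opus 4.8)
The plan is to use the consistency equations of Theorem~\ref{main:thm1} to show first that the site overlaps vanish, and then to upgrade this to the pointwise-in-$u$ statement via the Aldous-Hoover structure. Fix $\sigma\in\mathcal{N}$. Applying \eqref{main:thm1:eq1} with $q=2k$, $m=n=1$, and $C_l=\{1\}$ for all $l\le 2k$, the left-hand side computes $\e R_{2k}^2$ in the limit; more precisely, choosing the sets $C_l$ appropriately recovers $\lim_N\e\la R_{2k}^2\ra$, which is $0$ by \eqref{siteoverlapzero} of Theorem~\ref{hightempoverlap}. Feeding this into the middle expression of \eqref{main:thm1:eq1} gives $\e\bigl(\e_{u,x}\prod_{i\in C}s_i\bigr)^2=0$ for a two-element replica pattern on a single site, hence $\e_{u,x}\sigma(w,u,v,x)=0$ almost surely in $w,v$. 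This is the weaker conclusion; the work is to remove the average over $u$.

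To pass from $\e_{u,x}\sigma=0$ to $\e_x\sigma(w,u,v,x)=0$ for (almost) every fixed $u$, I would exploit the extra freedom in the consistency equations, namely the parameter $\lambda$ and the ability to place replicas at distinct sites. Consider $q=2$ replicas at two sites: take $m=2$, and on each replica use the coordinate pattern that isolates the product $s_1 s_2$ of the two single-site spins. Using \eqref{main:thm1:eq1} with these choices expresses $\e\bigl(\e_{u,x} s_1 s_2\bigr)\bigl(\e_{u,x}s_1's_2'\bigr)$-type quantities through the functionals $U_{l,\lambda},V_\lambda$; sending $\lambda\to 0$ strips the second PPP factor (the $\hat s_{1,k}\hat s_{2,k}$ product degenerates), so $V_\lambda\to\e_{u,\delta,x}\prod_{i\le n}\prod_k(1+\tanh(\beta\xi_{i,k})s_{i,k}\delta_i)$ and the $U$-functionals retain only the $\prod_{i\in C_l}$ pieces weighted by the same cavity field. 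In this $\lambda=0$ limit the equation reduces to a statement about a single pure state: $\e\bigl(\la s_1 s_2\ra_u - \la s_1\ra_u\la s_2\ra_u\bigr)$-type covariances within a fixed $u$ must reconstruct from the vanishing site overlap, forcing $\e_x\sigma(w,u,v,x)\,\e_x\sigma(w,u,v',x)=0$ for a.e.\ $w,u,v,v'$, and in particular (taking $v=v'$) $\bigl(\e_x\sigma(w,u,v,x)\bigr)^2=0$.

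More concretely, the cleanest route is: define $m(w,u,v):=\e_x\sigma(w,u,v,x)\in[-1,1]$ and show $\e\, m(w,u,v_1)^2\cdots m(w,u,v_{2k})^2 = \lim_N \e\la (\text{multi-site, single-replica-per-site overlap})^2\ra$, which is controlled by powers of $R_{2}$ and hence vanishes at high temperature; then Fatou/monotone convergence in $k$ (or a direct moment argument) yields $m(w,u,v)=0$ a.s. The role of Theorem~\ref{main:thm1} here is exactly to convert the Gibbs-average vanishing of overlaps into an almost-sure algebraic identity on the Aldous-Hoover kernel, and the role of the parameter $\lambda$ is to let us ``turn off'' the bond interaction among the test sites so that the surviving identity is purely about the site magnetization $m$ rather than about bond correlations.

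\textbf{Main obstacle.} The delicate point is justifying the $\lambda\to 0$ passage inside the infinite products defining $U_{l,\lambda}(\sigma)$ and $V_\lambda(\sigma)$: one must verify that $V_\lambda(\sigma)$ stays bounded away from $0$ uniformly (so division is legitimate in the limit) and that the products converge — this is where Theorem~\ref{invariancestep1} is invoked, together with a dominated-convergence argument using $\sum_k \tanh^2(\beta\xi_k')<\infty$ a.s. (finite because $\alpha>1$ gives $\e\sum_k|\xi_k'|<\infty$ on $\{|\xi_k'|\le 1\}$ and the tail is summable). A secondary subtlety is bookkeeping the replica/site index patterns $C_l$ so that the left side of \eqref{main:thm1:eq1} genuinely equals a power of the site overlap $R_{2k}$ rather than a mixed object; I expect this to be routine but notationally heavy. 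If the $\lambda\to 0$ route proves too fragile, an alternative is to run the moment computation directly at fixed $\lambda=1$ and observe that the $\hat s$-factors, being bounded and independent of the test-site data, only inflate the identity by a positive finite constant, which still forces $m\equiv 0$.
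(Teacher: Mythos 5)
Your first step is fine as far as it goes, but both halves of the argument have problems. First, a bookkeeping issue: the array $(s_i^l)$ attached to $\sigma\in\mathcal{N}$ arises as an f.d.d.\ limit of replicas sampled from the \emph{perturbed PVB} Gibbs measure (first $N\to\infty$, then $\eps_m\downarrow 0$), so moments such as $\e\prod_{l\le q}\prod_{i\in C_l}s_i^l$ are limits of PVB Gibbs averages, not of L\'evy Gibbs averages. Equation \eqref{siteoverlapzero} of Theorem \ref{hightempoverlap} concerns the L\'evy model and is never shown to transfer to these array moments (only the free energies of the two models are matched); what is actually needed, and what the paper quotes, is the high-temperature PVB overlap result of Guerra--Toninelli, $\lim_N\e\la R_{1,2}^2\ra_\eps^{\VB}=0$, valid for small $\eps$ because $2\gamma\,\e\tanh^2(\beta y)<1$ when $\beta<\beta_\alpha$. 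This part is fixable, but you must invoke the right overlap statement.

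The serious gap is the upgrade from $\e_{u,x}\sigma=0$ to $\e_x\sigma=0$, and none of your three proposed mechanisms works. The $\lambda\to 0$ manipulation of $U_{l,\lambda},V_\lambda$ cannot produce information at a fixed $u$: the consistency equations only constrain array moments $\e\prod_l\prod_{i\in C_l}s_i^l$, and any quantity in which two factors share the same $u$ but carry independent $x$'s --- such as $m(w,u,v)^2=\e_{x,x'}\sigma(w,u,v,x)\sigma(w,u,v,x')$ --- is simply not an array moment, so your ``cleanest route'' identity has no Gibbs-average counterpart. Likewise, ``taking $v=v'$'' in a statement valid for a.e.\ $(v,v')$ is specializing to a null set. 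The missing idea is short: the single array moment $\e\, s_1^1s_1^2s_2^1s_2^2$ (two replicas at two sites) vanishes, and by Aldous--Hoover it equals
\begin{align*}
\e\Bigl[\Bigl(\e_{v}\,\bar s(w,u_1,v)\,\bar s(w,u_2,v)\Bigr)^2\Bigr],\qquad \bar s(w,u,v):=\e_x\sigma(w,u,v,x),
\end{align*}
so the Gram kernel $g(u_1,u_2)=\e_v\bar s(w,u_1,v)\bar s(w,u_2,v)$ vanishes a.e.\ in $(w,u_1,u_2)$; since $g$ is the kernel of $S^*S$ for the integral operator $S$ from $L^2(du)$ to $L^2(dv)$ with kernel $\bar s(w,\cdot,\cdot)$, this forces $S=0$ and hence $\bar s(w,u,v)=0$ a.e. This is the paper's argument, carried out for $\sigma\in\mathcal{M}_\eps$ and then passed to $\mathcal{N}$ by f.d.d.\ convergence; Theorem \ref{main:thm1}, the parameter $\lambda$, and the uniform-convergence machinery of Theorem \ref{invariancestep1} play no role in it.
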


As a consequence of this proposition, it can be checked directly that when $\beta<\beta_\alpha,$
$\inf_{\sigma\in \mathcal{N}}Q(\sigma)$ matches the limiting free energy obtained in Theorem \ref{freeenergy}.
We comment that while the definition of $\mathcal{N}$ involves $\mathcal{M}_\eps$, it seems more natural that both Theorems \ref{main:thm1} and \ref{main:thm2} should still hold with the replacement of $\mathcal{N}$ by the set $\mathcal{N}^{\circ}$ that gathers all $\sigma\in \mathcal{S}$ arising from the Aldous-Hoover representation for all possible weak limits of the replicas sampled from the L\'evy model with an added perturbative Hamiltonian term to the original system. Although this is exactly the case in the PVB model in \cite{panchenko2013}, the Poissonian perturbation approach therein does not seem to work in the L\'evy model since our heavy-tailed disorder is not additive. For this reason, we approach our theorems by analyzing the $\eps$-limit of Panchenko's invariance principle and the variational formula in the PVB model with truncation level $\eps$; handling such limit involves a heavy analysis, especially, in obtaining the uniform convergences and continuities of the related functionals.

\begin{remark}
    \rm In Theorem \ref{thm:universality} below, we will show that the expectation of the free energy for the general L\'evy model is asymptotically the same as that associated to the Pareto distribution. Thus, the existence of the limiting free energy in Theorem \ref{superadditivity} and the same expressions in Theorem \ref{main:thm2} hold for the general L\'evy model.
\end{remark}

%In Panchenko's invariance for the PVB model, its validity holds for all $\mathcal{M}$ 

%With this, it seems natural to take $\mathcal{N}$ as the collection of all $\sigma\in \mathcal{S}$ that arise from the Aldous-Hoover representation for the %limiting arrays obtained along all possible subsequences of the replicas $(\sigma^l)_{l\geq 1}.$ In the PVB model, it was proved in {\color{red}Panchenko} that %by coupling the PVB Hamiltonian with a symmetric Poissonian perturbation term, the subset $\mathcal{M}_\eps\subset\mathcal{S}$ constructed through the %mechanism described above satisfies an analogous invariant principle \eqref{main:thm1:eq1}.

%We mention that the spaces $\mathcal{N}$ and $\mathcal{N}_{\mathrm{inv}}$ depend on the temperature parameter $a_a.$ It is desirable to have a better variational %representation for the limiting free energy that allows us to understand the phase transition of the model.

\subsection{Case $0<\alpha \le 1$}

In this section, we investigate the behavior of the limiting free energy and the Gibbs measure in the L\'evy model assuming that $0<\alpha \le 1.$
% In the case $\alpha=1$, the limiting free energy is essentially a constant with a super-linear scaling.

% \begin{theorem}\label{thm: generalized: alpha=1: free energy}
%     For $\alpha =1$ and $\beta >0$, we have that as $N\to\infty$,   \begin{equation*}
%         \frac{1}{N \ln N} \ln Z_N \stackrel{p}{\to} \frac{\beta}{2}.
%     \end{equation*}
% \end{theorem}

In the case $\alpha=1$, upon suitable normalization, the limiting free energy is given by a simple constant proportional to $\beta$ and the formula holds for any temperature. Recall $b_N$ from \eqref{def:bN}. Let
%concentrates around a constant after a scaling $\gg N$, but $\ll N^{1+\eps}$ for any $\eps>0.$ 
\[h_N =  \int_0^{b_N}   \hth( y)N \p (| X|> ya_N)dy.\]
   
\begin{theorem}\label{thm: generalized: alpha=1: free energy}
     Assume $\alpha =1$.
     We have that $1 \ll h_N\ll N^{\eps}$ for any $\eps>0$. Moreover, for any $\beta>0$, \begin{equation*}
        \frac{1}{N h_N}\ln Z_N  \stackrel{p}{\to} \frac{\beta}{2}.
    \end{equation*}
  %   and 
  %   \begin{equation*}
		% \frac{1}{b_N}\Bigl(\ln Z_N-N\ln 2- \frac{\beta N}{2} h_N- b_N \beta \ln \beta  \Bigr)\stackrel{d}{\to}
		% \beta Y_\alpha,
  %   \end{equation*}
    
\end{theorem}

Recall from Theorem \ref{freeenergy} that when $1<\alpha<2,$ the normalization for $\ln Z_N$ is $N$ and the free energy converges to a deterministic constant in any $L^p$ norm for $1\le p<\alpha$. In sharp contrast, we show that when $0<\alpha<1$,  the correct normalization for $\ln Z_N$ becomes $b_N$ which is roughly like $N^{1/\alpha}$ and the normalized free energy converges to the sum of a PPP in distribution. More precisely, let $E_1, E_2, \ldots $ be i.i.d.\ exponential random variables with rate one and let their cumulative sums be given by $\gamma_j  =  E_1+ E_2+ \cdots+ E_j$ for  $j \ge 1$. Note that $(\gamma_j)_{j\geq 1}$ forms a PPP with unit intensity. By the mapping theorem, the points \begin{equation}\label{PPP2} 
	\gamma_1^{-1/\alpha} >  \gamma_2^{-1/\alpha} > \cdots 
\end{equation} form a PPP with intensity $\nu(x) =  \alpha x^{ -(1+\alpha)}$ on $(0, \infty)$, ordered descendingly. The following is our main result.

% \begin{theorem} \label{thm:free_energy_below_one}
% 	For $0 < \alpha < 1$ and $\beta > 0$, we have that as $N\to\infty,$
% 	\[ \frac{1}{N^{1/\alpha}}\ln Z_N \stackrel{d}{\to} \frac{\beta}{2^{1/\alpha}}  \sum_{j=1}^\infty \gamma_j^{-1/\alpha},\]
% 	where the infinite sum on the right-hand side is finite a.s..
% \end{theorem}

\begin{theorem} \label{thm:free_energy_below_one}
	For $0 < \alpha < 1$ and $\beta > 0$, we have that as $N\to\infty,$

	\[ \frac{1}{b_N}\ln Z_N \stackrel{d}{\to} \beta  \sum_{j=1}^\infty \gamma_j^{-1/\alpha},\]
 where  the infinite sum on the right-hand side is finite a.s..
\end{theorem}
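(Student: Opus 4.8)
The plan is to exploit the fact that when $0<\alpha<1$ the heavy-tailed weights $J_{ij}$ have infinite mean, so a few of the largest $|J_{ij}|$ dominate the entire Hamiltonian, and the partition function is essentially governed by the ground state restricted to these extreme edges. Concretely, order the absolute disorder values $|J_{(1)}|\ge |J_{(2)}|\ge\cdots$ among the $\binom N2$ edges. A first upper bound: since $\ln Z_N\le N\ln 2+\beta\sum_{i<j}|J_{ij}|$, one sees $N^{-1/\alpha}\ln Z_N$ cannot be large; more usefully, for any fixed $L$, keeping only the contribution of the $L$ largest-weight edges and using that spins on disjoint edges can be chosen independently to align with the signs of those $J$'s, one gets a lower bound $\ln Z_N\ge N\ln 2 + \beta\sum_{k\le L}|J_{(k)}|-(\text{error from overlapping vertices})$. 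For the matching upper bound one writes $e^{-H_N(\sigma)}\le e^{\beta\sum_{k\le L}|J_{(k)}|}\exp\bigl(\beta\sum_{k>L}J_{(k)}\sigma\cdot\sigma\bigr)$ and controls the tail sum $\sum_{k>L}|J_{(k)}|$ after normalization.

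The key input is the classical fact that the ordered extreme values of $N^2/2$ i.i.d.\ samples from the density \eqref{density of J} converge, after scaling by $N^{-1/\alpha}$, to the PPP $(\gamma_j^{-1/\alpha})_{j\ge 1}$ on $(0,\infty)$ described in \eqref{PPP2}: indeed the point process $\{N^{-1/\alpha}|J_{ij}|\}_{i<j}$ converges to a PPP with intensity $\alpha x^{-(1+\alpha)}\,dx$ by the standard heavy-tail extreme-value computation (the number of edges with $|J_{ij}|\ge N^{-1/\alpha}t$ is Binomial with mean $\sim\binom N2\cdot\frac1N\cdot(N^{-1/\alpha}t)^{-\alpha}/\,\cdots$, which tends to $t^{-\alpha}$ up to the normalization built into \eqref{density of J}). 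Hence $N^{-1/\alpha}\sum_{k\le L}|J_{(k)}|\stackrel{d}{\to}\sum_{k\le L}\gamma_k^{-1/\alpha}$ for each fixed $L$, and the a.s.\ finiteness of $\sum_{j\ge1}\gamma_j^{-1/\alpha}$ follows since $\gamma_j\sim j$ and $1/\alpha>1$. The three steps are then: (i) show $N^{-1/\alpha}\bigl(\ln Z_N-N\ln2\bigr)$ differs from $\beta N^{-1/\alpha}\sum_{k\le L}|J_{(k)}|$ by something that vanishes as $N\to\infty$ then $L\to\infty$; (ii) pass to the limit for fixed $L$ using the PPP convergence above; (iii) let $L\to\infty$ on the limit side, using a.s.\ finiteness of the series and a standard $3\eps$/Slutsky-type argument to upgrade to distributional convergence of the full quantity.

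For step (i) the two directions need separate treatment. The lower bound is the easy direction: select a vertex-disjoint subset of the $L$ heaviest edges (one loses at most a bounded number of them to vertex collisions, which is negligible after dividing by $N^{1/\alpha}$ since each $|J_{(k)}|=O(N^{1/\alpha})$ and collisions among $L$ fixed edges are $O(1)$), set $\sigma_i\sigma_j=\mathrm{sign}(J_{ij})$ on those edges and sum freely over the remaining spins, giving $\ln Z_N\ge (N-2L)\ln2+\beta\sum_{k\le L}|J_{(k)}|-\beta\bigl|\sum_{\text{collision edges}}J\bigr|$. The upper bound requires controlling $\beta N^{-1/\alpha}\sum_{k>L}|J_{(k)}|$ plus the $\ln 2^N$ entropy term: bound $\ln Z_N\le N\ln2+\beta\sum_{k\le L}|J_{(k)}|+\ln\bigl\langle e^{\beta\sum_{k>L}J_{(k)}\sigma\cdot\sigma}\bigr\rangle_0$ and estimate the last term crudely by $\beta\sum_{k>L}|J_{(k)}|$, whose normalized version $\beta N^{-1/\alpha}\sum_{k>L}|J_{(k)}|$ converges in distribution to $\beta\sum_{k>L}\gamma_k^{-1/\alpha}$, which $\to0$ in probability as $L\to\infty$. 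The main obstacle is precisely this uniform-in-$N$ tail control: one must show $\lim_{L\to\infty}\limsup_{N\to\infty}\Prob\bigl(N^{-1/\alpha}\sum_{k>L}|J_{(k)}|>\eps\bigr)=0$, which does not follow from the finite-dimensional PPP convergence alone. This is handled by a direct first/second-moment or truncation estimate on $\sum_{i<j}|J_{ij}|\1_{\{|J_{ij}|\le N^{-1/\alpha}t\}}$ together with a count of how many edges exceed level $N^{-1/\alpha}t$ (itself a tight Binomial), giving the needed tightness of the tail sum. Once that estimate is in hand, combining it with steps (ii)–(iii) yields the stated convergence, and the a.s.\ finiteness of $\sum_{j\ge1}\gamma_j^{-1/\alpha}$ is immediate from the strong law $\gamma_j/j\to1$.
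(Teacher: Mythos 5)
Your overall architecture coincides with the paper's: sandwich $N^{-1/\alpha}\ln Z_N$ between the trivial upper bound $N^{-1/\alpha}\bigl(N\ln 2+\beta\sum_{i<j}|J_{ij}|\bigr)$ and a lower bound obtained by aligning $\sigma_i\sigma_j$ with $\mathrm{sign}(J_{ij})$ on the $L$ heaviest edges (which are vertex-disjoint with probability tending to one), then pass to the PPP limit of the order statistics and conclude by a sandwich lemma for weak convergence. The one genuine methodological difference is how the tail sum is controlled. The paper avoids any separate tightness argument by using the exact distributional identity of Lemma \ref{lem:heavy_tail_conv_representation}(a), under which the whole vector of order statistics equals $(\gamma_{n+1}/n)^{1/\alpha}(\gamma_1^{-1/\alpha},\dots,\gamma_n^{-1/\alpha})$ in distribution; head and tail then converge jointly, and the tail is dominated almost surely by the convergent series $\sum_{j>L}\gamma_j^{-1/\alpha}$. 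Your proposed truncation/first-moment estimate is a valid substitute: since $\alpha<1$, one has $\binom{N}{2}\,\e\,|J|\1_{\{|J|\le \delta N^{1/\alpha}\}}=O(\delta^{1-\alpha}N^{1/\alpha})$, and the number of edges above level $\delta N^{1/\alpha}$ is a tight binomial, so $\lim_{L\to\infty}\limsup_N\Prob\bigl(N^{-1/\alpha}\sum_{k>L}|J_{(k)}|>\eps\bigr)=0$ indeed follows. Each approach buys something: yours is self-contained and elementary; the paper's representation gives the joint convergence for free and recycles directly into Theorem \ref{thm:Gibbs_structure}.

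One local error to fix: the displayed lower bound $\ln Z_N\ge(N-2L)\ln2+\beta\sum_{k\le L}|J_{(k)}|-\beta\bigl|\sum_{\text{collision edges}}J\bigr|$ is not what ``summing freely over the remaining spins'' produces. After fixing the spins on the $2L$ vertices of the top edges, the Hamiltonian still contains all $\binom{N}{2}-L$ remaining edges; averaging over the free spins via Jensen only annihilates those touching a free vertex and leaves up to $O(L^2)$ edges inside the top vertex set, whose normalized contribution is of order $L^2\gamma_L^{-1/\alpha}\sim L^{2-1/\alpha}$ and does not vanish as $L\to\infty$ unless $\alpha<1/2$ (or unless one adds a further vertex-disjointness argument at a higher rank threshold). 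The clean repair is the paper's single-configuration bound $\ln Z_N\ge \beta\sum_{k\le L}|J_{(k)}|-\beta\sum_{k>L}|J_{(k)}|$ --- the discarded entropy $N\ln 2$ is negligible at scale $N^{1/\alpha}$ because $1/\alpha>1$ --- after which the subtracted term is exactly the tail sum you already know how to control.
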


Next, we prove that a spin configuration drawn from the Gibbs measure at any temperature aligns itself with the signs of the interactions on the heaviest edges with high probability. In fact, the number of such heaviest edges can be taken to be growing polynomially with $N$. Denote $n =N(N-1)/2$ and let $1 \le  U(k, 1) <U(k, 2) \le N$ for $1 \le k \le n$ be the indices such that
\begin{equation}\label{heavytailed:add:eq1}
	| J_{U(1, 1), U(1, 2)}| >  | J_{U(2, 1), U(2, 2)}| > \cdots >  | J_{U(n, 1), U(n, 2)}|. 
\end{equation}
For $1 \le k \le n$, define the indicator variable
\[ \chi_k =  \chi_k (\sigma) =  \1_{\{  \mathrm{sgn}(J_{U(k, 1), U(k, 2)} \sigma_{U(k, 1)} \sigma_{U(k, 2)}) =1 \} }.  \]

\begin{theorem}\label{thm:Gibbs_structure}
	Let $0 < \alpha < 1$ and $\beta >0$. Set $\kappa  = \min( 1/2 , 1- \alpha) - \delta$ for some arbitrarily small constant $\delta>0$ and take $R = N^\kappa$.  
	Then, as $N \to \infty$, we have 
	\[ \mathbb{E}  \bigl\langle \1_{\{ \chi_k  = 1,  \ 1 \le k \le R \} } \bigr\rangle \to   1.\]
\end{theorem}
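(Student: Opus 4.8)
The plan is to localize the Gibbs mass onto the configurations that agree with the sign pattern of the $R$ heaviest bonds, and to show that the ``error'' coming from all the other configurations is negligible, provided $R$ does not grow too fast. Write the heaviest-bond weights as $|J_{U(k,1),U(k,2)}| = \gamma_k^{-1/\alpha}\bigl(1+o(1)\bigr)$ after the usual PPP coupling, so that $|J_{U(k,1),U(k,2)}|$ is of order $(k/N)^{-1/\alpha} = N^{1/\alpha}k^{-1/\alpha}$ for $k$ polynomial in $N$; this is the quantitative version of \eqref{heavytailed:add:eq1}. The first step is to produce a lower bound on $Z_N$: restrict the sum over $\sigma$ to those configurations that satisfy $\chi_k(\sigma)=1$ for all $1\le k\le R$ (there are still exponentially many of them, since fixing the signs of $R=N^\kappa \ll N$ products $\sigma_i\sigma_j$ constrains at most $2R$ coordinates and typically far fewer distinct ones, but in any case at least $2^{N-2R}$ configurations survive). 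For such $\sigma$ the contribution to $-H_N$ from the top $R$ edges is exactly $\beta\sum_{k\le R}|J_{U(k,1),U(k,2)}|$, so
\[
  \ln Z_N \;\ge\; (N-2R)\ln 2 \;+\; \beta\sum_{k\le R}\bigl|J_{U(k,1),U(k,2)}\bigr| \;-\; \beta\Bigl|\sum_{k>R}J_{U(k,1),U(k,2)}\sigma_{U(k,1)}\sigma_{U(k,2)}\Bigr|,
\]
and the last term, maximized over the restricted set, is controlled because the tail sum $\sum_{k>R}|J_{U(k,1),U(k,2)}|$ is $O(N^{1/\alpha}R^{1-1/\alpha})=o(N^{1/\alpha})$ when $\alpha<1$ and $R\to\infty$ (this is exactly where $1/\alpha>1$ makes the partial tail small relative to the head).

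The second and main step is the matching upper bound on the mass of the complement. Let $A = \{\sigma:\ \chi_k(\sigma)=1\ \forall k\le R\}$. I want to show $\e\langle \1_{A^c}\rangle \to 0$, equivalently $\e\bigl[\sum_{\sigma\in A^c} e^{-H_N(\sigma)} / Z_N\bigr]\to 0$. Using the lower bound on $Z_N$ above and splitting $A^c = \bigcup_{k\le R}\{\chi_k=0\}$, it suffices to bound, for each fixed $k\le R$,
\[
  \e\,\Bigl\langle \1_{\{\chi_k=0\}}\Bigr\rangle \;=\; \e\,\frac{\sum_{\sigma:\,\chi_k(\sigma)=0} e^{-H_N(\sigma)}}{Z_N},
\]
and then union bound over $k\le R$. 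The clean way is a ``flip'' argument: for a configuration $\sigma$ with $\chi_k(\sigma)=0$, flip $\sigma_{U(k,1)}$ to obtain $\sigma'$; this changes the Hamiltonian by $-H_N(\sigma') - (-H_N(\sigma)) = 2\beta\,\sigma_{U(k,1)}\sum_{j\ne U(k,1)} J_{U(k,1),j}\sigma_j$. The dominant term in that sum is the $j = U(k,2)$ term, which contributes $\ge 2\beta|J_{U(k,1),U(k,2)}|$ to the exponent in favor of $\sigma'$ (since $\chi_k(\sigma)=0$ means the $k$-th bond was unsatisfied, flipping satisfies it), while the remaining terms $2\beta\sigma_{U(k,1)}\sum_{j\ne U(k,1),U(k,2)}J_{U(k,1),j}\sigma_j$ form, in absolute value, at most $2\beta\sum_{j\ne U(k,1),U(k,2)}|J_{U(k,1),j}|$, which is a sum of $N-2$ i.i.d.\ heavy-tailed variables restricted to magnitudes below $|J_{U(k,2+\text{something})}|$ — this single row-sum is of order $N^{1/\alpha}$ as well, so a naive bound is \emph{not} enough. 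The fix is that we only need this to beat the \emph{gap} $|J_{U(k,1),U(k,2)}| - (\text{sum over the rest of the row})$ on a typical event; the right comparison is with the full tail sum $\sum_{k'>k}|J_{U(k',1),U(k',2)}|$ truncated to the row through $U(k,1)$, which by the order statistics of the PPP is $O(N^{1/\alpha} k^{-1/\alpha}\cdot(\text{small}))$ with high probability because only an $O(1/N)$ fraction of the top bonds pass through any fixed vertex. Making this precise — i.e.\ showing that for every $k\le R$, with probability $1-o(1/R)$, $\beta|J_{U(k,1),U(k,2)}|$ exceeds $\beta\sum_{j\ne U(k,1),U(k,2)}|J_{U(k,1),j}|$ by at least a $(1+c)$ factor, so that $\langle\1_{\{\chi_k=0\}}\rangle \le e^{-2c\beta|J_{U(k,1),U(k,2)}|} + o(1/R) \le e^{-c' N^{1/\alpha} k^{-1/\alpha}} + o(1/R)$ — is the technical heart. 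Summing the exponential bound over $k\le R = N^{\min(1/2,1-\alpha)-\delta}$ gives a bound that goes to $0$ because $N^{1/\alpha}R^{-1/\alpha} = N^{1/\alpha - \kappa/\alpha}\to\infty$ as a power of $N$, so $e^{-c'N^{1/\alpha}k^{-1/\alpha}}$ is superpolynomially small uniformly in $k\le R$, while the probability of the bad event summed over $k\le R$ is $o(1)$ by construction.

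The role of the exponent $\kappa = \min(1/2,1-\alpha)-\delta$ deserves a separate comment because it is where the two competing constraints meet. The $1-\alpha$ comes from the tail-sum estimate: $\sum_{k>R}|J_{U(k,1),U(k,2)}|\asymp N^{1/\alpha}R^{1-1/\alpha}$ must be $o(N^{1/\alpha})$, i.e.\ $R = o(N)$, but more sharply we need the \emph{per-vertex} tail sum $\sum_{k>R,\,U(k,1)=v\text{ or }U(k,2)=v}|J|$ to be much smaller than the $R$-th heaviest weight $\asymp N^{1/\alpha}R^{-1/\alpha}$; since a fixed vertex is incident to roughly a $2/N$ fraction of all bonds, the per-vertex tail sum is of order $N^{-1}\cdot N^{2/\alpha}R^{1-1/\alpha}$-ish after accounting for which bonds land on $v$, and requiring this $\ll N^{1/\alpha}R^{-1/\alpha}$ yields $R\ll N^{1-\alpha}$. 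The $1/2$ comes from a different, more delicate point: the heaviest $R$ bonds must actually be \emph{vertex-disjoint} (or at least form a simple graph with bounded degree) with high probability, since otherwise fixing $\chi_k=1$ for all $k\le R$ could be infeasible or could over-constrain a vertex; a birthday-type computation shows the top $R$ bonds are vertex-disjoint with probability $1-O(R^2/N)$, which forces $R\ll N^{1/2}$. I would therefore organize the proof as: (i) PPP coupling and a deterministic event $\Omega_N$ of probability $1-o(1)$ on which (a) the top-$R$ bonds are vertex-disjoint, (b) the weights satisfy two-sided bounds $c(k/N)^{-1/\alpha}\le |J_{U(k,\cdot)}|\le C(k/N)^{-1/\alpha}$ for $k\le N^{2/3}$ say, and (c) the per-vertex tail sums obey the required smallness; (ii) the lower bound on $\ln Z_N$ restricted to $A$; (iii) the union bound over $k\le R$ via the flip argument, conditioning on $\Omega_N$; (iv) assembling: $\e\langle\1_{A^c}\rangle \le \sum_{k\le R}\e[\1_{\Omega_N}\langle\1_{\{\chi_k=0\}}\rangle] + \p(\Omega_N^c) \to 0$. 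The main obstacle is step (iii)'s quantitative comparison of a single heavy weight against the rest of its incident row — the non-additivity of the heavy-tailed law means one cannot simply invoke a CLT or Gaussian concentration, and one must instead argue through the order statistics of the induced PPP and a careful accounting of how many of the top bonds are incident to a given vertex, which is precisely the mechanism that pins down the exponent $\kappa$.
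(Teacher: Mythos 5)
Your strategy is essentially the paper's: a spin-flip bijection that maps configurations violating the $k$-th constraint to configurations satisfying it, paid for by a high-probability ``row-domination'' event, with the two constraints $\kappa<1/2$ (vertex-disjointness / at most one top-ranked entry per row) and $\kappa<1-\alpha$ (the top entry of a row dominates the sum of the rest) correctly identified as the sources of the exponent. Two remarks. First, your opening lower bound on $\ln Z_N$ is a red herring: bounding numerator and denominator separately cannot work (both are $e^{\Theta(N^{1/\alpha})}$ and the separate bounds are far too lossy), and you in fact abandon it for the ratio/bijection argument, which is the right move and is what the paper does — there the sum is organized over subsets $B\subseteq[R]$ of violated constraints with a bijection $f_B$ flipping all of $\sigma_{U(k,1)}$, $k\in B$, at once, yielding $(1+e^{-c(N/R)^{1/\alpha}})^R-1$; your single-$k$ union bound gives the same $Re^{-c(N/R)^{1/\alpha}}$.

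Second, the step you call the technical heart — showing $|J_{U(k,1),U(k,2)}|$ exceeds $\sum_{j\ne U(k,2)}|J_{U(k,1),j}|$ — is left as an assertion, and your heuristic for it is off. The quantity ``$N^{-1}N^{2/\alpha}R^{1-1/\alpha}$'' compared against $N^{1/\alpha}R^{-1/\alpha}$ yields $R\ll N^{1-1/\alpha}$, which is vacuous for $\alpha<1$; you then write the correct constraint $R\ll N^{1-\alpha}$ without deriving it. The paper's Lemma~\ref{lem:Gibbs_structure}(b) gets it by stratifying the $N-1$ entries of the row by \emph{global} rank: exactly one has rank $\le R$; none has rank in $(R,T]$ with $T=N^{1/2+\delta/2}$ (this is where $\kappa<1/2$ enters, via an $RT/N$ birthday bound per row); at most $\lceil\ln N\rceil$ have rank in $(T,N]$, each of size $\lesssim N^{1/\alpha}T^{-1/\alpha}$; and the remaining $\le N$ bulk entries are each crudely bounded by the rank-$N$ value $\asymp 1$, so the bulk contributes $O(N)$. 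The binding comparison is this bulk term against the head, $N\ll N^{1/\alpha}R^{-1/\alpha}$, which is exactly $R\ll N^{1-\alpha}$ — not a statement about how many of the global top bonds pass through a vertex. Without this stratified estimate (or an equivalent), the union bound has no quantitative input, so as written the proposal identifies the right lemma but does not supply it.
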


The exponent $\kappa$ in Theorem \ref{thm:Gibbs_structure} is not likely to be optimal.
 We add that from our analysis for Theorem~\ref{thm:free_energy_below_one}, one can obtain the limit of the ground state energy, i.e.,  $b_N^{-1}\min_{\sigma}\sum_{i<j}J_{ij}\sigma_i\sigma_j$, as well as construct near ground states (near-minimizers). We omit the details in this work.

\subsection{Open Problems}\label{op}

 There are still many open problems in the L\'evy model which have not been explored in this paper. Below, we highlight three major important directions.

\begin{enumerate}

    \item Following Theorem \ref{main:thm1}, one of the main remaining questions in the L\'evy model is to understand the so-called M\'ezard-Parisi ansatz for the limiting free energy for $1<\alpha<2$.  In the setting of the PVB model, if one assumes that the overlap is supported only on a finite number of atoms, then this ansatz was understood in the work \cite{Panchenko2016} and led to the M\'ezard-Parisi formula for the limiting free energy. However, since the overlap distribution is expected to be supported on an interval in the low temperature regime, the general scenario remains open and as a result, we cannot take this formula to understand the limiting free energy of the L\'evy model directly. 

 \item Understanding the behavior of the site and bond overlaps are important subjects in the L\'evy model. In particular, in the case $1<\alpha<2$, it is predicted in \cite{Cizeau1993} that there exists another critical temperature $\beta_c>\beta_\alpha$ such that for $\beta_\alpha<\beta<\beta_c,$ the site overlap is concentrated at a positive number, while for $\beta>\beta_c$, this quantity possesses a nontrivial distribution. For $0<\alpha\leq 1,$ we do not have any results on the overlaps. It would be particularly interesting to understand the phase transition of the overlaps with respect to the temperature. We anticipate that the bond overlap should be strictly positive at any temperature.

  %  \item In the case $\alpha=1$, we did not attempt to analyze anything except for the fluctuation of the free energy, Theorem \ref{fluctuationfreeenergy}. In view of our high temperature and superadditivity arguments for $1<\alpha<2$, they do not seem to extend to this case. 

\end{enumerate}

\subsection{Structure of the Paper}
% {\textcolor{red}{MENTION THAT WE USE A SPECIAL FORM OF DISTRIBUTION}}

We organize the rest of the paper as follows. From Sections 3 to 9,
% , whenever we find it convenient, 
we shall present the proofs for all results stated in Section 2 by restricting ourselves to the Pareto disorder \eqref{density of J}, which already retains the key features of the general model while rendering the calculations throughout our proofs more tractable. 

We assume $1<\alpha<2$ throughout Sections 3 to 8. 
In Section~3, we establish the limiting free energy by utilizing the second moment method, which naturally gives rise to the critical temperature $\beta_\alpha$ in good agreement with the findings of the physics literature, e.g., \cite{Cizeau1993, Mezard, Hartmann}. 
%Assuming high temperature and following the arguments in \cite{Jagannath}, the concentration of the free energy is established, and we calculate its limiting free energy using the second moment method.
In Section 4, we derive the fluctuation of the free energy $\ln Z_N$ by dividing this quantity into the scalar and effective components, $\ln \bar Z_N$ and $\ln \widehat{Z}_N$ (see \eqref{split}), and establishing their fluctuations individually. For the former, it is obtained through a truncation argument together with a standard central limiting theorem for the sum of i.i.d. heavy tailed distributions; for the latter, we employ the powerful cluster expansion technique introduced by Aizenman-Lebowitz-Ruelle \cite{aizenman}.
Section 5 contains the proof of the concentration of the overlaps in the high temperature regime.
In Section~6, superadditivity of the free energy is demonstrated in order to establish the existence of the limiting free energy in any temperature. The proof of the invariance principle and the variational formula stated in Section \ref{sec:inv+var} is presented in Section 8, which is based on Panchenko's analogous results in the setting of the PVB model \cite{panchenko2013} reviewed in Section 7.  Section 9 is devoted to the proof of our results for the cases, $\alpha=1$ and $0<\alpha<1.$ We begin by establishing the limiting free energy for $\alpha=1$ stated in Theorem \ref{thm: generalized: alpha=1: free energy} and then focus on the case $0<\alpha<1$, for which we prove the weak convergence of the limiting free energy and analyze the structure of the Gibbs measure by making use of a representation of the disorder matrix in terms of a PPP parametrized by the heavy-tailed distribution (see Lemma \ref{lem:heavy_tail_conv_representation}). 

Finally, the rest of the paper consists of four appendices. Appendices \ref{ProofStep1} and \ref{ProofStep2} gather the proofs for the technical components, Theorems \ref{invariancestep1} and \ref{invariancestep2}, stated  in Section 8. Recalling that between Sections 3 and 9, our proofs are limited to the Pareto disorder \eqref{density of J}, the proofs of our main results for the general L\'evy case are kept in Appendix \ref{sec:generalization} that is based on the exposition on the limiting expectation for the L\'evy distribution in Appendix \ref{sec:proof of expectationlemma}.

\section{Establishing the Limiting Free Energy at High Temperature}
\subsection{Concentration of the Free Energy}

Note for any $a=\pm 1$ and $x\in \mathbb{R},$ we have the identity,
\begin{equation}\label{identity}
	e^{ax}=\cosh (x)\bigl(1+\tanh(x)a\bigr).
\end{equation}
Using this, we can split the partition function into the scalar and effective parts as
\begin{equation}\label{split}
Z_N=\bar Z_N\cdot \widehat Z_N,    
\end{equation}
where
\[\bar Z_N \colonequals\prod_{i<j} \ch (\beta J_{ij} )\,\,\mbox{and}\,\, \widehat Z_N := \sum_{\sigma \in \{-1,1\}^N} \prod_{i<j}\big(1+ \sig_i\sig_j \hth (\beta J_{ij})\big). \]
%Our first main result shows that for $1<\alpha<2,$ these three quantities are concentrated.
We need the concentration of the free energy in the L\'evy model, which will be used to establish the limiting free energy at high temperature. Incidentally, the same concentration was recently established in  \cite[Proposition 5.1]{Jagannath}. For the sake of completeness, we present a proof below.

\begin{theorem}[Concentration]\label{con}
	Assume $1<\alpha <2$ and $\beta>0.$
	For any $0<\eps<\alpha-1$ and $(2+\eps)(1+\alpha)^{-1}\alpha<p<\alpha$,
	we have for large enough $N$,
	\begin{equation}
		\label{Z}	\e \Bigl|\frac{\ln Z_N}{N}-\e\frac{\ln  Z_N}{N}\Bigr|^{p} \le \frac{K}{N^{p+p/\alpha-2-\eps}} 
	\end{equation}
 and the same inequality also holds for $\widehat Z_N$ and $\bar Z_N$,
	where  $K>0$ is a constant depending only on $\alpha,p,\beta$.  
\end{theorem}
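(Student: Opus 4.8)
The natural approach is a martingale / bounded-difference argument adapted to heavy tails, exactly as one does for the free energy of diluted models, but with the twist that a single coordinate $J_{ij}$ can be arbitrarily large, so the naive McDiarmid bound fails. The plan is to split the disorder in each bond as $J_{ij} = J_{ij}\1_{\{|J_{ij}|\le t\}} + J_{ij}\1_{\{|J_{ij}|> t\}}$ for a truncation level $t=t_N$ to be optimized, and to control the two contributions by different mechanisms. For the truncated part, changing one coordinate $J_{ij}$ (within $[-t,t]$) changes $\ln Z_N$ — and separately $\ln \bar Z_N$ and $\ln\widehat Z_N$ via \eqref{split} — by at most $\beta t$ in absolute value, since $|\partial_{J_{ij}}\ln Z_N| = |\beta\langle\sigma_i\sigma_j\rangle|\le \beta$ and $|\partial_{J_{ij}}\ln\bar Z_N|=|\beta\tanh(\beta J_{ij})|\le\beta$; hence the Azuma–Hoeffding / Efron–Stein machinery gives sub-Gaussian concentration on the event that all bonds satisfy $|J_{ij}|\le t$, with variance proxy of order $\binom N2 (\beta t)^2$. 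For the heavy part, I would simply bound the number of bonds with $|J_{ij}|>t$: this count is Binomial$\bigl(\binom N2,\min(1,t^{-\alpha}/N)\bigr)$, and on each such bond removing or inserting the interaction changes $\ln Z_N$ by at most $\beta|J_{ij}|$, a quantity with an $\alpha$-stable-type tail; since $p<\alpha$, the $p$-th moment of $|J_{ij}|$ (restricted to $|J_{ij}|>t$) is finite and of the right order.

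Concretely, I would write $\ln Z_N - \e\ln Z_N$ as a sum of martingale differences $D_{ij}$ in the filtration generated by revealing the bonds one at a time, and decompose each $D_{ij} = D_{ij}' + D_{ij}''$ where $D_{ij}'$ uses a resampled copy that is truncated at level $t$ and $D_{ij}''$ collects the excess. The truncated martingale $\sum D_{ij}'$ has increments bounded by $2\beta t$, so by the Burkholder–Davis–Gundy / Azuma inequality its $L^p$ norm is $\lesssim \beta t \sqrt{\binom N2}\asymp \beta t N$. The excess term is controlled in $L^p$ by the triangle inequality and the bound $|D_{ij}''|\lesssim \beta(|J_{ij}|+ |\tilde J_{ij}|)\1_{\{|J_{ij}|\vee|\tilde J_{ij}|>t\}}$ together with $\e\bigl[|J|^p\1_{\{|J|>t\}}\bigr] \asymp t^{p-\alpha}/N$ (from the density \eqref{density of J}); summing the $p$-th moments over the $\binom N2$ bonds, using $p\le 1$ or the $c_p$-inequality for $p\in(1,\alpha)$ to pull the sum out, gives an $L^p$ contribution of order $\bigl(N^2\cdot t^{p-\alpha}/N\bigr)^{1/p} = \bigl(N t^{p-\alpha}\bigr)^{1/p}$ — here is where the constraint $p>2\alpha/(1+\alpha)$ must be used to make the optimization work. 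Dividing by $N$ and optimizing $t$: balancing $t$ against $\bigl(N t^{p-\alpha}\bigr)^{1/p}/N = N^{1/p-1} t^{1-\alpha/p}$ gives $t\asymp N^{(1-p)/(p+\alpha\cdot\text{(something)})}$; carrying this through yields the stated rate $N^{-(p+p/\alpha-2)/p}$ for the $L^p$-norm, i.e. $\e|N^{-1}\ln Z_N - N^{-1}\e\ln Z_N|^p \le K N^{-(p+p/\alpha-2)}$. The identical argument applies verbatim to $\bar Z_N$ (where the derivative bound is even cleaner) and to $\widehat Z_N = Z_N/\bar Z_N$ by the triangle inequality in $L^p$.

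The main obstacle I anticipate is \emph{not} the truncated martingale part (which is routine) but getting the heavy-tail bookkeeping to produce exactly the exponent $p+p/\alpha-2$ with the precise admissible range $2\alpha/(1+\alpha)<p<\alpha$: one must be careful that for $p>1$ the martingale differences are summed via Burkholder (so one controls $\e(\sum |D_{ij}''|^2)^{p/2}$, not $\sum\e|D_{ij}''|^p$), which forces a second application of a moment inequality and is where the lower bound on $p$ genuinely enters. A secondary technical point is that for $\widehat Z_N$ and $\bar Z_N$ individually the increment bounds need the elementary estimate $|\ln\cosh(\beta x)-\ln\cosh(\beta y)|\le\beta|x-y|$ and $|\ln(1+a\tanh(\beta x)) - \ln(1+a\tanh(\beta y))|\le \beta|x-y|$ uniformly in $a\in\{-1,1\}$ (the latter needs $|\tanh|<1$ strictly, which is fine), so that the same truncation/excess split goes through; alternatively, once the bound is proved for $Z_N$ and for $\bar Z_N$, the bound for $\widehat Z_N$ is immediate from $\ln\widehat Z_N = \ln Z_N - \ln\bar Z_N$ and the triangle inequality in $L^p$.
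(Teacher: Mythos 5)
Your overall architecture --- reveal the bonds one at a time, form the martingale differences $D_k$ of $N^{-1}\ln Z_N$, and bound $|D_k|\lesssim \beta N^{-1}\bigl(|J_{e_k}|+\e|J_{e_k}|\bigr)$ --- is the same as the paper's, but the truncation into a bounded piece plus a heavy excess is unnecessary, and it is exactly where your plan is left unfinished. The paper keeps the heavy-tailed increments intact: writing $X_{e_k}=N^{1/\alpha}J_{e_k}$, whose law is $N$-independent with $\e|X|^p<\infty$ for every $p<\alpha$, one has $|D_k|\le \beta N^{-(1+1/\alpha)}\bigl(|X_{e_k}|+\e|X_{e_k}|\bigr)$, and then Burkholder--Davis--Gundy together with the elementary subadditivity $\bigl(\sum_k D_k^2\bigr)^{p/2}\le \sum_k |D_k|^{p}$ --- valid because $p/2<1$ even though $p>1$ --- gives $\e|N^{-1}\ln Z_N-N^{-1}\e\ln Z_N|^p\le C\sum_k\e|D_k|^p\le C' N^{2-p-p/\alpha}$ in three lines. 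This also corrects a misconception in your write-up: the lower bound $p>2\alpha/(1+\alpha)$ does \emph{not} enter through any moment inequality; the estimate \eqref{Z} is derived for all $1\le p<\alpha$, and the constraint is simply the condition $p+p/\alpha-2>0$ under which the right-hand side decays.

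The genuine gap in your route is the optimization over $t$, which you leave as ``$t\asymp N^{(1-p)/(p+\alpha\cdot(\text{something}))}$'' and then assert lands on the stated rate. If one carries it out (truncated martingale contributes $\asymp t^p$ to the $p$-th moment; the excess, after centering and using the same $p/2<1$ subadditivity you were worried about, contributes $\asymp N^{1-p}t^{p-\alpha}$; balancing gives $t=N^{(1-p)/\alpha}$) one arrives at $N^{p(1-p)/\alpha}$, which on the admissible range is in fact slightly smaller than $N^{2-p-p/\alpha}$ --- so the route can be salvaged and even yields a marginally stronger bound, but it does not ``yield the stated rate'' as claimed, and the decisive computation is missing. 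Your treatment of $\bar Z_N$ (explicit increments $N^{-1}\ln\cosh(\beta N^{-1/\alpha}X_{e_k})$) and of $\widehat Z_N$ via $\ln\widehat Z_N=\ln Z_N-\ln\bar Z_N$ and the triangle inequality matches the paper.
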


\begin{remark}
	\rm Since $\e |J|^p<\infty$ for every $p<\alpha$ and $\e |J|^\alpha =\infty$, \eqref{Z} cannot be extended to any $p\ge\alpha$.
\end{remark}

\begin{proof}[\bf Proof of Theorem \ref{con}]
    
We revisit the argument in \cite{Jagannath} and point out that the same steps lead to the concentration of  $\ln \widehat Z_N$.
%As in \cite{Jagannath}, we order \[\mathcal I= \{(i,j):1\le i<j\le N\}\] with an arbitrary bijection $f:\mathcal I \to \{1,2,\dots,|\mathcal I|\}$.
Write $X_{ij}=N^{1/\alpha}J_{ij}$ so that $X_{ij}$ are i.i.d.\ with density $2^{-1}\alpha\1_{\{|x|\ge1\}}x^{-1-\alpha}$.  Let $n = N(N-1)/2$.
Let $e_1, e_2, \ldots, e_n$ be an arbitrary enumeration of the elements from $\mathcal I =  \{(i,j):1\le i<j\le N\}$.
%Denote $J_{f^{-1}(x)}$ by $J_x$.
For $e = (i, j)$, let $X_e$ and $\sigma_e$ denote $X_{ij}$ and $\sigma_i \sigma_j$ respectively. 
%Furthermore, let $\sigma_x$ denote $\sigma_i \sigma_j$ where $f((i,j))=x$.
Define $\mathcal F_{k}$ to be the sigma-field generated by the random variables $\{ X_{e_\ell}: 1 \le \ell \le k \}$.
Consider the martingale difference sequence 
\[ D_k =  \frac{1}{N}\e \bigl[\ln Z_N \big|\mathcal F_k\bigr] - \frac{1}{N}\e \bigl[\ln Z_N \big|\mathcal F_{k-1}\bigr],  \ \ 1 \le k \le n,  \]
where the expectation  is well-defined since $\alpha >1$.
Writing $\la\cdot\ra_k$ for the Gibbs expectation for  the same Hamiltonian as $H_N$ with $X_{e_k} = 0$ and $Z_N^{(k)}$ for the corresponding partition function, we can express $Z_N  =  Z_N^{(k)}\la \exp(\beta N^{-1/\alpha} X_{e_k} \sigma_{e_k})\ra_k$. Since $\e [\ln Z^{(k)}|\mathcal F_k]= \e [\ln Z^{(k)}|\mathcal F_{k-1}]$,
we have 
\begin{equation}
    ND_k= \e \bigl[\ln \la \exp(\beta N^{-1/\alpha} X_{e_k} \sigma_{e_k})\ra_k\big|\mathcal F_{k}\bigr]- \e \bigl[\ln \la \exp(\beta N^{-1/\alpha} X_{e_k} \sigma_{e_k})\ra_k\big|\mathcal F_{k-1}\bigr], 
    \label{eq:D}
\end{equation}
which leads to the bound 
\begin{equation}\label{eq:D_bd}
|D_k |\le \beta N^{-1-1/\alpha}\bigl(|X_{e_k}|+\e|X_{e_k}|\bigr).
\end{equation}
%Define a martingale \begin{align}\label{mtg}
%    A_x= \frac{1}{N}\e [\ln Z_N |\mathcal F_x]-\frac{1}{N}\e \ln Z_N, 
%\end{align} where it is understood that $\mathcal F_0 $ is the trivial sigma-algebra.
%Let $D_x=A_x-A_{x-1}$ be the martingale difference. 
%For $x\in \{1,2,\dots,|\mathcal I|\}$, we define \[-H_N^{(x)}(\sigma)=\frac{\beta}{N^{1/\alpha}}\sum_{y:y\neq x} X_y \sigma_y \mbox{ and } Z_N^{(x)}=\sum_{\sigma} \exp(-H_N^{(x)}(\sigma)). \]
%We can decompose $Z_N$ into \[Z_N=Z_N^{(x)}\la \exp(\beta N^{-1/\alpha} X_x \sigma_x)\ra_x,\] where $\la\cdot\ra_x$ denotes the Gibbs average with respect to $H^{(x)}$.
%Since $\e [\ln Z^{(x)}|\mathcal F_x]= \e [\ln Z^{(x)}|\mathcal F_{x-1}]$, we have \[ND_x= \e [\ln \la \exp(\beta N^{-1/\alpha} X_x \sigma_x)\ra_x|\mathcal F_{x}]- \e [\ln \la \exp(\beta N^{-1/\alpha} X_x \sigma_x)\ra_x|\mathcal F_{x-1}]. \]
%Bounding each term and using the inequality $\exp(\beta N^{-1/\alpha} X_x \sigma_x) \le \exp(\beta N^{-1/\alpha} |X_x|)$, we have \[N|D_x|\le \beta N^{-1/\alpha}\bigl(|X_x|+\e|X_x|\bigr).\]
%It follows that \begin{align*}
%   |D_x|^p \le \max\{1,2^{p-1}\} \beta ^p N^{-p-p/\alpha}\bigl(|X_x|^p + (\e |X_x|)^p\bigr).
%\end{align*}
Let $0< p<\alpha$.
By Burkholder-Davis-Gundy inequality, there exists a constant $C_p$ such that
\begin{equation}\label{eq:burk}
    \e \Bigl| \frac{1}{N} \ln Z_N  - \frac{1}{N}\e  \ln Z_N \Bigr|^p \le C_p \e \Bigl|\sum_{k} D_k^2\Bigr|^{p/2}\le C_p \sum_k \e |D_k|^p 
\end{equation} 
where the last inequality follows from the fact that $p/2 < 1$. Combining \eqref{eq:D_bd} and \eqref{eq:burk}, and 
using the fact that  $X_k$ is $L^p$-integrable, we derive that 
\begin{equation}
    \e \Bigl| \frac{1}{N}\ln Z_N  - \frac{1}{N} \e  \ln Z_N \Bigr|^p\le \frac{C \e|X|^p}{    N^{p-2+p/\alpha}} 
     \label{eq:D final bound}
\end{equation} 
for some constant $C  = C(p, \alpha, \beta) >0$. 
%Note $p<\alpha$ and $\alpha>1$ assures that the expectations are finite.
The proof is complete by noticing that $p-2+p/\alpha>0$ is equivalent to $p>2\alpha /(1+\alpha)$.

The proof for $\ln \bar Z_N$ is similar. In   this case, we take  our martingale difference to be 
$$ D_k = \frac{1}{N} \e \bigl[\ln \bar Z_N \big|\mathcal F_k\big] -\frac{1}{N}\e \bigl[\ln \bar Z_N \big|\mathcal F_{k-1}\bigr]=\frac{1}{N}\ln\cosh(\beta N^{-1/\alpha}X_{e_k}) $$
and note that $|D_k|\leq \beta N^{-1-1/\alpha}|X_{e_k}|.$
The rest of the proof remains the same. Finally, the concentration of $\ln \widehat Z_N$ follows from that of $\ln Z_N$ and $\ln \bar Z_N$ by using the identity $\ln \widehat Z_N= \ln  Z_N - \ln  \bar Z_N$ and the triangle inequality.  

\end{proof}

% \begin{remark}
%     In the general heavy-tailed case of Section \ref{extension to general heavy tail}, under the same assumption as Theorem \ref{con}, for any $\eps>0$, we can obtain the bound $KN^{-p-p/\alpha+2+\eps}$ for the $L^p$-norm of the normalized free energy.
% \end{remark}

% \begin{remark}
%     {\color{red}In the general heavy-tailed case, \eqref{eq:D final bound} becomes.......}
% \end{remark}

\subsection{Proof of Theorem \ref{freeenergy}}\label{subsec:add1}
% In view of Theorem \ref{thm:universality} and Theorem \ref{con}, it suffices to assume that $J$ has density \eqref{density of J}.
 Note that as $N \to \infty$, 
\begin{equation*}
% \label{freeenergy:eq1}
		\e\frac{\ln \bar Z_N}{N}=\frac{N-1}{2} \e \ln \ch (\beta J )\to \frac{\alpha}{2}\int_0^{\infty}\frac{\ln \ch (\beta x) }{x^{1+\alpha} } dx,
	\end{equation*}
 where the inequality $|\ln \ch (x)|\leq \min(|x|, x^2/2)$ for $x\in\mathbb R$ assures that the integral is finite in the range $1<\alpha<2$. From this and Theorem \ref{con},
 it suffices to show that $\lim_{N\to\infty}N^{-1}\e\ln \widehat Z_N=\ln 2.$ 
 
 We argue by using the second moment method. First of all, by independence and symmetry of the weights $(J_{ij})$, \[\e \widehat Z_N = \sum_\sigma \prod_{i<j} (1+\sig_i\sig_j \e \hth (\beta J_{ij})) =2^N,\]
Then Jensen's inequality implies that \[\frac{1}{N}\e \ln \widehat Z_N  \leq \frac{1}{N}\ln \e \widehat Z_N =\ln 2.\]
Thus, it remains  to show $
\liminf_{N\to\infty}N^{-1}\e \ln \widehat Z_N=\ln 2.
$
For any $N\geq 2$,
\begin{align*}
	\e \widehat Z_N^2 &=\sum_{\sigma,\tau} \prod_{i<j, \ k<l}\e\big(  1+\sig_i\sig_j\hth(\beta J_{ij})\bigr)\bigl(1+\tau_k\tau_l\hth(\beta J_{kl})\bigr)
	\\&=\sum_{\sigma,\tau} \prod_{i<j} \bigl(1+\sig_i\sig_j\tau_i\tau_j \e \hth^2(\beta J)\bigr).
\end{align*}
 Using the inequality $1+x \leq e^x$ for $x\in \mathbb R$, we obtain that
 \begin{align*}
	\e \widehat Z_N^2 &\leq \sum_{\sigma,\tau} \exp\Bigl(\sum_{i<j} \sig_i\sig_j\tau_i\tau_j \e \hth^2(\beta J)\Bigr)
	\\&\leq \sum_{\sigma,\tau} \exp \Big(\frac{1}{2}\Big(\sum_{i\leq N} \sig_i\tau_i\Big)^2 \e \hth^2(\beta J)\Big)
	\leq  4^N\big( 1-N\e \hth^2(\beta J) \big)^{-1/2}.
\end{align*}
 The last inequality above follows from the fact that $(\sig_i\tau_i)_{i\leq N}$ can be regarded as independent Rademacher random variables on which we can apply the standard exponential moment estimate $\e \exp(2^{-1}(\sum_{i\leq N} a_i\eta_i )^2) \leq (1-\sum_{i\leq N} a_i^2)^{-1/2}$ for i.i.d.\ Rademacher $(\eta_i)_{i \le n} $ and $\sum_{i\leq N}a_i^2 <1$. Additionally, we used the fact that, as $N\to\infty$,
 \begin{equation} \label{freeenergy:eq2}
N\e \hth^2(\beta J)= \alpha \int_{N^{-1/\alpha}}^\infty \frac{\hth^2(\beta x)}{x^{1+\alpha}}dx \uparrow (\beta/\beta_{\alpha})^\alpha<1.
 \end{equation} 
 % as apparent from Lemma \ref{expectationlemma}$(i)$.
Now, let $A$ be the event $\{\widehat Z_N > 2^{-1} \e \widehat Z_N\}$.
The Paley-Zygmund inequality implies \[\Prob (A)\geq \frac{1}{4} \frac{(\e \widehat Z_N)^2}{\e\widehat Z_N^2}\geq \frac{1}{4}\sqrt{1-(\beta/\beta_{\alpha})^\alpha}.\]
On the event $A$, it holds that \[\frac{\ln \widehat Z_N}{N} > -\frac{\ln 2}{N}+\frac{\ln \e \widehat  Z_N}{N} = -\frac{\ln 2}{N}+\ln 2. \]
From the concentration of $\ln \widehat Z_N$ in Theorem \ref{con},  we have $N^{-1} \ln \widehat Z_N  \to  N^{-1} \e \ln \widehat Z_N $ in probability. Fix $\epsilon>0$. 
For sufficiently large $N$, on an event of constant probability, we have
\[\frac{\e\ln \widehat Z_N}{N} = \frac{\e\ln \widehat Z_N}{N} - \frac{\ln \widehat Z_N}{N} +\frac{\ln \widehat Z_N}{N} \geq -\epsilon - \frac{\ln 2}{N}+\ln 2.\]
Since the previous inequality is not random, we have $\liminf_{N\to\infty}N^{-1}\e \ln \widehat Z_N=\ln 2,$ as desired.

\section{Establishing the Fluctuation of the Free Energy}

We study the fluctuation of $\ln Z_N$ in Theorem \ref{fluctuationfreeenergy} based on the following fluctuation results of $\ln \bar Z_N$ and  $\ln \widehat Z_N$. First, we achieve the fluctuation of $\ln \bar Z_N$ at any temperature.
\begin{theorem} \label{thm: generalized: fluctuation of Zbar}
	For $0<\alpha <2$ and $\beta>0$, we have \[\frac{1}{a_N^{-1}a_{{N\choose 2}}}\Bigl(\ln \bar Z_N -{N\choose 2}\e \Bigl(\ln \ch (\beta J) \1_{\{a_N\ln\ch(\beta J)\le \beta a_{N\choose 2}\}}\Bigr)\Bigr)\stackrel{d}{\to}\beta Y_\alpha,\] 
	as $N \to \infty$, where the characteristic function of $Y_\alpha$ is given by
    \begin{equation} 
    \label{eq:Y_a:charac}
	\log \e e^{it Y_\alpha}  =  \int_0 ^\infty (e^{itx}-1-itx \1_{\{ x \le 1\}})\alpha x^{-(\alpha+1)}dx. 
	\end{equation}
\end{theorem}

%\begin{theorem} \label{thm1}
% 	For $0<\alpha <2$ and $\beta>0$, we have \[\frac{1}{N^{1/\alpha}}\Bigl(\ln \bar Z_N - \frac{\alpha N}{2}\int_{N^{-1/\alpha}}^{(\frac{N-1}{2})^{1/\alpha}} \frac{\ln \ch (\beta x)}{x^{1+\alpha}}dx\Bigr)\stackrel{d}{\to}\beta 2^{ -1/\alpha}Y_\alpha,\] 
% 	as $N \to \infty$, where the characteristic function of $Y_\alpha$ is given by
%     \begin{equation} \label{eq:Y_a:charac}
% 	\log \e e^{it Y_\alpha}  =  \int_0 ^\infty (e^{itx}-1-itx \1_{\{ x < 1\}})\alpha x^{-(\alpha+1)}dx. 
% 	\end{equation}
% \end{theorem}

Next, we write down the fluctuation of $\ln \widehat Z_N$, which we obtain only at high temperature. 

\begin{theorem} \label{fluctuation hat Z}
	For $0<\alpha<2$ and $\beta<\beta_{\alpha}$, as $N \to \infty$, we have 
	 \[
	2^{-N} \widehat Z_N \stackrel{d}{\to} X_{\alpha, \beta },\] where the characteristic function of $\ln X_{\alpha, \beta}$ is given by \[t\mapsto \prod_{k=3}^\infty \exp\Bigl(\frac{\alpha ^k}{k 2^{k+1}} \int_{\mathbb R^k} \Bigl(e^{it \ln \bigl(1+\prod_{i\leq k}\hth(\beta x_i)\bigr)} -1- it\prod_{i\leq k}\hth(\beta x_i) \Bigr) \frac{1}{\prod_{i\leq k} |x_i|^{1+\alpha}}dx_1\dots dx_k\Bigr).\]
\end{theorem}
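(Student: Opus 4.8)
The overall strategy is a cluster/cumulant expansion of $\ln\widehat Z_N$ in the style of Aizenman--Lebowitz--Ruelle \cite{aizenman}, matching the structure of the claimed limit, which is a convergent product over $k\ge 3$ of characteristic functions attached to $k$-cycles. Recall from \eqref{split} that $2^{-N}\widehat Z_N=2^{-N}\sum_\sigma\prod_{i<j}\bigl(1+\sigma_i\sigma_j\hth(\beta J_{ij})\bigr)$. Expanding the product over edges indexes subsets of edges, i.e.\ subgraphs $H$ of $K_N$; after averaging $2^{-N}\sum_\sigma \prod_{(i,j)\in H}\sigma_i\sigma_j$, only subgraphs all of whose degrees are even survive, contributing $\prod_{(i,j)\in H}\hth(\beta J_{ij})$. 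Thus $2^{-N}\widehat Z_N=\sum_{H: \text{all degrees even}}\prod_{e\in H}\hth(\beta J_e)$, and the Eulerian (even-degree) subgraphs are exactly disjoint unions of edge-disjoint cycles. The plan is then to write $\ln$ of this partition-function-like object via a Mayer/cluster expansion: $\ln\widehat Z_N - N\ln 2 = \sum_{\text{connected clusters } \mathcal C} \Phi(\mathcal C)$, with the dominant contribution coming from single cycles of each fixed length $k\ge 3$ (a cycle of length $2$ is excluded since the edge set has no multi-edges; an even-degree subgraph on $2$ vertices with the simple-graph constraint gives nothing beyond $k\ge 3$), while clusters built from two or more cycles sharing a vertex are shown to be negligible in the $N\to\infty$ limit.

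The main term: for each fixed $k\ge 3$, sum over $k$-cycles $C$ in $K_N$ of $\prod_{e\in C}\hth(\beta J_e)$. The number of such cycles is $\sim N^k/(2k)$, each edge weight has the scaling $\hth(\beta J_{ij})=\hth(\beta N^{-1/\alpha}X_{ij})$ with $X_{ij}$ of density $2^{-1}\alpha\1_{\{|x|\ge 1\}}|x|^{-1-\alpha}$, and $\hth(\beta N^{-1/\alpha}x)$ is tiny unless $|x|$ is comparable to $N^{1/\alpha}$. So $\sum_C\prod_{e\in C}\hth(\beta J_e)$ is, after the change of variables $x_i=N^{-1/\alpha}X_{e_i}$ inside the expectation, a triangular array of (roughly) $N^k/(2k)$ i.i.d.\ terms each of size $\asymp N^{-k}$, with heavy tails. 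This is exactly the setting of convergence of row-sums of i.i.d.\ triangular arrays to an infinitely divisible law: one checks the Lévy--Khintchine conditions, i.e.\ that for each $k$, $N^k/(2k)$ times the law of $\prod_{i\le k}\hth(\beta x_i)$ (with $x_i$ from the rescaled tail measure) converges vaguely to the Lévy measure $\frac{\alpha^k}{k\,2^{k+1}}\prod_i|x_i|^{-1-\alpha}dx_i$ pushed forward by $(x_i)\mapsto\prod_i\hth(\beta x_i)$, with the correct truncated-mean centering, and that there is no Gaussian part. Summing the logs of the resulting characteristic functions over $k\ge 3$ — and checking this sum converges, which uses $\sum_k k^{-1}(\beta/\beta_\alpha)^{\alpha\cdot ?}$-type bounds coming from $\alpha\int_0^\infty \hth^2(x)x^{-1-\alpha}dx=\beta_\alpha^{-\alpha}$ and the constraint $\beta<\beta_\alpha$ — yields the stated characteristic function of $\ln X_{\alpha,\beta}$. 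One also has to verify joint convergence across different $k$ (the $k$-cycle sums for distinct $k$ are asymptotically independent because cycles of different lengths overlap only on negligibly many configurations), so that the limits multiply.

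The error terms: one must bound the contribution to $\ln\widehat Z_N-N\ln 2$ from (a) connected clusters consisting of $\ge 2$ edge-disjoint cycles glued at vertices, and (b) the difference between the true cluster-expansion coefficient $\Phi(\mathcal C)$ and the naive single-cycle sum, and show both are $o_p(1)$, so they don't affect the distributional limit. This is where the condition $\beta<\beta_\alpha$ is essential: it is precisely the radius of convergence making the expansion summable (the relevant per-edge "activity" $N\,\e\hth^2(\beta J)\to(\beta/\beta_\alpha)^\alpha<1$ by \eqref{freeenergy:eq2}), so multi-cycle clusters are geometrically suppressed. I expect this combinatorial bookkeeping — controlling the cluster expansion uniformly and showing the multi-cycle and truncation remainders vanish in probability — to be the main obstacle; the single-cycle computation itself is a relatively routine (if delicate) triangular-array CLT for heavy-tailed summands once the scaling is set up. A clean way to organize it is: first prove $L^1$ or $L^2$ control of the remainder after truncating the cycle length at some large $k_0$ and after discarding multi-cycle clusters (using moment bounds on $\hth(\beta J)$, which has all moments), then apply a triangular-array infinite-divisibility limit theorem to the finite collection $\{k\text{-cycle sums}: 3\le k\le k_0\}$, and finally let $k_0\to\infty$ and invoke convergence of the infinite product.
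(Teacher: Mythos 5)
Your starting point (the even-subgraph expansion $2^{-N}\widehat Z_N=\sum_{\partial\Gamma=\emptyset}\prod_{e\in\Gamma}\tanh(\beta J_e)$ and the ALR philosophy) is exactly the paper's, but the execution diverges in two places where, as written, your plan has real soft spots that the paper's route is designed to avoid. First, you propose a genuine Mayer/cluster expansion of $\ln\widehat Z_N-N\ln 2=\sum_{\mathcal C}\Phi(\mathcal C)$ over connected clusters. This is not a standard hard-core polymer model: an even-degree subgraph decomposes into edge-disjoint cycles non-uniquely, and the cycles may share vertices, so the partition function is not a sum over compatible collections of "cycle polymers" and the cluster coefficients $\Phi(\mathcal C)$ would have to be constructed and resummed by hand. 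The paper never expands the logarithm. It proves weak convergence of $\widehat X_N=2^{-N}\widehat Z_N$ itself, using the orthogonality $\e\, w(\Gamma)w(\Gamma')=0$ for $\Gamma\neq\Gamma'$ to get an $L^2$ bound $\sup_N\e(\sum_{|\Gamma|\ge m}w(\Gamma))^2\le Ce^{\sqrt{2m}}(\beta/\beta_\alpha)^{\alpha m}$ (this is where $\beta<\beta_\alpha$ enters, via $N\e\tanh^2(\beta J)\to(\beta/\beta_\alpha)^\alpha<1$), and a second $L^2$ truncation removing edges with $|J_e|\le\eps$. After these truncations the surviving edges form a sparse Erd\H{o}s--R\'enyi graph $G_{N,\eps^{-\alpha}/N}$ whose cycles of length $\le m$ are vertex-disjoint with probability tending to one, so the truncated sum \emph{literally factors} as $\prod_{|\gamma|\le m}(1+\prod_{e\in\gamma}\tanh(\beta J_e)\1_{\{|J_e|>\eps\}})$; taking logs is then free, and the limit follows from Poisson convergence of cycle counts (a compound Poisson per cycle length), followed by $\eps\downarrow0$ and $m\to\infty$ via a two-parameter approximation lemma.

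Second, your claim that the fixed-$k$ cycle sum is "a triangular array of roughly $N^k/(2k)$ i.i.d.\ terms" is false as stated: distinct cycles share edges, so the summands are dependent and a triangular-array infinitely-divisible limit theorem does not apply directly. The fix is precisely the vertex-disjointness-after-truncation argument above, which you would need to import anyway; once you have it, the compound Poisson limit comes out directly and the centering term $-it\prod_i\tanh(\beta x_i)$ in the L\'evy integral appears for free from the oddness of $\tanh$ (so that the $\eps\downarrow0$ limit of the integral converges near the origin), rather than from a truncated-mean centering in a general CLT. So: right framework, but you should replace the Mayer expansion of the logarithm and the i.i.d.\ triangular-array step by (i) $L^2$ truncation via orthogonality of the $w(\Gamma)$ and (ii) exact factorization over vertex-disjoint short cycles of the sparse truncated graph.
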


\begin{remark}
The limit $X_{\alpha, \beta}$ is distributed as $\exp(T_3+T_4+ \cdots),$ where the random variables $T_3, T_4, \ldots, $ are independent with the following distributions. For $k \ge 3$, 
let $(\xi_j)_{j \ge 1}$ be the PPP on $(\mathbb{R}\setminus \{0\})^k$  with intensity measure $\mu^{\otimes k}$ for $\mu$ defined in \eqref{intensitymeasure}. Let $ (\xi_j^1, \ldots, \xi_j^k)$ represent the coordinates of the vector $\xi_j$.  Then 
\[ T_k =  \frac{1}{2k} \sum_{ j \ge 1 }\ln \Bigl(1+\prod_{i\leq k}\hth(\beta \xi_j^i )\Bigr), \]
where the above infinite sum is interpreted as the weak limit, as $\eps \to 0$, of the finite sum over $j$ such that $|\xi_j^i| > \eps$ for each $ 1 \le i \le k$.
\end{remark}

%\subsection{Proof of Theorem \ref{fluctuationfreeenergy}}

%Before we proceed to prove Theorems~\ref{thm: generalized: fluctuation of Zbar} and \ref{fluctuation %hat Z}, let us quickly mention how they lead to a proof of %Theorem~\ref{fluctuationfreeenergy}.

\begin{proof}[\bf Proof of Theorem \ref{fluctuationfreeenergy} ] 
% Notice that \eqref{a_N asymptotics} implies $\lim_{N\to\infty}a_N^{-1}a_{{N\choose 2}}=\infty $. 
From Theorem \ref{fluctuation hat Z}, we  see that 
\[
    \frac{1}{N^{1/\alpha}}\bigl( \ln \widehat Z_N- N\ln2\bigr)\stackrel{d}{\to}0. 
\]
Since $\ln Z_N=\ln \bar Z_N +\ln \widehat Z_N$, our assertion follows by using the Slutsky's theorem and Theorem \ref{thm: generalized: fluctuation of Zbar}.
\end{proof}

\subsection{Proof of Theorem~\ref{thm: generalized: fluctuation of Zbar}} 
In the Pareto case, by \eqref{density of J}, we can recast the statement of the theorem into \begin{equation*}
    \frac{1}{N^{1/\alpha}}\Bigl(\ln \bar Z_N - m \alpha \int_{N^{-1/\alpha}}^{\beta^{-1}g(\beta m^{1/\alpha})} \frac{\ln \ch ( \beta x)}{x^{1+\alpha}}dx\Bigr) \stackrel{d}{\to} \frac{\beta Y_\alpha}{2^{1/\alpha}},
\end{equation*} where $m=(N-1)/2$ and $g$ is the inverse function of $x\mapsto \ln\ch(x)$.
Using \eqref{eq: centering small} and the fact $\lim_{x\to\infty} (g(x) -x) =\ln 2$, this is equivalent to
\begin{equation}\label{eq: centering}
    \frac{1}{N^{1/\alpha}}\Bigl(\ln \bar Z_N - \frac{\alpha N}{2}\int_{N^{-1/\alpha}}^{(\frac{N-1}{2})^{1/\alpha}} \frac{\ln \ch (\beta x)}{x^{1+\alpha}}dx\Bigr)\stackrel{d}{\to} \frac{\beta Y_\alpha}{2^{1/\alpha}},
\end{equation} which we shall prove in this section.

Let $n=N(N-1)/2.$	Recall that $\ln \bar Z_N=\sum_{i<j}\ln \ch (\beta J_{ij})$, which is a sum of a triangular array of i.i.d.\ heavy-tailed random variables. 
	We decompose $\ln \bar Z_N$ into light-tailed and heavy-tailed parts by writing   $\ln \bar Z_N= S_1 + S_2,$ where 
	\begin{align*}
		S_1 \colonequals\sum_{i<j}\ln \ch \bigl(\beta J_{ij}\1_{\{|J_{ij}|\leq 1\}}\bigr)\,\,\mbox{and}\,\, S_2 \colonequals\sum_{i<j}\ln \ch \bigl(\beta J_{ij}\1_{\{|J_{ij}|>1\}}\bigr).
	\end{align*}
%	We aim to show that as  $N \to \infty$, 
%	\begin{align} \label{primitive fluctuation}
%	     \frac{1}{N^{1/\alpha}}\Bigl(\ln \bar Z_N -\alpha m \int_{N^{-1/\alpha}}^{m^{1/\alpha}} \frac{\ln \ch (\beta x)}{x^{1+\alpha}}dx\Bigr)\stackrel{d}{\to} \frac{\beta Y_\alpha}{2^{1/\alpha}}.
%	\end{align}
Let us  first address $S_1$.
	Define $$X_{ij}=\ln\ch\bigl(\beta J_{ij}\1_{\{|J_{ij}|\leq 1\}}\bigr)-\e\ln\ch\bigl(\beta J_{ij}\1_{\{|J_{ij}|\leq 1\}}\bigr)$$ for $1 \le i< j \le N$ and put $s_N^2 =\sum_{i<j}\e X_{ij}^2$.
	It is easy to check that
	\begin{align*} s_N^2 &=\frac{\alpha(N-1)}{2} \Bigl(\int_{N^{-1/\alpha}}^1 \frac{\bigl(\ln\ch(\beta x)\bigr)^2}{x^{1+\alpha}}dx -\frac{\alpha}{N} \Bigl(\int_{N^{-1/\alpha}}^1 \frac{\ln\ch(\beta x)}{x^{1+\alpha}}dx\Bigr)^2\Bigr)  \\
 &~\sim \frac{\alpha N}{2}\int_0^1 \frac{\bigl(\ln\ch(\beta x)\bigr)^2}{x^{1+\alpha}}dx.
	\end{align*}
	Since $X_{ij}$'s are bounded and $s_N^2 \to\infty$ as $N\to\infty$, the Lindeberg condition \[\forall \epsilon>0, \  \frac{1}{s_N^2} \sum_{i<j}\e X_{ij}^2 \1_{\{|X_{ij}|\geq \epsilon s_N \}} \to 0 \]  holds trivially.
%	Note that we have the following asymptotic behavior of $s_N^2$: \[\frac{1}{s_N^2}\frac{\alpha N}{2}\int_0^1 \frac{\bigl(\ln\ch(\beta x)\bigr)^2}{x^{1+\alpha}}dx\xrightarrow[N\to\infty]{}1.\]
Hence, by the Lindeberg-Feller central limit theorem, we have, as $N \to \infty$,  
\begin{equation}\label{eq:clt1}
  \frac{1}{\sqrt{N}}\Bigl( S_1 -\frac{\alpha(N-1)}{2} \int_{N^{-1/\alpha}}^1\frac{\ln\ch(\beta x)}{x^{1+\alpha}}dx\Bigr)=\frac{\sum_{i<j}X_{ij}}{\sqrt{N}} \stackrel{d}{\to}   N\Bigl(0,\ \frac{\alpha}{2}\int_0^1 \frac{\bigl(\ln\ch(\beta x)\bigr)^2}{x^{1+\alpha}}dx\Bigr).
\end{equation}

	Next, we handle $S_2.$ Note that $J\1_{\{|J|>1\}} \stackrel{d}{=}  B g$, where $B$ is Bernoulli with parameter $1/N$, $g$ has density $(\alpha/2) |x|^{-(1+\alpha)}  \1_{\{|x|>1\}} $, and they are independent of each other. Therefore, we can represent 
    	\[ S_2  =  \sum_{i=1}^M \ln \ch(\beta g_i), \] where $(g_i)_{i \ge 1}$ are i.i.d.\  copy of  $g$ and 
	 $M$ is an independent $\mathrm{Binomial}(n, 1/N)$ random variable. In order to find out the asymptotic distribution of $S_2,$ we introduce $S_{2,m}=\sum_{i=1}^m\ln \cosh(\beta g_i)$ for any $m\geq 1.$ Note that $\ln \ch (\beta g)$ is a positive random variable with tail probability
	 \[ \Prob \bigl(\ln \ch (\beta g)>x\bigr) = \bigl( \beta^{-1} \mathrm{arccosh}(e^x)\bigr)^{-\alpha} \sim \beta^\alpha x^{-\alpha}\,\,\text{ as } x \to \infty. \]
	Set 
	  \[c_m =\inf\bigl\{x > 0 : \Prob \bigl( \ln \ch (\beta g) >x\bigr) < m^{-1} \bigr\}\,\,\mbox{and}\,\,d_m= m \e \ln \ch (\beta g)\1_{\{ \ln \ch (\beta g) \leq c_m\}}.\]
	%Using the fact that $x\mapsto \ln \ch (x)$ is a positive even function, 
	It is easy to verify that 
	\begin{equation*}
		c_m \sim \beta m^{1/\alpha} \,\,\mbox{and}\,\,  d_m  =  \alpha m \int_{1}^{m^{1/\alpha}} \frac{\ln \ch (\beta x)}{x^{1+\alpha}}dx + o(m^{1/\alpha}).
	\end{equation*}
	By the standard result on the sum of i.i.d.\ heavy-tailed random variables, see, e.g., \cite[Theorem 3.8.2]{durrett_2019}, it follows that 
	\begin{equation}\label{eq:heavy_tailed_CLT}
	\frac{S_{2,m} -d_m}{c_m}\stackrel{d}{\to}Y_\alpha\,\,\mbox{as $m\to\infty$,}
	\end{equation}
 where the limiting distribution $Y_\alpha$ is as in  \eqref{eq:Y_a:charac}. We claim that 
	\begin{equation} \label{eq:res_error}
	 \frac{S_2 - S_{2,m}}{c_{m}}\stackrel{p}{\to}0\,\,\mbox{as}\,\,m=\frac{N-1}{2}\to\infty.
	  \end{equation}
	Let $Q = |M-m |$. It suffices to show that $ N^{-1/\alpha} \sum_{i=1}^{Q} \ln\ch(\beta g_i) \stackrel{p}{\to} 0$. 	To that end, fix $\epsilon>0$ and $\delta>0$. Keeping in mind that $\ln \cosh$ is positive, we can bound
	\begin{equation} \label{eq:random_sum_bound}
	 \Prob \Big ( N^{-1/\alpha} \sum_{i=1}^{Q} \ln\ch(\beta g_i) > \eps \Big ) \le \Prob \Big ( N^{-1/\alpha} \sum_{i=1}^{\lceil\sqrt{N}\ln N\rceil } \ln\ch(\beta g_i) > \eps \Big )  + \Prob\bigl( Q > \lceil\sqrt{N}\ln N\rceil \bigr).  
	 \end{equation}
	 On the one hand, by Chebyshev's inequality $\Prob( Q > \lceil\sqrt{N}\ln N \rceil) \le N^{-1} (\ln N)^{- 2} \mathrm{Var}(M) \le (\ln N)^{- 2}$, which implies that the second term of \eqref{eq:random_sum_bound} vanishes as $N \to \infty.$  
	On the other hand, from \eqref{eq:heavy_tailed_CLT}, we obtain that
	\begin{equation*}
	\lim_{K\to\infty}\liminf_{m\to\infty} \Prob \big( S_{2,m} \le K c_m + d_m  \big) =1. 
	\end{equation*}
Now, a straightforward computation yields that $d_m $ is bounded above by $C m^{1 /\alpha}, C m \ln m, $ and $ Cm$ when $ 0<  \alpha <1, \alpha = 1,$ and  $1< \alpha <2$, respectively, which implies that for any $0< \alpha< 2$, 
	\[  \lim_{K\to\infty} \liminf_{m\to\infty} \Prob \big( S_{2,m} \le K m^{\max(1, 1/\alpha)} \ln m  \big)=1. \]
 Here, if we take $m = \lceil\sqrt{N}\ln N \rceil$, then $m^{\max(1, 1/\alpha)} \ln m\leq CN^{2^{-1}\max(1,1/\alpha)}(\log N)^{2+1/\alpha}$ for some constant $C>0$ independent of $N$. Consequently, 
 \[  \lim_{K\to\infty} \liminf_{m=\lceil \sqrt{N}\log N\rceil\to\infty} \Prob \big( N^{-1/\alpha}S_{2,m} \le K N^{-\max(\alpha^{-1}-2^{-1},(2\alpha)^{-1})}(\log N)^{2+1/\alpha} \big)=1. \]
	Since $\max(\alpha^{-1}-2^{-1},(2\alpha)^{-1})>0,$ this limit implies that the first term of \eqref{eq:random_sum_bound} also goes to zero as $N \to \infty$, which confirms \eqref{eq:res_error}.

Finally, Theorem~\ref{thm: generalized: fluctuation of Zbar} follows from combining \eqref{eq:clt1}, \eqref{eq:heavy_tailed_CLT}, and \eqref{eq:res_error} and using the estimate
%To get the final form from \eqref{primitive fluctuation}, we combine Slutsky's lemma and the estimates 
	%\[\int_{0}^{\epsilon} \frac{\ln \ch (\beta x)}{x^{1+\alpha}}dx <\infty, \quad \frac{1}{N^{1/\alpha}} \int_{\epsilon}^{N^{1/\alpha}} \frac{\ln \ch (\beta x)}{x^{1+\alpha}}dx=O(N^{-1})\] for any $\epsilon>0$ and $0<\alpha<2$.
	\begin{equation}\label{eq: centering small}
	    \int_{1}^{N^{1/\alpha}} \frac{\ln \ch (\beta x)}{x^{1+\alpha}}dx  =  o(N^{1/\alpha} ),\,\,\forall 0<\alpha<2. 
	\end{equation}

\subsection{Proof of Theorem \ref{fluctuation hat Z}}

Our proof uses the powerful cluster expansion technique introduced by Aizenman, Lebowitz, and Ruelle \cite{aizenman} to study the fluctuation of the free energy in the SK model. This approach was later adapted to the VB model by K\"oster  \cite{koster}. Before we go into the main argument, we state some elementary results about the weak convergence and random graphs. We refer to  \cite{koster} for their proofs.

\begin{lemma} \label{cut}
	Let $(X_N)_{N \ge 1}$ be a family of random variables that can be approximated, with respect to the $L^2$ norm, by some weakly convergent sequences, i.e., for any $N\geq 1$ and $\eta>0$, there exist random variables $X_{N,\eta}$ and $X_\eta$ such that
\begin{itemize}
		\item[(i)] $\lim_{\eta\downarrow 0}\sup_N \e |X_N -X_{N, \eta} |^2 =0.$
		
		\item[(ii)] For each $\eta>0$, $X_{N, \eta} $ converges in distribution to $X_\eta$ as $N\to\infty$.
\end{itemize}
	Then there exists a random variable $X$ such that $X_\eta \stackrel{d}{\to} X$ as $\eta \to 0$ and $X_N \stackrel{d}{\to} X$ as $N \to \infty$. 
\end{lemma}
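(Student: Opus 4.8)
The plan is to run a standard approximation-plus-diagonalization argument, working with a \emph{complete} metric for weak convergence so that a Cauchy sequence of laws actually has a limit. Fix a metric $\rho$ on the Borel probability measures on $\mathbb{R}$ that metrizes weak convergence and is complete --- for instance the L\'evy--Prokhorov metric, or the bounded-Lipschitz (Dudley) metric $\rho(\mu,\nu)=\sup\{|\int f\,d\mu-\int f\,d\nu|:\|f\|_\infty\le 1,\ \mathrm{Lip}(f)\le 1\}$. The only quantitative input needed is that $L^2$-closeness controls $\rho$: for random variables $Y,Z$ on a common probability space, testing against bounded $1$-Lipschitz $f$ gives $|\e f(Y)-\e f(Z)|\le \e|Y-Z|\le (\e|Y-Z|^2)^{1/2}$, hence $\rho(\mathcal{L}(Y),\mathcal{L}(Z))\le (\e|Y-Z|^2)^{1/2}$. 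Write $\mu_N=\mathcal{L}(X_N)$, $\mu_{N,\eta}=\mathcal{L}(X_{N,\eta})$, $\nu_\eta=\mathcal{L}(X_\eta)$, and $\delta(\eta):=\bigl(\sup_N \e|X_N-X_{N,\eta}|^2\bigr)^{1/2}$, so that $\delta(\eta)\to 0$ as $\eta\downarrow 0$ by hypothesis (i) and $\rho(\mu_N,\mu_{N,\eta})\le\delta(\eta)$ for every $N$.

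First I would show that $(\nu_\eta)_{\eta>0}$ is $\rho$-Cauchy as $\eta\downarrow 0$. Given $\eps>0$, pick $\eta_0$ with $\delta(\eta)<\eps/4$ for all $\eta<\eta_0$. For any $\eta,\eta'<\eta_0$ and any $N$, the triangle inequality gives
\[
\rho(\nu_\eta,\nu_{\eta'})\le \rho(\nu_\eta,\mu_{N,\eta})+\rho(\mu_{N,\eta},\mu_N)+\rho(\mu_N,\mu_{N,\eta'})+\rho(\mu_{N,\eta'},\nu_{\eta'})\le \rho(\nu_\eta,\mu_{N,\eta})+\rho(\mu_{N,\eta'},\nu_{\eta'})+\tfrac{\eps}{2}.
\]
Letting $N\to\infty$ with $\eta,\eta'$ fixed and using (ii) (which says $\mu_{N,\eta}\to\nu_\eta$ and $\mu_{N,\eta'}\to\nu_{\eta'}$), the first two terms vanish, so $\rho(\nu_\eta,\nu_{\eta'})\le\eps/2<\eps$. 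By completeness, $\nu_\eta$ converges to some $\mu$; let $X$ be a random variable with $\mathcal{L}(X)=\mu$, which is the asserted convergence $X_\eta\stackrel{d}{\to}X$.

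Finally I would deduce $X_N\stackrel{d}{\to}X$ from the same pieces: for any $\eta>0$ and any $N$,
\[
\rho(\mu_N,\mu)\le \rho(\mu_N,\mu_{N,\eta})+\rho(\mu_{N,\eta},\nu_\eta)+\rho(\nu_\eta,\mu)\le \delta(\eta)+\rho(\mu_{N,\eta},\nu_\eta)+\rho(\nu_\eta,\mu).
\]
Given $\eps>0$, first choose $\eta$ small enough that $\delta(\eta)<\eps/3$ and (by the previous step) $\rho(\nu_\eta,\mu)<\eps/3$; then, with this $\eta$ now fixed, (ii) yields $\rho(\mu_{N,\eta},\nu_\eta)<\eps/3$ for all large $N$, so $\rho(\mu_N,\mu)<\eps$ eventually.

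The argument is entirely soft, so there is no real obstacle; the two points that require attention are the order of the quantifiers --- the \emph{uniformity in $N$} in hypothesis (i) is exactly what permits fixing $\eta$ small before sending $N\to\infty$ --- and working with a complete metrization of weak convergence so that the Cauchy family $(\nu_\eta)$ has a limit. The quantitative passage from $L^2$ distance to $\rho$ is elementary.
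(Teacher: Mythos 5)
Your proof is correct. The paper itself does not prove Lemma \ref{cut}; it defers to \cite{koster}, where the standard argument runs through characteristic functions: one bounds $|\e e^{itX_N}-\e e^{itX_{N,\eta}}|\le |t|\,\e|X_N-X_{N,\eta}|\le |t|\,\delta(\eta)$ uniformly in $N$, deduces that $\e e^{itX_N}$ is Cauchy pointwise in $t$ with a limit that is continuous at $0$, and invokes L\'evy's continuity theorem to identify the limit as a characteristic function. Your route replaces that machinery by a complete metrization of weak convergence (bounded-Lipschitz), using the equally elementary bound $\rho(\mathcal{L}(Y),\mathcal{L}(Z))\le (\e|Y-Z|^2)^{1/2}$ and completeness of the metric in place of L\'evy's theorem; the two-step triangle-inequality scheme (first show $(\nu_\eta)$ is Cauchy, then approximate $\mu_N$ through $\mu_{N,\eta}$ and $\nu_\eta$) is identical in structure. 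Both arguments are sound, and you correctly flag the one point that matters: the uniformity in $N$ of hypothesis (i) is what lets you fix $\eta$ before sending $N\to\infty$.
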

%\begin{remark}
%	This lemma still holds even if we change the 2-norm in condition (i) to the following: $\e \bigl|X_N-X_N^{(\epsilon)}\bigr|^p\leq \epsilon$ for some $p>0$.
%\end{remark}

Let $G_{N, p/N}$ denote the Erd\"os-R\'enyi random graph on the vertex set $[N]$ with parameter $p/N$, i.e., each edge is selected with probability $p/N$, independently of the other edges.
%The following two lemmas state some standard results on random graphs.
\begin{lemma}[Lemma 2.1, \cite{koster}]\label{cyclepoisson}
	% Let $p_N = p(1+o(1))$ for some $p>0$. 
 For  $k\geq 3$, let $Y_{N,k}$ denote the number of $k$-cycles contained in $G_{N, p/N}$.
	Then, for each fixed $m\geq 3$, as $N \to \infty$, 
	 \[(Y_{N,3},\dots,Y_{N,m})\xrightarrow[N\to\infty]{d}(Y_3,\dots,Y_m),\] where $Y_k$ is an independent Poisson random variable with parameter $p^k/2k$ for $3\leq k\leq m$.
\end{lemma}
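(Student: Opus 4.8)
The plan is to establish the joint Poisson convergence by the classical method of factorial moments. For each $k$, write $Y_{N,k}=\sum_{C}\1_{\{E(C)\subseteq G_{N,p/N}\}}$, the sum running over the $\tfrac{N!}{(N-k)!}\cdot\tfrac{1}{2k}$ copies $C$ of the $k$-cycle in the complete graph on $[N]$; since a fixed copy is present with probability $(p/N)^k$, independently across edges, one gets $\e Y_{N,k}\to p^k/(2k)$. More generally, for nonnegative integers $r_3,\dots,r_m$, the product of falling factorials $\prod_{k=3}^m (Y_{N,k})_{r_k}$ (with $(x)_r:=x(x-1)\cdots(x-r+1)$) counts ordered tuples $\mathcal C$ consisting, for each $k$, of $r_k$ pairwise distinct copies of $C_k$, so that by linearity and edge-independence
\[\e\prod_{k=3}^m (Y_{N,k})_{r_k}=\sum_{\mathcal C}\Big(\frac{p}{N}\Big)^{e(\mathcal C)},\qquad e(\mathcal C):=\Big|\bigcup_{C\in\mathcal C}E(C)\Big|.\]

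Next I would split this sum according to whether the cycles making up $\mathcal C$ are pairwise vertex-disjoint. Writing $H=\bigcup_{C\in\mathcal C}C$ for the associated subgraph, with $v(H)$ vertices and $e(H)=e(\mathcal C)$ edges, the number of tuples $\mathcal C$ realizing a given isomorphism type $H$ is $O(N^{v(H)})$, so that type's total contribution is $O\big(N^{v(H)-e(H)}\big)$. The decisive combinatorial point is that every connected component of $H$ is either a single cycle — which can only occur when all the (distinct) cycles of $\mathcal C$ meeting that component coincide — or a connected graph each of whose edges lies on a cycle and which contains two or more distinct cycles, hence has cyclomatic number $e-v+1\ge 2$, i.e.\ at least one more edge than vertices. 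Consequently $v(H)-e(H)\le 0$, with equality precisely when all cycles in $\mathcal C$ are pairwise vertex-disjoint (so $H$ is a disjoint union of $\sum_k r_k$ cycles and $e(H)=v(H)=\sum_k k r_k$). Since at most $\sum_k k r_k$ vertices are involved, only finitely many isomorphism types occur, and so every tuple containing a vertex-sharing pair contributes $O(1/N)\to 0$.

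It then remains to evaluate the main term. The number of tuples $\mathcal C$ whose cycles are all pairwise vertex-disjoint equals $\tfrac{N!}{(N-V)!}\prod_{k}(2k)^{-r_k}$, where $V=\sum_k k r_k$ — pick an ordered sequence of disjoint vertex sets of the prescribed sizes, then impose a cyclic order on each — and this is asymptotic to $N^{V}\prod_k(2k)^{-r_k}$; multiplying by $(p/N)^{V}$ yields the limit $\prod_{k=3}^m (p^k/(2k))^{r_k}$. Since $\e (Y)_r=\lambda^r$ for $Y\sim\mathrm{Poisson}(\lambda)$ and joint factorial moments of independent random variables factorize, we obtain $\e\prod_k (Y_{N,k})_{r_k}\to\prod_k (p^k/(2k))^{r_k}$ for every $(r_k)$; as the Poisson law is determined by its moments, the method-of-moments criterion for $\mathbb Z_{\ge 0}$-valued vectors gives $(Y_{N,3},\dots,Y_{N,m})\xrightarrow{d}(Y_3,\dots,Y_m)$ with the $Y_k$ independent and $Y_k\sim\mathrm{Poisson}(p^k/(2k))$. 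The one genuinely substantive step is the second one, namely showing that overlapping configurations are asymptotically negligible; this rests entirely on the fact that a connected union of at least two distinct cycles has strictly more edges than vertices, and everything else is routine counting. (Alternatively, one could bypass the moment computation and invoke general Poisson-approximation results for counts of strictly balanced subgraphs — $C_k$ being strictly balanced — but the argument above is the most self-contained.)
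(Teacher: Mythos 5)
Your proof is correct. Note that the paper does not prove this lemma itself — it quotes it as Lemma 2.1 of \cite{koster} and explicitly defers the proof there — and your argument is the classical factorial-moment proof of joint Poisson convergence for short cycle counts (as in Bollob\'as), which is the standard route: the moment identity, the main-term count $\frac{N!}{(N-V)!}\prod_k(2k)^{-r_k}$, and the crucial negligibility of overlapping configurations via the observation that a connected union of two distinct cycles has cyclomatic number at least $2$ (hence $e(H)\ge v(H)+1$) are all handled correctly.
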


\begin{lemma}[Lemma 2.2, \cite{koster}]\label{vertexdisjoint}
	% Let $p_N = p(1+o(1))$ for some $p>0$. 
 For  $m\geq 3$, let $\mathcal{A}_{N,m}$ denote the event that  the cycles in $G_{N, p/N}$ of length at most $m$  are vertex-disjoint.
	Then, for each fixed $m\geq 3$, as $N \to \infty$, 
	\[\Prob(\mathcal{A}_{N,m})\to 1.\]
\end{lemma}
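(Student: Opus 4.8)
The plan is to prove this by a first–moment computation on ``forbidden'' dense subgraphs. The key first step is a purely combinatorial observation: if $C_1\neq C_2$ are two cycles of $G_{N,p/N}$ of length at most $m$ that share a vertex, then their union $H=C_1\cup C_2$ is a connected graph on $v:=|V(H)|\le 2m$ vertices with $|E(H)|\ge v+1$. Indeed, $H$ is connected since the two cycles meet; the intersection $C_1\cap C_2$, being a subgraph of a cycle, is a vertex-disjoint union of $c\ge 1$ paths, hence $|E(C_1\cap C_2)|=|V(C_1\cap C_2)|-c$; and since every cycle has exactly as many edges as vertices, inclusion--exclusion gives $|E(H)|-|V(H)| = -\bigl(|E(C_1\cap C_2)|-|V(C_1\cap C_2)|\bigr)=c\ge 1$. (Sharing a single vertex yields a figure-eight, sharing a path yields a theta graph, sharing several paths yields something denser still.) Therefore, on the complement event $\mathcal A_{N,m}^c$, the graph $G_{N,p/N}$ must contain a subgraph on $v$ vertices with at least $v+1$ edges for some $3\le v\le 2m$.

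Next I would bound the probability of this event by its expectation. For fixed $v$ and $e\ge v+1$, the expected number of pairs $(S,F)$ with $S\subseteq[N]$, $|S|=v$, $F\subseteq\binom{S}{2}$, $|F|=e$, and $F\subseteq E(G_{N,p/N})$ is at most $\binom{N}{v}\binom{\binom{v}{2}}{e}(p/N)^e \le C_{v,e}\,p^{e}\,N^{\,v-e}$, which is $O(N^{-1})$ because $e\ge v+1$. Summing over the finitely many admissible pairs $(v,e)$ with $3\le v\le 2m$ and $v+1\le e\le\binom{v}{2}$ shows that the expected number of such dense subgraphs is $O_m(N^{-1})$; it is precisely here that the restriction $v\le 2m$ coming from the first step is essential, since otherwise the sum need not converge. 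By Markov's inequality, $\Prob(\mathcal A_{N,m}^c)=O_m(N^{-1})\to 0$, which is the assertion.

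I do not expect a genuine obstacle: the only point deserving care is the combinatorial lemma of the first step, i.e.\ checking that overlapping short cycles always produce a subgraph with strictly more edges than vertices, uniformly over the way in which the two cycles overlap. Everything afterward is the standard sparse Erd\H{o}s--R\'enyi moment estimate, and it agrees with the argument in \cite{koster}.
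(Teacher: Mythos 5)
Your proof is correct; the paper itself gives no proof of this lemma but defers to \cite{koster}, and your argument---reducing the failure of vertex-disjointness of two short cycles to the presence of a connected subgraph on at most $2m$ vertices with strictly more edges than vertices, then applying the standard first-moment bound $\binom{N}{v}\binom{\binom{v}{2}}{e}(p/N)^e=O(N^{v-e})=O(N^{-1})$---is exactly the standard route taken in the cited reference. The one delicate point, the combinatorial excess, is handled correctly: the intersection of two distinct cycles is a forest with $c\ge 1$ components, so inclusion--exclusion gives $|E(H)|-|V(H)|=c\ge 1$.
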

% {\color{blue}We remark that the original statements of the above two lemmas in \cite{koster} were for $p_N$ equal to a constant $p>0$.}

We now begin the main argument. Following \cite{aizenman}, we  expand the product and then sum over $\sigma$ to obtain that 
\[ \widehat Z_N = 2^N \sum_{\partial \Gamma = \emptyset} w(\Gamma),\]
 for $w(\Gamma):= \prod_{e\in E(\Gamma)}\hth(\beta J_{e})$. Here, the summation is over all simple and closed graphs $\Gamma$ with vertex set $V(\Gamma)\subseteq [N]$ and edge set $E(\Gamma)\subseteq\{(i,j):1\leq i<j\leq N\},$ i.e., the edges are are not repeated and 
 \[ \partial \Gamma := \{i\in [N] : \text{$i$ belongs to an odd number of edges of $\Gamma$}\}=\emptyset.\] 
 Note that such graphs can be written as the union of (edge-disjoint)  cycles, but such decomposition is  not necessarily unique. We denote a cycle by~$\gamma$. By a slight abuse of notation, we write  $w(\gamma)$  for $ \prod_{e\in E(\gamma) }\hth(\beta J_{e})$.
%\[ \widehat X_{N, m, \eps } = \sum_{\Gamma \in B_{N, m} } w_\eps(\Gamma) \]
The following  lemma is crucial, which states that at high temperature, large graphs do not contribute much to the partition function. 
Similar results were also obtained earlier for the SK model    \cite[Lemma 3.3]{aizenman} and for the VB model  \cite[Lemma 2.3]{koster}. 
\begin{lemma}\label{Bigraphdie}
	Let $0<\alpha<2$. For $\beta<\beta_{\alpha}$,
 % and $\eta>0$ such that $(1+\eta)\beta/\beta_\alpha <1$, 
 there exists a constant $C$ such that 
	\[\sup_{N \ge 1} \e \Bigl(\sum_{\partial \Gamma = \emptyset, |\Gamma|\geq m} w(\Gamma)  \Bigr)^2 \leq  Ce^{\sqrt{2m}}(\beta/\beta_{\alpha})^{\alpha m}, \,\,m \ge 1.\]
\end{lemma}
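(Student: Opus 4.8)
\textbf{Proof proposal for Lemma \ref{Bigraphdie}.} The plan is to expand the square, take the expectation using independence of the edge weights, and then bound the resulting combinatorial sum over pairs of even subgraphs by a sum over cycle decompositions, following the strategy of \cite{aizenman,koster}. Concretely, write $\bigl(\sum_{\partial\Gamma=\emptyset,\,|\Gamma|\ge m} w(\Gamma)\bigr)^2 = \sum_{\Gamma_1,\Gamma_2} w(\Gamma_1)w(\Gamma_2)$, where both $\Gamma_i$ range over simple closed graphs with at least $m$ edges. Since the $J_e$ are independent and centered in the sense that $\e\,\hth(\beta J_e)=0$ by symmetry, $\e\, w(\Gamma_1)w(\Gamma_2)=\prod_{e} \e\,\hth(\beta J_e)^{n_e}$ vanishes unless every edge $e$ appears an even number of times in the multiset $E(\Gamma_1)\sqcup E(\Gamma_2)$; because each $\Gamma_i$ is simple, this forces $E(\Gamma_1)=E(\Gamma_2)$, so the double sum collapses to $\sum_{\partial\Gamma=\emptyset,\,|\Gamma|\ge m}\prod_{e\in E(\Gamma)}\e\,\hth^2(\beta J_e)$. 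Now $\e\,\hth^2(\beta J) = \alpha\int_{N^{-1/\alpha}}^\infty \hth^2(\beta x)\,x^{-1-\alpha}\,dx/N \le (\beta/\beta_\alpha)^\alpha/N$ by \eqref{beta_critical}, uniformly in $N$; denote $\rho := (\beta/\beta_\alpha)^\alpha/N$ so that each term is at most $\rho^{|\Gamma|}$.

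The remaining task is purely combinatorial: bound $\sum_{\partial\Gamma=\emptyset,\,|\Gamma|\ge m}\rho^{|\Gamma|}$, where the sum is over even subgraphs of $K_N$ with at least $m$ edges. Every such $\Gamma$ decomposes (non-uniquely) into edge-disjoint cycles $\gamma_1,\dots,\gamma_r$ with $\sum_j|\gamma_j|=|\Gamma|\ge m$ and each $|\gamma_j|\ge 3$; I would overcount by summing over ordered tuples of such cycles. The number of cycles of length $k$ in $K_N$ is at most $N^k/(2k)$ (choose an ordered $k$-tuple of distinct vertices, divide by the $2k$ rotations/reflections), so the contribution of a single cycle factor of length $k$ is at most $\frac{N^k}{2k}\rho^k = \frac{(\beta/\beta_\alpha)^{\alpha k}}{2k}$. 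Writing $x:=(\beta/\beta_\alpha)^\alpha<1$ and letting $c_k := x^k/(2k)$, the sum over all even subgraphs with $\ge m$ edges is at most $\sum_{r\ge 1}\frac{1}{r!}\sum_{k_1,\dots,k_r\ge 3,\ \sum k_j\ge m}\prod_j c_{k_j}$ — the $1/r!$ because an even subgraph's cycle decomposition, even when unordered, is overcounted at most $r!$ times by ordered tuples (actually the unordered decomposition need not be unique, but each even subgraph is still produced by \emph{at least one} unordered cycle collection, and passing to ordered tuples only inflates the count, so the bound is valid without the $1/r!$ as well; keeping $1/r!$ makes the generating-function bookkeeping cleaner). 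Restricting to $\sum k_j\ge m$ and using $k_j\ge 3$, I would insert the elementary inequality $\1_{\{\sum k_j\ge m\}}\le e^{\sqrt{2(\sum k_j - m)}}\cdot e^{-\,?}$ — more simply, peel off a factor $x^{m/ \text{something}}$: since each $\prod c_{k_j}\le x^{\sum k_j}\prod\frac{1}{2k_j}$ and $\sum k_j\ge m$, we get $\prod c_{k_j}\le x^m\cdot x^{\sum k_j-m}\prod\frac{1}{2k_j}$, and summing the leftover freely over all $r\ge1$ and all $k_j\ge 3$ gives $\sum_{r\ge1}\frac1{r!}\bigl(\sum_{k\ge3}\frac{x^k}{2k}\bigr)^r\le \exp\bigl(\sum_{k\ge3}\frac{x^k}{2k}\bigr)\le \exp\bigl(\tfrac12|\ln(1-x)|\bigr)=:C<\infty$ since $x<1$. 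This already yields the bound $C\,x^m$, which is even stronger than the claimed $C e^{\sqrt{2m}} x^m$; I would simply absorb the harmless factor $e^{\sqrt{2m}}$ to match \cite{koster}'s statement, or keep the sharper form if convenient downstream (this slack is why the statement carries the $e^{\sqrt{2m}}$: it is the cruder bound obtained by a less careful peeling, and it suffices).

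The one genuine subtlety — and the step I expect to be the main obstacle — is the combinatorial bookkeeping in passing from even subgraphs to cycle collections cleanly enough that the sum over \emph{all} cycle collections (not just those with total length exactly $|\Gamma|$) converges and produces a uniform-in-$N$ constant; this is exactly the point where the hypothesis $\beta<\beta_\alpha$, i.e. $x=(\beta/\beta_\alpha)^\alpha<1$, is used, and one must be careful that the $N^k$ from counting length-$k$ cycles in $K_N$ is \emph{exactly} cancelled by the $\rho^k=N^{-k}x^k$ so that no $N$-dependence survives. A second minor point is justifying the vanishing of $\e\,w(\Gamma_1)w(\Gamma_2)$ whenever $E(\Gamma_1)\ne E(\Gamma_2)$: since $\Gamma_1,\Gamma_2$ are simple, an edge in the symmetric difference appears exactly once in the union, contributing a lone factor $\e\,\hth(\beta J_e)=0$ by the symmetry of the law of $J$. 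Everything else is the routine estimate $\e\,\hth^2(\beta J)\le x/N$ and the convergence of $\sum_k x^k/(2k)$, which I would not belabor.
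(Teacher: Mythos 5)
Your first half matches the paper exactly: the orthogonality collapse $\e\, w(\Gamma_1)w(\Gamma_2)=0$ for $\Gamma_1\ne\Gamma_2$ (via a lone factor $\e\,\hth(\beta J_e)=0$), the estimate $\e\,\hth^2(\beta J)\le (\beta/\beta_\alpha)^\alpha/N$, the cycle count $N^k/(2k)$ cancelling the $N^{-k}$, and the reduction to an exponential generating function in $x=(\beta/\beta_\alpha)^\alpha$. The gap is in the one step you yourself flag as the subtlety: the "peel off $x^m$ and sum the leftover freely" move is not valid as written. After factoring $\prod_j c_{k_j}=x^m\cdot x^{\sum k_j-m}\prod_j(2k_j)^{-1}$, you have two options for the leftover, and both fail. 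If you keep the factor $x^{\sum k_j-m}$ and drop the constraint $\sum k_j\ge m$, the leftover is exactly $x^{-m}\prod_j c_{k_j}$, so its free sum is $x^{-m}\exp\bigl(\sum_{k\ge3}x^k/(2k)\bigr)$ and the $x^m$ you peeled off cancels — you get only a constant, with no decay in $m$. If instead you bound $x^{\sum k_j-m}\le 1$ on the constrained set and then drop the constraint, the leftover free sum contains $\sum_{k\ge3}(2k)^{-1}=\infty$. Either way the displayed chain $\sum_{r\ge1}\frac1{r!}\bigl(\sum_{k\ge3}\frac{x^k}{2k}\bigr)^r\le\exp(\cdots)$ does not carry the factor $x^m$ with it, so the claimed conclusion $C\,x^m$ does not follow from the argument given.

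This is precisely where the paper's proof does its real work, and why the statement carries the factor $e^{\sqrt{2m}}$ rather than being "cruder slack": one inserts $e^{\delta(|\Gamma|-m)}\ge1$ with a tilt $\delta=\delta(m)$ chosen so that $e^\delta x=1-1/(2\sqrt m)$. Then the free sum $\exp\bigl(\sum_{k\ge3}(xe^\delta)^k/(2k)\bigr)$ converges because $xe^\delta<1$, costing $\exp(\sqrt m/3)$, while $e^{-\delta m}\le x^m e^{3\sqrt m/4}$; combining gives $x^m e^{\sqrt{2m}}$. (A fixed, $m$-independent tilt $y\in(x,1)$ would only yield $C(y)\,(x/y)^m$, which is weaker than $x^m$.) Your sharper claim $S_m\le C x^m$ is in fact true, but seeing it requires more than the free-summation step — e.g.\ the closed form $\exp\bigl(\sum_{k\ge3}z^k/(2k)\bigr)=(1-z)^{-1/2}e^{-z/2-z^2/4}$, whose Taylor coefficients are uniformly bounded, so the tail $\sum_{n\ge m}a_nx^n\le C x^m/(1-x)$. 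Either supply such an argument or adopt the paper's $\delta(m)$-tilting; as it stands the key quantitative estimate is unproved.
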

\begin{proof}
	Since  the weights attached to distinct edges are independent, $\e w(\Gamma) w(\Gamma')  = 0$ if $\Gamma \ne \Gamma'$. Therefore, 
	we have \[\e \Bigl(\sum_{\partial \Gamma = \emptyset, |\Gamma|\geq m} w(\Gamma)  \Bigr)^2 = \sum_{\partial \Gamma = \emptyset, |\Gamma|\geq m} \e w(\Gamma)^2. \]
	For sufficiently large $m$, let $\delta=\delta(m) >0$ be such that $e^\delta (\beta/\beta_{\alpha})^{\alpha} = 1-1/(2\sqrt{m})$.
	
 We claim that for any $N\ge m$, 
	\begin{equation}\label{eq:cycle_claim}
	\sum_{\gamma: |\gamma|=m} \e w(\gamma)^2 \le \frac{1}{2m}(\beta/\beta_{\alpha})^{\alpha m}, \quad \forall m\ge 1.
	\end{equation}
	Indeed, there are $$ \frac{N!}{(N-m)! (2m)}$$ many cycles of size $m$, and we obtain our claim from  \eqref{freeenergy:eq2}, since
	  \begin{align*}
		& \sum_{\gamma : |\gamma|=m} \e w(\gamma)^2 = \frac{N(N-1)\cdots(N-m+1)}{2m} \bigl(\e (\hth \beta J)^2\bigr)^m
		\\&= \frac{1}{2m} \prod_{i=1}^{m-1}  \Bigl(1-\frac{i}{N}\Bigr) \cdot \bigl( N\e \hth^2(\beta J) \bigr)^m \le \frac{1}{2m} (\beta/\beta_{\alpha})^{\alpha m}.
	\end{align*}

%Observe that any closed graphs can be represented (though not necessarily unique) as a disjoint union of loops.
%Denoting an arbitrary simple loop by $\gamma$, by the claim from the previous paragraph,
From the claim \eqref{eq:cycle_claim}, we derive that 
	\begin{align*}
		\sum_{\partial \Gamma = \emptyset, |\Gamma|\geq m} \e w(\Gamma)^2 &\leq \sum_{\partial \Gamma = \emptyset, |\Gamma|\geq m} e^{\delta |\Gamma| -\delta m}\e w(\Gamma)^2 
		\\
  &\leq  e^{-\delta m} \prod_{\gamma} \bigl(1+e^{\delta|\gamma|} \e w(\gamma)^2 \bigr) \\
  &=  e^{-\delta m} \prod_{k\leq N}\prod_{|\gamma|=k} \bigl(1+e^{\delta|\gamma|} \e w(\gamma)^2 \bigr) \leq e^{-\delta m} \exp\Bigl(\sum_{k\geq 3}\frac{( \beta/\beta_{\alpha})^{\alpha k}  e^{\delta k}}{2k} \Bigr),
	\end{align*}
 where the last inequality used $1+x\leq e^x$ for all $x>0.$
Now, we conclude that  
	\begin{align*}
e^{-\delta m} \exp\Bigl(\sum_{k\geq 3}\frac{( \beta/\beta_{\alpha})^{\alpha k}  e^{\delta k}}{2k} \Bigr)
&\leq \frac{( \beta/\beta_\alpha)^{\alpha m}}{(1-1/(2\sqrt{m}))^m}\exp\Bigl(\frac{1}{6(1-( \beta/\beta_\alpha)^\alpha e^\delta)}\Bigr)\\
		&=( \beta/\beta_{\alpha})^{\alpha m} \exp\Bigl( -m \ln (1-1/(2\sqrt{m})) +\frac{\sqrt{m}}{3}\Bigr)\\
		 &\le  ( \beta/\beta_{\alpha})^{\alpha m}  e^{\sqrt{2m}}, 
	\end{align*}
 where the first inequality used $\sum_{k\geq 3}x^k/(2k)\leq 6^{-1}(1-x)^{-1}$ for $0<x<1$ and the last inequality used  $\ln (1 - x)\ge -3x/2$ for $ 0 \le  x \le 1/2$.
 This completes our proof.
\end{proof}
%The following lemma can also be found in \cite{aizenman}. 
Denote 
\[ B_{N,m}\colonequals \{\Gamma: \partial \Gamma = \emptyset, \text{ $\Gamma$ is a union of cycles of size at most $m$}\}.\]
For $N \ge 1$ and $m \ge 1$, let us define 
	\[
	\widehat X_N = 2^{-N} \widehat Z_N = \sum_{\partial \Gamma = \emptyset} w(\Gamma)  \,\,\mbox{and}\,\, \widehat X_{N, m} = \sum_{ \Gamma \in B_{N, m}} w(\Gamma).  \]
Note that each $w(\Gamma)$ in $\widehat X_N-\widehat X_{N,m}$ is a summation, in which $w(\Gamma)$ with  $\Gamma$ satisfying $\partial\Gamma=\emptyset$ and being of size at least $m$. Therefore, from Theorem \ref{Bigraphdie},
\begin{equation}\label{eq:m_approx}
 \lim_{m\to\infty}\sup_N  \e ( \widehat X_N  -  \widehat X_{N, m})^2 =0.
 \end{equation}
For $\eps >0$, 	
set
%Define $\overline{B}_{N,m} = B_{N,m}  \setminus \{\Gamma: \partial \Gamma=\emptyset, |\Gamma|\leq m\} $.
\[ \widehat X_{N, m, \eps } = \sum_{\Gamma \in B_{N, m} } w_\eps(\Gamma) \]
for $w_\eps(\Gamma):= \prod_{e\in E(\Gamma)}\hth(\beta J_{e})\1_{\{|J_e| > \epsilon\}}.$
\begin{lemma}\label{lem:fixed_m_approx}
For any $\beta>0$ and any each fixed $m \ge 1$, 
\[ \lim_{\eps\downarrow 0}\sup_{N\ge 3} \e ( \widehat X_{N, m, \eps } -  \widehat X_{N, m })^2=0.\]
\end{lemma}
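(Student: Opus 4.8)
The goal is to show that truncating the edge weights at level $\eps$ inside the finite-sum approximation $\widehat X_{N,m}$ costs nothing in $L^2$ uniformly in $N$, as $\eps \downarrow 0$, for each fixed $m$. The natural approach is to expand the difference $\widehat X_{N,m,\eps} - \widehat X_{N,m}$ as a sum over graphs $\Gamma \in B_{N,m}$ of $w_\eps(\Gamma) - w(\Gamma)$, and to exploit the fact that distinct graphs have independent weights so that cross terms vanish in expectation. More precisely, I would first write
\[
\e\bigl(\widehat X_{N,m,\eps} - \widehat X_{N,m}\bigr)^2 = \sum_{\Gamma,\Gamma' \in B_{N,m}} \e\bigl[(w_\eps(\Gamma)-w(\Gamma))(w_\eps(\Gamma')-w(\Gamma'))\bigr],
\]
and observe that a term is nonzero only if $\Gamma$ and $\Gamma'$ share at least one edge (since the factor associated with an edge $e$ appearing in exactly one of $\Gamma,\Gamma'$ has mean $\e[\hth(\beta J_e)\1_{\{|J_e|>\eps\}}] = 0$ by symmetry, or $\e[\hth(\beta J_e)] = 0$). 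This is the usual orthogonality trick from the cluster expansion, already used in the proof of Lemma \ref{Bigraphdie}.

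\textbf{Key steps.} The main estimate is pointwise: for a single graph $\Gamma$ with edge set $E(\Gamma)$, bound $\e[(w_\eps(\Gamma)-w(\Gamma))^2]$. Writing $t_e = \hth(\beta J_e)$ and $t_e^\eps = \hth(\beta J_e)\1_{\{|J_e|>\eps\}}$, one has $|t_e|, |t_e^\eps| \le 1$ and $t_e - t_e^\eps = \hth(\beta J_e)\1_{\{|J_e|\le \eps\}}$, so by a telescoping/triangle-inequality argument over the at most $m|\Gamma_{\max}|$ edges (each cycle has length at most $m$, and $\Gamma$ is a union of at most, say, $N$ cycles — but actually only those graphs contributing matter),
\[
\bigl|w(\Gamma) - w_\eps(\Gamma)\bigr| \le \sum_{e\in E(\Gamma)} \prod_{e'\ne e}|t_{e'}| \cdot \1_{\{|J_e|\le\eps\}} \le \sum_{e\in E(\Gamma)} \1_{\{|J_e|\le\eps\}}.
\]
Squaring and taking expectations, using $\e\hth^2(\beta J)\1_{\{|J|\le\eps\}} \le \e \min(\beta^2 J^2, 1)\1_{\{|J|\le\eps\}}$ together with the density \eqref{density of J} (which gives a bound of order $N^{-1}$ times a quantity that is $o_\eps(1)$ as $\eps\downarrow 0$, since $\alpha\int_{N^{-1/\alpha}}^{\eps} \hth^2(\beta x)x^{-1-\alpha}\,dx \le \alpha\int_0^\eps \min(\beta^2 x^2,1) x^{-1-\alpha}\,dx \to 0$), one gets that the per-edge contribution is $\le \rho(\eps)/N$ with $\rho(\eps) \to 0$. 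Then I would combine this with a counting bound: the number of graphs $\Gamma \in B_{N,m}$ containing a given edge, or more precisely the full double sum over $\Gamma,\Gamma'$ sharing an edge, is controlled exactly as in Lemma \ref{Bigraphdie} — since $m$ is \emph{fixed}, the relevant graphs have boundedly many edges and the generating-function bound $\prod_\gamma(1 + e^{\delta|\gamma|}\e w(\gamma)^2) \le \exp(\sum_{k\ge 3} (\beta/\beta_\alpha)^{\alpha k} e^{\delta k}/(2k))$ shows $\e \widehat X_{N,m}^2$ is bounded uniformly in $N$; a nearly identical computation with one edge-weight replaced by a light-tailed factor of size $O(\rho(\eps)/N)$ instead of $O(1/N)$ yields an overall bound $C_m\,\rho(\eps)$.

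\textbf{Main obstacle.} The routine-but-delicate part is bookkeeping the double sum over pairs $(\Gamma,\Gamma')$ that share edges without losing uniformity in $N$: naively a graph in $B_{N,m}$ can have order $N$ edges (many disjoint cycles), and one must check that the $L^2$ mass is not dominated by such large graphs. The cleanest route is to note that $\widehat X_{N,m,\eps}-\widehat X_{N,m}$ is itself, after expansion, a sum of "mixed" graph weights where at least one edge carries the difference factor $\hth(\beta J_e)\1_{\{|J_e|\le\eps\}}$; grouping these by their cycle decomposition and running the Aizenman–Lebowitz–Ruelle style product bound — with the modification that each cycle containing a "difference edge" contributes a factor $O(\rho(\eps))$ smaller — gives $\e(\widehat X_{N,m,\eps}-\widehat X_{N,m})^2 \le \exp(\text{const})\cdot O(\rho(\eps))$ uniformly in $N$, and then letting $\eps\downarrow 0$ finishes the proof. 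I expect no genuine difficulty beyond reproducing the Lemma \ref{Bigraphdie} bookkeeping with this extra small factor tracked through.
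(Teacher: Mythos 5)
Your proposal follows essentially the same route as the paper: orthogonality across distinct graphs (cross terms vanish because any edge in the symmetric difference carries an odd, mean-zero factor), a per-graph estimate showing each truncated edge contributes $\e[\hth^2(\beta J)\1_{\{|J|\le\eps\}}]=O(\eps^{2-\alpha}/N)$ while every other edge still contributes $\e\hth^2(\beta J)=O(1/N)$, and the Aizenman--Lebowitz--Ruelle product bound over cycles of length at most $m$ to make the sum over $B_{N,m}$ uniform in $N$. One caution: your displayed inequality $|w(\Gamma)-w_\eps(\Gamma)|\le\sum_{e}\1_{\{|J_e|\le\eps\}}$ discards the factors $\prod_{e'\ne e}|t_{e'}|$ and is useless as written, since $\p(|J_e|\le\eps)\to 1$ and the right-hand side has expectation of order $|E(\Gamma)|$; you must retain those factors (as your subsequent prose indicates) so that after squaring and Cauchy--Schwarz a graph of size $k$ contributes $O(k^2\theta^k\eps^{2-\alpha}N^{-k})$. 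The paper obtains the same per-graph bound slightly more cleanly via the exact identity $\e(w(\Gamma)-w_\eps(\Gamma))^2=\e w(\Gamma)^2-\e w_\eps(\Gamma)^2=(\e\hth^2(\beta J))^k-(\e\hth^2(\beta J)\1_{\{|J|>\eps\}})^k$ combined with $(x+y)^k-x^k\le k(x+y)^{k-1}y$.
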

\begin{proof}
Throughout this proof, to lighten up the notation, we will always assume that $\partial \Gamma = \emptyset$ without explicitly mentioning it.   Using orthogonality as before, the expectation in the lemma equals $\sum_{\Gamma \in B_{N,m}} \e  (w(\Gamma) - w_\eps(\Gamma) )^2$.  
%We have  the decomposition
%\[ \hth (\beta J)^2 =  \hth (\beta J)^2 \1_{ \{ |J| \le \eps \} } +  \hth (\beta J)^2  \1_{ \{ |J| > \eps \} }   \]
For a fixed $\Gamma$ with $| \Gamma| = k$, 
\[  \e  \bigl(w(\Gamma) - w_\eps(\Gamma) \bigr)^2 = \e  w(\Gamma)^2 - \e w_\eps(\Gamma)^2 = \bigl( \e  \hth^2 (\beta J) \bigr)^k - \bigl( \e  \hth^2 (\beta J) \1_{ \{ |J| > \eps \} }  \bigr)^k . \]
Similar to \eqref{freeenergy:eq2}, 
\[  N \e \hth^2 (\beta J) \1_{ \{ |J| \le \eps \} }  \uparrow \frac{\eps^{2 - \alpha} \alpha \beta^2}{2-\alpha}. \]
Using this and the inequality $(x+y)^k  - x^k \le k (x+y)^{k-1} y$ for any $x, y \ge 0$, we bound the difference above as
 \begin{equation*}
N^k\e  \bigl(w(\Gamma) - w_\eps(\Gamma) \bigr)^2 \le k      \bigl( N\e  \hth^2 (\beta J) \bigr)^{k-1}   N \e  \hth^2 (\beta J) \1_{ \{ |J| \le \eps \} } \le  C \theta^k\eps^{2 - \alpha}   \end{equation*}
for some suitable constants $C$ and $\theta$ that only depend on $\alpha$ and $\beta$. 
	Since any graph in $B_{N, m}$ can be represented as a disjoint union of cycles of size at most $m$, and there are at most $N^k$ many cycles  of size $k$,  we obtain
\begin{align*}
\sum_{ \Gamma \in B_{N,m}} \e ( w(\Gamma) - w_\eps(\Gamma))^2 &\le C' \eps^{2 - \alpha} \prod_{| \gamma | \le m} \big ( 1+  \theta^{ |\gamma|}  N^{-|\gamma|} \big ) \\
&\le C' \eps^{2 - \alpha} \prod_{| \gamma | \le m} \exp \big ( \theta^{ |\gamma|}  N^{-|\gamma|} \big) 
\le C' \eps^{2 - \alpha}  \exp \Big ( \sum_{k=3}^m \theta^k   \Big)
\end{align*}  for another constant $C'>0$ that only depends on $\alpha$ and $\beta.$
Letting $\eps \to 0$ finishes the proof of the lemma. 
	\end{proof}

% {\color{blue}For $\eps>0$, from \eqref{a_N}, \begin{equation}
%     J\1_{\{| J|>\eps\}}\stackrel{d}{=}Bg_{\eps,N}, \label{eq: definition of g,B}
% \end{equation}  where $B\sim \mathrm {Ber}((1+o(1))\eps^{-\alpha}/N)$ and $g_{\eps,N}$ has a symmetric distribution with the tail probability
% $\p(|g_{\eps,N}| > x) = (\eps/x)^\alpha L(a_N x)/L(a_N \eps),\  x > \eps$, and they are independent. It is easy to see that  $g_{\eps,N}\stackrel{d}{\to}g_\eps$ as $N\to\infty$, where $g_\eps$ has a density }
% \begin{equation}
%     2^{-1}\alpha \eps^{\alpha} |x|^{-(1+\alpha)}  \1_{\{|x|\geq \eps\}}.\label{eq:g_eps density}
% \end{equation} 

	We now proceed to show that for fixed $m$ and $\eps$, $\widehat X_{N, m, \eps}$ converges in distribution as $N \to \infty$. 
 First, observe that the set $E_{N,\eps}$ of edges $e$ with $|J_e| > \eps$ forms an Erd\H{o}s-R\'enyi graph $G_{N, \eps^{-\alpha}/N}$. Let $\mathcal{A}_{N,m}$ be the event that all cycles of size at most $m$ in $E_{N,\eps}$ are vertex disjoint.
	Observe that on the event $\mathcal{A}_{N,m}$, any  $\Gamma \in B_{N, m}$, satisfying $|J_e| > \eps$ for each edge $e\in E(\Gamma)$, can be represented uniquely as the union of these vertex disjoint cycles of size at most $m$ in $E_{N,\eps}$.
	Therefore, \[\1_{\mathcal{A}_{N,m}}\sum_{\Gamma \in \mathcal B_{N,m}} w_\eps(\Gamma) =\1_{\mathcal{A}_{N,m}} \prod_{|\gamma|\leq m}\Bigl(1+\prod_{e\in E(\gamma)}\hth(\beta J_{e}) \1_{\{|J_e|> \eps\}}\Bigr).\]
On the other hand, note that $\hth(\beta J) \1_{\{|J|\geq \epsilon\}} \stackrel{d}{=} \hth(\beta g) B$, where $g$ has a density 
\begin{equation}
    2^{-1}\alpha \eps^{\alpha} |x|^{-(1+\alpha)}  \1_{\{|x|\geq \eps\}},\label{eq:g_eps density}
\end{equation} $B$ is $\mathrm{Ber}(\eps^{-\alpha}/N)$, and they are independent of each other.
We can further write \[\1_{\mathcal{A}_{N,m}}\sum_{\Gamma \in \mathcal A_{N,m}} w_\eps(\Gamma) \stackrel{d}{=}\1_{\mathcal{A}_{N,m}} \prod_{|\gamma|\leq m}\Bigl(1+\prod_{e\in E(\gamma)}\hth(\beta g_{\gamma,e}) \Bigr).\]
Here, $(g_{\gamma,e})_{\gamma,e}$ are i.i.d. copies of $g$ and independent of other randomness. Lemma \ref{vertexdisjoint} implies that $\1_{\mathcal{A}_{N,m}}\to 1$ in probability as $N\to\infty$. 
	Hence,  for fixed $m$ and $\eps$, as $N\to\infty$, $\widehat X_{N, m, \eps}$ shares the same weak limit as that of
$$ Y_{N, m, \eps} := \exp\Bigl(\sum_{k=3}^m   \sum_{ \gamma \in E_{N,\eps}, |\gamma| = k}\ln \Bigl(1+\prod_{e\in E(\gamma)}\hth(\beta g_{\gamma,e}) \Bigr)\Bigr).$$ 
Recall that $E_{N,\eps}$ is an Erd\H{o}s-R\'enyi graph $G_{N, \eps^{-\alpha}/N}$.
Lemma \ref{cyclepoisson}  implies that $\ln Y_{N,m,\eps}$, as $N \to \infty$,  converges in distribution to the sum of $(m-2)$-many independent compound Poisson random variables, whose characteristic function is given by 
\begin{equation}\label{eq:charac1}
\prod_{k=3}^m \exp\Bigl(\frac{\eps^{-\alpha k}}{2k} \bigl(\e e^{it W_k}-1\bigr)\Bigr),
\end{equation}
	where the compound variable $W_k$ satisfies $W_k \stackrel{d}{=} \ln \bigl(1+\prod_{i\leq k}\hth(\beta g_i)\bigr)$ and $(g_i)_i$ are i.i.d.\ with distribution \eqref{eq:g_eps density}. A direct computation gives that 
	\begin{align*}
		\e e^{it W_k}-1 = \frac{\alpha^k\eps^{\alpha k}}{2^k}\int_{ I_{k, \eps}}\Bigl( e^{it \ln \bigl(1+\prod_{i\leq k}\hth(\beta x_i)  \bigr)} -1\Bigr)  \frac{1}{\prod_{i\leq k} |x_i|^{1+\alpha}}dx_1\dots dx_k,
	\end{align*}
	where $ I_{k, \eps} = \{ x \in \mathbb{R}^k: |x_i|\geq\epsilon, 1 \le i\leq k \}$. Plugging this into \eqref{eq:charac1}, we deduce that 
 $\ln Y_{N, m, \eps}$ converges in distribution to a random variable, say $\ln \widehat X_{m, \eps},$ with the characteristic function \[\prod_{k=3}^m \exp\Big(\frac{\alpha ^k}{k 2^{k+1}} \int_{I_{k, \eps}}\Bigl( e^{it \ln \bigl(1+\prod_{i\leq k}\hth(\beta x_i)  \bigr)} -1\Bigr)\frac{1}{\prod_{i\leq k} |x_i|^{  1+\alpha}}dx_1\dots dx_k\Big).\]
Up to here, we have shown that $\widehat X_{N, m, \eps } \stackrel{d}{\to} \widehat X_{m, \eps}$ as $N \to \infty$.
 
	Since $x\mapsto \hth(\beta x) $ is an odd function, the characteristic function of $\ln \widehat X_{m, \eps}$   is equal to \[\prod_{k=3}^m \exp\Big(\frac{\alpha ^k}{k 2^{k+1}} \int_{I_{k, \eps} } \Bigl(e^{it \ln \bigl(1+\prod_{i\leq k}\hth(\beta x_i)\bigr)} -1- it\prod_{i\leq k}\hth(\beta x_i) \Bigr) \frac{1}{\prod_{i\leq k} |x_i|^{1+\alpha}}dx_1\dots dx_k\Big),\]
	which converges as $\eps \to 0$ to
	\begin{equation}\label{eq:charac2}
	\prod_{k=3}^m \exp\Big(\frac{\alpha ^k}{k 2^{k+1}} \int_{\mathbb{R}^k } \Bigl(e^{it \ln \bigl(1+\prod_{i\leq k}\hth(\beta x_i)\bigr)} -1- it\prod_{i\leq k}\hth(\beta x_i) \Bigr) \frac{1}{\prod_{i\leq k} |x_i|^{1+\alpha}}dx_1\dots dx_k\Big).
	\end{equation}
The finiteness of the integral above near the origin is guaranteed by  $e^{it\ln(1+u)}-1-itu =O(u^2)$ as $u\to0$ and $|\tanh( x)| \le |x|$.   As a result, we have just shown that $\widehat X_{m, \eps} \stackrel{d}{\to} 
\widehat X_m$ for some random variable such that the characteristic function of $\ln  \widehat X_{m}$ is as given in \eqref{eq:charac2}. 
 By Lemma \ref{cut} and Lemma~\ref{lem:fixed_m_approx}, we deduce that  for each fixed $m$, $\widehat X_{N, m} \stackrel{d}{\to} \widehat X_m$  as $N \to \infty$. Another application of Lemma \ref{cut}, along with \eqref{eq:m_approx}, implies that the weak limit of $\widehat X_N$ exists and is the same as the weak limit of $\widehat X_m$. To identify the weak limit of $\widehat X_m$, we take the limit of the characteristic function of its logarithm as  given in   \eqref{eq:charac2}  and  realize that limit matches  with the characteristic function of $\ln X_{\alpha, \beta}$ as described in Theorem~\ref{fluctuation hat Z}. 
 This completes the proof of the theorem. 
 
\section{Establishing the Overlap Concentration}

This section is devoted to establishing the concentration of the overlaps in the L\'evy model at high temperature for $ 1 < \alpha < 2$ stated in Theorem~\ref{hightempoverlap}. We first study the site overlap and then the bond overlap.

\subsection{Proof of Equation (\ref{siteoverlapzero}) in Theorem~\ref{hightempoverlap}}\label{subsec:add2}

We need the following key proposition.

\begin{proposition}\label{prop1'}
% \label{prop1'}
	Assume $1<\alpha<2$.
	For any $\beta>0$, 
 there exists a constant $C>0$ independent of $N$ such that for any $N\geq 1,$
	% \begin{align*}
	% 	\Bigl|\e\frac{1}{N}\sum_{i<j}J_{ij}\la \sigma_i\sigma_j\ra-\frac{\gamma_1}{2}\bigl(1- \e\bigl\la R_L^2\bigr\ra\bigr)\Bigr|\leq \frac{C}{(\ln N)^{\alpha-1}},
	% \end{align*}\begin{equation}\label{eq: generalized: prop1'}
	\begin{equation*}\Bigl|\e\frac{1}{N}\sum_{i<j} J_{ij}\la \sigma_i\sigma_j\ra-\frac{c_1}{2}\bigl(1- \e\bigl\la R_{ L}^2\bigr\ra\bigr)\Bigr| \le \frac{C}{(\ln N)^{\alpha-1}},
\end{equation*}
where for $\ell\geq 1$, $c_\ell:=N\e  J\tanh^\ell(\beta  J)>0$ 
% with $\lim_{N\to\infty}c_\ell = \gamma_\ell$ 
and, independent of all other randomness,
	$L$ is a random variable with the probability mass function,
\begin{equation*}
% \label{eq: L_pmf}
    \p(L=2k)=\frac{c_{2k-1}-c_{2k+1}}{c_1}, \quad \forall k\ge 1.
\end{equation*}
	% where $\gamma_1 = \alpha\int_{0}^\infty \tanh(\beta x)x^{-\alpha}dx$ 
 % and, independent of all other randomness,
	% $L$ is a random variable with the probability mass function 
	% \begin{equation} \label{eq:M_pmf}
	% \Prob(L=2k)=\frac{\alpha \int_0^\infty \tanh^{2k}(\beta x)x^{-(\alpha+1)}dx}  {2k\beta \int_{0}^\infty \tanh(\beta x)x^{-\alpha} dx},\,\,\forall k\geq 1.
	% \end{equation}
\end{proposition}

\begin{proof}
In this proof, we denote by $C_0,C_1,C_2,\ldots$ positive constants independent of $N.$ 
For $i<j,$ let  $\widehat{Z}_N^{(ij)}$ and $\la \cdot\ra_{ij}$ be the partition function and Gibbs expectation associated to the system, for which the disorder variable $J_{ij}$ is set to zero. Then we have
	\[ \widehat Z_N =  \big  \la 1+ \sigma_i \sigma_j \tanh(\beta J_{ij})  \big  \ra_{ij} \widehat Z_N^{(ij)},  \quad \la \sigma_i \sigma_j \ra \widehat Z_N =  \big  \la \sigma_i \sigma_j (1+ \sigma_i \sigma_j \tanh(\beta J_{ij}) )  \big  \ra_{ij} \widehat Z_N^{(ij)},\]
which implies that
	%Recall that
%$e^{\beta J_{ij}\sigma_i\sigma_j}=\cosh(\beta J_{ij})\bigl(1+\sigma_i\sigma_j\tanh(\beta J_{ij})\bigr)$. 	Consequently,
%%	such that
%%	\begin{align*}
%%		\sigma_{i}\sigma_je^{\beta J_{ij}\sigma_i\sigma_j}&=\cosh(\beta J_{ij})\bigl(\sigma_{i}\sigma_j+\tanh(\beta J_{ij})\bigr).
%%	\end{align*}
%\begin{align*}
%		\sum_{\sigma}\sigma_i\sigma_je^{-\beta H_N(\sigma)}&=\Bigl[\prod_{a<b }\cosh(\beta J_{ab})\Bigr]\\
%		&\quad\Bigl[\sum_{\sigma}\Bigl(\prod_{a<b:(a,b)\neq (i,j)}\bigl(1+\sigma_a\sigma_b\tanh(\beta J_{ab})\bigr)\Bigr)\Bigl(\sigma_i\sigma_j+\tanh(\beta J_{ij})\Bigr)\Bigr]
%	\end{align*}
%	and
%	\begin{align*}
%		\sum_{\sigma}e^{-\beta H_N(\sigma)}&=\Bigl[\prod_{a<b}\cosh(\beta J_{ab})\Bigr]\\
%		&\quad\Bigl[\sum_{\sigma}\Bigl(\prod_{a<b:(a,b)\neq (i,j)}\bigl(1+\sigma_a\sigma_b\tanh(\beta J_{ab})\bigr)\Bigr)\Bigl( 1+\sigma_i\sigma_j\tanh(\beta J_{ij}) \Bigr)\Bigr].
%	\end{align*}
%	From these, we obtain that 
	\begin{align}\label{overlap:eq1}
		\la \sigma_i\sigma_j\ra&=\frac{\la \sigma_i\sigma_j\ra_{ij}+\tanh(\beta J_{ij})}{1+\la \sigma_i\sigma_j\ra_{ij}\tanh(\beta J_{ij})}=f\bigl(\la \sigma_i\sigma_j\ra_{ij},\beta J_{ij}\bigr),
	\end{align}
 where for any $a\in (-1,1)$ and $x\in \mathbb{R},$
 $$
 f(a,x):=\frac{a+\tanh(x)}{1+a\tanh(x)}.
 $$
To handle this objective, note that for any $a\in (-1,1)$ and $x\in \mathbb{R},$ 
	\begin{align}
 \label{eq3.1}|f(a,x)|&< 1,\\
	\label{eq3}	|f(a,x)-a|&\leq 2|\tanh(x)|,
\end{align}
and
\begin{align}\label{eq3.2}			
\partial_af(a,x)
	&=\frac{1-\tanh(x)^2}{(1+a\tanh(x))^2}\leq \frac{2}{1-|\tanh(x)|}.
	\end{align}
	From \eqref{eq3},
	\begin{equation}\label{cavityineq}
		\e\bigl|\la\sigma_i\sigma_j\ra-\la\sigma_i\sigma_j\ra_{ij}\bigr|\leq 2\e|\tanh(\beta J_{ij})|\leq 2\e\beta|J_{ij}|
		% =C_0 N^{ - 1/\alpha}. 
    =2\beta N^{-1/\alpha}\e|X|.
	\end{equation}
	From \eqref{overlap:eq1}, if we let $\la\cdot \ra_{ij}'$ be  the Gibbs expectation $\la \cdot\ra$, in which $J_{ij}$ is replaced by an independent copy $J_{ij}'$, then 
	\begin{align}\label{overlap:eq3}
		\e\bigl|		\la \sigma_i\sigma_j\ra_{ij}'- \la \sigma_i\sigma_j\ra_{ij}\bigr|\leq 2 \beta N^{-1/\alpha}\e|X|.
  % C_0 N^{ - 1/\alpha}.
	\end{align}
	From \eqref{eq3.1} and \eqref{eq3.2}, we have that for any $x \in \mathbb{R}$, 
\begin{align*}
	\Psi_{ij} (x) := 	\bigl|f\bigl(\la \sigma_i\sigma_j\ra_{ij},\beta x\bigr)-f\bigl(\la \sigma_i\sigma_j\ra_{ij}',\beta x\bigr)\bigr|
		&\leq \min\Bigl(\frac{2\bigl|\la \sigma_i\sigma_j\ra_{ij}-\la \sigma_i\sigma_j\ra_{ij}'\bigr|}{1-|\tanh(\beta x)|},2\Bigr).
	\end{align*}
	Consequently, for any $K>N^{-1/\alpha},$
	\begin{align*}
	\e\bigl[|J_{ij}|  \Psi_{ij}(J_{ij}) ;|J_{ij}|\leq K\bigr] &\leq 2\e\Bigl[\frac{|J_{ij}|}{1-|\tanh(\beta J_{ij})|};|J_{ij}|\leq K\Bigr]\e\bigl|\la \sigma_i\sigma_j\ra_{ij}-\la \sigma_i\sigma_j\ra_{ij}'\bigr|, \\
	\e\bigl[|J_{ij}| \Psi_{ij}(J_{ij});|J_{ij}|\geq K\bigr]
		&\leq 2\e\bigl[|J_{ij}|;|J_{ij}|\geq K\bigr].
		\end{align*}
		 A simple computation yields 
   % that  $\e\bigl[|J|;|J|\geq K\bigr] \le \alpha  (\alpha-1)^{-1} K^{1-\alpha} N^{-1} $ and 
		% \[  \e\Bigl[\frac{|J|}{1-|\tanh(\beta J)|} ;|J|\leq K\Bigr] \le \frac{\alpha e^{2\beta K}}{2(\alpha-1)N^{1/\alpha}} .\]
\[  \e\Bigl[\frac{| J|}{1-|\tanh(\beta  J)|} ;| J|\leq K\Bigr] \le e^{2\beta K} \frac{\e| X|}{N^{1/\alpha}}, \quad \e[|J|;|J|\ge K]\le  \frac{\alpha K^{-\alpha+1}}{(\alpha-1)N}.
\]
	These bounds, coupled with the estimate \eqref{overlap:eq3}, imply that 
	\begin{align*}
\frac{1}{N}\sum_{i<j}\e\Bigl[|J_{ij}| \Psi_{ij}(J_{ij})\Bigr]
		&\leq \frac{ C_0 e^{2\beta K}}{N^{-1+2/\alpha}}+C_0 K^{-\alpha+1}.		
  % \frac{1}{N}\sum_{i<j}\e\Bigl[|J_{ij}| \Psi_{ij}(J_{ij})\Bigr]
		% &\leq \frac{ 2\beta (\e |X|)^2e^{2\beta K}}{N^{-1+2/\alpha}}+N\frac{\alpha K^{-\alpha+1}}{\alpha-1}.
  % \frac{C_0\alpha e^{2\beta K}}{2(\alpha-1)}\frac{1}{N^{2/\alpha-1}}+\frac{\alpha}{\alpha-1}K^{1-\alpha}.
	\end{align*}
Letting $K=(4\beta)^{-1}(2/\alpha-1)\ln N$ yields that
	\begin{align}\label{overlap:eq4}
		\e\Bigl|\frac{1}{N}\sum_{i<j}J_{ij}\la \sigma_i\sigma_j\ra-\frac{1}{N}\sum_{i<j}J_{ij}f\bigl(\la \sigma_i\sigma_j\ra_{ij}',\beta J_{ij}\bigr)\Bigr|\leq \frac{C_1}{(\ln N)^{\alpha-1}}.
	\end{align}

%  For fixed $K,$ the first vanishes as $N\to\infty$ since $1<\alpha<2$, and
%  % by \eqref{order of a_N} and from Lemma \ref{expectationlemma}$(ii)$, 
%  the second term is bounded by $\alpha (\alpha-1)^{-1} K^{-\alpha+1}$. Therefore, we can send $K\to\infty$ to obtain
% \begin{equation}\label{overlap:eq4}
% 		\limsup_{N\to\infty}\e\Bigl|\frac{1}{N}\sum_{i<j} J_{ij}\la \sigma_i\sigma_j\ra-\frac{1}{N}\sum_{i<j} J_{ij}f\bigl(\la \sigma_i\sigma_j\ra_{ij}',\beta  J_{ij}\bigr)\Bigr|=0.
% \end{equation}

	Next, we compute
	\[		\frac{1}{N}\sum_{i<j}\e\bigl[J_{ij}f\bigl(\la \sigma_i\sigma_j\ra_{ij}',\beta J_{ij}\bigr)\bigr]
        \]
	with the help of the fact that $\la \cdot \ra_{ij}'$ is independent of $J_{ij}$. 
	Using the geometric series, we  write
	\[ f(a,x) =a-\sum_{\ell=1}^\infty (-1)^\ell (a^{\ell-1}-a^{\ell+1})\tanh(x)^{\ell}.\]
	Therefore,
	\[J_{ij}f\bigl(\la \sigma_i\sigma_j\ra_{ij}',\beta J_{ij}\bigr)
		= J_{ij}\la \sigma_i\sigma_j\ra_{ij}'-\sum_{\ell=1}^\infty(-1)^\ell \bigl( {\la \sigma_i\sigma_j\ra_{ij}'}^{\ell-1}- {\la \sigma_i\sigma_j\ra_{ij}'}^{\ell+1}\bigr)J_{ij}\tanh(\beta J_{ij})^{\ell}.
	\]
		
	Note that \[
		\sum_{\ell=1}^\infty \bigl|(a^{\ell-1}-a^{\ell+1})x\tanh(x)^{\ell}\bigr|\le |x|(1-a^2) \sum_{\ell=1}^\infty |a|^{\ell-1} = |x|(1+|a|)\le 2|x|,
	\]
	so \[\sum_{\ell=1}^\infty \e \bigl| \bigl( {\la \sigma_i\sigma_j\ra_{ij}'}^{\ell-1}- {\la \sigma_i\sigma_j\ra_{ij}'}^{\ell+1}\bigr)J_{ij}\tanh(\beta J_{ij})^{\ell}\bigr|\le 2\e |J|<\infty.\]
Consequently, using the Fubini theorem and noting that $\la \sigma_i\sigma_j\ra$ and $\la \sigma_i\sigma_j\ra_{ij}'$ are identically distributed yield that
	\begin{align*}
		\sum_{i<j}\e\bigl[J_{ij}f\bigl(\la \sigma_i\sigma_j\ra_{ij}',\beta J_{ij}\bigr)\bigr]=\frac{1}{N}\sum_{\ell\,\,\mbox{\rm \footnotesize odd}}c_{\ell}\Bigl(\sum_{i<j}\e {\la \sigma_i\sigma_j\ra}^{\ell-1}- \sum_{i<j}\e{\la \sigma_i\sigma_j\ra}^{\ell+1}\Bigr).
	\end{align*}
 % where for odd $\ell$, 
	% \begin{equation*}
	%     c_\ell:=N\e \bigl[J\tanh^{\ell}(\beta J)\bigr]>0. 
	% \end{equation*}
 %  % \\&=\alpha\int_{N^{-1/\alpha}}^\infty \frac{\tanh(\beta x)^\ell}{x^\alpha}dx>0.  \nonumber
	To handle the right-hand side, note that we can write
	\begin{align}\label{extra:eq1}
		\sum_{i<j}\bigl\la \sigma_i\sigma_j\bigr\ra^\ell=\sum_{i<j}\bigl\la \sigma_i^1\cdots\sigma_i^\ell\sigma_j^1\cdots\sigma_j^\ell\bigr\ra =\frac{1}{2}\bigl\la N^2R_\ell^2-N\bigr\ra
	\end{align}
	such that
	\begin{align}\label{add:equation1}
		\frac{1}{N}\sum_{i<j}\e\bigl[J_{ij}f\bigl(\la \sigma_i\sigma_j\ra_{ij}',\beta J_{ij}\bigr)\bigr]&=\frac{1}{2}\sum_{\ell\,\,\mbox{\rm \footnotesize odd}}c_{\ell}\bigl(\e\bigl\la R_{\ell-1}^2\bigr\ra-\e\bigl\la R_{\ell+1}^2\bigr\ra\bigr).
	\end{align}
	Consequently, from \eqref{overlap:eq4}
	\begin{align*}
		\limsup_{N\to\infty}\Bigl|\e\frac{1}{N}\sum_{i<j}J_{ij}\la \sigma_i\sigma_j\ra-\frac{1}{2}\sum_{\ell\,\,\mbox{\rm \footnotesize odd}}c_{\ell}\bigl(\e\bigl\la R_{\ell-1}^2\bigr\ra-\e\bigl\la R_{\ell+1}^2\bigr\ra\bigr)\Bigr|=0.
  % \leq \frac{C_2}{(\ln N)^{\alpha-1}}.
	\end{align*}
	% For odd $\ell \ge 1$, set 
		% \[ \gamma_\ell =\alpha\int_{0}^\infty \frac{\tanh(\beta x)^\ell}{x^\alpha}dx. \]
% Note  that
% 	\begin{align*} 
% 		0 \leq \gamma_\ell-c_\ell &=\alpha\int_0^{N^{-1/\alpha}}\frac{\tanh(\beta J)^\ell}{J^\alpha}dJ
% 		\leq \frac{\alpha \beta^\ell}{(1+\ell-\alpha)N^{(1+\ell-\alpha)/\alpha}},
% 	\end{align*}
%  which implies that as long as $N$ is large enough,
%  \begin{align*}
%      \sum_{\ell\,\,\mbox{\rm \footnotesize odd}}(\gamma_\ell-c_\ell)&\leq \frac{\alpha \beta}{N^{2/\alpha-1}} \sum_{k=0}^\infty \frac{\beta^{2k}N^{-2k/\alpha}}{(2+2k-\alpha)}
%      \leq\frac{\alpha \beta}{N^{2/\alpha-1}}\Bigl(\frac{1}{2-\alpha}+\frac{1}{2}\sum_{k=1}^\infty \frac{(\beta^2 N^{-2/\alpha})^k}{k}\Bigr)\\
%      &=\frac{\alpha \beta}{N^{2/\alpha-1}}\Bigl(\frac{1}{2-\alpha}-\frac{1}{2}\ln\bigl(1-\beta^2N^{-2/\alpha}\bigr)\Bigr)\leq \frac{C_3}{N^{2/\alpha-1}}.
%  \end{align*}
% Consequently, as long as $N$ is large enough,
% 	\begin{align*}
% 		\Bigl|\e\frac{1}{N}\sum_{i<j}J_{ij}\la \sigma_i\sigma_j\ra-\frac{1}{2}\sum_{\ell\,\,\mbox{\rm \footnotesize odd}}\gamma_{\ell}\bigl(\e\bigl\la R_{\ell-1}^2\bigr\ra-\e\bigl\la R_{\ell+1}^2\bigr\ra\bigr)\Bigr|\leq \frac{2C_2}{(\ln N)^{\alpha-1}}.
% 	\end{align*}
	
	Finally, note that for any sequences, $a_0\geq a_2\geq a_4\geq \cdots\geq 0,$ we can write
	\begin{align}\label{eq:summationbyparts}
		\sum_{\ell:\mbox{\rm \footnotesize odd}}c_\ell(a_{\ell-1}-a_{\ell+1})
		&=c_1\Bigl(a_0-\sum_{k=1}^\infty  \frac{c_{2k-1}-c_{2k+1}}{c_1}a_{2k}\Bigr),
	\end{align}
 where $
		c_1^{-1} \sum_{k=1}^\infty (c_{2k-1}-c_{2k+1})=1$. Also, note that from \eqref{extra:eq1}, $\e\la R_{\ell-1}^2\ra$ is nonincreasing in odd $\ell\geq 1.$
	It follows that
	\begin{align*}
	\limsup_{N\to\infty}\Bigl|\e\frac{1}{N}\sum_{i<j}J_{ij}\la \sigma_i\sigma_j\ra-\frac{c_1}{2}\bigl(1-\e\bigl\la R_L^2\bigr\ra\bigr)\Bigr|=0,
  % \leq \frac{2C_2}{(\ln N)^{\alpha-1}},
	\end{align*}
	where $L$ is a random variable, independent of $\la \cdot\ra$, with the probability mass function $\p(L=2k)=(c_{2k-1}-c_{2k+1})/c_1$ for all $k\geq 1$ .
	% A straightforward computation by using integration by parts and noting that $(\tanh^{2k}(\beta x))'=2k\beta(\tanh^{2k-1}(\beta x)-\tanh^{2k+1}(\beta x))$ reveals that this probability mass function matches with the one given in \eqref{eq:M_pmf}. 
%	
%	At the very last step, note that a change of variable $x=\beta J$ gives
%	\begin{align*}
%		\gamma_\ell&=\alpha\int_0^\infty \frac{\tanh^\ell(\beta J)}{J^\alpha}dJ=\alpha \beta^{\alpha-1}\int_0^\infty \frac{\tanh^\ell(x)}{x^\alpha}dx.
%	\end{align*}
%	Observe that 
%	\begin{align*}
%		&\int_0^\infty \frac{\tanh^{2k-1}(x)-\tanh^{2k+1}(x)}{x^\alpha}dx\\
%		&=\int_0^\infty \frac{\tanh^{2k-1}(x)(1-\tanh^2(x))}{x^\alpha}dx\\
%		&=\frac{\tanh(x)\tanh^{2k-1}(x)}{x^\alpha}\Big|_0^\infty-\int_0^\infty \tanh(x)\Bigl(\frac{(2k-1)\tanh^{2k-2}(x)(1-\tanh^2(x))}{x^\alpha}-\frac{\alpha \tanh^{2k-1}(x)}{x^{\alpha+1}}\Bigr)dx\\
%		&=-(2k-1)\int_0^\infty \frac{\tanh^{2k-1}(x)-\tanh^{2k+1}(x)}{x^\alpha}dx+\alpha\int_0^\infty\frac{ \tanh^{2k}(x)}{x^{\alpha+1}}dx.
%	\end{align*}
%	It follows that
%	\begin{align*}
%		\int_0^\infty \frac{\tanh^{2k-1}(x)-\tanh^{2k+1}(x)}{x^\alpha}dx&=\frac{\alpha}{2k}\int_0^\infty\frac{ \tanh^{2k}(x)}{x^{\alpha+1}}dx.
%	\end{align*}
%	Consequently,
%	\begin{align*}
%		\gamma_{2k-1}-\gamma_{2k+1}&=\frac{\alpha^2\beta^{\alpha-1}}{2k}\int_0^\infty\frac{ \tanh^{2k}(x)}{x^{\alpha+1}}dx\\
%		&=\frac{\alpha^2}{2k\beta}\int_0^\infty \frac{\tanh^{2k}(\beta y)}{y^{\alpha+1}}dy.
%	\end{align*}
%	
\end{proof}

For each $\ell\ge 1$, we define \begin{equation*} 
% \label{eq: gamma}
    \gamma_\ell = \lim_{N\to\infty}c_\ell=\lim_{N\to\infty}\alpha \int_{N^{-1/\alpha}}^{\infty}\frac{\hth^\ell (\beta x)}{x^{\alpha}}dx=\alpha\int_{0}^{\infty}\frac{\hth^\ell (\beta x)}{x^{\alpha}}dx.
\end{equation*}

\begin{proof}[\bf Proof of Equation \eqref{siteoverlapzero}]
% which exists by Lemma \ref{expectationlemma}$(i)$.
Using the convexity of $ 0<\beta \mapsto\ln Z_N(\beta)$ and applying Griffiths' lemma (see, e.g., \cite[Theorem 25.7]{rockafellar-1970a}), we obtain from Theorem \ref{freeenergy} that  for any $\beta < \beta_\alpha$, 
\begin{align*} 
% \label{eq:derivative_free_energy}
 \lim_{N\to\infty}\frac{d}{d\beta}\frac{1}{N}\e\ln Z_N(\beta)=\frac{d}{d \beta} \Big ( \frac{\alpha}{2}\int_0^\infty\frac{\ln \cosh(\beta x)}{x^{\alpha+1}} dx\Big)=\frac{\gamma_1}{2},
\end{align*}
where the derivative is handled by using the fact that
for any $\beta,\beta'>0$ and $x\in \mathbb{R},$ 
	\[  \bigl|\ln\cosh(\beta x)-\ln \cosh(\beta' x)\bigr| \leq \min\bigl(\max(\beta,\beta')|x|^2,|x|\bigr) |\beta-\beta'|\]
 and the dominated convergence theorem.
On the other hand, by Proposition~\ref{prop1'}, 
\[  \lim_{N\to\infty}\frac{d}{d\beta}\frac{1}{N}\e\ln Z_N(\beta) =  \lim_{N\to\infty} \frac{1}{N}\sum_{i<j} \e J_{ij}\la \sigma_i\sigma_j\ra =  \frac{\gamma_1}{2}\big(1-  \lim_{N\to\infty} \e\bigl\la R_L^2\bigr\ra\big).  \]
As a result, we conclude that $\lim_{N\to\infty} \e\la R_L^2\ra = 0$ and this validates \eqref{siteoverlapzero} since for any $\ell\geq 1,$
\[	\frac{\gamma_{\ell-1}-\gamma_{\ell+1}}{\gamma_1} \limsup_{N\to\infty}\e\la R_{2\ell}^2\ra\leq \lim_{N\to\infty}  \e\bigl\la R_{ L}^2\bigr\ra=0.\]
\end{proof}

\subsection{Proof of Equation (\ref{eq: overlap and temperature}) in Theorem \ref{hightempoverlap}}
Since $\gamma_1 =\lim_{N\to\infty}c_1$, Proposition \ref{prop1'} implies \begin{equation*}
% \label{eq: R^2}
        \limsup_{N\to\infty}\Bigl| \e \la R_L^2\ra - \Bigl( 1- \frac{2}{\gamma_1}\e \frac{1}{N}\sum_{i<j}J_{ij}\la\sigma_i\sigma_j\ra \Bigr) \Bigr|  =0.
        % \le \frac{C(\beta)}{\gamma_1 (\ln N)^{\alpha-1}},
    \end{equation*}
    % for some increasing function $\beta \mapsto C(\beta)$ not depending on $N$.
   % From the Griffith's lemma, since $\beta \mapsto N^{-1}\ln Z_N (\beta)$ is convex, the limit of its derivative in $\beta$ \[\lim_{N\to\infty}N^{-1} \e \sum_{i<j}J_{ij}\la \sigma_i \sigma_j \ra \] exists for a.e. $\beta>0$.
    Then, \begin{equation}\label{eq: limit of R^2}
        1\ge \liminf_{N\to\infty} \e \la R_L^2 \ra = 1-\frac{2}{\gamma_1} \limsup_{N \to \infty}\e \frac{1}{N}\sum_{i<j} J_{ij} \la \sigma_i \sigma_j \ra.
    \end{equation}
    On the other hand, the average Hamiltonian at inverse temperature $\beta$ is bounded above by the free energy at any inverse temperature $\beta'>0$, as can be seen from the inequalities \begin{equation}\label{eq: hamiltonian and free energy}
        \frac{1}{N}\e \sum_{i<j}J_{ij}\la\sigma_i\sigma_j\ra  \le  \frac{1}{N}\e \max _{\sigma}\sum_{i<j}J_{ij}\sigma_i\sigma_j \le \frac{1}{N\beta '} \e \ln \sum_\sigma \exp\Bigl(\beta '\sum_{i<j}J_{ij}\sigma_i\sigma_j\Bigr).
    \end{equation} Since the limiting free energy exists for any $\beta'>0$ (see, e.g., Theorem \ref{superadditivity} or Theorem \ref{main:thm2}), \eqref{eq: hamiltonian and free energy} implies  \begin{equation}\label{eq: expectation of Hamiltonian bounded in beta}
    \sup_{\beta>0}\limsup_{N \to \infty}\e \frac{1}{N}\sum_{i<j} J_{ij} \la \sigma_i \sigma_j \ra \le C_\alpha    
    \end{equation}
     for some constant $C_\alpha>0$.
    A simple computation shows $\gamma_1 = \alpha \beta^{\alpha-1}\int_{0}^\infty  \tanh(x)x^{-\alpha}dx$ and $|R_L|\le 1$. In view of \eqref{eq: expectation of Hamiltonian bounded in beta}, for any $k\ge 1$, sending $\beta \to \infty$ in \eqref{eq: limit of R^2} yields \begin{equation}\label{eq: beta to infty: R_L^2}
            \lim_{\beta \to \infty} \liminf_{N \to \infty} \e \la R_{L}^2 \ra = \lim_{\beta \to \infty} \limsup_{N \to \infty} \e \la R_{L}^2 \ra =1.
    \end{equation}

    Fix $k\ge 1.$
      Since $\lim_{N\to\infty}c_k = \gamma_k$ for all $k\ge 1$, it is elementary to check using integration by parts that 
	\begin{equation*} 
	p_{2k}\colonequals \lim_{N\to\infty}\Prob(L=2k) =\frac{\gamma_{2k-1}-\gamma_{2k+1}}{\gamma_1}=\frac{\alpha \int_0^\infty \tanh^{2k}( x)x^{-(\alpha+1)}dx}  {2k\int_{0}^\infty \tanh( x)x^{-\alpha} dx}>0,\,\,\forall k\geq 1.
	\end{equation*}
 Using the fact that $|R_{2\ell}|\le 1$ for all $\ell\ge1$, it holds that \begin{align*}
     \e \la R_L^2\ra &= \p (L=2k)\e \la R_{2k}^2\ra + \sum_{\ell\ne k}\p(L=2\ell)\e \la R_{2\ell}^2\ra
     \\&\le \p (L=2k)\e \la R_{2k}^2\ra + (1- \p(L=2k) ).
 \end{align*}
Taking $N\to\infty$, \begin{align}
    \liminf_{N\to\infty}\e \la R_L^2\ra &\le p_{2k}\liminf_{N\to\infty}\e\la R_{2k}^2\ra +1-p_{2k} = 1-p_{2k}\bigl(1-\liminf_{N\to\infty}\e\la R_{2k}^2\ra\bigr)\le 1. \label{eq: R_L to R_2k}
\end{align} 
Note that $p_{2k}$ does not depend on $\beta$. Then, in view of \eqref{eq: beta to infty: R_L^2}, sending $\beta \to\infty$ in \eqref{eq: R_L to R_2k} yields the desired result.

\subsection{Moment Computations}

This subsection is devoted to computing the first and second moments of a modified bond overlap that will be used later when we turn to the proof of the concentration of  the bond overlap $Q_K$ in the next subsection.

\begin{proposition}\label{bondoverlap:firstmoment}
	Assume $1<\alpha<2$ and $\beta < \beta_\alpha$.
	Then, for any $K>0$,
	\begin{equation}\label{firstmoment}
		\lim_{N\to\infty}\e \Bigl\la \frac{1}{N}\sum_{i<j}\1_{|J_{ij}|\ge K} \sigma^1_i\sigma^1_j\sigma^2_i\sigma^2_j\Bigr\ra = C_K
  \end{equation}
  and
  \begin{equation}
  	\label{secondmoment}\lim_{N\to\infty}\e \Bigl\la \Bigl(\frac{1}{N}\sum_{i<j}\1_{\{|J_{ij}|\ge K\}} \sigma^1_i\sigma^1_j\sigma^2_i\sigma^2_j\Bigr)^2 \Bigr\ra = C_K^2
	\end{equation}
 for
 $$
 C_K:=\frac{\alpha}{2} \int_{K}^\infty \frac{\hth^2(\beta x)}{x^{1+\alpha}}dx.
 $$
\end{proposition}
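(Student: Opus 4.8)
The plan is to compute both moments by the cavity method for the effective Gibbs measure $G_N(\sigma)\propto\prod_{i<j}\bigl(1+\sigma_i\sigma_j\tanh(\beta J_{ij})\bigr)$, feeding in the site‑overlap estimate \eqref{siteoverlapzero} to kill the error terms. Since distinct replicas are i.i.d.\ under $\langle\,\cdot\,\rangle$, one first reduces both moments to disorder‑averages of one‑replica correlations: $\langle\sigma^1_i\sigma^1_j\sigma^2_i\sigma^2_j\rangle=\langle\sigma_i\sigma_j\rangle^2$ and, for four distinct sites, $\langle\sigma^1_i\sigma^1_j\sigma^2_i\sigma^2_j\sigma^1_k\sigma^1_l\sigma^2_k\sigma^2_l\rangle=\langle\sigma_i\sigma_j\sigma_k\sigma_l\rangle^2$ (with the obvious simplifications when indices coincide); in particular both limits in Proposition \ref{bondoverlap:firstmoment} are manifestly nonnegative, as they must be in view of \eqref{bondoverlapnonzero}.

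For \eqref{firstmoment}, the single‑edge cavity identity \eqref{overlap:eq1} gives $\langle\sigma_i\sigma_j\rangle=f(\langle\sigma_i\sigma_j\rangle_{ij},\beta J_{ij})$, and from $f(a,x)-\tanh x=a(1-\tanh^2 x)/(1+a\tanh x)$ one obtains the \emph{singularity‑free} bound $|f(a,x)^2-\tanh^2 x|\le4|a|$. As $\langle\sigma_i\sigma_j\rangle_{ij}$ is independent of $J_{ij}$ and $\mathbb{P}(|J|\ge K)=K^{-\alpha}/N$ for large $N$, replacing $\langle\sigma_i\sigma_j\rangle^2$ by $\tanh^2(\beta J_{ij})$ costs at most $4K^{-\alpha}N^{-1}\,\mathbb{E}|\langle\sigma_i\sigma_j\rangle_{ij}|$ per edge; summing over $i<j$ and dividing by $N$, the main term tends to $\frac{\alpha}{2}\int_K^\infty\tanh^2(\beta x)\,x^{-1-\alpha}\,dx=C_K$, while the total error is $\le2K^{-\alpha}N^{-2}\sum_{i<j}\mathbb{E}|\langle\sigma_i\sigma_j\rangle_{ij}|\to0$ by Cauchy--Schwarz, the identity $N^{-2}\sum_{i<j}\mathbb{E}\langle\sigma_i\sigma_j\rangle^2=\frac{1}{2}(\mathbb{E}\langle R_2^2\rangle-N^{-1})$, \eqref{siteoverlapzero}, and \eqref{cavityineq}.

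For \eqref{secondmoment} I expand the square and split the double sum over unordered edges $e_1=\{i,j\}$, $e_2=\{k,l\}$ by $|e_1\cap e_2|\in\{0,1,2\}$. If $e_1=e_2$ the summand is $\1_{\{|J_{ij}|\ge K\}}$ and contributes $O(1/N)$; if $e_1,e_2$ share a vertex, the spin product collapses to $\langle\sigma_a\sigma_b\rangle^2$ for the two unshared sites $a,b$, and after detaching $J_{e_1},J_{e_2}$ the $\sim N^3$ such terms contribute $o(1)$ by \eqref{siteoverlapzero} (here $\alpha>1$ enters the bookkeeping). The dominant disjoint case is treated by peeling off both $J_{e_1}$ and $J_{e_2}$: with $\tau_r=\tanh(\beta J_{e_r})$ and $a,b,c$ the $\{e_1,e_2\}$‑cavity values of $\langle\sigma_i\sigma_j\rangle,\langle\sigma_k\sigma_l\rangle,\langle\sigma_i\sigma_j\sigma_k\sigma_l\rangle$ (independent of $J_{e_1},J_{e_2}$, with denominator $\ge(1-|\tau_1|)(1-|\tau_2|)>0$),
\[
\langle\sigma_i\sigma_j\sigma_k\sigma_l\rangle=\frac{c+\tau_2 a+\tau_1 b+\tau_1\tau_2}{1+\tau_1 a+\tau_2 b+\tau_1\tau_2 c}.
\]
The leading term is $\tau_1\tau_2$, whose square $\tanh^2(\beta J_{e_1})\tanh^2(\beta J_{e_2})$ factorises over the two independent edges; summing the $\sim N^4/4$ disjoint pairs against the prefactor $N^{-2}$ reproduces $\bigl(\frac{\alpha}{2}\int_K^\infty\tanh^2(\beta x)\,x^{-1-\alpha}\,dx\bigr)^2=C_K^2$.

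The main obstacle is the remainder $|\langle\sigma_i\sigma_j\sigma_k\sigma_l\rangle^2-\tau_1^2\tau_2^2|$, which a short computation bounds by a constant times $(|a|+|b|+|c|)$ multiplied by $(1-|\tau_1|)^{-1}$ and $(1-|\tau_2|)^{-1}$ — factors that are not integrable against the power law. I would absorb these by truncating each heavy weight at a level $T=T_N\to\infty$: on $\{|J_{e_1}|,|J_{e_2}|\le T\}$ the singular factors are $\le e^{O(\beta T)}$, so this part of the error is $\lesssim e^{O(\beta T)}\mathbb{P}(|J|\ge K)^2 N^{-2}\sum_{\mathrm{disj}}\mathbb{E}(|a|+|b|+|c|)$, while on the complement the summand is $\le2$ and contributes $O\bigl(\mathbb{P}(|J|>T)\mathbb{P}(|J|\ge K)\,N^2\bigr)=O(T^{-\alpha})$. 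One then shows $N^{-4}\sum_{\mathrm{disj}}\mathbb{E}(|a|+|b|+|c|)\to0$: the two‑spin pieces by \eqref{siteoverlapzero}, and the four‑spin piece $c$ by Cauchy--Schwarz using $\langle\sigma_i\sigma_j\sigma_k\sigma_l\rangle^2=\langle(\sigma^1_i\sigma^2_i)(\sigma^1_j\sigma^2_j)(\sigma^1_k\sigma^2_k)(\sigma^1_l\sigma^2_l)\rangle$ together with $\mathbb{E}\langle R_2^4\rangle\le\mathbb{E}\langle R_2^2\rangle\to0$, plus edge exchangeability and \eqref{cavityineq}‑type comparisons to pass to the cavity‑modified system; choosing $T_N\to\infty$ slowly enough then makes $e^{O(\beta T_N)}\cdot o(1)\to0$ and closes the estimate. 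Verifying this last balancing — in particular that the cavity‑modified four‑spin correlation inherits the required smallness despite the non‑integrable edge‑removal bounds — is the technical heart of the argument. Finally, writing $Q_K=(N/M)\cdot N^{-1}\sum_{i<j}\1_{\{|J_{ij}|\ge K\}}\sigma^1_i\sigma^1_j\sigma^2_i\sigma^2_j$ with $N/M\to K^{\alpha}$ a.s., Proposition \ref{bondoverlap:firstmoment} delivers the concentration \eqref{bondoverlapnonzero}.
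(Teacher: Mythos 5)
Your argument is correct, and for the first moment \eqref{firstmoment} it is essentially the paper's proof: the same cavity identity, the same singularity-free bound $|\la\sigma_i\sigma_j\ra^2-\hth^2(\beta J_{ij})|\le 4|\la\sigma_i\sigma_j\ra_{ij}|$ on $\{|J_{ij}|\ge K\}$, independence of the cavity bracket from $J_{ij}$, and \eqref{siteoverlapzero} via Cauchy--Schwarz.

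For the second moment \eqref{secondmoment} you take a genuinely different route. The paper peels off $J_{e_1}$ and $J_{e_2}$ one at a time (Lemmas \ref{lemma--3}--\ref{lemma--4}), at each step writing the four-spin bracket as $(V+A\hth(\beta J))/(1+B\hth(\beta J))$ with $V,A$ independent of $J$; the symmetry of $J$ then kills the cross term, so $\e(V+A\hth(\beta J))^2$ splits exactly into $\e V^2+\e A^2\hth^2(\beta J)$, and the non-integrable factor $(1-|\hth(\beta J)|)^{-1}$ is handled by a \emph{fixed} cutoff $M$ with iterated limits ($N\to\infty$ first, then $M\to\infty$, using $N^2\,\p(\max(|J|,|J'|)\ge M,\min\ge K)\le 2(MK)^{-\alpha}$). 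You instead write down the one-shot double-cavity formula, isolate the leading term $\tau_1\tau_2$ directly, and absorb the singular denominator with an $N$-dependent truncation $T_N\to\infty$ chosen after the fact against the unquantified rate in \eqref{siteoverlapzero}; your control of the four-spin cavity average via $\la\sigma_i\sigma_j\sigma_k\sigma_l\ra^2=\la\prod_a\sigma_a^1\sigma_a^2\ra$, $\e\la R_2^4\ra\le\e\la R_2^2\ra\to0$, Cauchy--Schwarz, and \eqref{cavityineq}-type edge-removal comparisons is sound, and the diagonal choice of $T_N$ is legitimate since for any null sequence $\epsilon_N$ one can pick $T_N\to\infty$ with $e^{C\beta T_N}\epsilon_N\to0$ while the tail contribution is $O(T_N^{-\alpha})$. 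The paper's iterated-limit scheme avoids any diagonalization and keeps every estimate with a fixed constant; yours is more direct and makes the leading term visible in one step, at the cost of the slightly more delicate balancing you correctly identify as the technical heart. One peripheral slip: in your closing sentence, $M\sim\mathrm{Binomial}(N(N-1)/2,K^{-\alpha}/N)$ gives $N/M\to 2K^{\alpha}$ in probability (see \eqref{extra:eq2}), not $K^{\alpha}$ a.s.; with the factor $2$ restored one recovers exactly the constant $\alpha K^{\alpha}\int_K^\infty \hth^2(\beta x)x^{-1-\alpha}dx$ in \eqref{bondoverlapnonzero}. This does not affect the proof of the proposition itself.
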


For the rest of this subsection, we establish this proposition. We begin with the first assertion.

\begin{proof}[\bf Proof of Equation \eqref{firstmoment} in Proposition \ref{bondoverlap:firstmoment}]
	Note that \eqref{siteoverlapzero} of Theorem~\ref{hightempoverlap} implies that for $\beta < \beta_\alpha$ and  for any $\ell\ge 1$, 
	$\lim_{N\to\infty}\e\la\sigma_1\sigma_2\ra^{2\ell}=0,$
	so that from \eqref{cavityineq}, $\lim_{N\to\infty}\e\la\sigma_1\sigma_2\ra_{12}^{2\ell}=0.$
	Observe that \begin{align*}
		\1_{\{|J_{12}|>K\}}\bigl|\la\sigma_1\sigma_2\ra^2-\hth^2(\beta J_{12})\bigr| 
		%&\le 2 \1_{\{|J_{12}|>K\}}|\la\sigma_1\sigma_2\ra-\hth(\beta J_{12})| \\
		&\le 2\1_{\{|J_{12}|>K\}} \Bigl| \frac{\la\sigma_1\sigma_2\ra_{12}(1-\hth^2(\beta J_{12}))}{1+\la\sigma_1\sigma_2 \ra _{12}\hth(\beta J_{12})}\Bigr|
		\\&\le 2\1_{\{|J_{12}|>K\}} \bigl|\la\sigma_1\sigma_2\ra_{12}\bigr|\bigl(1+|\hth(\beta J_{12})|\bigr).
	\end{align*}
%	and \[\Prob(|J|>K)= \frac{K^{-\alpha}}{N}.\]
	Taking expectation and using \eqref{eq:probconv}, we have  \[\e\1_{\{|J_{12}|>K\}}\bigl|\la\sigma_1\sigma_2\ra^2-\hth^2(\beta J_{12})\bigr|\le 4\e \bigl|\la\sigma_1\sigma_2\ra_{12}\bigr|\Prob(|J|>K)=o(1/N).\]
	Now, from symmetry among sites, we have \begin{equation*}
		\e \Bigl\la \frac{1}{N}\sum_{i<j}\1_{\{|J_{ij}|\ge K\}} \sigma^1_i\sigma^1_j\sigma^2_i\sigma^2_j\Bigr\ra = \frac{N-1}{2}\e \1_{\{|J_{12}|\ge K\}} \la\sigma_1\sigma_2\ra^2=\frac{N}{2}\e\hth^2(\beta J) \1_{\{|J|>K\}}+ o(1).
	\end{equation*}
The proof is complete since 
% Lemma \ref{expectationlemma}$(ii)$ implies 
$\lim_{N\to\infty}N\e \1_{|J|\ge K} \hth^2(\beta J) = 2C_K$.
\end{proof}

The second moment computation requires more effort, which is based on three lemmas.

\begin{lemma}\label{lemma--3}
	Assume that $U_N,V_N,A_N,B_N$ are $[-1,1]$-valued random variables satisfying
	\begin{align*}
		U_N&=\frac{V_N+A_N\tanh(\beta J)}{1+B_N\tanh(\beta J)}.
	\end{align*}
	Then $	\lim_{N\to\infty}\e	|U_N-V_N|^2=0.$
\end{lemma}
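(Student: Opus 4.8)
The plan is to exploit that the disorder $J$ --- which depends on $N$ through its density $\frac{\alpha}{2N}|x|^{-(1+\alpha)}\1_{\{|x|\ge N^{-1/\alpha}\}}$ --- is small, in the sense that the prefactor $1/N$ forces $\e\,\phi\bigl(\tanh(\beta J)\bigr)$ to be $O(1/N)$ for any bounded $\phi$ vanishing at $0$. Concretely, I would first clear denominators in the hypothesis: from $U_N\bigl(1+B_N\tanh(\beta J)\bigr) = V_N + A_N\tanh(\beta J)$ one gets the telescoping identity
\[
U_N - V_N = \bigl(A_N - U_N B_N\bigr)\tanh(\beta J).
\]
Writing it this way, rather than dividing by $1+B_N\tanh(\beta J)$, is the one point worth care: that denominator is not bounded away from $0$ uniformly (it can approach $0$ when $|J|$ is large), so I deliberately avoid it and instead use only $|A_N - U_N B_N|\le |A_N| + |U_N|\,|B_N|\le 2$, which holds because all four variables take values in $[-1,1]$.

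Combining the identity with $|\tanh(x)|\le\min(|x|,1)$ gives the pointwise estimate $|U_N - V_N|\le 2\min(\beta|J|,1)$, and hence
\[
\e|U_N - V_N|^2 \le 4\,\e\bigl[\min(\beta^2 J^2,1)\bigr] = \frac{2\alpha}{N}\int_{\{|x|\ge N^{-1/\alpha}\}}\frac{\min(\beta^2 x^2,1)}{|x|^{1+\alpha}}\,dx \le \frac{2\alpha}{N}\int_{\mathbb R\setminus\{0\}}\frac{\min(\beta^2 x^2,1)}{|x|^{1+\alpha}}\,dx.
\]
The last integral is finite: near the origin the integrand is comparable to $\beta^2|x|^{1-\alpha}$, integrable since $\alpha<2$, and near infinity it is comparable to $|x|^{-(1+\alpha)}$, integrable since $\alpha>0$. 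Denoting its value by $c_{\alpha,\beta}<\infty$, we obtain $\e|U_N-V_N|^2\le 2\alpha c_{\alpha,\beta}/N\to 0$ as $N\to\infty$, which is the assertion.

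There is essentially no obstacle here: once the telescoping identity is recorded, the proof reduces to the elementary observation that $\e\bigl[\min(\beta^2 J^2,1)\bigr]=O(1/N)$. If one prefers not to rely on integrability at the origin, one can instead split on $\{|J|\le K\}$ and $\{|J|>K\}$ for a fixed constant $K$, bounding $|1+B_N\tanh(\beta J)|\ge 1-\tanh(\beta K)>0$ on the first event and using $|U_N-V_N|\le 2$ together with $\Prob(|J|>K)=K^{-\alpha}/N$ on the second; but the estimate above is cleaner and uniform in the truncation.
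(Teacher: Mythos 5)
Your proof is correct, and it takes a cleaner route than the paper's. The paper keeps $V_N$ in the numerator, writing $U_N-V_N=\frac{(A_N-V_NB_N)\tanh(\beta J)}{1+B_N\tanh(\beta J)}$, and therefore has to control the denominator via $|1+B_N\tanh(\beta J)|\ge 1-|\tanh(\beta J)|$; since this lower bound degenerates for large $|J|$, the paper then splits on $\{|J|\le M\}$ versus $\{|J|\ge M\}$, obtains a bound of the form $\frac{CM^{2-\alpha}}{N(1-\tanh(\beta M))^2}+\frac{4}{NM^\alpha}$, and lets $N\to\infty$ for fixed $M$ (this is exactly the fallback you sketch in your last paragraph). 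Your observation that clearing denominators yields $U_N-V_N=(A_N-U_NB_N)\tanh(\beta J)$ --- with $U_N$ rather than $V_N$ in the numerator --- eliminates the denominator entirely, since $|A_N-U_NB_N|\le 2$ by the $[-1,1]$-boundedness alone. This buys a one-line pointwise bound $|U_N-V_N|\le 2\min(\beta|J|,1)$ and a uniform $O(1/N)$ rate with no truncation parameter; the integral $\int\min(\beta^2x^2,1)|x|^{-(1+\alpha)}\,dx$ is indeed finite precisely for $0<\alpha<2$, which covers the regime $1<\alpha<2$ in force here. Both arguments rest on the same underlying fact that $\tanh(\beta J)$ is small in mean square, but yours is the more economical execution.
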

\begin{proof}
	For notational clarity, we simply write $(U,V,A,B)$ for $(U_N,V_N,A_N,B_N).$
	From a direct computation, we have
	\begin{equation*}
		|U-V|=\Bigl|\frac{(A-VB)\hth(\beta J)}{1+B\hth(\beta J)}\Bigr|\leq \frac{2|\tanh(\beta J)|}{1-|\tanh(\beta J)|}.
	\end{equation*}
	Now, fix an $M>0$ and write
	\begin{equation*}
		|U-V|^2\leq \frac{4|\tanh(\beta J)|^2}{(1-|\tanh(\beta M)|)^2}\1_{\{|J|\leq M\}}+4 \1_{\{|J|\geq M\}}.
	\end{equation*}
  % With the help of Lemma \ref{expectationlemma}$(iii)$ and \eqref{eq:probconv}, 
  It holds that
  % as long as  $N$ is large,
\begin{equation*}
	\e\tanh^2(\beta  J)1_{\{| J|\leq M\}}\leq \frac{\alpha \beta^2}{N(2-\alpha)}M^{2-\alpha}, \quad \p(| J|\geq M)= \frac{1}{NM^\alpha}.
\end{equation*}
 These imply that for $N\ge 1$,
 % for sufficiently large $N$,
		\[ \e|U-V|^2 \leq \frac{4\alpha \beta^2M^{2-\alpha}}{N(2-\alpha)(1-\tanh(\beta M))^2}+\frac{4}{NM^{\alpha}}.
	\]
%where the third inequality used the bound $|\tanh (x)|\leq |x|.$ 
Sending $N\to\infty$ completes our proof.
\end{proof}

\begin{lemma}
	% \label{lemma:lem--2}
	Let $U_N,V_N,W_N,A_N,B_N,C_N,D_N$ be $
	[-1,1]$-valued random variables and $J'$ is an independent copy of $J$.
	%$$J,J'\sim\frac{\alpha \1_{\{|x|\geq N^{-1/\alpha}\}}}{2|x|^{1+\alpha}} dx.$$
	Assume that $V_N,A_N,J'$ are independent of $J$ and that $W_N,C_N,D_N$ are independent of $J,J'$. Also suppose that $U_N$ and $B_N$ can be written as
	$$
	U_N=\frac{V_N+A_N\tanh(\beta J)}{1+B_N\tanh(\beta J)}, \qquad  		B_N=\frac{W_N+C_N\tanh(\beta J')}{1+D_N\tanh(\beta J')}.
	$$
%	and
%	\begin{align*}
%		B_N=\frac{W_N+C_N\tanh(\beta J')}{1+D_N\tanh(\beta J')}.
%	\end{align*}
	\begin{enumerate}
		\item[$(i)$] If $\lim_{N\to\infty} \e |W_N|=0$ and $\lim_{N\to\infty}\e |D_N|=0,$ then
		\begin{align}\label{lemma:lem--2:eq2}
			\lim_{N\to\infty}N^2\bigl|\e\bigl(B_N^2-C_N^2\tanh^2(\beta J')\bigr)\1_{\{|J|,|J'|\geq K\}}\bigr|=0.
		\end{align}
		\item[$(ii)$] 	If $\lim_{N\to\infty} \e |W_N|=0$ and $\lim_{N\to\infty}\e |C_N|=0,$ then 
		\begin{align}\label{lemma:lem--2:eq1}
			\lim_{N\to\infty}N^2\bigl|\e \bigl(U_N^2-(V_N^2+A_N^2\tanh^2(\beta J))\bigr)\1_{\{|J|,|J'|\geq K\}}\bigr|=0.
		\end{align}
	\end{enumerate}
\end{lemma}
\begin{proof}
	For notational clarity,	we again ignore the dependence of our random variables on $N.$
 We also let $c\ge 1$ be some large absolute constants which may change line by line.
	From a direct computation, we have
%	\begin{align}
%		\nonumber	|U-(V+A\tanh(\beta J))|&\leq \frac{2|B|}{1-|\tanh(\beta J)|},\\
%		\label{add:eq--2}	|B-(W+C\tanh(\beta J'))|&\leq \frac{2|D|}{1-|\tanh(\beta J')|}.
%	\end{align}
		\begin{align}
  \begin{split}\label{add:eq--2}
		\bigl|U-(V+A\tanh(\beta J))\bigr|\leq \frac{2|B|}{1-|\tanh(\beta J)|}, \\
			\bigl|B-(W+C\tanh(\beta J'))\bigr|\leq \frac{2|D|}{1-|\tanh(\beta J')|}.
   \end{split}
	\end{align}
	We first prove \eqref{lemma:lem--2:eq2}.
	For $M>K$, set  $\mathcal{H} = \{ \min(|J|,|J'|) \ge K,   \ \max(|J|,|J'|) \ge M\}$. 
	We have
	\begin{align*}
		\p( \mathcal{H}) &\leq \p(|J|\geq M,|J'|\geq K)+\p(|J'|\geq M,|J|\geq K)= \frac{2}{N^2M^\alpha K^\alpha}.
	\end{align*}
	Using $|x^2-y^2|\le (|x|+|y|)|x-y|$ for  $x,y\in\mathbb R$ and \eqref{add:eq--2}, it follows that
	\begin{align*}
		\e \bigl|B^2-(W+C\tanh(\beta J'))^2\bigr|\1_{\{|J|,|J'|\geq K\}}
		&\leq 3\e \bigl|B-(W+C\tanh(\beta J'))\bigr|\1_{\{|J|,|J'|\geq K\}}\\
		&\leq \frac{6\e |D|}{1-\tanh(\beta M)}\p(K\le |J|,|J'|\leq M)+9 	\p( \mathcal{H})\\
		&\le\frac{6\e|D|}{N^2(1-\tanh(\beta M))K^{2\alpha}}+\frac{18}{N^2M^\alpha K^\alpha}.
	\end{align*}
	Since $\lim_{N\to \infty}\e |D|=0,$
	\begin{equation*}
		\limsup_{N\to\infty}N^2\bigl|\e\bigl(B^2-(W+C\tanh(\beta J'))^2\bigr)\1_{\{|J|,|J'|\geq K\}}\bigr|\leq 18 (MK)^{-\alpha}.
	\end{equation*}
	Since this holds for any $M>K,$ 
	\begin{equation*}
		\lim_{N\to\infty}N^2\bigl|\e \bigl(B^2-(W+C\tanh(\beta J'))^2\bigr)|\1_{\{|J|,|J'|\geq K\}}\bigr|=0.
	\end{equation*}
	Now using the symmetry of $J'$ and the independence between $J'$ and $W,C,J,$ this limit becomes
	\begin{equation*}
		\lim_{N\to\infty}N^2\bigl|\e \bigl(B^2-(W^2+C^2\tanh^2(\beta J')\bigr)\1_{\{|J|,|J'|\geq K\}}\bigr|=0.
	\end{equation*}
	From this, \eqref{lemma:lem--2:eq2} follows since, as $N \to \infty$, 
	\begin{align*}
		N^2\e W^2 \1_{\{|J|,|J'|\geq K\}}&=N^2\e W^2\cdot\p(|J|,|J'|\geq K)=  K^{-2\alpha} \e W^2 \le K^{-2\alpha}  \e W \to 0.
	\end{align*}
	
	To show \eqref{lemma:lem--2:eq1},
	from the independence of $V,A,J'$ with $J$ and the symmetry of $J$, it suffices to prove
	\begin{equation*}
		\limsup_{N\to\infty}N^2\bigl|\e \bigl(U^2-(V+A\tanh(\beta J))^2\bigr)\1_{\{|J|,|J'|\geq K\}}\bigr|=0.
	\end{equation*}
	%Observe that $B,J'$ is independent with $J$.
	Using \eqref{add:eq--2}, for $M>K$, we similarly control 
	\begin{align*}
		\e \bigl|U^2-(V+A\tanh(\beta J))^2\bigr|\1_{\{|J|,|J'|\geq K\}} &\leq 3\e \bigl|U-(V+A\tanh(\beta J))\bigr|\1_{\{|J|,|J'|\geq K\}}\\
		&\leq \frac{6\p(K\le |J|\leq M)}{1-\tanh(\beta M)}\e |B|\1_{\{K\le |J'|\le M\}}+ 9	\p( \mathcal{H})\\
		%&\le\frac{6}{N(1-\tanh(\beta M))K^{\alpha}} \frac{\e(|W|+|C|)}{1-\hth(\beta M)}\p(K\le |J'|\leq M)+\frac{18}{N^2M^\alpha K^\alpha}\\
		&\le \frac{6\e(|W|+|C|)}{N^2(1-\tanh(\beta M))^2K^{2\alpha}}+\frac{18}{N^2M^\alpha K^\alpha},
	\end{align*}
where we used the definition of $B$ to bound it by $( |W| + |C| ) / (1 - \tanh (\beta |J'|))$. 
	From the assumption $\lim_{N\to\infty}\e (|W|+|C|)=0,$ we see that for any $M>K$, \begin{equation*}
		\limsup_{N\to\infty}N^2\bigl|\e \bigl(U^2-(V+A\tanh(\beta J))^2\bigr)\1_{\{|J|,|J'|\geq K\}}\bigr|\le \frac{18}{ M^\alpha K^\alpha}.
	\end{equation*}
	Since this holds for any $M>K$, we obtain \eqref{lemma:lem--2:eq1}.
	\end{proof}

\begin{lemma}\label{lemma--4}
Assume that $1<\alpha<2$ and $\beta<\beta_\alpha.$	Let $ i,j,k,l$ be distinct positive integers with $i<j$ and $k<l.$ For any $K>0,$ we have that 
	\begin{equation*}
		\lim_{N\to\infty}N^2\e \bigl(\la \sigma_i\sigma_j\sigma_k\sigma_l\ra^2-\tanh^2(\beta J_{ij})\tanh^2(\beta J_{kl})\bigr)\1_{\{|J_{ij}|\geq K,|J_{kl}|\geq K\}}=0.
	\end{equation*}
\end{lemma}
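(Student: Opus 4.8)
The plan is to peel the two disjoint edges $(i,j)$ and $(k,l)$ off the Gibbs measure by the cavity identity and reduce everything to Lemma~\ref{lemma:lem--2}(ii) together with two residual estimates. Removing the edge $(i,j)$ exactly as in \eqref{overlap:eq1} gives
\[
\la\sigma_i\sigma_j\sigma_k\sigma_l\ra=\frac{\la\sigma_i\sigma_j\sigma_k\sigma_l\ra_{ij}+\la\sigma_k\sigma_l\ra_{ij}\hth(\beta J_{ij})}{1+\la\sigma_i\sigma_j\ra_{ij}\hth(\beta J_{ij})},
\]
and then removing $(k,l)$ from $\la\sigma_i\sigma_j\ra_{ij}$ gives
\[
\la\sigma_i\sigma_j\ra_{ij}=\frac{\la\sigma_i\sigma_j\ra_{ij,kl}+\la\sigma_i\sigma_j\sigma_k\sigma_l\ra_{ij,kl}\hth(\beta J_{kl})}{1+\la\sigma_k\sigma_l\ra_{ij,kl}\hth(\beta J_{kl})}.
\]
This places us exactly in the setting of Lemma~\ref{lemma:lem--2} with $U_N=\la\sigma_i\sigma_j\sigma_k\sigma_l\ra$, $V_N=\la\sigma_i\sigma_j\sigma_k\sigma_l\ra_{ij}$, $A_N=\la\sigma_k\sigma_l\ra_{ij}$, $B_N=\la\sigma_i\sigma_j\ra_{ij}$, $W_N=\la\sigma_i\sigma_j\ra_{ij,kl}$, $C_N=\la\sigma_i\sigma_j\sigma_k\sigma_l\ra_{ij,kl}$, $D_N=\la\sigma_k\sigma_l\ra_{ij,kl}$, $J=J_{ij}$ and $J'=J_{kl}$; all required independences hold because an edge-deleted Gibbs average depends only on the remaining disorder, and $J_{ij},J_{kl}$ are independent copies.

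The hypotheses of Lemma~\ref{lemma:lem--2}(ii) that must be checked are $\e|W_N|\to0$ and $\e|C_N|\to0$. By \eqref{siteoverlapzero} with $k=1$ we have $\e\la R_2^2\ra\to0$; writing $\e\la R_2^2\ra=N^{-2}\sum_{i,j}\e\la\sigma_i\sigma_j\ra^2$, whose summands are nonnegative, gives $\e\la\sigma_1\sigma_2\ra^2\to0$, hence $\e|\la\sigma_1\sigma_2\ra|\to0$. Likewise $\e\la R_2^4\ra\le\e\la R_2^2\ra\to0$ and $\e\la R_2^4\ra=N^{-4}\sum_{i,j,k,l}\e\la\sigma_i\sigma_j\sigma_k\sigma_l\ra^2$ with nonnegative summands, so $\e\la\sigma_1\sigma_2\sigma_3\sigma_4\ra^2\to0$ over distinct indices, hence $\e|\la\sigma_1\sigma_2\sigma_3\sigma_4\ra|\to0$. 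A one-edge interpolation (smart path) estimate shows that deleting an edge $e$ alters any $\la X\ra$ with $|X|\le1$ by at most $2\beta|J_e|$, so, using $\e|J|=O(N^{-1/\alpha})$ from $1<\alpha$, one gets $\e|W_N|\le\e|\la\sigma_i\sigma_j\ra|+O(N^{-1/\alpha})\to0$ and $\e|C_N|\le(\e\la\sigma_i\sigma_j\sigma_k\sigma_l\ra^2)^{1/2}+O(N^{-1/\alpha})\to0$. Lemma~\ref{lemma:lem--2}(ii) then reduces the claim to establishing (a) $N^2\,\e\,V_N^2\,\1_{\{|J_{ij}|,|J_{kl}|\ge K\}}\to0$ and (b) $N^2\,\e\bigl(A_N^2-\hth^2(\beta J_{kl})\bigr)\hth^2(\beta J_{ij})\,\1_{\{|J_{ij}|,|J_{kl}|\ge K\}}\to0$.

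For (b), removing $(k,l)$ from $A_N=\la\sigma_k\sigma_l\ra_{ij}$ writes $A_N=f(\la\sigma_k\sigma_l\ra_{ij,kl},\beta J_{kl})$ with $f$ as in \eqref{overlap:eq1}, and the elementary bound $|f(a,x)-\hth(x)|\le2|a|$ gives $|A_N^2-\hth^2(\beta J_{kl})|\le4|\la\sigma_k\sigma_l\ra_{ij,kl}|$; since $J_{ij}$, $J_{kl}$ and $\la\sigma_k\sigma_l\ra_{ij,kl}$ are mutually independent, the expectation in (b) factors into $o(1)\cdot\e[\hth^2(\beta J)\1_{\{|J|\ge K\}}]\cdot\p(|J|\ge K)=o(1)\cdot O(N^{-1})\cdot O(N^{-1})$, which is $o(N^{-2})$. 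For (a), $V_N$ is independent of $J_{ij}$, so $N^2\,\e\,V_N^2\,\1_{\{|J_{ij}|,|J_{kl}|\ge K\}}=N K^{-\alpha}\,\e\,V_N^2\,\1_{\{|J_{kl}|\ge K\}}$; removing $(k,l)$ from $V_N=\la\sigma_i\sigma_j\sigma_k\sigma_l\ra_{ij}$ writes $V_N=(V_N'+A_N'\hth(\beta J_{kl}))/(1+B_N'\hth(\beta J_{kl}))$ with $V_N',A_N',B_N'$ independent of $J_{ij},J_{kl}$ and $\e (V_N')^2+\e (A_N')^2\to0$, and then fixing $M>K$, bounding $|V_N|\le(|V_N'|+|A_N'|)/(1-\hth(\beta M))$ on $\{|J_{kl}|\le M\}$ and using $V_N^2\le1$ with $\p(|J_{kl}|>M)=M^{-\alpha}/N$ on $\{|J_{kl}|>M\}$ gives $\limsup_N N^2\,\e\,V_N^2\,\1_{\{|J_{ij}|,|J_{kl}|\ge K\}}\le K^{-\alpha}M^{-\alpha}$, which tends to $0$ as $M\to\infty$. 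Combining (a), (b) and the output of Lemma~\ref{lemma:lem--2}(ii) yields the assertion. The recurring obstacle, and the only genuinely delicate point, is that conditioning on $\{|J|\ge K\}$ does not bound $|J|$ from above, so the factors $(1-|\hth(\beta J)|)^{-1}$ produced by the cavity identities have no finite moments; the remedy used at every step is to truncate at an auxiliary level $M$, control the $\{|J|>M\}$ contribution by the trivial bound together with $\p(|J|>M)=M^{-\alpha}/N$, and send $M\to\infty$ only after $N\to\infty$. One must also track carefully which cavity field is independent of which of $J_{ij},J_{kl}$, since the two extra factors of $N^{-1}$ needed to overcome the $N^2$ come precisely from those independences.
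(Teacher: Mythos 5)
Your proof is correct and follows essentially the same route as the paper: two cavity steps identifying $U_N,V_N,A_N,B_N,W_N,C_N,D_N$ exactly as in the paper, an application of Lemma~\ref{lemma:lem--2}(ii), and then separate control of the residual terms $N^2\e V_N^2\1$ and $N^2\e(A_N^2-\hth^2(\beta J_{kl}))\hth^2(\beta J_{ij})\1$ via truncation at an auxiliary level $M$. Your two local simplifications — the smart-path bound $|\la X\ra-\la X\ra_e|\le 2\beta|J_e|$ in place of the paper's iterated use of Lemma~\ref{lemma--3} to get $\la\cdot\ra_{ijkl}\approx 0$, and the direct estimate $|f(a,x)-\tanh(x)|\le 2|a|$ plus independence in place of invoking Lemma~\ref{lemma:lem--2}(i) for the $A_N^2$ term — are both valid and slightly streamline the argument without changing its substance.
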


\begin{proof}
	Recall the notation $\la \cdot\ra_{ij}$ from the proof of Proposition \ref{prop1'}. Denote by $\la \cdot\ra_{ijkl}$ the Gibbs expectation associated to the original system for which $J_{ij}$ and $J_{kl}$ are set to zero. To ease our notation, for a sequence of random variables $(X_N)_{N\geq 1}$, we say that $X_N\approx 0$ if $\lim_{N\to\infty}\e |X_N|=0.$ 
	First of all, we claim that
	\begin{equation}\label{lemma--4:eq1}
		\la \sigma_i\sigma_j\ra_{ijkl}\approx 0,\la \sigma_k\sigma_l\ra_{ijkl}\approx 0,		\la \sigma_i\sigma_j\sigma_k\sigma_l\ra_{ijkl}\approx 0.
	\end{equation}
	To see this, recall that \eqref{siteoverlapzero} and \eqref{cavityineq} imply 
	$\la \sigma_i\sigma_j\ra_{ij}\approx 0.$
	Since
	\begin{equation}\label{lemma--3:proof:eq1}
		\la \sigma_i\sigma_j\ra_{ij}=\frac{\la \sigma_i\sigma_j\ra_{ijkl}+\la \sigma_i\sigma_j\sigma_k\sigma_l\ra_{ijkl}\tanh(\beta J_{kl})}{1+\la \sigma_k\sigma_l\ra_{ijkl}\tanh(\beta J_{kl})},
	\end{equation}
	we see that from Lemma \ref{lemma--3}, $\la \sigma_i\sigma_j\ra_{ijkl}\approx 0$ and by symmetry, $\la \sigma_k\sigma_l\ra_{ijkl}\approx 0.$ On the other hand, we can write
	\begin{align}
		\label{lemma--3:proof:eq2}		\la \sigma_i\sigma_j\sigma_k\sigma_l\ra&=\frac{\la \sigma_i\sigma_j\sigma_k\sigma_l\ra_{ij}+\la \sigma_k\sigma_l\ra_{ij}\tanh(\beta J_{ij})}{1+\la \sigma_i\sigma_j\ra_{ij}\tanh(\beta J_{ij})},\\
		\label{lemma--3:proof:eq3}		\la \sigma_i\sigma_j\sigma_k\sigma_l\ra_{ij}&=\frac{\la \sigma_i\sigma_j\sigma_k\sigma_l\ra_{ijkl}+\la \sigma_i\sigma_j\ra_{ijkl}\tanh(\beta J_{kl})}{1+\la \sigma_k\sigma_l\ra_{ijkl}\tanh(\beta J_{kl})}.
	\end{align}
	Again, \eqref{siteoverlapzero} implies $\la \sigma_i\sigma_j\sigma_k\sigma_l\ra\approx 0$.
	It follows by applying Lemma \ref{lemma--3} twice that $\la \sigma_i\sigma_j\sigma_k\sigma_l\ra_{ijkl}\approx 0.$ This completes the proof of our claim.

	Next, we express \eqref{lemma--3:proof:eq2} and  \eqref{lemma--3:proof:eq1} respectively as
\[ \la \sigma_i\sigma_j\sigma_k\sigma_l\ra =\frac{V_N+A_N\tanh(\beta J)}{1+B_N\tanh(\beta J)}, \quad 
		\la \sigma_i\sigma_j\ra_{ij}=\frac{W_N+C_N\tanh(\beta J')}{1+D_N\tanh(\beta J')} \]
	for $J=J_{ij}$ and $J'=J_{kl}.$ Note that $V_N,A_N,J'$ are independent of $J$ and $W_N,C_N,D_N$ are independent of $J,J'$ and that $W_N,C_N,D_N\approx 0$ by \eqref{lemma--4:eq1}. It follows from  \eqref{lemma:lem--2:eq1} that
	\begin{equation}\label{add:eq--1}
		\lim_{N\to\infty}N^2\bigl|\e \bigl(\la \sigma_i\sigma_j\sigma_k\sigma_l\ra^2-\bigl(\la \sigma_i\sigma_j\sigma_k\sigma_l\ra_{ij}^2+\la \sigma_k\sigma_l\ra_{ij}^2\tanh^2(\beta J_{ij})\bigr)\bigr)\1_{\{|J_{ij}|\geq K,|J_{kl}|\geq K\}}\bigr|=0.
	\end{equation}
	Similarly, we express
	\begin{equation*}
		\la \sigma_k\sigma_l\ra_{ij}=\frac{\la \sigma_k\sigma_l\ra_{ijkl}+\tanh(\beta J_{kl})}{1+\la \sigma_k\sigma_l\ra_{ijkl}\tanh(\beta J_{kl})}=\frac{W_N'+C_N'\tanh(\beta J')}{1+D_N'\tanh(\beta J')},
	\end{equation*}
	and since $W_N',C_N',D_N'$ are independent of $J,J'$ and $W_N',D_N'\approx 0$ by \eqref{lemma--4:eq1}, from \eqref{lemma:lem--2:eq2},
	\begin{equation*}
		\lim_{N\to\infty}N^2\bigl|\e\bigl(\la \sigma_k\sigma_l\ra_{ij}^2-\tanh^2(\beta J_{kl})\bigr)\1_{\{|J_{ij}|,|J_{kl}|\geq K\}}\bigr|=0,
	\end{equation*}
	which implies that
	\begin{equation}\label{lemma--4:proof:eq2}
		\lim_{N\to\infty}N^2\bigl|\e\bigl(\la \sigma_k\sigma_l\ra_{ij}^2\tanh^2(\beta J_{ij})-\tanh^2(\beta J_{kl})\tanh^2(\beta J_{ij})\bigr)\1_{\{|J_{ij}|,|J_{kl}|\geq K\}}\bigr|=0.
	\end{equation}
	In addition, we can express \eqref{lemma--3:proof:eq3} as
	\begin{equation*}
		\la \sigma_i\sigma_j\sigma_k\sigma_l\ra_{ij}=\frac{W_N''+C_N''\tanh(\beta J')}{1+D_N''\tanh(\beta J')}.
	\end{equation*}
	Since $W_N'',C_N'',D_N''$ are independent of $J,J'$ and $W_N'',D_N''\approx 0$ by \eqref{lemma--4:eq1}, from \eqref{lemma:lem--2:eq2},
	\begin{equation*}
		\lim_{N\to\infty}N^2\bigl|\e\bigl(\la \sigma_i\sigma_j\sigma_k\sigma_l\ra_{ij}^2-\la\sigma_i\sigma_j\sigma_j\sigma_k\ra_{ijkl}^2\tanh^2(\beta J_{kl})\bigr)\1_{\{|J_{ij}|,|J_{kl}|\geq K\}}\bigr|=0.
	\end{equation*}
	Here, since
	\begin{equation*}
		\lim_{N\to\infty}	N^2\e \la\sigma_i\sigma_j\sigma_j\sigma_k\ra_{ijkl}^2\tanh^2(\beta J_{kl})\1_{\{|J_{ij}|,|J_{kl}|\geq K\}}\le \lim_{N\to \infty} K^{-2\alpha}\e \la\sigma_i\sigma_j\sigma_j\sigma_k\ra_{ijkl}^2=0,
	\end{equation*}
	we also see that
	\begin{equation*}
		\lim_{N\to\infty}	N^2\e\la \sigma_i\sigma_j\sigma_k\sigma_l\ra_{ij}^2 \1_{\{|J_{ij}|,|J_{kl}|\geq K\}}=0.
	\end{equation*}
	This together with \eqref{add:eq--1} and \eqref{lemma--4:proof:eq2} completes our proof.
\end{proof}

\begin{proof}[\bf Proof of Equation \eqref{secondmoment} in Proposition \ref{bondoverlap:firstmoment}]
	Note \begin{equation*}
		\Bigl\la  \Bigl(\frac{1}{N}\sum_{i<j}  \1_{\{|J_{ij}|\ge K\}}\sigma_i^1\sigma_j^1\sigma_i^2\sigma_j^2\Bigr)^2\Bigr\ra=\frac{1}{N^2}\sum_{i<j,\ k<l} \1_{\{|J_{ij}|,|J_{kl}|\ge K\}}\la \sigma_i\sigma_j\sigma_k\sigma_l\ra ^2 .
	\end{equation*}
	There are at most $O(N^3)$ many cases where at least two of the indices from $i,j,k,l$ such that $i<j$ and $k<l$ coincide.
	Since  $ \p( |J_{ij}|,|J_{kl}|\ge K )  = O(N^{-2}),$  it suffices to show that \[ \lim_{N\to\infty}\frac{1}{N^2}\sum_{i<j,\ k<l \text{ distinct}}\e \1_{\{|J_{ij}|,|J_{kl}|\ge K\}}\la \sigma_i\sigma_j\sigma_k\sigma_l\ra ^2 = C_K^2. \]
	Since there are total $N(N-1)(N-2)(N-3)/4$ many choices of $i<j$ and $k<l$ such that the indices are all distinct, we deduce, using the symmetry among sites, that 
	\begin{align*}
		&\frac{1}{N^2}\sum_{i<j,\ k<l\,\mathrm{distinct}}\e \1_{\{|J_{ij}|,|J_{kl}|\ge K\}}\la \sigma_i\sigma_j\sigma_k\sigma_\ell\ra ^2\\
  & =  \frac{N(N-1)(N-2)(N-3)}{4N^4}N^2 \e \1_{\{|J_{12}|,|J_{34}|\ge K\}}\la \sigma_1\sigma_2\sigma_3\sigma_4\ra ^2.
	\end{align*}
	From Lemma \ref{lemma--4},  the right-hand side of the  equation above converges to \begin{equation*}
		\tfrac{1}{4} \lim_{N\to\infty}N^2\e \hth^2(\beta J_{12})\hth^2(\beta J_{34})\1_{\{|J_{12}|\ge K\}}\1_{\{|J_{34}|\ge K\}}=C_K^2.
	\end{equation*}
\end{proof}

\subsection{Proof of Equation (\ref{bondoverlapnonzero}) in Theorem \ref{hightempoverlap}}\label{subsection:add:add3}

First of all, recall that $M=\sum_{i<j}\mathbbm{1}_{\{| J|\geq K\}}\sim \mbox{Binomial}(N(N-1)/2,K^{-\alpha}/N)$.
It is elementary to check that as $N\to\infty,$ $M/N\to 2^{-1}K^{-\alpha}$ in $L^2$, which implies that
\begin{equation}
    \label{extra:eq2}
    \frac{N}{M}\1_{\{M\ge1\}} \stackrel{p}{\to} 2K^{\alpha}.
\end{equation}
	Now, from Proposition \ref{bondoverlap:firstmoment}, we have \[\lim_{N\to\infty}\e \Bigl\la \Bigl(\frac{1}{N}\sum_{i<j}\1_{\{|J_{ij}|\ge K\}} \sigma^1_i\sigma^1_j\sigma^2_i\sigma^2_j - C_K \Bigr)^2 \Bigr\ra =0.\]
From this, we readily have \begin{align}
	\label{extra:eq3}\Bigl\la\frac{1}{N}\sum_{i<j}\1_{\{|J_{ij}|\ge K\}} \sigma^1_i\sigma^1_j\sigma^2_i\sigma^2_j\Bigr\ra  &\stackrel{L^2}{\to} C_K,\\
	\label{extra:eq4}	\Bigl\la \Bigl(\frac{1}{N}\sum_{i<j}\1_{\{|J_{ij}|\ge K\}} \sigma^1_i\sigma^1_j\sigma^2_i\sigma^2_j\Bigr)^2\Bigr\ra  &\stackrel{L^1}{\to} C_K^2.
	\end{align}
Consequently, from \eqref{extra:eq3} and writing \begin{equation*}
		Q_K= \frac{N}{M}\1_{\{M\ge 1\}} \cdot\frac{1}{N}\sum_{i<j}\1_{\{|J_{ij}|\ge K\}} \sigma^1_i\sigma^1_j\sigma^2_i\sigma^2_j,
	\end{equation*} 
	it follows from \eqref{extra:eq2} that \[\la Q_K\ra\xrightarrow[]{p}\alpha K^{\alpha}\int_{K}^\infty \frac{\hth^2(\beta x)}{x^{1+\alpha}}dx. \]
	Since $|Q_K|\le 1$, from the dominated convergence theorem, we have \[    \lim_{N\to\infty}\e \la Q_K \ra = \alpha K^{\alpha}\int_{K}^\infty \frac{\hth^2(\beta x)}{x^{1+\alpha}}dx.\]
	With \eqref{extra:eq4}, an identical argument  shows \[\lim_{N\to\infty} \e \la Q_K^2 \ra= \Bigl(\alpha K^{\alpha}\int_{K}^\infty \frac{\hth^2(\beta x)}{x^{1+\alpha}}dx\Bigr)^2.\]
	These limits imply the desired concentration for the bond overlap.

\section{Establishing the Superadditivity of the Free Energy}

We establish the proof of Theorem \ref{superadditivity} in this section. First of all, we will introduce the VB and PVB models and show that their free energies are asymptotically the same. Next, we show that the free energy in the L\'evy model can be approximated by that of the PVB model with a properly chosen disorder and edge connectivity. Based on these, the proof of Theorem \ref{superadditivity} is presented in Subsection \ref{sec:Proof:superadditivity}.

\subsection{Viana-Bray Model and its Poissonian Generalization}\label{BVBmodel}
Let $\beta>0.$ Let $\gamma>0$ be a fixed parameter and let $y$ be a random variable with finite first moment. Let $(B_{ij})$ be i.i.d. Bernoulli$(\gamma/N)$ and $(y_{ij})$ and $(y_k)$ be i.i.d. sampled from $y$. For any $r\geq 0,$ let $\pi(r)$ be a Poisson random variable with mean $r$. Each time when we see $\pi(r),\pi(r'),\pi(r''),\ldots$, we shall assume that they are independent of each other even when $r=r'.$ Let $(I(1,k),I(2,k))$ be i.i.d. uniformly sampled from the set $\{(i,j):1\leq i<j\leq N\}$. We assume that these random variables are all independent of each other. The Hamiltonians of the VB and PVB are defined as
\begin{equation*}
	-H_N^{\B}(\sigma)=\beta\sum_{1\leq i<j\leq N}B_{ij}y_{ij}\sigma_i\sigma_j
\end{equation*}
and
\begin{equation*}
	-H_N^{\VB}(\sigma)=\beta \sum_{k=1}^{\pi(\gamma N)}y_{k}\sigma_{I(1,k)}\sigma_{I(2,k)}.
\end{equation*}
Define the free energy of the above two models by $$F_N^{\B}=\frac{1}{N}\ln \sum_\sigma e^{-H_N^{\B}(\sigma)}$$ and $$F_N^{\VB}=\frac{1}{N}\ln \sum_{\sigma} e^{-H_N^{\VB}(\sigma)}.$$
Note that in comparison, the PVB model has been studied a bit more owing to the fact, as observed in \cite{Franz}, that many technicalities  in the Bernoulli model disappear in the PVB model. The following theorem nevertheless shows that the  free energies in these two models are asymptotically the same.

\begin{theorem} \label{whytruncation2}
	There exists a constant $C>0$ independent of $y$ and $\gamma$ such that
	\begin{align*}
		\bigl|\e F_N^{\B}-\e F_N^{\VB}\bigr|\leq \frac{C}{N}\Bigl(\gamma^3+\gamma\Bigl(1+\gamma +\frac{\gamma^2}{N}+\gamma^3\Bigr)\e|y|\Bigr),\,\,\forall N\geq 1.
	\end{align*}
\end{theorem}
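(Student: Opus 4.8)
The plan is to couple the VB and PVB models on a common probability space so that their Hamiltonians differ only on a small (random) number of edges, and then control the free energy difference via the Lipschitz bound $|F_N(H) - F_N(H')| \le \frac{1}{N}\sup_\sigma |H(\sigma) - H'(\sigma)|$. The natural coupling replaces the collection $(B_{ij})$ of i.i.d.\ Bernoulli$(\gamma/N)$ variables by a Poissonization: write $\pi(\gamma N)$ as the total number of ``darts,'' each thrown uniformly into $\{(i,j): 1 \le i < j \le N\}$, and compare the resulting edge multiplicities $\tilde B_{ij}$ (which are i.i.d.\ Poisson$(\gamma/N)$ by the thinning/splitting property of the Poisson) with the $B_{ij}$. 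First I would construct, for each pair $(i,j)$, a coupling of a Bernoulli$(\gamma/N)$ and a Poisson$(\gamma/N)$ maximizing the probability they agree; the total-variation distance between these laws is $O((\gamma/N)^2)$, so the expected number of pairs on which they disagree is $O(N^2 \cdot (\gamma/N)^2) = O(\gamma^2)$. Conditionally on a disagreement at $(i,j)$, the contribution to $|H_N^{\B} - H_N^{\VB}|$ is at most $\beta |\tilde B_{ij} - B_{ij}|\,|y_{ij}|$, and $\e|\tilde B_{ij} - B_{ij}|$ on the disagreement event is controlled by $\e|y|$ times a Poisson tail, giving the $\gamma \e|y|$ and $\gamma^3 \e|y|$ type terms once summed over pairs.

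The key steps, in order, are: (1) Poissonize the Bernoulli edge field, introducing $\tilde B_{ij} \sim$ Poisson$(\gamma/N)$ i.i.d., and note that $\sum_{i<j}\tilde B_{ij} \overset{d}{=} \pi(\binom{N}{2}\gamma/N)$, not exactly $\pi(\gamma N)$; absorb the discrepancy between $\binom{N}{2}\gamma/N = \gamma(N-1)/2$ and $\gamma N/1$... actually the PVB uses mean $\gamma N$, so one must be careful: reconcile the two by an intermediate model with Poisson mean $\gamma(N-1)/2$ darts, or equivalently observe the PVB Hamiltonian in \eqref{pvb} uses $\pi(\eps^{-\alpha} N)$ whereas the edge-Poissonized VB has $\binom{N}{2}$ independent Poissons of rate $\gamma/N$; the difference in total mass is $O(\gamma)$, each extra/missing dart costing $\beta \e|y|$ in expected Hamiltonian sup-norm, hence an $O(\gamma \e|y|/N)$ term in the free energy after dividing by $N$. (2) Bound $\e\sup_\sigma|H_N^{\B} - H_N^{\VB}|$ by summing the per-pair contributions: the ``Bernoulli vs.\ Poisson law mismatch'' gives $\sum_{i<j} \beta\, \e|y|\, \e|\tilde B_{ij}-B_{ij}|$, where the coupling yields $\e|\tilde B_{ij}-B_{ij}| = O((\gamma/N)^2)$, producing $O(\gamma^2 \e|y| / N)$ after the $1/N$ normalization — and pushing to higher Poisson moments accounts for the $\gamma^3$, $\gamma^2/N$ refinements. (3) Separately, the ``pure counting'' discrepancy (number of darts) with no $y$-factor would give the bare $\gamma^3/N$ term; this arises because Poissonizing $\binom{N}{2}$ Bernoullis introduces a random number of edges differing from $\binom{N}{2}\gamma/N$ by fluctuations of order $\gamma$, but the relevant estimate needed is really on $\e|\text{(number of disagreeing edges)}|$, which is $O(\gamma^2)$ unconditionally but whose \emph{cube} appears if one iterates the coupling argument or bounds a variance — I would track exactly where the power three enters (likely from $\e(\sum_{i<j}(\tilde B_{ij} - B_{ij}))^{\text{something}}$ or from a second-moment/Chebyshev step controlling large deviations of the dart count). (4) Finally, assemble: $|\e F_N^{\B} - \e F_N^{\VB}| \le \frac{1}{N}\e\sup_\sigma|H_N^{\B} - H_N^{\VB}| \le \frac{C}{N}(\gamma^3 + \gamma(1 + \gamma + \gamma^2/N + \gamma^3)\e|y|)$, with $C$ absolute since every constant above depends only on universal Poisson/Bernoulli coupling estimates, not on the law of $y$ or on $\gamma$.

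The main obstacle I anticipate is bookkeeping the exact polynomial-in-$\gamma$ dependence — in particular isolating the three distinct sources (bare dart-count discrepancy $\sim \gamma^3$; first-order law-mismatch $\sim \gamma \e|y|$ together with its $\gamma^2, \gamma^3$ corrections from higher factorial moments of the Poisson; and the $\gamma^2/N$ term from the $\binom{N}{2}$-vs-$N$ normalization) and verifying that the constant $C$ can be taken uniform in $y$ and $\gamma$. The Lipschitz reduction and the Bernoulli–Poisson coupling are routine; the delicate part is that one cannot simply bound $|y_{ij}|$ by a constant, so all $y$-dependent terms must be handled through $\e|y|$ alone (using only the first moment, as hypothesized), which forces the coupling to be done at the level of edge \emph{presence} first and then multiply by an independent copy of $|y|$ — one must make sure the independence structure of $(B_{ij}, y_{ij})$ vs.\ $(\pi(\gamma N), (y_k))$ is respected throughout the coupling so that no spurious correlation between the disorder magnitudes and the edge mismatch is introduced.
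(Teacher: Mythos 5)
Your route is genuinely different from the paper's. The paper does not couple the two models on a common probability space; instead it (i) shifts the Poisson mean from $\gamma N$ to $\gamma(N-1)$ at cost $\gamma\e|y|/N$, (ii) rewrites the resulting PVB Hamiltonian with i.i.d.\ Poisson edge multiplicities $P_{ij}$, and (iii) swaps each $P_{ij}$ for a Bernoulli variable one edge at a time, bounding each swap with a general Poisson-versus-Bernoulli comparison lemma (Lemma \ref{poiber}) applied to the map $n\mapsto F_s(n)$. The bare $\gamma^3/N$ term you were trying to reverse-engineer comes from the $|f(0)|$ term of that lemma (there $f(0)=F_s(0)$ is only bounded by $\ln 2+\beta\gamma\e|y|$, and the $\ln 2$ produces a $y$-free contribution); it is an artifact of that method, and your argument neither produces nor needs it. Your maximal-coupling-plus-Lipschitz scheme is more elementary and, executed carefully, yields the stronger bound $C\beta N^{-1}(\gamma+\gamma^2)\e|y|$, which implies the stated one: per pair, $\e\bigl[\tilde B_{ij}\1_{\{\tilde B_{ij}\neq B_{ij}\}}\bigr]=O((\gamma/N)^2)$, the disorder magnitudes factor out as $\e|y|$ by independence, and summing over $\binom{N}{2}$ pairs and dividing by $N$ finishes.

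The one point you must nail down is the dart-count normalization, which you flag but then misstate. Poissonizing $\binom{N}{2}$ independent Bernoulli$(\gamma/N)$ edges produces a total dart count with mean $\binom{N}{2}\gamma/N=\gamma(N-1)/2$, whereas the PVB Hamiltonian \eqref{pvb} uses $\pi(\gamma N)$: the discrepancy in means is $\Theta(\gamma N)$, not $O(\gamma)$ as you write, and $\Theta(\gamma N)$ surplus darts would cost $\Theta(\gamma\e|y|)$ in the normalized free energy, destroying the bound. (The paper's own proof quietly resolves this by interpolating to Bernoulli$(2\gamma/N)$ edges, which does not literally match the Ber$(\gamma/N)$ definition of $F_N^{\B}$ in Subsection \ref{BVBmodel}; there is an implicit factor-of-two convention.) For your argument, compare the Ber$(\gamma/N)$ VB model to the PVB model with per-edge Poisson rate exactly $\gamma/N$, i.e.\ total mean $\gamma(N-1)/2$, and only then absorb a genuinely $O(\gamma)$ adjustment of the Poisson mean at cost $O(\gamma\e|y|/N)$. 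Once the rates are matched, the rest of your plan goes through.
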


For the rest of this subsection, we establish this theorem. We need the following lemma.

\begin{lemma}\label{poiber}
	Let $f:\mathbb N \cup \{0\} \to \mathbb R$ be a function such that $|f(1)-f(0)|\le d$ and $|f(2)-f(0)|\le d$ for some $d$.
	Assume that $X\sim\mathrm{Poi}(\Delta)$ and $Y\sim\mathrm{Ber}(\Delta)$ for some $\Delta >0$.
	We have that  \[|\e f(X)-\e f(Y)|\le \frac{3}{2} \bigl(\Delta ^3|f(0)| +d \Delta^2 + \Delta^3 \e|f(X+3)|\bigr).\]
\end{lemma}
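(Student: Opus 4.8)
The plan is to compare $\e f(X)$ and $\e f(Y)$ by writing both expectations out term by term using the explicit laws of $X\sim\mathrm{Poi}(\Delta)$ and $Y\sim\mathrm{Ber}(\Delta)$, and to isolate the contributions that must cancel or be controlled by the hypotheses $|f(1)-f(0)|\le d$ and $|f(2)-f(0)|\le d$. First I would record $\p(Y=0)=1-\Delta$, $\p(Y=1)=\Delta$, and $\p(X=k)=e^{-\Delta}\Delta^k/k!$, so that
\[
\e f(Y)=f(0)+\Delta\bigl(f(1)-f(0)\bigr),
\]
\[
\e f(X)=e^{-\Delta}f(0)+e^{-\Delta}\Delta f(1)+\sum_{k\ge 2}e^{-\Delta}\frac{\Delta^k}{k!}f(k).
\]
The idea is to add and subtract the "$k=2$ would-be Bernoulli mass'' cleverly; more simply, I would Taylor-expand $e^{-\Delta}=1-\Delta+\tfrac{\Delta^2}{2}-\cdots$ to second order and track remainders. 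Writing $e^{-\Delta}=1-\Delta+r$ with $|r|\le \Delta^2/2$ (since $0\le 1-\Delta+\Delta^2/2-e^{-\Delta}\le \Delta^2/2$ for $\Delta\ge 0$, and $e^{-\Delta}\ge 1-\Delta$), one gets
\[
\e f(X)-\e f(Y)
= r f(0)+ r\,\Delta f(1)
+\Bigl(e^{-\Delta}-1\Bigr)\Delta\bigl(f(1)-f(0)\bigr)
+\sum_{k\ge 2}e^{-\Delta}\frac{\Delta^k}{k!}f(k),
\]
after regrouping the $f(0)$ and $f(1)$ terms against $\e f(Y)=f(0)+\Delta(f(1)-f(0))$.

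Next I would bound each piece. The term $r f(0)$ is at most $\tfrac{\Delta^2}{2}|f(0)|$; combined with $r\Delta f(1)$, using $|f(1)|\le |f(0)|+d$, this is $O(\Delta^2|f(0)|+\Delta^3 d)$—but to match the stated bound I must be more careful and pull out a clean $\Delta^3|f(0)|$ factor, which suggests instead peeling off the $k=0,1,2$ contributions of $X$ explicitly and comparing $\p(X=0)+\p(X=1)$ against $1$ and $\p(X=1)$ against $\Delta$. Concretely: $|\p(X=0)-(1-\Delta)|$, $|\p(X=1)-\Delta|$, and $\p(X\ge 3)$ are each $O(\Delta^2)$ with the appropriate sign structure, while the crucial observation is that the "extra'' mass $\p(X=2)$ and the deficits in $\p(X=0),\p(X=1)$ can be absorbed using $|f(2)-f(0)|\le d$ and $|f(1)-f(0)|\le d$, so that the $|f(0)|$-proportional error is genuinely of order $\Delta^3$ rather than $\Delta^2$. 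The tail $\sum_{k\ge 3}e^{-\Delta}\tfrac{\Delta^k}{k!}f(k)$ I would bound by $\tfrac{\Delta^3}{2}\e|f(X+3)|$ after the substitution $k\mapsto k+3$ and the estimate $e^{-\Delta}\tfrac{\Delta^{k+3}}{(k+3)!}\le \tfrac{\Delta^3}{2}\cdot e^{-\Delta}\tfrac{\Delta^k}{k!}$, which holds since $(k+3)!\ge 2\cdot k!$.

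The main obstacle—really a bookkeeping obstacle rather than a conceptual one—is getting the constants to land exactly at $\tfrac32$ and the error to split as $\Delta^3|f(0)|+d\Delta^2+\Delta^3\e|f(X+3)|$: one must expand $e^{-\Delta}$ to the right order, keep the $f(0)$, $f(1)$, $f(2)$ terms grouped so that the differences $f(1)-f(0)$ and $f(2)-f(0)$ (bounded by $d$) appear multiplied only by $O(\Delta^2)$ quantities, and verify the elementary inequalities $|e^{-\Delta}-1+\Delta-\Delta^2/2|\le \Delta^3/6$, $1-e^{-\Delta}\le \Delta$, and $\p(X\ge 3)\le \Delta^3/6$ that make each coefficient at most $\tfrac32$. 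Once the decomposition is organized this way, the triangle inequality finishes the proof.
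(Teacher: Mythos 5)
Your proposal is correct in substance and follows essentially the same route as the paper: peel off the $k=0,1,2$ masses, regroup so that $f(0)$'s coefficient becomes the third-order remainder $e^{-\Delta}(1+\Delta+\tfrac{\Delta^2}{2})-1=-e^{-\Delta}\sum_{\ell\ge3}\Delta^\ell/\ell!=O(\Delta^3)$ while the differences $f(1)-f(0)$ and $f(2)-f(0)$ (each $\le d$) multiply only $O(\Delta^2)$ factors, and shift the tail by $3$ using $(k+3)!\ge 6\,k!$. Note only that your first displayed identity is algebraically off (it carries a spurious $\Delta^2 f(0)$ term), but you correctly recognize that this naive expansion is insufficient and the decomposition you then describe is exactly the paper's.
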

\begin{proof}
	The idea is that the distributions of $X$ and $Y$ are similar except when their values are small. 
	Write \[
	\e f(X)-\e f(Y)=f(0)(e^{-\Delta} -(1-\Delta))+f(1)(e^{-\Delta}\Delta -\Delta)+f(2)e^{-\Delta}\frac{\Delta^2}{2!}+\sum_{\ell\ge3}f(\ell)e^{-\Delta}\frac{\Delta^\ell}{\ell!}.
	\]
	The first three terms can be rearranged as
	\begin{align*}
		&f(0)\bigl(e^{-\Delta}-(1-\Delta)+\Delta(e^{-\Delta}-1)+2^{-1}e^{-\Delta}\Delta^2\bigr)\\& \quad+(f(1)-f(0))\Delta (e^{-\Delta}-1)+2^{-1}(f(2)-f(0))e^{-\Delta}\Delta^2\\
		&=f(0)\bigl(e^{-\Delta}(1+\Delta+2^{-1}\Delta^2)-1\bigr)+(f(1)-f(0))\Delta (e^{-\Delta}-1)+2^{-1}(f(2)-f(0))e^{-\Delta}\Delta^2\\
		&= -f(0) e^{-\Delta}\sum_{\ell\ge 3}\frac{\Delta^\ell}{\ell!} +(f(1)-f(0))\Delta (e^{-\Delta}-1)+2^{-1}(f(2)-f(0))e^{-\Delta}\Delta^2.
	\end{align*}
	Therefore, noting that 
	$$
	e^{-\Delta}\sum_{\ell\ge 3}\frac{\Delta^\ell}{\ell!} \leq \frac{\Delta^3}{6},
	$$
	the absolute value of the first three terms is bounded above by
	\begin{align*}
		\frac{|f(0)|\Delta^3}{6}+\frac{3d\Delta^2}{2}.
	\end{align*}
	As for the last term, it can be	bounded by \[\biggl|\sum_{\ell\ge3}f(\ell)e^{-\Delta}\frac{\Delta^\ell}{\ell!}\biggr|\le \sum_{\ell\ge0}|f(\ell+3)|e^{-\Delta}\frac{\Delta^\ell}{\ell!}\frac{\ell!}{(\ell+3)!}\Delta^3\le \frac{\Delta^3}{6}\e|f(X+3)|.\]
	Combining everything, we have \[| \e f(X)-\e f(Y)|\le \frac{3}{2}( \Delta^3|f(0)|+d\Delta^2+\Delta^3 \e|f(X+3)|).\]
\end{proof}

\begin{proof}[\bf Proof of Theorem \ref{whytruncation2}]
	Let $(P_{ij})$ be i.i.d. Poisson$(2\gamma/ N)$ and let $(B_{ij})$  be i.i.d. Bernoulli$(2\gamma/N)$. These are independent of each other and other randomness. Denote by $\hat F_N^{\VB}$ the same as $F_N^{\VB}$, but the parameter $\gamma$ in $F_N^{\VB}$ is replaced by $\gamma(N-1)/N.$ Note that we can write $\pi(\gamma N){=}\pi(\gamma)+\pi(\gamma(N-1))$ in distribution so that
	\begin{equation*}
		\e F_N^{\VB}-\e \hat F_N^{\VB}=\frac{1}{N}\e\ln \Bigl\la \exp \Bigl(\sum_{k=1}^{\pi(\gamma)}y_k'\sigma_{I'(1,k)}\sigma_{I'(2,k)}\Bigr)\Bigr\ra',
	\end{equation*}
	where $(y_k')$ are i.i.d. copies of $y$, $(I'(1,k),I'(2,k))$ are i.i.d. copies of $(I(1,k),I(2,k))$, and they are independent of each other and other randomness. Also, $\la \cdot\ra'$ is the Gibbs expectation associated with the free energy $\e \hat F_N^{\VB}.$ Thus, 
	\begin{equation}\label{add:eq-9}
		\bigl|\e F_N^{\VB}-\e \hat F_N^{\VB}\bigr|\leq \frac{1}{N}\e\sum_{k=1}^{\pi(\gamma)}|y_k'|=\frac{\gamma \e|y|}{N}.
	\end{equation}
	Next, by using the thinning property of the Poisson random variable, we can write
	\begin{equation*}
		\hat F_N^{\VB}=\frac{1}{N}\e\ln\sum_{\sigma}\exp\Bigl(\beta \sum_{i<j}\Bigl(\sum_{k=1}^{P_{ij}}y_{ij}^k\Bigr)\sigma_{i}\sigma_{j}\Bigr),
	\end{equation*}
	where $(y_{ij}^k:i,j,k\geq 1)$ are i.i.d. copies of $y$ and independent of all other randomness.
	Let $(i_t,j_t)$ for $1\leq t\leq M:=N(N-1)/2$ be an enumeration of the pairs $\{(i,j):1\leq i<j\leq N\}.$ For any $0\leq s\leq M$ and $n\in \mathbb{N}\cup\{0\}$, define
	\begin{equation*}
		F_s(n)=\frac{1}{N}\e\ln\sum_{\sigma}\exp\Bigl(\beta \sum_{1\leq t\leq s-1}\Bigl(\sum_{k=1}^{P_{i_tj_t}}y_{i_tj_t}^k\Bigr)\sigma_{i_t}\sigma_{j_t}+\Bigl(\sum_{k=1}^{n}y_{i_sj_s}^k\Bigr)\sigma_{i_s}\sigma_{j_s}+\sum_{s+1\leq t\leq M}\Bigl(\sum_{k=1}^{B_{i_tj_t}}y_{i_tj_t}^k\Bigr)\sigma_{i_t}\sigma_{j_t}\Bigr).
	\end{equation*}
	From the assumption that $y$ is integrable, we can adapt the same argument as that of \eqref{add:eq-9} to show that $|F_s(0)-F_s(1)|$ and $|F_s(0)-F_s(2)|$ are uniformly $O(\e|y|/N)$ over all $1\leq s\leq M$. Now that we also have
	\begin{align*}
		|F_s(0)|&\leq \ln 2+\frac{\beta}{N} \e\Bigl[\sum_{1\leq t\leq s-1}\sum_{k=1}^{P_{i_tj_t}}|y_{i_tj_t}^k|+\sum_{s+1\leq t\leq M}\sum_{k=1}^{B_{i_tj_t}}|y_{i_tj_t}^k|\Bigr]\\
		&=\ln 2+ \frac{\beta(M-1)}{N^2}2\gamma\e|y|\\
		&\leq \ln 2+\beta \gamma \e|y|
	\end{align*}
	and for $N\geq 2,$
	\begin{align*}
		\e|F_s(P_{i_sj_s}+3)|&\leq \ln 2+\frac{\beta}{N} \e\Bigl[\sum_{1\leq t\leq s-1}\sum_{k=1}^{P_{i_tj_t}}|y_{i_tj_t}^k|+\sum_{k=1}^{P_{i_sj_s}+3}|y_{i_sj_s}^k|+\sum_{s+1\leq t\leq M}\sum_{k=1}^{B_{i_tj_t}}|y_{i_tj_t}^k|\Bigr]\\
	&=\ln 2+ \frac{\beta}{N}\Bigl(\frac{2\gamma M}{N}+3\Bigr)\e|y|\\
		&\leq \ln 2+\beta \Bigl(\gamma +\frac{3}{N}\Bigr)\e|y|.
	\end{align*}
	As a result, from Lemma \ref{poiber}, we readily have that there exists a constant $C>0$ independent of $N$ such that
	\begin{equation*}
		\bigl|\e F_s(P_{i_sj_s})-\e F_s(B_{i_sj_s})\bigr|\leq \frac{C}{N^3}\Bigl(\gamma^3+\gamma^2\Bigl(1+\frac{\gamma}{N}+\gamma^2\Bigr)\e|y|\Bigr).
	\end{equation*}
	Thus, noting that 
	$$
	\e F_s(P_{i_sj_s})=\e F_{s+1}(B_{i_{s+1}j_{s+1}})\,\,\mbox{and}\,\,\sum_{k=1}^{B_{i_tj_t}}y_{i_tj_t}^k\stackrel{d}{=}B_{i_tj_t}y_{i_tj_t},
	$$ we have
	\begin{equation*}
		\bigl|	\e\hat F_N^{\VB}-\e F_N^{\B}\bigr|\leq \sum_{s=0}^M\bigl|\e F_s(P_{i_sj_s})-\e F_s(B_{i_sj_s})\bigr|\leq\frac{C}{N}\Bigl(\gamma^3+\gamma^2\Bigl(1+\frac{\gamma}{N}+\gamma^2\Bigr)\e|y|\Bigr).
	\end{equation*}
	This together with \eqref{add:eq-9} completes our proof.
\end{proof}

\subsection{Free Energy Connection Between the L\'evy and Diluted Models}\label{Sec:levyDilutedModel}

For any $\eps>N^{-1/\alpha}$, denote by $g_\eps$ the distribution of $J$ conditionally on $|J|\geq \eps$, i.e., 
\begin{equation}
	\label{add:eq-11}
	g_\eps\sim \frac{\alpha\varepsilon^{\alpha}}{2}\frac{\1_{\{|x|\geq \varepsilon\}}}{|x|^{1+\alpha}}.
\end{equation}
Let 
\begin{equation}
	\label{input}
	y=g_\eps\,\,\mbox{and}\,\,\gamma=\eps^{-\alpha}.
\end{equation}
Denote by the corresponding free energy of the PVB model by $F_{N,\eps}^{\VB}.$ Also, denote $F_N=N^{-1}\ln Z_N.$ The following theorem shows that the free energy of the L\'evy model is asymptotically the same as that in the PVB model whenever $1<\alpha<2$.

\begin{theorem}\label{whytruncation}
	Assume $1<\alpha<2$.
   We have that
	\begin{align*}
		\lim_{\eps\downarrow 0}\lim_{N\to\infty}\bigl|\e F_N-\e F_{N,\eps}^{\VB}\bigr|=0.
	\end{align*}
\end{theorem}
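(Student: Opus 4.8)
The plan is to decompose the comparison $|\e F_N - \e F_{N,\eps}^{\VB}|$ into three pieces via the intermediate VB model. Writing $F_{N,\eps}^{\B}$ for the free energy of the VB model with disorder $y = g_\eps$ and connectivity $\gamma = \eps^{-\alpha}$ as in \eqref{input}, I would use the triangle inequality
\begin{align*}
\bigl|\e F_N - \e F_{N,\eps}^{\VB}\bigr| \le \bigl|\e F_N - \e F_{N,\eps}^{\B}\bigr| + \bigl|\e F_{N,\eps}^{\B} - \e F_{N,\eps}^{\VB}\bigr|.
\end{align*}
The second term is already controlled by Theorem \ref{whytruncation2}: plugging in $\gamma = \eps^{-\alpha}$ and noting $\e|g_\eps| = \alpha\eps/(\alpha-1)$ is finite for $\alpha>1$, the bound is $O(N^{-1}(\eps^{-3\alpha} + \eps^{-\alpha}(1+\eps^{-\alpha}+\eps^{-2\alpha}/N+\eps^{-3\alpha})\eps)) = O_\eps(N^{-1})$, which vanishes as $N\to\infty$ for each fixed $\eps$, before we even send $\eps\downarrow 0$.

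The first term is where the real work lies, and here is where I expect the main obstacle. The point is that the VB model with disorder $(B_{ij}, g_\eps)$ has Hamiltonian $\beta\sum_{i<j} B_{ij}g_{\eps,ij}\sigma_i\sigma_j$, and since $B_{ij}g_{\eps,ij} \stackrel{d}{=} J_{ij}\1_{\{|J_{ij}|\ge\eps\}}$ in distribution (using that $J\1_{\{|J|\ge\eps\}}$ is a mixture of a point mass at $0$ with weight $1-\eps^{-\alpha}/N$ and the law of $g_\eps$ with weight $\eps^{-\alpha}/N$), the VB free energy equals the free energy of the truncated L\'evy Hamiltonian $\beta\sum_{i<j} J_{ij}\1_{\{|J_{ij}|\ge\eps\}}\sigma_i\sigma_j$. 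So I must bound the effect of dropping the small weights $J_{ij}\1_{\{|J_{ij}|<\eps\}}$ from $H_N$. The natural approach is: condition on all the large weights, treat the small-weight contribution as a perturbation, and use the elementary bound that changing the Hamiltonian by $\Delta H$ changes $\frac1N\ln Z$ by at most $\frac1N(\max_\sigma |\Delta H(\sigma)| )$ — but this crude bound gives $\frac{\beta}{N}\sum_{i<j}|J_{ij}|\1_{\{|J_{ij}|<\eps\}}$, whose expectation is $\frac{\beta(N-1)}{2}\e|J|\1_{\{|J|<\eps\}} = \frac{\beta(N-1)}{2}\cdot\frac{\alpha}{(1-\alpha)N}(\eps^{1-\alpha} - N^{(\alpha-1)/\alpha})$; for $1<\alpha<2$ the dominant term is $O(N^{-1/\alpha}) \cdot$ something, let me recompute — it is $\frac{\alpha\beta}{2(\alpha-1)}(N^{1/\alpha-1}\cdot N^{\,} - \eps^{1-\alpha})$, which actually does \emph{not} vanish since $\eps^{1-\alpha}\to\infty$. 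So the crude sup bound is insufficient; I would instead need the more refined argument used for the fluctuation analysis: split $J_{ij}\1_{\{|J_{ij}|<\eps\}}$ into its light-tailed bulk and use that the centered sum $\sum_{i<j}\ln\cosh(\beta J_{ij}\1_{\{|J_{ij}|<\eps\}})$ contributes $O(N^{1-1/\alpha})$-type fluctuations plus a deterministic centering that matches between the two models, OR — more cleanly — incorporate the scalar part. Indeed, writing $Z_N = \bar Z_N \widehat Z_N$ as in \eqref{split}, one sees $\frac1N\e\ln\bar Z_N \to \frac\alpha2\int_0^\infty \frac{\ln\ch(\beta x)}{x^{1+\alpha}}dx$, and one can show the truncated-L\'evy free energy minus this same scalar piece converges to $\ln 2$ plus the $\eps$-contribution; the key is that $\frac1N\bigl|\e\ln\widehat Z_N - \e\ln\widehat Z_N^{(\ge\eps)}\bigr|$ is controlled by a second-moment / cluster-expansion argument showing small weights contribute negligibly to $\widehat Z_N$, analogous to Lemma \ref{lem:fixed_m_approx} combined with Lemma \ref{Bigraphdie}.

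Concretely, I would carry out the steps in this order: (1) record $B_{ij}g_{\eps,ij}\stackrel{d}{=}J_{ij}\1_{\{|J_{ij}|\ge\eps\}}$ and conclude $\e F_{N,\eps}^{\B} = \e \tilde F_{N,\eps}$ where $\tilde F_{N,\eps}$ is the truncated-L\'evy free energy; (2) bound $|\e F_N - \e\tilde F_{N,\eps}|$ — the hard step — by decomposing $\ln Z_N = \ln\bar Z_N + \ln\widehat Z_N$ (and similarly for the truncated model), matching the scalar parts up to an error controlled by $\e|\ln\ch(\beta J)\1_{\{|J|<\eps\}} - \text{(truncated analog)}|$-type quantities which are $O(\eps^{2-\alpha})$ after dividing appropriately, and matching the effective parts using the cluster/cycle expansion $\widehat Z_N = 2^N\sum_{\partial\Gamma=\emptyset} w(\Gamma)$ together with the estimate (as in Lemma \ref{lem:fixed_m_approx}) that edges with $|J_e|<\eps$ contribute $O(\eps^{2-\alpha})$ in the relevant norm; (3) invoke Theorem \ref{whytruncation2} for the VB-to-PVB comparison; (4) combine, taking $N\to\infty$ first (killing the $O_\eps(1/N)$ terms) and then $\eps\downarrow 0$ (killing the $O(\eps^{2-\alpha})$ terms, using $2-\alpha>0$). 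The main obstacle is genuinely step (2): one must be careful that naively bounding the free-energy difference by $L^1$-norm of the dropped Hamiltonian fails (the small weights have divergent truncated first moment relative to the scale), so one is forced into the $\bar Z/\widehat Z$ split where the divergent deterministic piece cancels exactly and only a genuinely small $O(\eps^{2-\alpha})$ remainder survives.
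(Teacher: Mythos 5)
Your overall architecture matches the paper's: you pass from the L\'evy model to the truncated L\'evy model, identify the latter with the VB model via $B g_\eps \stackrel{d}{=} J\1_{\{|J|\ge\eps\}}$, and invoke Theorem \ref{whytruncation2} for the VB-to-PVB comparison (your bookkeeping of that bound, with $\gamma\e|g_\eps| = \tfrac{\alpha}{\alpha-1}\eps^{1-\alpha}$ giving an $O_\eps(1/N)$ error, is correct). You also correctly diagnose that the crude bound $\tfrac{\beta}{N}\sum_{i<j}|J_{ij}|\1_{\{|J_{ij}|<\eps\}}$ on the truncation error is useless, since its expectation diverges. The gap is in your proposed repair of that step. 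You route the comparison through the split $Z_N = \bar Z_N\widehat Z_N$ and control the effective part $\widehat Z_N$ by the cluster expansion of Lemmas \ref{Bigraphdie} and \ref{lem:fixed_m_approx}. But those estimates only converge for $\beta<\beta_\alpha$ (the second-moment sum over closed graphs is geometric in $(\beta/\beta_\alpha)^{\alpha}$), whereas Theorem \ref{whytruncation} must hold for \emph{every} $\beta>0$: it feeds into the superadditivity and the variational formula, which are low-temperature statements. Your step (2) therefore fails outside the high-temperature regime, and the $\bar Z/\widehat Z$ decomposition does not rescue it there.

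The missing idea is the interpolation of Lemma \ref{whytruncation1}. One scales the small weights by $t\in[0,1]$, so that $\frac{d}{dt}\e F_{N,t}=\beta\sum_{i<j}\e\bigl[J_{ij}\1_{\{|J_{ij}|<\eps\}}\la\sigma_i\sigma_j\ra_t\bigr]$, and then uses the single-edge cavity identity to write $\la\sigma_i\sigma_j\ra_t$ in terms of $\la\sigma_i\sigma_j\ra_{t,ij}$, the Gibbs average with $J_{ij}$ removed. The replacement error is at most $2|\tanh(\beta tJ_{ij})|\le 2\beta|J_{ij}|$, contributing $2\beta\e[|J|^2;|J|<\eps]=O(\eps^{2-\alpha}/N)$ per edge; and the leading term $\e\bigl[J_{ij}\1_{\{|J_{ij}|<\eps\}}\la\sigma_i\sigma_j\ra_{t,ij}\bigr]$ vanishes \emph{exactly} by independence of $J_{ij}$ from $\la\cdot\ra_{t,ij}$ together with the symmetry of $J$. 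Summing over the $O(N^2)$ edges and integrating in $t$ gives $|\e F_N-\e\tilde F_{N,\eps}|\le \alpha\beta^2\eps^{2-\alpha}/(2-\alpha)$, valid at every temperature. This cancellation-by-symmetry is precisely what replaces your divergent $L^1$ bound, and it requires no expansion of $\widehat Z_N$ at all.
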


To establish this theorem, we need the following lemma.

\begin{lemma}\label{whytruncation1}
	For any $N^{-1/\alpha}<\varepsilon$, we have that
	\begin{align}\label{add:eq-10}
		\Bigl|\e F_N-\frac{1}{N}\e\ln \sum_{\sigma}\exp\Bigl(\beta\sum_{i<j}J_{ij}\1_{\{|J_{ij}|\geq \eps\}}\sigma_i\sigma_j\Bigr)\Bigr|\leq \frac{\alpha\beta^2\varepsilon^{2-\alpha}}{(2-\alpha)}.
	\end{align}
\end{lemma}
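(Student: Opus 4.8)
The plan is to establish \eqref{add:eq-10} by a smart-path interpolation in which the light bonds are switched on continuously. For $t\in[0,1]$ set
\[
\varphi(t)=\frac{1}{N}\,\e\ln\sum_{\sigma}\exp\Bigl(\beta\sum_{i<j}\bigl(J_{ij}\1_{\{|J_{ij}|\ge\eps\}}+t\,J_{ij}\1_{\{|J_{ij}|<\eps\}}\bigr)\sigma_i\sigma_j\Bigr),
\]
so that $\varphi(1)=\e F_N$ while $\varphi(0)$ is exactly the truncated free energy on the left-hand side of \eqref{add:eq-10}; it then suffices to bound $|\varphi(1)-\varphi(0)|\le\int_0^1|\varphi'(t)|\,dt$. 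For each realization of the disorder the random variable inside the expectation is smooth in $t$ with derivative $\frac{\beta}{N}\sum_{i<j}J_{ij}\1_{\{|J_{ij}|<\eps\}}\la\sigma_i\sigma_j\ra_t$, whose modulus is at most the deterministic quantity $\frac{\beta\eps}{N}\binom{N}{2}$, and the random variable itself is bounded in modulus by $\ln2+\frac{\beta}{N}\sum_{i<j}|J_{ij}|$, which has finite expectation since $\e|J|<\infty$ for $1<\alpha<2$. Hence differentiation under the expectation is legitimate and $\varphi'(t)=\frac{\beta}{N}\sum_{i<j}\e\bigl[J_{ij}\1_{\{|J_{ij}|<\eps\}}\,\la\sigma_i\sigma_j\ra_t\bigr]$, where $\la\cdot\ra_t$ denotes the Gibbs average of the interpolated model.

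I would then estimate each summand by a cavity argument. Fix a pair $(i,j)$; on the event $\{|J_{ij}|<\eps\}$ the coefficient of $\sigma_i\sigma_j$ in the interpolated Hamiltonian is $\beta tJ_{ij}$, so exactly as in \eqref{overlap:eq1} one has $\la\sigma_i\sigma_j\ra_t=f\bigl(\la\sigma_i\sigma_j\ra_t^{(ij)},\beta tJ_{ij}\bigr)$ with $f(a,x)=(a+\tanh x)/(1+a\tanh x)$, where $\la\cdot\ra_t^{(ij)}$ is the Gibbs average of the system with the bond $(i,j)$ deleted. The key point is that $\la\cdot\ra_t^{(ij)}$ is a measurable function of the disorder other than $J_{ij}$, hence independent of $J_{ij}$. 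Writing $f(a,x)=a+(f(a,x)-a)$ and conditioning on the disorder apart from $J_{ij}$, the leading term vanishes because $J_{ij}\1_{\{|J_{ij}|<\eps\}}$ is an odd function of the symmetric variable $J_{ij}$ and hence has mean zero, i.e. $\e\bigl[J_{ij}\1_{\{|J_{ij}|<\eps\}}\,\la\sigma_i\sigma_j\ra_t^{(ij)}\bigr]=0$. For the remainder, the elementary bound $|f(a,x)-a|\le 2|\tanh x|$ (used already in the proof of Proposition~\ref{prop1'}) combined with $|\tanh(\beta tJ_{ij})|\le\beta|J_{ij}|$ shows that the remaining contribution of the pair $(i,j)$ is at most $2\beta\,\e\bigl[J_{ij}^2\1_{\{|J_{ij}|<\eps\}}\bigr]$ in absolute value.

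It then remains to assemble the estimates. From the density \eqref{density of J}, and using the hypotheses $\eps>N^{-1/\alpha}$ and $\alpha<2$,
\[
\e\bigl[J^2\1_{\{|J|<\eps\}}\bigr]=\frac{\alpha}{N}\int_{N^{-1/\alpha}}^{\eps}x^{1-\alpha}\,dx\le\frac{\alpha\,\eps^{2-\alpha}}{N(2-\alpha)}.
\]
Substituting this into the formula for $\varphi'(t)$ and summing over the $\binom{N}{2}$ pairs gives, uniformly in $t\in[0,1]$,
\[
|\varphi'(t)|\le\frac{\beta}{N}\cdot\frac{N(N-1)}{2}\cdot 2\beta\cdot\frac{\alpha\,\eps^{2-\alpha}}{N(2-\alpha)}\le\frac{\alpha\beta^2\eps^{2-\alpha}}{2-\alpha},
\]
and integrating in $t$ yields \eqref{add:eq-10}. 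I do not expect a deep obstacle here: the only genuinely delicate steps are the justification of differentiating $\varphi$ under the expectation — which is precisely where $\alpha>1$ enters through $\e|J|<\infty$ — and the verification, in the cavity step, that the bond-deleted Gibbs average is truly independent of $J_{ij}$, so that the symmetry of the disorder annihilates the first-order term and leaves only the harmless second-moment remainder; everything else is a routine computation.
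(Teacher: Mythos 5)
Your proof is correct and follows essentially the same route as the paper's: the same interpolation switching the light bonds on linearly in $t$, the same cavity identity $\la\sigma_i\sigma_j\ra_t=f(\la\sigma_i\sigma_j\ra_{t,ij},\beta tJ_{ij})$, the cancellation of the leading term via the independence of the bond-deleted Gibbs average from $J_{ij}$ together with the symmetry of the disorder, and the bound $|f(a,x)-a|\le 2|\tanh x|$ combined with the truncated second moment of $J$. The only difference is that you make explicit the justification for differentiating under the expectation, which the paper leaves implicit.
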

\begin{proof}
	Let $\varepsilon>0$ be fixed. Set $A_{\varepsilon}=\{x\in \mathbb{R}:|x|<\varepsilon\}$. For any $t\in [0,1],$ set
	\begin{equation*}
		-H_{N,t}(\sigma)= \sum_{i<j}J_{ij}\bigl(t\1_{\{J_{ij}\in A_{\varepsilon}\}}+\1_{\{J_{ij}\notin A_{\varepsilon}\}}\bigr)\sigma_i\sigma_j.
	\end{equation*}
	Consider the free energy associated to $H_{N,t},$
	\begin{equation*}
		F_{N,t}=\frac{1}{N}\ln \sum_{\sigma}e^{-\beta H_{N,t}(\sigma)}
	\end{equation*}
	and set $\la \cdot\ra_{t}$ the Gibbs expectation associated with the Hamiltonian above.
	Then
	\begin{equation*}
		\frac{d}{dt}    F_{N,t}=\beta\sum_{i<j} J_{ij}\1_{\{J_{ij}\in A_{\varepsilon}\}}\la\sigma_i\sigma_j\ra_t.
	\end{equation*}
	Note that when $J_{ij}\in A_{\varepsilon}$,
	$
	J_{ij}\bigl(t\1_{\{J_{ij}\in A_{\varepsilon}\}}+\1_{\{J_{ij}\notin A_{\varepsilon}\}}\bigr)=tJ_{ij}.
	$
	Similar to the proof of Proposition~\ref{prop1'}, we can write
	\begin{align*}
		\la \sigma_i\sigma_j\ra_t&=\frac{\la \sigma_i\sigma_j\ra_{ t ,ij}+\tanh(\beta tJ_{ij})}{1+\la \sigma_i\sigma_j\ra_{t,ij}\tanh(\beta tJ_{ij})},
	\end{align*}
	where $\la \cdot\ra_{t,ij}$ is the Gibbs measure associated to $H_{N,t}$ with the replacement that $J_{ij}=0.$ Now, using \eqref{eq3},
	\begin{align*}
		\e|J_{ij}|\1_{\{J_{ij}\in A_\eps\}}\bigl|\la\sigma_i\sigma_j\ra_t-\la\sigma_i\sigma_j\ra_{t,ij}\bigr|&\leq 2\e\bigl[ |J_{ij}\tanh(\beta tJ_{ij})|;J_{ij}\in A_\eps\bigr]\\
		&\leq 2\beta\e\bigl[ |J_{ij}|^2;J_{ij}\in A_\eps\bigr]\\
		&\leq \frac{2\alpha \beta}{N}\int_{0}^\eps \frac{1}{x^{\alpha-1}}dx=\frac{2\alpha \beta\eps^{2-\alpha}}{N(2-\alpha)}.
	\end{align*}
	On the other hand, from the independence between $J_{ij}$ and $\la \sigma_i\sigma_j\ra_{t ,ij}$ and note that $|J_{ij}|$ has a finite mean, we have
	\begin{align*}
		\e J_{ij}\1_{\{J_{ij}\in A_\eps\}}\la\sigma_i\sigma_j\ra_{t,ij}&=\e J_{ij}\1_{\{J_{ij}\in A_\eps\}}\cdot \e\la\sigma_i\sigma_j\ra_{t,ij}=0.
	\end{align*}
	Combining these two displays together yields that
	\begin{equation*}
		\bigl|\e \bigl[J_{ij}\la \sigma_i\sigma_j\ra_t;J_{ij}\in A_{\varepsilon}\bigr]\bigr|\leq \frac{2\alpha\beta\varepsilon^{2-\alpha}}{N(2-\alpha)}.
	\end{equation*}
	Consequently,
	\begin{equation*}
		\Bigl|\frac{d}{dt}   \e  F_{N,t}\Bigr| \leq \frac{\alpha\beta^2\varepsilon^{2-\alpha}}{(2-\alpha)}
	\end{equation*}    
	and our proof is completed by noting that $|\e F_{N,1}-\e F_{N,0}|$ equals the left-hand side of  \eqref{add:eq-10}.
\end{proof}

\begin{proof}[\bf Proof of Theorem \ref{whytruncation}]
	In view of Lemma \ref{whytruncation1} and Theorem \ref{whytruncation2}, our assertion follows directly by noting that $Bg_\eps \stackrel{d}{=} J\1_{\{|J|\ge\eps\}},$ where $B\sim \mathrm{Ber}(\gamma/N)$ is independent of $g_\eps.$ 
\end{proof}

\subsection{Proof of Theorem \ref{superadditivity}}\label{sec:Proof:superadditivity}

\begin{lemma}[Superadditivity]\label{subadditivity}
	In the PVB model, if we assume that $y$ is symmetric with $\e |y|<\infty$ and  then for any $M,N\geq 2,$
	\begin{equation*}
		(M+N)\e F_{M+N}^{\VB}\geq M\e F_M^{\VB}+N\e F_N^{\VB}-6\beta\gamma \e|y|.
	\end{equation*}
\end{lemma}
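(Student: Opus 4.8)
The plan is to run a Franz--Leone type interpolation between the PVB model on $M+N$ vertices and the disjoint union of the PVB models on $M$ and $N$ vertices. Split the vertex set $\{1,\ldots,M+N\}$ into a block $\mathcal A$ of size $M$ and a block $\mathcal B$ of size $N$. For $t\in[0,1]$ define an interpolating Hamiltonian by attaching, to the $t$-system, a Poisson$(t\gamma(M+N))$ collection of edges with endpoints sampled uniformly from \emph{all} pairs $\{(i,j):1\le i<j\le M+N\}$, together with a Poisson$((1-t)\gamma M)$ collection of edges whose endpoints are sampled uniformly from pairs inside $\mathcal A$ only, and an independent Poisson$((1-t)\gamma N)$ collection of edges with endpoints inside $\mathcal B$ only; each edge again carries an independent weight $y_k$ with inverse temperature $\beta$. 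At $t=1$ this is exactly the PVB Hamiltonian $-H_{M+N}^{\VB}$, while at $t=0$ the system decouples into $-H_M^{\VB}$ on $\mathcal A$ and $-H_N^{\VB}$ on $\mathcal B$, so that $(M+N)F_{M+N}^{\VB}$ at $t=1$ and $MF_M^{\VB}+NF_N^{\VB}$ at $t=0$ are the two endpoints of $\varphi(t):=\e\ln\sum_\sigma e^{-H_t(\sigma)}$.

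Next I would differentiate $\varphi$ in $t$. Using the standard fact that adding a Poisson$(r\,dt)$ amount of edges of a given type contributes, to $d\varphi/dt$, the term $r\,\e\ln\la e^{\beta y\sigma_{I}\sigma_{J}}\ra_t$ with $(I,J)$ uniform over the relevant pair set and $y$, and one extra copy of the Gibbs replica inside $\la\cdot\ra_t$, one gets
\begin{align*}
\varphi'(t)=\gamma(M+N)\,\e\ln\bigl\la e^{\beta y\sigma_{I}\sigma_{J}}\bigr\ra_t-\gamma M\,\e\ln\bigl\la e^{\beta y\sigma_{I^{\mathcal A}}\sigma_{J^{\mathcal A}}}\bigr\ra_t-\gamma N\,\e\ln\bigl\la e^{\beta y\sigma_{I^{\mathcal B}}\sigma_{J^{\mathcal B}}}\bigr\ra_t .
\end{align*}
Because $y$ is symmetric, $e^{\beta y\sigma\tau}\stackrel d=\cosh(\beta y)(1+\tanh(\beta y)\sigma\tau)$ with $\tanh(\beta y)$ symmetric, so $\e\ln\la e^{\beta y\sigma_I\sigma_J}\ra_t=\e\ln\la 1+\tanh(\beta y)\sigma_I\sigma_J\ra_t+\e\ln\cosh(\beta y)$, and the $\ln\cosh$ pieces cancel exactly since $(M+N)=M+N$ for the total Poisson intensities. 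Expanding $\ln(1+x)=-\sum_{p\ge1}(-x)^p/p$ and averaging over the uniform choice of endpoints produces, for the size-$(M+N)$ block, a sum over $p$ of $\frac{(-1)^{p+1}}{p}\e\tanh^p(\beta y)\,\e\la (R^{(M+N)}_{p}\text{-type overlap moment})\ra$ where the uniform-pair average of $\sigma_I\sigma_J$ over a block of size $L$ equals $\frac{L}{L-1}\bigl(\la (\frac1L\sum_i \sigma_i^1\cdots)^{2}\ra\text{-style quantity}\bigr)-\frac1{L-1}$; the key algebraic point is that the leading quadratic term is $\gamma\cdot\frac{L^2}{L(L-1)}\cdot L\cdot(\text{same overlap kernel})$, and the prefactors $\gamma(M+N)\cdot\frac{1}{M+N-1}$ versus $\gamma M\cdot\frac1{M-1}$, $\gamma N\cdot\frac1{N-1}$ arrange so that the genuinely quadratic (convex) contribution has the right sign while all mismatch terms are $O(1/L)$ relative to $\gamma$. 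Carrying the constants through, the non-sign-definite remainder is bounded by a finite multiple of $\beta\gamma\,\e|y|$ uniformly in $t$, which integrates to give $\varphi(1)-\varphi(0)\ge -6\beta\gamma\,\e|y|$.

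The main obstacle is precisely this sign bookkeeping in $\varphi'(t)$: one must show that the interpolation derivative is, up to an $O(\beta\gamma\e|y|)$ error, a sum of terms of the form $c_p\,\e\la Q^{2}\ra$ with $c_p\ge0$ for a suitable overlap-type quantity $Q$ — this is where the Franz--Leone convexity structure enters, and where the symmetry of $y$ is essential (it kills the odd powers and makes $\e\tanh^{2k}(\beta y)\ge0$ the right coefficients). Handling the $\frac{L}{L-1}$ versus $\frac1{L-1}$ discrepancies between the three blocks, and checking they are all absorbed into the claimed $6\beta\gamma\e|y|$ bound by crude estimates $|\e\ln\la e^{\beta y\sigma\tau}\ra_t-\e\ln\cosh(\beta y)|\le\e|\tanh(\beta y)|\le\beta\e|y|$ applied termwise to the three generators (giving at most $\gamma(M+N)+\gamma M+\gamma N\le 4\gamma\cdot(\text{max block})$ copies, further bounded using $N/2\le M\le 2N$ style control or just the crude $(M+N)+M+N$ count), is routine but must be done carefully to land on the stated constant. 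Once $\varphi'(t)\ge -6\beta\gamma\e|y|$ is established, integrating over $t\in[0,1]$ finishes the proof.
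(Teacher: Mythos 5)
Your proposal is correct and follows essentially the same route as the paper: the same three-Poisson-process interpolation, the same derivative formula, the even-power expansion of $\ln(1+\tanh(\beta y)\sigma_I\sigma_J)$ using the symmetry of $y$, and the Jensen/convexity step exploiting $R(\sigma^1,\ldots,\sigma^{2r})=\tfrac{M}{M+N}R(\tau^1,\ldots,\tau^{2r})+\tfrac{N}{M+N}R(\rho^1,\ldots,\rho^{2r})$ to get the sign of the extensive term. One small warning on your final bookkeeping: the constant $6$ comes not from a crude termwise bound on the three generators (which would be extensive in $M+N$), but from bounding each of the three $\tfrac{L}{L-1}(1-A_{2r})$ correction terms by $2$ and then summing $\sum_r \tfrac{1}{2r}\e\tanh^{2r}(\beta y)=\e\ln\cosh(\beta y)\le \beta\e|y|$.
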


\begin{proof}
	Let $M,N\geq 2$ be fixed. Let $(I(1),I(2))$, $(I(1,k),I(2,k))_{k\geq 1}$ be i.i.d. uniformly sampled from $\{(i,j):1\leq i<j\leq M+N\}$. Similarly, let $(I'(1),I'(2))$, $(I'(1,k),I'(2,k))_{k\geq 1}$, and $(I''(1),I''(2))$, $(I''(1,k),I''(2,k))_{k\geq 1}$ be i.i.d. uniformly sampled from $\{(i,j):1\leq i<j\leq M\}$ and  $\{(i,j):1\leq i<j\leq N\}$, respectively. Let $(y_k),(y_k'),(y_k''),y',y''$ be independent copies of $y.$ These random variables are all independent of each other. For any $\sigma=(\tau,\rho)\in \{-1,1\}^M\times\{-1,1\}^N,$ set
	\begin{equation*}
		H_{a,a',a''}(\sigma)=\beta \sum_{k\leq a}y_k\sigma_{I(1,k)}\sigma_{I(2,k)}+\beta \sum_{k\leq a'}y_k'\tau_{I'(1,k)}\tau_{I'(2,k)}+\beta \sum_{k\leq a''}y_k''\rho_{I''(1,k)}\rho_{I''(2,k)}.
	\end{equation*}
	For $0\leq t\leq 1,$ letting $a(t)=\pi(t\gamma(M+N))$, $a'(t)=\pi((1-t)\gamma M),$ and $a''(t)=\pi((1-t)\gamma N),$ consider the interpolated free energy,
	\begin{equation*}
		\Phi_t=\e \ln \sum_{\sigma\in \{-1,1\}^{M+N}}\exp H_{a(t),a'(t),a''(t)}(\sigma)
	\end{equation*}
	and denote the corresponding Gibbs expectation by $\la \cdot\ra_t.$
	Note that the Poisson random variable $\pi(s)$ satisfies the following equation,
	$$
	\frac{d}{ds}\e f(\pi(s))=\e f(\pi(s)+1)-\e f(\pi(s)).
	$$
	Therefore,
	\begin{equation*}
		\frac{d}{dt}\Phi_t =
		\gamma(M+N)\e \ln \bigl\la  e^{\beta y\sigma_{I(1)}\sigma_{I(2)}}\bigr\ra_t-\gamma M\e \ln \bigl\la e^{\beta y'\tau_{I'(1)}\tau_{I'(2)}}\bigr\ra_t-\gamma N\e \ln \bigl\la e^{\beta y''\rho_{I''(1)}\rho_{I''(2)}}\bigr\ra_t.
	\end{equation*}
	Now, using the identity \eqref{identity}, we can write
	\begin{align}
		\nonumber\e \ln \bigl\la  e^{\beta y\sigma_{I(1)}\sigma_{I(2)}}\bigr\ra_t&=\e\ln \cosh \beta y+\e \ln \bigl(1+\tanh(\beta y)\la \sigma_{I(1)}\sigma_{I(2)}\ra_t\bigr)\\
	\nonumber	&=\e\ln \cosh \beta y-\sum_{r=1}^\infty \frac{(-1)^r}{r}\e \tanh^r(\beta y)\cdot \e\la \sigma_{I(1)}\sigma_{I(2)}\ra_t^r\\
	\label{add:add:eq1}	&=\e\ln \cosh \beta y-\sum_{r=1}^\infty \frac{1}{2r}\e \tanh^{2r}(\beta y)\cdot \e \la \sigma_{I(1)}\sigma_{I(2)}\ra_t^{2r}.
	\end{align}
	Since
	\begin{align*}
		\e \la \sigma_{I(1)}\sigma_{I(2)}\ra_t^{2r}&=\frac{2}{(M+N)(M+N-1)}\sum_{1\leq i<j\leq M+N}	\e \la \sigma_{i}\sigma_{j}\ra_t^{2r}\\
		&=\frac{2}{(M+N)(M+N-1)}\sum_{1\leq i<j\leq M+N}\e \bigl\la \sigma_{i}^1\sigma_i^2\cdots\sigma_i^{2r}\cdot\sigma_{j}^1\sigma_j^2\cdots\sigma_j^{2r}\bigr\ra_t\\
		&=\frac{M+N}{M+N-1}\Bigl(\e \bigl\la R(\sigma^1,\ldots,\sigma^{2r})^2\bigr\ra_t-\frac{1}{M+N}\Bigr)\\
		&=A_{2r}-\frac{1}{M+N-1}\bigl(1-A_{2r})
	\end{align*}
	for $A_{2r}:=\e \la R(\sigma^1,\ldots,\sigma^{2r})^2\ra_t,$
	it follows that
	\begin{align*}
		\e \ln \bigl\la  e^{\beta y\sigma_{I(1)}\sigma_{I(2)}}\bigr\ra_t&=\e\ln \cosh \beta y-\sum_{r=1}^\infty \frac{1}{2r}\e\hth^{2r}(\beta y) \Bigl(A_{2r}-\frac{1}{M+N-1}\bigl(1-A_{2r})\Bigr).
	\end{align*}
	Similarly, for $A_{r}':=\e \la R(\tau^1,\ldots,\tau^{2r})^2\ra_t$ and $A_{r}'':=\e \la R(\rho^1,\ldots,\rho^{2r})^2\ra_t,$
	\begin{align*}
		\e \ln \bigl\la  e^{\beta y'\tau_{I'(1)}\tau_{I'(2)}}\bigr\ra_t&=\e\ln \cosh \beta y-\sum_{r=1}^\infty \frac{1}{2r}\e\hth^{2r}(\beta y) \Bigl(A_{2r}'-\frac{1}{M-1}\bigl(1-A_{2r}')\Bigr),\\
		\e \ln \bigl\la  e^{\beta y''\rho_{I''(1)}\rho_{I''(2)}}\bigr\ra_t&=\e\ln \cosh \beta y-\sum_{r=1}^\infty \frac{1}{2r}\e\hth^{2r}(\beta y) \Bigl(A_{2r}''-\frac{1}{N-1}\bigl(1-A_{2r}'')\Bigr).
	\end{align*}
	Putting these together yields that
	\begin{equation*}
		\frac{d}{dt}\Phi_t =-\gamma \sum_{r=1}^\infty \frac{\e\tanh^{2r}(\beta y)}{2r}\e \la \Delta_{2r}+\Gamma_{2r} \ra_t,
	\end{equation*}
	where $\Delta_r:=(M+N)A_{r}-MA_{r}'-NA_{r}''$ and 
	$$
	\Gamma_r:=-\frac{M+N}{M+N-1}\bigl(1-A_{r})+\frac{M}{M-1}\bigl(1-A_{r}')+\frac{N}{N-1}\bigl(1-A_{r}'').
	$$
	Here, since $$
	R(\sigma^1,\cdots,\sigma^{2r})=\frac{M}{M+N}R(\tau^1,\cdots,\tau^{2r})+\frac{N}{M+N}R(\rho^1,\cdots,\rho^{2r}),
	$$
	from Jensen's inequality, $\Delta_r\leq 0.$ On the other hand, obviously $|\Gamma_r|\leq 6.$
	Thus,
	\begin{align*}
		\frac{d}{dt}\Phi_t&\geq -6\gamma \sum_{r=1}^\infty \frac{\e\tanh^{2r}(\beta y)}{2r}=-3\gamma\e\ln \frac{1}{1-\tanh^2(\beta y)}=-6\gamma \e \ln\cosh(\beta y)\geq -6\beta \gamma \e|y|,
	\end{align*}
	where we used the formula $\cosh^{-2}x=1-\tanh^2 x$ and the inequality $\ln \cosh x\leq |x|.$ Our assertion follows by noting that $F_1=(M+N)\e F_{M+N}^{\VB}$ and $F_0=M \e F_M^{\VB}+N\e F_N^{\VB}.$

\end{proof}

\begin{proof}[\bf Proof of Theorem \ref{superadditivity}]
	Recall $(y,\gamma)$ from \eqref{input}. Note that
	\begin{equation*}
		\gamma\e |y| =\alpha\int_\eps^\infty \frac{1}{x^{\alpha}}dx=\eps^{-(\alpha-1)}.
	\end{equation*}
	Let $M,N$ be large enough and satisfy $N/2\leq M\leq 2N.$ Take $\eps=N^{-1/(1+3\alpha)}$. By applying Lemma~\ref{subadditivity}, we have
	\begin{equation*}
		(M+N)\e F_{M+N,\eps}^{\VB}\geq M\e F_{M,\eps}^{\VB}+N\e F_{N,\eps}^{\VB}-C(M+N)^{\frac{\alpha-1}{1+3\alpha}}.
	\end{equation*} 
	On the other hand, using Theorem \ref{whytruncation2} and Lemma \ref{whytruncation1} gives
	\begin{equation*}
		\bigl|\e F_{M+N}-\e F_{M+N,\eps}^{\VB}\bigr| \leq  \frac{C'}{(M+N)^{\frac{2-\alpha}{1+3\alpha}}}
	\end{equation*}
	and
	\begin{align*}
		\bigl|\e F_{M}-\e F_{M,\eps}^{\VB}\bigr|&\leq  \frac{C'}{M^{\frac{2-\alpha}{1+3\alpha}}}\leq \frac{C''}{(M+N)^{\frac{2-\alpha}{1+3\alpha}}},\\	
		\bigl|\e F_{N}-\e F_{N,\eps}^{\VB}\bigr|&\leq  \frac{C'}{N^{\frac{2-\alpha}{1+3\alpha}}}\leq \frac{C''}{(M+N)^{\frac{2-\alpha}{1+3\alpha}}}.
	\end{align*}
	Here, $C,C',C''$ are universal constants independent of $M,N.$ Combining these together yields that
	\begin{equation*}
		(M+N)\e F_{M+N} \geq M\e F_M+N\e F_N-(C+C'+2C'')\phi(M+N),
	\end{equation*}
	where  $
	\phi(x):=x^{1-(2-\alpha)/(1+3\alpha)}.
	$
	Here, $\phi$ satisfies the de Bruijn–Erd\H{o}s condition, namely, $\phi$ is increasing and $\int_1^\infty x^{-2}\phi(x)dx<\infty.$ Therefore, from \cite[Theorem 23]{de-Bruijn1952}, we conclude that $\lim_{N\to\infty}\e F_N$ converges. Now, Theorem \ref{con} completes our proof.
\end{proof}

\section{Panchenko's Invariance and Free Energy Representation}\label{VBM}

Recall the VB model introduced in Subsection \ref{BVBmodel}. In this section, we shall review its invariant equations for the spin distributions and the variational representation for the limiting free energy obtained in \cite{panchenko2013}. Let $c_N$ satisfy $c_N\to\infty,c_N/N\to 0$, and $|c_{N+1}-c_N|\to 0.$ For $r>0,$ denote by $(\pi_l(r))_{l\geq 1}$ i.i.d. Poisson random variables with mean $r.$ Let $(y_{kl})_{k,l\geq 1}$ be i.i.d. copies of $y$ and $(J(k,l))_{k,l\geq 1}$ be i.i.d. uniform on $\{1,2,\ldots,N\}.$ All these and the randomness defined in Subsection \ref{BVBmodel} are independent of each other.
Set 
$$
-H_{N}^{\mbox{\tiny pert}}(\sigma)=\sum_{l\leq \pi(c_N)}\ln 2\cosh\Bigl(\sum_{k\leq \pi_l(\gamma)}y_{kl}\sigma_{J(k,l)}\Bigr).
$$
In order to derive the invariance equations,  \cite{panchenko2013} redefined $H_N^{\VB}$ as
\begin{equation}\label{perturbedVB}
	-H_{N}^{\VB}(\sigma)=\beta\sum_{k=1}^{\pi(\gamma N)}y_{k}\sigma_{I(1,k)}\sigma_{I(2,k)}-H_{N}^{\mbox{\tiny pert}}(\sigma).
\end{equation}
Denote by $G_N^{\VB}$ and $F_N^{\VB}$ the Gibbs measure and free energy associated with this modified Hamiltonian, respectively. Note that since $c_N/N\to 0$, $H_N^{\mbox{\tiny pert}}$ can be understood as a small perturbation of the original PVB Hamiltonian.  In the limit, this perturbation does not change the free energy at all, but as one shall see, it does influence the structure of the Gibbs measure significantly and utilize the invariance equations for the spin distributions.

Denote by $\sigma,\sigma^1,\sigma^2,\ldots$ i.i.d. samplings from the Gibbs measure $G_N^{\VB}$ and by $\la \cdot\ra^{\VB}$ the Gibbs expectation. For each $N\geq 1$, we can extend $(\sigma^l)_{l\geq 1}\subset \{-1,1\}^N$ to be $(\bsig^{l})_{\ell\geq 1}\subseteq \{-1,1\}^\infty$ by defining
$$
\bsig_i^{l}=\sigma_i^l,\,\,1\leq i\leq N\,\,\mbox{and}\,\,\bsig_i^{l}=0,\,\,\forall i>N.
$$
Let $\nu_N$ be the distribution induced by $(\bsig^l)_{l\geq 1}$ with respect to $\e\la\cdot\ra^{\VB}.$ Let $\nu$ be the weak limit of $(\nu_N)_{N\geq 1}$ along certain subsequence if necessary. Note that  the distribution of $(\sigma_{i}^{l}:1\leq i\leq N,l\geq 1)$ under $\e\la\cdot\ra^{\VB}$ possesses two symmetries in the sense that it is invariant with respect to any finite permutation of $l$'s and with respect to any permutation of $1\leq i\leq N.$  With these, we readily see that the array $(s_{i}^l:i,l\geq 1)$ generated by $\nu$ is also symmetric under any finite permutations with respect to columns and rows. From the Aldous-Hoover representation \cite{Aldous1985,Hoover1982}, this array can be expressed as
$s_i^l=\sigma(w,u_l,v_i,x_{i,l})$ for some $\sigma\in \mathcal{S}$ and i.i.d. $w,u_l,v_i,x_{i,l}$ uniform random variables on $[0,1].$ Denote by $\mathcal{M}$  the collection of all $\sigma$'s in the Aldous-Hoover representation for all possible weak limits of $(\nu_N)_{N\geq 1}$.

For any $I\in\mathbb{N}^r$ for $r\geq 1,$ denote by $y_I$ and $\hat y_I$ i.i.d. copies from $y$. From Subsection \ref{sec:inv+var}, recall that $v_I,\hat v_I,x_I,\hat x_I$ are i.i.d. uniform on $[0,1]$ for any $r\geq 1$ and $I \in \mathbb{N}^r$, that $\delta$ and $(\delta_i)_{i\geq 1}$ be i.i.d. Rademacher random variables, and that these are all independent of each other and other randomness. For any $\sigma\in \mathcal{S}$, recall $s_I$ and $\hat s_I$ from \eqref{add:sec1.1:eq1}. As in Subsection \ref{sec:inv+var}, consider arbitrary $n,m,q,r\geq 1$ with $n\leq m.$ Let $C_l\subseteq \{1,\ldots,m\}.$ Denote the collection of the cavity and non-cavity spin coordinates, respectively, by 
\begin{equation*}
	C_l ^1 =C_l \cap \{1,\dots,n\},\,\,C_l ^2 = C_l \cap \{n+1,\dots,m\}.
\end{equation*}
Define two functionals $\mathcal{U}_{l}$ and $\mathcal{V}$ on $\mathcal{S}$ by
\begin{equation*}
\mathcal{U}_l(\sigma)=\e_{u,\delta,x}\Bigl(\prod_{i \in C_l^1}\delta_i\Bigr)\exp\Bigl(\beta\sum_{i\leq n}\sum_{k\leq \pi_i(\gamma)}y_{i,k}s_{i,k}\delta_i\Bigr)\Bigl(\prod_{i\in C_l^2}s_i\Bigr)\exp\Bigl( \beta\sum_{k\leq r}\hat y_k\hat s_{1,k}\hat s_{2,k}\Bigr)
\end{equation*}
and
\begin{equation*}
	\mathcal{V}(\sigma)=\e_{u,\delta,x}\exp\Bigl(\beta\sum_{i\leq n}\sum_{k\leq \pi_i(\gamma)}y_{i,k}s_{i,k}\delta_i\Bigr)\exp \Bigl(\beta\sum_{k\leq r}\hat y_k\hat s_{1,k}\hat s_{2,k}\Bigr).
\end{equation*}
The following theorem is Panchenko's invariant principle for the spin distribution $(s_i^l)_{i,l\geq 1}.$

\begin{theorem}[Theorem 1 in \cite{panchenko2013}]\label{pan:cavity} For any $\sigma\in \mathcal{M},$ we have
	\begin{equation}\label{pan:inv}
		\e\prod_{l\leq q}\prod_{i\in C_l}s_i^l =\e\prod_{l\leq q}\e_{u,x}\prod_{i\in C_l}s_i=\e\frac{\prod_{l\leq q}\mathcal{U}_l(\sigma)}{\mathcal{V}(\sigma)^q}.
	\end{equation}
\end{theorem}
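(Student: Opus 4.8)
The plan is to follow Panchenko's proof of the cavity (invariance) principle for the PVB model (see \cite{panchenko2013}). The first equality in \eqref{pan:inv} is purely a consequence of Fubini's theorem together with the independence built into the Aldous--Hoover representation: since $s_i^l=\sigma(w,u_l,v_i,x_{i,l})$ with $(u_l)_{l\ge1}$ i.i.d.\ and $(x_{i,l})_{i,l\ge1}$ i.i.d., conditioning on $w$ and $(v_i)$ makes the product over $l$ factorize, and each factor is distributed as $\e_{u,x}\prod_{i\in C_l}s_i$. The substantive claim is the second equality, which I would establish by a finite-$N$ cavity computation followed by a passage to the limit.

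Fix $\sigma\in\mathcal{M}$, arising from a subsequential weak limit $\nu$ of $(\nu_N)$. Along that subsequence the left-hand side of \eqref{pan:inv} equals $\lim_N\e\la\prod_{l\le q}\prod_{i\in C_l}\sigma_i^l\ra^{\VB}_N$, since this is a bounded function of finitely many spins; moreover, because the perturbation parameters satisfy $c_N/N\to0$ and $|c_{N+1}-c_N|\to0$, one checks (the model varying slowly in $N$) that $\nu_{N+n}\to\nu$ along the same subsequence, so the same limit is obtained from the $(N+n)$-spin model. I would therefore carry out the cavity computation on the $(N+n)$-spin PVB model with the perturbation \eqref{perturbedVB}, designating $\{1,\dots,n\}$ as cavity coordinates and $\{n+1,\dots,N+n\}$ as the bulk. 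Conditioning on all disorder except the edges incident to $\{1,\dots,n\}$, the superposition/thinning properties of the Poisson edge count show that, up to an $O(1/N)$ error from edges internal to $\{1,\dots,n\}$, each cavity coordinate $i$ is attached to the bulk through a cavity field $\beta\sum_{k\le\pi_i(\gamma)}y_{i,k}\sigma_{J(i,k)}$ with independent uniform bulk indices $J(i,k)$, while the bulk carries a number of edges differing from the reference $N$-spin PVB by only $O(1)$. Summing the cavity spins out of the partition function via \eqref{identity} — so that a cavity coordinate $i\in C_l^1$ contributes the fresh sign $\delta_i$ weighted by its cavity-field factor while a bulk coordinate $i\in C_l^2$ contributes its own spin — and absorbing the $O(1)$ bulk-edge discrepancy into factors $\exp(\beta\sum_{k\le r}\hat y_k\hat s_{1,k}\hat s_{2,k})$, the identity being proved for each fixed $r\ge1$ and then averaged over the (Poisson) number of discrepant edges, yields
\begin{align*}
\e\Bigl\la\prod_{l\le q}\prod_{i\in C_l}\sigma_i^l\Bigr\ra^{\VB}=\e\frac{\prod_{l\le q}U_l^{(N)}}{(V^{(N)})^q}+o(1),\qquad N\to\infty,
\end{align*}
where $U_l^{(N)},V^{(N)}$ are the finite-$N$ analogues of $\mathcal{U}_l,\mathcal{V}$ with the cavity-field and bulk-edge spins written through the $N$-spin Gibbs measure.

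To conclude, I would take the limit along the subsequence: the exchangeability of the bulk spins forces the cavity-field spins $\sigma_{J(i,k)}$ and the bulk-edge spins to converge jointly, with the extra randomness, to $s_{i,k}=\sigma(w,u,v_{i,k},x_{i,k})$, to $\hat s_{1,k},\hat s_{2,k}$, and to Rademacher $\delta_i$; since the exponents involve only finitely (Poisson-)many bounded terms and $\e|y|<\infty$, standard uniform-integrability bounds (Cauchy--Schwarz with $|\hth|\le1$, plus a lower bound on the normalization $V^{(N)}$) give $U_l^{(N)}\to\mathcal{U}_l(\sigma)$, $V^{(N)}\to\mathcal{V}(\sigma)$ and hence $\e\prod_l U_l^{(N)}/(V^{(N)})^q\to\e\prod_l\mathcal{U}_l(\sigma)/\mathcal{V}(\sigma)^q$. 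Together with the convergence of the left-hand side this is the second equality of \eqref{pan:inv}.

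I expect the real obstacle to be upgrading the approximate cavity identity in the display to an \emph{exact} one in the limit — this is the role of the perturbation $H_N^{\mathrm{pert}}$. Planting a $\pi(c_N)$-sized cloud of random cavity-field terms leaves the limiting free energy unchanged but, via the stability (Ghirlanda--Guerra type) mechanism for dilute models, forces the limiting spin array to be self-consistent, so that cavity and bulk coordinates can be represented simultaneously through a single $\sigma\in\mathcal{S}$ and the $o(1)$ above may be discarded after taking limits. Making this precise requires differentiating $N^{-1}\e$ of the perturbed log-partition functions in the perturbation parameters and invoking concentration of the free energy; this is the technical heart of the argument. By comparison, the Poisson-thinning bookkeeping for the cavity fields, the matching of bulk edges for each $r$, and the uniform integrability needed for the limit are routine.
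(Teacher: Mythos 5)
The paper offers no proof of this statement: Theorem \ref{pan:cavity} is quoted verbatim as Theorem 1 of \cite{panchenko2013} inside the review Section \ref{VBM}, so there is no in-paper argument to compare yours against --- what you have written is a reconstruction of Panchenko's original proof. As a reconstruction, your outline is broadly faithful to that source: the first equality is indeed just conditional independence of the replicas given $w$ and $(v_i)$ in the Aldous--Hoover representation, and the second is obtained by a cavity computation on an $(N+n)$-coordinate system, with Poisson superposition/thinning of the edges incident to the cavity set, the identity \eqref{identity} to sum out the cavity spins, and a passage to the limit using exchangeability and uniform integrability of the resulting bounded functionals.

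The one point I would push back on is your closing description of how the approximate identity becomes exact. The mechanism is not a Ghirlanda--Guerra-type stability obtained by differentiating $N^{-1}\e$ of the perturbed log-partition function in the perturbation parameters and invoking concentration of the free energy. Rather, the perturbation $H_N^{\mbox{\tiny pert}}$ is built from terms $\ln 2\cosh\bigl(\sum_{k\le\pi_l(\gamma)}y_{kl}\sigma_{J(k,l)}\bigr)$ precisely because these are the factors produced when a cavity coordinate coupled to uniformly chosen bulk coordinates is summed out of the partition function. Hence integrating the $n$ cavity spins out of the perturbed $(N+n)$-spin system yields, in distribution, the perturbed $N$-spin system with $c_N$ shifted by an $O(1)$ amount, together with the explicit cavity-field and bulk-edge factors that become $\mathcal{U}_l$ and $\mathcal{V}$; the hypotheses $c_N\to\infty$, $c_N/N\to0$ and $|c_{N+1}-c_N|\to0$ are exactly what make this shift, and the replacement of $N+n$ by $N$, negligible along the chosen subsequence. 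It is this absorption/continuity property of the perturbation, not a derivative computation, that lets you discard the $o(1)$ after taking limits; as written, your sketch points the ``technical heart'' at the wrong tool.
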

Let $\mathcal{M}_{\mathrm{inv}}$ be the collection of $\sigma\in \mathcal{S}$ such that  all of the invariance equations in \eqref{pan:inv} hold.  From Theorem \ref{pan:cavity}, we readily see that $\mathcal{M}\subseteq \mathcal{M}_{\mathrm{inv}}.$ For any $\sigma\in \mathcal{S},$ define
$$
\mathcal{Q}(\sigma)=\ln 2+\e\ln \e_{u,x}\cosh\Bigl(\beta\sum_{k\leq \pi(\gamma)}y_ks_{k}\Bigr)-\e \ln \e_{u,x}\exp\Bigl(\beta\sum_{k\leq \pi(\gamma/2)}y_ks_{1,k}s_{2,k}\Bigr).
$$
With this, the limiting free energy of the PVB model can be expressed as a variational formula in terms of $\mathcal{Q}.$

\begin{theorem}[Theorem 2 in \cite{panchenko2013}]\label{panchenko:freeenergy}
	We have that almost surely and in $L^1,$
	\begin{equation*}
	\lim_{N\to\infty}F_N^{\VB} =\inf_{\sigma\in \mathcal{M}}\mathcal{Q}(\sigma)=\inf_{\sigma\in \mathcal{M}_{\mathrm{inv}}}\mathcal{Q}(\sigma).
	\end{equation*}
\end{theorem}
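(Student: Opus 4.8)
\emph{Proof strategy.} The plan is to prove the two inequalities
\[
\lim_{N\to\infty} F_N^{\VB}\ \ge\ \inf_{\sigma\in\mathcal M}\mathcal Q(\sigma)
\qquad\text{and}\qquad
\lim_{N\to\infty} F_N^{\VB}\ \le\ \inf_{\sigma\in\mathcal M_{inv}}\mathcal Q(\sigma),
\]
which, together with the trivial inclusion $\mathcal M\subseteq\mathcal M_{inv}$ (hence $\inf_{\mathcal M}\mathcal Q\ge\inf_{\mathcal M_{inv}}\mathcal Q$), force all three quantities to coincide. First I would record that $\lim_N F_N^{\VB}$ exists and equals its $L^1$ limit: this follows from the superadditivity of $N\,\e F_N^{\VB}$ up to a bounded additive error in the spirit of Lemma~\ref{subadditivity} together with concentration of $F_N^{\VB}$, and the perturbation term $H_N^{\mbox{\tiny pert}}$ (being of vanishing strength since $c_N/N\to0$) does not change this limit, while it does guarantee that every subsequential asymptotic Gibbs measure satisfies the invariance identities of Theorem~\ref{pan:cavity}.

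For the first inequality I would run the Aizenman--Sims--Starr scheme, writing $\lim_N F_N^{\VB}=\lim_N\bigl(\e\ln Z_{N+1}^{\VB}-\e\ln Z_N^{\VB}\bigr)$ and comparing the systems on $N$ and $N+1$ spins by interpolating in the Poisson number of clauses. The increment then decomposes into a ``cavity field at the new site'' contribution and a ``clause-removal'' contribution, i.e. into quantities of the form $\e\ln\la\cosh(\beta\sum_{k\le\pi(\gamma)}y_ks_k)\ra$ and $\e\ln\la\exp(\beta\sum_{k\le\pi(\gamma/2)}y_ks_{1,k}s_{2,k})\ra$ evaluated at the finite-$N$ Gibbs average; expanding the logarithms in geometric/Taylor series turns these into polynomials in the multi-overlap moments $\e\prod_{l\le q}\prod_{i\in C_l}s_i^l$. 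Passing to a subsequence along which $\nu_N\to\nu$ and invoking the Aldous--Hoover representation produces $\sigma_\ast\in\mathcal M$; Theorem~\ref{pan:cavity} then lets one replace each such moment by its decoupled counterpart $\e\prod_l\e_{u,x}\prod_{i\in C_l}s_i$, and resumming the series identifies the limit of the increment with $\mathcal Q(\sigma_\ast)$. Hence $\lim_N F_N^{\VB}=\mathcal Q(\sigma_\ast)\ge\inf_{\sigma\in\mathcal M}\mathcal Q(\sigma)$.

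For the reverse inequality, given an arbitrary $\sigma\in\mathcal M_{inv}$ I would build a trial random measure on $\{-1,1\}^N$ from the Aldous--Hoover array $(s_i^l)$ associated with $\sigma$ and run a Franz--Leone interpolation between the genuine PVB Hamiltonian and this trial system: at parameter $t$ one keeps a $t$-fraction of the clauses acting on the true $N$ spins and adds the complementary cavity fields acting through the trial array. Differentiating in $t$ and expanding $\ln(1+\tanh(\beta y)\,(\text{overlap}))$ in a power series expresses $\tfrac{d}{dt}$ of the interpolated free energy as a weighted sum over $r$ of $\e\tanh^{2r}(\beta y)$ times differences of the multi-overlap moments of the true and trial systems; the identities defining $\mathcal M_{inv}$ are exactly what forces each of these differences to have a definite sign (the same mechanism by which $\Delta_r\le0$ in the proof of Lemma~\ref{subadditivity}), so the interpolation is monotone and yields $F_N^{\VB}\le\mathcal Q(\sigma)+o(1)$, hence $\lim_N F_N^{\VB}\le\inf_{\sigma\in\mathcal M_{inv}}\mathcal Q(\sigma)$. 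The hard part will be precisely this last step: producing the test-chain construction from an abstract element of $\mathcal M_{inv}$ and verifying, using the \emph{full} family of invariance identities rather than mere overlap concentration, that the interpolation derivative never changes sign; this is the technical core of \cite{panchenko2013}, where the careful bookkeeping of cavity versus non-cavity coordinates and of the random-measure (ROSt) formalism is needed. We refer to \cite{panchenko2013} for the details.
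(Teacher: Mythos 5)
The paper does not prove this statement: it is imported verbatim as Theorem 2 of \cite{panchenko2013} in a review section, so there is no in-paper proof to compare against, and your sketch has to be measured against Panchenko's actual argument (which the present paper redeploys when proving Theorem \ref{main:thm2}). Your architecture is the right one — existence of the limit via superadditivity and concentration, the Aizenman--Sims--Starr increment $\e\ln Z_{N+1}^{\VB}-\e\ln Z_N^{\VB}$ identified along subsequences with convergent spin distributions as $\mathcal Q(\sigma_\ast)$ for some $\sigma_\ast\in\mathcal M$ (this uses Theorem \ref{pan:cavity} to decouple the multi-overlap moments), and a Franz--Leone/Guerra-type interpolation for the upper bound.

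The one substantive inaccuracy is in where the invariance identities enter the upper bound. You write that ``the identities defining $\mathcal M_{inv}$ are exactly what forces each of these differences to have a definite sign,'' i.e.\ that the invariance makes the interpolation monotone. That is not how the argument runs. The Franz--Leone bound
\[
\lim_{N\to\infty}\e F_N^{\VB}\ \le\ \ln 2+\gamma\,\e\ln\cosh(\beta y)+\tfrac{1}{n}\,\e\ln\mathcal A_n(\sigma)-\tfrac{1}{n}\,\e\ln\mathcal B_{n,1}(\sigma)
\]
holds for \emph{every} $\sigma\in\mathcal S$; the sign of the interpolation derivative comes from Jensen's inequality applied to the even multi-overlap moments, exactly the mechanism that yields $\Delta_r\le 0$ in Lemma \ref{subadditivity}, and requires no invariance at all. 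The invariance equations are used only afterwards: for $\sigma\in\mathcal M_{inv}$ they give the telescoping identities $\e\ln(\mathcal A_{n+1}/\mathcal A_n)=\e\ln\mathcal A_1$ and $\e\ln(\mathcal B_{n+1}/\mathcal B_n)=\e\ln\mathcal B_1$, so that $\tfrac1n\e\ln\mathcal A_n(\sigma)-\tfrac1n\e\ln\mathcal B_{n,1}(\sigma)$ collapses to the two single-index terms appearing in $\mathcal Q(\sigma)$. This is precisely the step the present paper reproduces in the proof of the second equality of Theorem \ref{main:thm2}, where the claim $\e\ln(\mathcal A_{n+1}/\mathcal A_n)=\e\ln\mathcal A_1$ is verified from the invariance principle by a polynomial-approximation argument. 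A second, minor point: the ASS scheme gives $\lim_N F_N^{\VB}\ge\liminf_N A_N=\mathcal Q(\sigma_\ast)\ge\inf_{\mathcal M}\mathcal Q$ (the increments $A_N$ need not converge, only their Ces\`aro averages do), so the exact equality $\lim_N F_N^{\VB}=\mathcal Q(\sigma_\ast)$ you assert is a consequence of combining both bounds rather than an output of the cavity computation alone.
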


Denote by $\mathcal{M}_\eps$ and $\mathcal{M}_{inv,\eps}$, the pair $\mathcal{M}$ and $\mathcal{M}_{\mathrm{inv}}$ according to the input $(y,\gamma)$ in \eqref{input}.
Recall $g_\eps$ for any $\eps>0$ from \eqref{add:eq-11}. Let $(g_{k,\eps}),$ $(g_{i,k,\eps}),$ and $(\hat g_{k,\eps})$  be i.i.d. copies of $g_\eps$ and be independent of each and everything else. For any $\sigma\in \mathcal{S},$ define
\begin{align*}
	\begin{split}
		{U}_{l,\eps,\lambda}(\sigma)&=\e_{u,\delta,x}\Bigl(\prod_{i \in C_l^1}\delta_i\Bigr)\Bigl(\prod_{i\leq n}\prod_{k\leq \pi_i(\gamma)}\bigl(1+\tanh(\beta g_{i,k,\eps})s_{i,k}\delta_i\bigr)\Bigr)\\
		&\qquad\qquad\Bigl(\prod_{i\in C_l^2}s_i\Bigr)\Bigl( \prod_{k\leq \pi(\lambda \gamma/2)}\bigl(1+\tanh(\beta \hat g_{k,\eps})\hat s_{1,k}\hat s_{2,k}\bigr)\Bigr)
	\end{split}
\end{align*}
and
\begin{equation*}
	{V}_{\eps,\lambda}(\sigma) =\e_{u,\delta,x}\Bigl(\prod_{i\leq n}\prod_{k\leq \pi_i(\gamma)}\bigl(1+\tanh(\beta g_{i,k,\eps})s_{i,k}\delta_i\bigr)\Bigr)\Bigl( \prod_{k\leq \pi(\lambda \gamma/2)}\bigl(1+\tanh(\beta \hat g_{k,\eps})\hat s_{1,k}\hat s_{2,k}\bigr)\Bigr).
\end{equation*}
In addition, we set the functional
\begin{equation}\label{add:eq-12}
	P_\eps(\sigma)=\e\ln \e_{u,\delta,x}\prod_{k=1}^{\pi(\gamma)}\bigl(1+\tanh(\beta g_{k,\eps}) s_k\delta\bigr)-\e \ln \e_{u,x}\prod_{k=1}^{\pi(\gamma/2)} \bigl(1+\tanh(\beta g_{k,\eps})s_{1,k}s_{2,k}\bigr).
\end{equation}
By using the identity \eqref{identity}, taking $r$ by $\pi(\lambda\gamma/2)$ for some $\lambda>0$ in Theorem \ref{pan:cavity}, and using the Wald identity for the compound Poisson distribution, we can rewrite Theorem \ref{panchenko:freeenergy} as a corollary.

\begin{corollary}\label{add:cor1}
	For any $\sigma\in \mathcal{M}_\eps,$ we have
	\begin{align*}
		\e\prod_{l\leq q}\prod_{i\in C_l}s_i^l&=\e\prod_{l\leq q}\e_{u,x}\prod_{i\in C_l}s_i=\e\frac{\prod_{l\leq q}U_{l,\eps,\lambda}(\sigma)}{V_{\eps,\lambda}(\sigma)^q}.
	\end{align*}
	In addition, almost surely and in $L^1,$
	\begin{equation*}
    \lim_{N\to\infty}F_{N,\eps}^{\VB}=\inf_{\sigma\in \mathcal{M_\eps}}Q_\eps(\sigma)=\inf_{\sigma\in \mathcal{M}_{inv,\eps}}Q_\eps(\sigma),
	\end{equation*}
	where 
	\begin{equation}
		\label{Qeps}
			Q_\eps(\sigma):=\ln 2+\frac{\alpha}{2}\int_\eps^{\infty}\frac{\ln\cosh(\beta x)}{x^{\alpha+1}}dx+P_\eps(\sigma).
	\end{equation}
\end{corollary}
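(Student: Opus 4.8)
The plan is to obtain Corollary \ref{add:cor1} directly from Theorems \ref{pan:cavity} and \ref{panchenko:freeenergy} specialized to the input $(y,\gamma)=(g_\eps,\eps^{-\alpha})$ of \eqref{input}, by applying the elementary identity \eqref{identity} to every exponential appearing in the functionals $\mathcal{U}_l$, $\mathcal{V}$, $\mathcal{Q}$ and then invoking Wald's identity for compound Poisson sums. The key point is that $e^{\beta y a}=\cosh(\beta y)\bigl(1+\tanh(\beta y)a\bigr)$ whenever $a\in\{-1,1\}$, and the products $s_{i,k}\delta_i$, $\hat s_{1,k}\hat s_{2,k}$, $s_{1,k}s_{2,k}$ occurring in those functionals all take values in $\{-1,1\}$; the resulting $\cosh$ prefactors are functions of the disorder alone, hence factor out of $\e_{u,\delta,x}$.

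For the invariance equations, first take the integer $r$ in Theorem \ref{pan:cavity} to be an independent $\pi(\lambda\gamma/2)$: the asserted identity holds for each fixed value of $r\ge 1$, hence persists after averaging over $\pi(\lambda\gamma/2)$, the case $r=0$ being the trivial one in which the $\hat s$-factors drop out. Writing $\exp\bigl(\beta\sum_{i\le n}\sum_{k\le\pi_i(\gamma)}y_{i,k}s_{i,k}\delta_i\bigr)=\bigl(\prod_{i\le n}\prod_{k\le\pi_i(\gamma)}\cosh(\beta y_{i,k})\bigr)\prod_{i\le n}\prod_{k\le\pi_i(\gamma)}(1+\tanh(\beta y_{i,k})s_{i,k}\delta_i)$ and $\exp\bigl(\beta\sum_{k\le\pi(\lambda\gamma/2)}\hat y_k\hat s_{1,k}\hat s_{2,k}\bigr)=\bigl(\prod_{k\le\pi(\lambda\gamma/2)}\cosh(\beta\hat y_k)\bigr)\prod_{k\le\pi(\lambda\gamma/2)}(1+\tanh(\beta\hat y_k)\hat s_{1,k}\hat s_{2,k})$, and then substituting $y_{i,k}=g_{i,k,\eps}$, $\hat y_k=\hat g_{k,\eps}$, one obtains $\mathcal{U}_l(\sigma)=D\,U_{l,\eps,\lambda}(\sigma)$ and $\mathcal{V}(\sigma)=D\,V_{\eps,\lambda}(\sigma)$, where $D=\prod_{i\le n}\prod_{k\le\pi_i(\gamma)}\cosh(\beta g_{i,k,\eps})\cdot\prod_{k\le\pi(\lambda\gamma/2)}\cosh(\beta\hat g_{k,\eps})>0$ does not depend on $l$. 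Consequently $\prod_{l\le q}\mathcal{U}_l(\sigma)/\mathcal{V}(\sigma)^q=\prod_{l\le q}U_{l,\eps,\lambda}(\sigma)/V_{\eps,\lambda}(\sigma)^q$ — here $V_{\eps,\lambda}(\sigma)>0$ since $|\tanh(\beta g_\eps)|<1$ almost surely, so no division issue arises — and taking $\e$ of both sides together with \eqref{pan:inv} yields the first displayed chain of equalities.

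For the free energy formula, I would transform $\mathcal{Q}$ in the same spirit. Since $\cosh t=\frac12(e^t+e^{-t})$ and $\e_\delta\prod_k(1+\tanh(\beta y_k)s_k\delta)=\frac12\prod_k(1+\tanh(\beta y_k)s_k)+\frac12\prod_k(1-\tanh(\beta y_k)s_k)$, one has
\[\cosh\Bigl(\beta\sum_{k\le\pi(\gamma)}y_ks_k\Bigr)=\Bigl(\prod_{k\le\pi(\gamma)}\cosh(\beta y_k)\Bigr)\,\e_\delta\prod_{k\le\pi(\gamma)}\bigl(1+\tanh(\beta y_k)s_k\delta\bigr),\]
so that, the $\cosh$-product being independent of $(u,x)$, $\e\ln\e_{u,x}\cosh\bigl(\beta\sum_{k\le\pi(\gamma)}y_ks_k\bigr)=\gamma\,\e\ln\cosh(\beta y)+\e\ln\e_{u,\delta,x}\prod_{k\le\pi(\gamma)}(1+\tanh(\beta g_{k,\eps})s_k\delta)$ by Wald's identity. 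Likewise $\e\ln\e_{u,x}\exp\bigl(\beta\sum_{k\le\pi(\gamma/2)}y_ks_{1,k}s_{2,k}\bigr)=\frac{\gamma}{2}\e\ln\cosh(\beta y)+\e\ln\e_{u,x}\prod_{k\le\pi(\gamma/2)}(1+\tanh(\beta g_{k,\eps})s_{1,k}s_{2,k})$. Subtracting gives $\mathcal{Q}(\sigma)=\ln 2+\frac{\gamma}{2}\e\ln\cosh(\beta y)+P_\eps(\sigma)$, and since $g_\eps$ has density $\frac{\alpha\eps^\alpha}{2}|x|^{-(1+\alpha)}\1_{\{|x|\ge\eps\}}$ one computes $\frac{\gamma}{2}\e\ln\cosh(\beta g_\eps)=\frac{\eps^{-\alpha}}{2}\cdot\alpha\eps^\alpha\int_\eps^\infty x^{-(1+\alpha)}\ln\cosh(\beta x)\,dx=\frac{\alpha}{2}\int_\eps^\infty x^{-(1+\alpha)}\ln\cosh(\beta x)\,dx$, i.e.\ $\mathcal{Q}=Q_\eps$ of \eqref{Qeps}. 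The free energy identity then follows verbatim from Theorem \ref{panchenko:freeenergy}.

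The argument is essentially bookkeeping, and I expect no serious obstacle; the points that require care are (i) that the $\cosh$ prefactors produced in $\mathcal{U}_l$ are genuinely identical for every $l$ and genuinely independent of the variables integrated by $\e_{u,\delta,x}$, so that they cancel in the ratio; (ii) the symmetrization turning $\cosh$ of a sum into an $\e_\delta$-average of a product, which is precisely what introduces the Rademacher variable $\delta$ into $P_\eps$; and (iii) the legitimacy of substituting the random index $r=\pi(\lambda\gamma/2)$ into the fixed-$r$ statement of Theorem \ref{pan:cavity}.
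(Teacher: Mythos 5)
Your proposal is correct and follows exactly the route the paper itself indicates (the paper compresses the argument into the single sentence preceding the corollary): apply the identity \eqref{identity} to factor each exponential into a disorder-only $\cosh$ prefactor times a product of $(1+\tanh(\cdot)\,\cdot)$ terms, observe that the prefactor is common to all $\mathcal{U}_l$ and $\mathcal{V}$ and cancels in the ratio, substitute $r=\pi(\lambda\gamma/2)$ into the fixed-$r$ invariance of Theorem \ref{pan:cavity}, and use Wald's identity to produce the $\frac{\alpha}{2}\int_\eps^\infty x^{-(1+\alpha)}\ln\cosh(\beta x)\,dx$ term in $Q_\eps$. The three points you flag as needing care are indeed the only ones, and you handle them correctly.
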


%As for the free 

%\begin{align*}
%	U_{\ell} &\colonequals\e_{u,\delta,x} \Bigl(\prod_{i\in C_{\ell} ^1}\delta_i \Bigr)\Bigl(\prod_{i\le n}\prod_{k\le \pi_i(\eps^{-\alpha})}\big(1+\hth (\beta g_{i,k,\eps}) %\delta_i s_{i,k}\big)\Bigr)\\
%	&\qquad\qquad\Bigl(\prod_{i\in C_{\ell}^2} \bar s_i\Bigr)\Bigl( \prod_{k\le \pi(\lambda \eps^{-\alpha}/2)}\bigl(1+\hth(\beta \hat g_{k,\eps})  \hat{s}_{1,k}  %\hat{s}_{2,k}\bigr)\Bigr)
%\end{align*}
%and
%\begin{align*}
%	V&\colonequals\e_{u,\delta,x} \Bigl(\prod_{i\le n}\prod_{k\le \pi_i(\eps^{-\alpha})}\big(1+\hth (\beta g_{i,k,\eps}) \delta_i s_{i,k}\big)\Bigr)\\
%	&\qquad\qquad\Bigl(\prod_{k\le \pi(\lambda\eps^{-\alpha}/2)}\bigl(1+\hth(\beta \hat  g_{k,\eps})  \hat{s}_{1,k}  \hat{s}_{2,k}\bigr)\Bigr),
%\end{align*}
%where  

\section{Establishing Invariance Principle and Variational Formula}
We will establish the proof of Theorem \ref{main:thm1}, Theorem \ref{main:thm2}, and Proposition \ref{hightemp} in this section. As a preparation, we need the uniform convergences and continuities of some related functionals, whose proofs require a heavy analysis and will be deferred to the Appendix.

\subsection{Uniform Convergence and Continuity of the Functionals}\label{ucf}

Recall the functionals $Q_\eps,U_{l,\eps,\lambda},$ and $V_{\eps,\lambda}$ introduced in the previous subsection. Also, recall the three functionals $Q,$ $U_{l,\lambda},$ and $V_\lambda$ introduced in Subsection \ref{sec:inv+var}. We have the following theorem

%\begin{proposition}
%	For any $\sigma\in \mathcal{S},$ $U_{l,\lambda}(\sigma)$ and $V_\lambda(\sigma)$ are almost surely finite and $V_\lambda(\sigma)$ is almost surely %positive. In addition, for any $\sigma\in \mathcal{S},$
%	$$
%	\frac{\prod_{l\le q} U_{l,\lambda}(\sigma)}{V_\lambda(\sigma) ^q}
%	$$
%	is integrable and $P(\sigma)$ is finite.
%\end{proposition}

\begin{theorem}\label{invariancestep1}
	We have that
	\begin{equation}
		\label{invariancestep1:eq1}
		\lim_{\eps\downarrow 0}\sup_\sigma \e\Bigl|\frac{\prod_{l\le q} U_{l,\eps,\lambda}(\sigma)}{V_{\eps,\lambda}(\sigma) ^q}- \frac{\prod_{l\le q} U_{l,\lambda}(\sigma)}{V_\lambda(\sigma) ^q}\Bigr|=0
	\end{equation} 
	and
	\begin{equation}	\label{invariancestep1:eq2}
		\lim_{\eps\downarrow 0}\sup_\sigma \bigl|Q_{\eps}(\sigma)-Q(\sigma)\bigr|=0.
	\end{equation}
\end{theorem}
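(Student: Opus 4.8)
The plan is to prove both limits by reducing everything to two basic estimates: (a) controlling the difference between a truncated-disorder functional and its limiting version when we replace the finite Poisson products by the Poisson Point Process products and replace $\pi(\gamma)=\pi(\eps^{-\alpha})$ by the full PPP $(\xi_{i,k})_{k\ge1}$, and (b) controlling the convergence of the relevant scalar integrals $\int_\eps^\infty(\ln\cosh(\beta x))x^{-\alpha-1}\,dx\to\int_0^\infty(\ln\cosh(\beta x))x^{-\alpha-1}\,dx$ and similarly for $\tanh$. The estimate (b) is elementary and quantitative since $|\ln\cosh x|\le\min(|x|,x^2/2)$ and $|\tanh x|\le\min(|x|,1)$, so the integrands are integrable near $0$ uniformly in $\eps$; this immediately handles the deterministic pieces of $Q_\eps(\sigma)-Q(\sigma)$ and reduces \eqref{invariancestep1:eq2} to showing $\sup_\sigma|P_\eps(\sigma)-P(\sigma)|\to0$, where $P(\sigma)$ denotes the PPP analogue of \eqref{add:eq-12}.

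The core of the argument is the probabilistic coupling. The key observation is that the product $\prod_{k\le\pi(\eps^{-\alpha})}(1+\tanh(\beta g_{k,\eps})s_k\delta)$ appearing inside $U_{l,\eps,\lambda},V_{\eps,\lambda},P_\eps$ has exactly the same law as the partial product $\prod_{k:|\xi_k|>\eps}(1+\tanh(\beta\xi_k)s_k\delta)$ over the points of a PPP $(\xi_k)$ with mean measure $\mu$ from \eqref{intensitymeasure}, because a PPP restricted to $\{|x|>\eps\}$ has a Poisson$(\eps^{-\alpha})$ number of points each distributed as $g_\eps$. So the $\eps\downarrow0$ limit is literally the limit of the product over $|\xi_k|>\eps$ to the product over all $k$. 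I would then show this convergence is uniform in $\sigma\in\mathcal S$ by bounding the $L^1$ (or $L^2$) distance between the truncated product and the full product, using that each factor satisfies $|1+\tanh(\beta\xi_k)s_k\delta|\le 1+|\tanh(\beta\xi_k)|$ and $\ge 1-|\tanh(\beta\xi_k)|$, together with $\sum_k\tanh^2(\beta\xi_k)<\infty$ a.s.\ (finite because $\e\sum_k\tanh^2(\beta\xi_k)=\alpha\int_0^\infty\tanh^2(\beta x)x^{-\alpha-1}\,dx<\infty$). Concretely, writing $a_k=\tanh(\beta\xi_k)s_k\delta$ with $|a_k|\le|\tanh(\beta\xi_k)|$, the difference $\prod_{\text{all }k}(1+a_k)-\prod_{|\xi_k|>\eps}(1+a_k)$ is bounded after conditioning on the PPP by a telescoping estimate of the form $\bigl|\prod(1+a_k)\bigr|\cdot\sum_{|\xi_k|\le\eps}|a_k|/(1-|a_k|)$; taking expectations and using Cauchy–Schwarz plus $\e\sum_{|\xi_k|\le\eps}\tanh^2(\beta\xi_k)=\alpha\int_0^\eps\tanh^2(\beta x)x^{-\alpha-1}\,dx\le C\eps^{2-\alpha}\to0$ gives a bound independent of $\sigma$. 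The denominators $V_{\eps,\lambda}(\sigma)$ and $\e_{u,x}\exp(\dots)$ need a uniform lower bound away from zero; this is where one uses Jensen together with the symmetry $\e_\delta(1+\tanh(\beta\xi)s\delta)=1$ and $\e_x$-independence structure — after integrating out $\delta$ (and the $\hat s$ pair integrating out one replica index), the denominators are bounded below by a deterministic positive quantity like $\e_{u,x}\exp(-C\sum_k\tanh^2(\beta\xi_k'))$ or simply by the fact that the product has expectation exactly $1$ conditionally, so one controls $\Prob(V_{\eps,\lambda}<\delta_0)$ via a second-moment/Paley–Zygmund bound uniform in $\eps,\sigma$.

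Once the truncated products converge uniformly in $L^1$ (and the denominators are uniformly bounded below in the appropriate probabilistic sense), \eqref{invariancestep1:eq1} follows by writing the ratio difference as a sum of two contributions — replacing the numerator, then replacing the denominator — and using $|x/y-x'/y'|\le|x-x'|/|y|+|x'||y-y'|/(|y||y'|)$ together with the uniform bounds $|U_{l,\eps,\lambda}|\le V_{\eps,\lambda}\le\prod_k(1+|\tanh\beta\xi_k|)$; for \eqref{invariancestep1:eq2}, the same product convergence handles $P_\eps(\sigma)\to P(\sigma)$ after noting $|\ln V-\ln V'|\le|V-V'|/\min(V,V')$ and controlling the event where $V$ is small. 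The main obstacle I anticipate is precisely making the lower bounds on the denominators \emph{uniform over all of $\mathcal S$} and simultaneously over small $\eps$: the functionals $U,V$ involve infinite products that are a priori only defined as limits, and one must be careful that the conditional (on the PPP) structure genuinely gives a $\sigma$-free lower bound — this is the delicate point that forces the "heavy analysis" the authors mention, and I would handle it by exploiting the Rademacher averaging over $\delta$ and $\delta_i$ which makes each block-expectation equal to $1$ in mean, so that a uniform quantitative anticoncentration bound for $V_{\eps,\lambda}$ can be extracted from its first and second conditional moments, both of which are controlled by $\alpha\int_0^\infty\tanh^2(\beta x)x^{-\alpha-1}\,dx<\infty$ independently of $\sigma$.
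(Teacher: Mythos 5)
Your overall architecture matches the paper's (PPP representation of the truncated disorder, reduction to uniform $L^1$ convergence of the products plus a uniform lower bound on the denominators, then a numerator/denominator splitting), but both of your key estimates break down for $1<\alpha<2$, and these are precisely the points where the real work lies.

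First, the telescoping bound. You bound the difference of products by $\bigl|\prod(1+a_k)\bigr|\cdot\sum_{|\xi_k|\le\eps}|a_k|/(1-|a_k|)$ with $|a_k|\le|\tanh(\beta\xi_k)|$. But for $1<\alpha<2$ one has $\e\sum_{|\xi_k|\le\eps}|\tanh(\beta\xi_k)|=\alpha\int_0^\eps\tanh(\beta x)x^{-\alpha-1}dx=\infty$, and in fact $\sum_{|\xi_k|\le\eps}|\tanh(\beta\xi_k)|=+\infty$ almost surely (Campbell's theorem: $\int_0^\eps x\,\mu(dx)=\infty$ when $\alpha\ge1$). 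So this deterministic bound is a.s.\ infinite and Cauchy--Schwarz cannot rescue it: the second factor would involve the first-moment sum, not the second-moment sum. The infinite product converges only because of cancellation, and the correct argument (the paper's Lemma \ref{uniform Y}) computes $\e\bigl(1-\prod_{\eps'<|\xi_k|\le\eps}(1+a_k)\bigr)^2$ exactly via the Poisson exponential formula, where the symmetry of the disorder kills the first-moment (cross) terms and leaves only $\exp\bigl(\int_{|x|\le\eps}\tanh^2(\beta x)\,\e\bar s^2\,\mu(dx)\bigr)-1\le C\eps^{2-\alpha}$, uniformly in $\sigma$.

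Second, the denominator. You need $\sup_{\sigma,\eps}\Prob(V_{\eps,\lambda}(\sigma)\le\eta)\to0$ as $\eta\downarrow0$, but neither of your proposed routes delivers this. The pointwise bound $V\ge\prod_k(1-|\tanh(\beta\xi_k)|)$ is $0$ a.s.\ for $1<\alpha<2$ by the same divergence as above. And Paley--Zygmund with $\e V=1$ and $\e V^2\le C$ only yields $\Prob(V>\eta)\ge c>0$ for some constant $c$, not $\Prob(V\le\eta)\to0$; first and second positive moments alone are compatible with $V$ having an atom at $0$ (e.g.\ $V\in\{0,2\}$ each with probability $1/2$), so the contribution of the bad event $\{V\le\eta\}$ to the $L^1$ error would not vanish. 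The missing ingredient is a uniform bound on a \emph{negative} moment: the paper bounds $\e V_{\eps,\lambda}^{-1}$ via Jensen and the Poisson exponential formula (display \eqref{add:eq-6}), which gives $\sup_{\sigma,\eps}\e|\ln V_{\eps,\lambda}(\sigma)|<\infty$ and hence $\Prob(V_{\eps,\lambda}\le\eta)\le\e|\ln V_{\eps,\lambda}|/\ln(1/\eta)\to0$ uniformly. The same negative-moment control is also what makes your step $|\ln V-\ln V'|\le|V-V'|/\min(V,V')$ usable for \eqref{invariancestep1:eq2}, where the paper additionally needs a uniform-integrability argument (its Lemmas \ref{add:lem1} and \ref{lemma}) to upgrade convergence in probability of the logarithms to convergence in $L^1$.
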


\begin{remark}
	\rm  It is easy to see that $U_{l,\eps,\lambda}(\sigma)$ and $V_{\eps,\lambda}(\sigma)$ are finite almost surely and $Q_\eps(\sigma)$ is finite since their definitions consist of Poissonian numbers of terms in products.  In contrast, as the definitions of $U_{l,\lambda}(\sigma)$, $V_{\eps,\lambda}(\sigma)$, and $Q_\eps(\sigma)$ involve infinite products, their finiteness require justifications, which are parts of the statement of Theorem \ref{invariancestep1}. 
\end{remark}

Next, we show that the functionals in Theorem \ref{invariancestep1} are continuous in the finite-dimensional distribution sense:

\begin{definition}\label{def:fdd}
	Let $(\sigma_{m})_{m\geq 1}\subset \mathcal{S}$ and $\sigma_0\in \mathcal{S}$. We say that $\sigma_m\to \sigma_0$ in the finite-dimensional distribution (f.d.d.) sense as $m\to \infty$ if the infinite array $s_{m}:=(s_{m,i}^l)_{i,l\geq 1}=(\sigma_m(w,u_l,v_i,x_{i,l}))_{i,l\geq 1}$ converges to $s_0:=(s_{0,i}^l)_{i,l\geq 1}=(\sigma_0(w,u_l,v_i,x_{i,l}))_{i,l\geq 1}$ for any finitely many entries.
\end{definition}

\begin{remark}\label{welldefiniteness}
	The space $\mathcal{S}$ is sequentially compact with respect to the f.d.d. convergence. To see this, consider an arbitrary sequence $(\sigma_m)_{m\geq 1}$ from $\mathcal{S}$ and denote by $s_{m}=(s_{m,i}^l)_{i,l\geq 1}$ the corresponding infinite array for each $m\geq 1.$ Since  the space of all probability measures on the space $[-1,1]^{\mathbb{N}\times\mathbb{N}}$ equipped with the metric $$d(x,y)=\sum_{i,l\geq 1}\frac{|x_{i,l}-y_{i,l}|}{2^{i+l}}$$ is sequentially compact, it follows that along some subsequence, $(s_{m_k})_{k\geq 1}$ converges to some infinite array $s_0=(s_{0,i}^l)_{i,l\geq 1}$ in the sense that $(s_{m_k,i}^l)_{i,l\leq r}$ converges to $(s_{0,i}^l)_{i,l\leq r}$ weakly as $k\to \infty$ for all $r\geq 1.$ Obviously the distribution of $s_0$ must be invariant under any permutations in finitely many rows and columns. With this, the Aldous-Hoover representation ensures that there exists some $\sigma_0\in \mathcal{S}$ such that in distribution $s_0=(\sigma_0(w,u_l,v_i,x_{i,l}))_{l,i\geq 1}.$ Therefore, $\sigma_{m_k}$ converges to $\sigma_0$ in the f.d.d. sense. 
\end{remark}

\begin{theorem}\label{invariancestep2}
	Assume that $\sigma_m\to\sigma_0 $ f.d.d. as $m\to\infty$. For any $\eps>0$ and $\lambda>0,$ we have that as $m\to\infty,$
	\[\lim_{m\to\infty}\e\frac{\prod_{l\le q}U_{l,\eps,\lambda}(\sigma_m) }{V_{\eps,\lambda}^q (\sigma_m)} =\e\frac{\prod_{l\le q}U_{l,\eps,\lambda}(\sigma_0) }{V_{\eps,\lambda}^q (\sigma_0)}\]
	and for any $\eta>0,$
	\begin{equation*}
		\lim_{m\to\infty}Q_{\eps}(\sigma_m)=Q_\eps(\sigma_0).
	\end{equation*}
\end{theorem}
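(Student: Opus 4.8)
The plan is to reduce everything to the dominated convergence theorem applied on the product probability space carrying the Poisson numbers of factors, the disorder variables $g_{i,k,\eps},\hat g_{k,\eps}$, and the auxiliary randomness $w,u,(u_l),\delta,(\delta_i),(x_I),(v_I)$. First I would observe that, since $\eps>0$ is fixed, each of $U_{l,\eps,\lambda}(\sigma)$, $V_{\eps,\lambda}(\sigma)$, and the two terms of $Q_\eps(\sigma)$ is built from \emph{finitely many} factors once we condition on the Poisson counts $\pi_i(\gamma),\pi(\lambda\gamma/2),\pi(\gamma),\pi(\gamma/2)$; moreover every factor is of the form $1+\tanh(\beta g)\,s_{i,k}\delta_i$ or $1+\tanh(\beta g)\hat s_{1,k}\hat s_{2,k}$ (for the $U,V$ case) with values in $[0,2]$, and of the form $\cosh(\beta\sum_k g_k s_k)$ resp.\ $1+\tanh(\beta g)s_{1,k}s_{2,k}$ for $Q_\eps$. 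The point is that each such factor depends on $\sigma_m$ only through a \emph{finite} collection of spin variables $s_{m,i}^l=\sigma_m(w,u_l,v_i,x_{i,l})$ — all evaluated at the same sources of randomness $(w,u_l,v_i,x_{i,l})$ — so f.d.d.\ convergence $\sigma_m\to\sigma_0$ gives convergence in distribution of these spin-vectors, jointly with all the independent disorder and Poisson variables.

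The key steps, in order: (i) Condition on all the Poisson counts and on $w$; then $U_{l,\eps,\lambda}(\sigma_m)$ and $V_{\eps,\lambda}(\sigma_m)$ become finite products of bounded continuous functions of finitely many coordinates of the array $(s_{m,i}^l)$ together with finitely many i.i.d.\ $g$'s and $x$'s. By the f.d.d.\ hypothesis and the independence of the extra randomness, the whole vector of these finitely many coordinates converges in distribution to the corresponding vector for $\sigma_0$. (ii) Since products of bounded continuous functions are bounded continuous, $\e_{u,\delta,x}[\cdots]$ — which is just an expectation over the $u,\delta,x$ block with $w$ and the Poisson/$g$ data frozen — converges; here one uses that $\e_{u,\delta,x}$ is a finite average-type integral and the integrand is jointly bounded continuous, so weak convergence of the joint law passes through. (iii) For the ratio $\prod_l U_{l,\eps,\lambda}/V_{\eps,\lambda}^q$, note $V_{\eps,\lambda}(\sigma)\ge \e_{u,\delta,x}\prod(1-|\tanh(\beta g)|)$, and more usefully that after taking $\e_{u,\delta,x}$ the denominator is bounded below by a strictly positive random variable depending only on the $g$'s and Poisson counts (not on $\sigma$): indeed $V_{\eps,\lambda}(\sigma)\ge \prod_{i,k}(1-|\tanh(\beta g_{i,k,\eps})|)\prod_k(1-|\tanh(\beta\hat g_{k,\eps})|)>0$ a.s.\ by Jensen/linearity, since each factor $1+\tanh(\beta g)s\delta\ge 1-|\tanh(\beta g)|$ pointwise in the auxiliary variables. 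So the ratio is a continuous function of $(\sigma, \text{data})$ on the event that the denominator bound is positive, which is a.s. (iv) Finally remove the conditioning on the Poisson counts and $w$ by dominated convergence: $|U_{l,\eps,\lambda}|\le \prod(1+|\tanh\beta g|)\le 2^{\pi_i(\gamma)+\pi(\lambda\gamma/2)}$ and $V_{\eps,\lambda}^{-q}\le \prod(1-|\tanh\beta g|)^{-q}$, and one checks that the product of these bounds is integrable in the $g$'s and Poisson counts — the $g$'s have all moments near their lower cutoff $\eps$ so $\e(1-|\tanh\beta g|)^{-q}<\infty$, and conditionally on the counts everything factorizes; integrability over the Poisson counts follows because $\e[\theta^{\pi(r)}]=e^{r(\theta-1)}<\infty$ for any $\theta$. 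The same dominated-convergence wrap-up handles $Q_\eps$, using $|\ln\cosh(\beta\sum_k g_ks_k)|\le \beta\sum_k|g_k|$ and $|\ln(1+\tanh(\beta g)s_1s_2)|\le -\ln(1-|\tanh\beta g|)$, both integrable against the compound Poisson with the $g_\eps$ law.

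I expect the main obstacle to be \textbf{step (iii)–(iv), the handling of the ratio and the uniform integrability needed to push the limit through the outer expectation $\e$}. The numerator and denominator are not independent and the denominator, while a.s.\ positive, is not bounded away from $0$ uniformly, so one cannot just invoke boundedness; the right move is to exhibit a single integrable envelope of the form $2^{\,\pi_i(\gamma)+\pi(\lambda\gamma/2)}\prod_{i,k}(1-|\tanh\beta g_{i,k,\eps}|)^{-q}\prod_k(1-|\tanh\beta\hat g_{k,\eps}|)^{-q}$, verify its integrability (this is where the cutoff $\eps>0$ is essential — without it $\e(1-|\tanh\beta g|)^{-q}$ could diverge), and then apply generalized dominated convergence to the almost-everywhere convergent integrands produced by steps (i)–(ii). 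For $Q_\eps$ the analogous but easier envelope is $\beta\pi(\gamma)\max_k|g_k|$-type plus $-\ln(1-|\tanh\beta g|)$ terms, whose integrability against the $g_\eps$ density is immediate. Once the envelope is in place the proof is a routine conditioning-plus-dominated-convergence argument; no delicate estimate beyond the explicit moment bounds for $g_\eps$ and the exponential generating function of the Poisson counts is required.
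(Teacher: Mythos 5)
Your plan has a genuine gap at the step you treat as routine, namely step (ii), while the step you flag as the main obstacle (the ratio and the envelope) is actually the easy part.

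The crux is that $U_{l,\eps,\lambda}(\sigma_m)$ and $V_{\eps,\lambda}(\sigma_m)$ are not "finite products of bounded continuous functions of finitely many array coordinates"; they are \emph{partial expectations} $\e_{u,\delta,x}[\cdots]$ of such products, i.e.\ random variables measurable with respect to $w$, the $v$'s, the disorder and the Poisson counts. Finite-dimensional weak convergence of the array $(s_{m,i}^l)$ does \emph{not} imply convergence in distribution of conditional expectations of functions of the array: weak convergence only controls expectations of bounded continuous functions under the full expectation, not under a partial one. (A toy counterexample: $X_m$ i.i.d.\ Rademacher independent of a sigma-field $\mathcal G$ versus $X_0$ a $\mathcal G$-measurable Rademacher variable; all $X_m$ and $X_0$ have the same law, yet $\e[X_m\mid\mathcal G]=0$ while $\e[X_0\mid\mathcal G]=X_0$.) So the assertion that "weak convergence of the joint law passes through" $\e_{u,\delta,x}$ is exactly the claim that needs a proof, and as stated it is false in general.

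The paper closes this gap with a replica/moment argument (Lemma \ref{UVconvergence}): for integers $a_1,\dots,a_q,b$ one writes $\e\bigl[\prod_l U_{l,\eps,\lambda}(\sigma_m)^{a_l}\,V_{\eps,\lambda}(\sigma_m)^{b}\bigr]$ by introducing independent copies $u_{l,\ell},x_{l,\ell,i},\delta_{l,\ell,i}$ of the integrated-out variables, so that the joint moment becomes the \emph{full} expectation of a bounded function of finitely many entries of the (relabelled) Aldous--Hoover array; these entries converge jointly in distribution by the f.d.d.\ hypothesis, hence the moments converge. After truncating on the event that the Poisson counts are at most $M$ (so everything is bounded), moment convergence plus Cram\'er--Wold gives joint weak convergence of $(U_{1,\eps,\lambda},\dots,U_{q,\eps,\lambda},V_{\eps,\lambda})$, and the truncation is removed by uniform second-moment bounds and Slutsky. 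If you add this moment step, the rest of your plan goes through — and in fact more simply than you propose: since $|U_{l,\eps,\lambda}|\le V_{\eps,\lambda}$ pointwise, the ratio $\prod_l U_{l,\eps,\lambda}/V_{\eps,\lambda}^q$ is bounded by $1$, so once you have joint weak convergence and $V_{\eps,\lambda}(\sigma_0)>0$ a.s., the continuous mapping theorem plus bounded convergence finishes the first claim with no need for your $(1-|\tanh(\beta g)|)^{-q}$ envelope. For $Q_\eps$ the same moment lemma gives weak convergence of $\mathcal A_{n,\eps}(\sigma_m)$ and $\mathcal B_{n,\eps,\lambda}(\sigma_m)$, and convergence of $\e\ln\mathcal A_{n,\eps}$, $\e\ln\mathcal B_{n,\eps,\lambda}$ then follows from the uniform integrability of the logarithms already established in the paper's Appendix A (your proposed integrable envelope plays the same role here).
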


As we have mentioned before, the proofs of these theorems require some subtle analysis of the functionals. Since the detailed arguments are not needed for the rest of the paper, we defer their proofs to the Appendices \ref{ProofStep1} and \ref{ProofStep2}, respectively.

\subsection{Proof of Theorems \ref{main:thm1} and \ref{main:thm2}}

We need a simple lemma.

\begin{lemma}\label{functional3epsilon}
	Let $\{T_\eps,T\}_{\eps>0}$ be a family of functionals  and let $\sigma \in \mathcal S$ and $(\sigma_\eps)_{\eps>0}\subset \mathcal{S}$. Assume that
	$
		\lim_{\eps\downarrow 0}\sup_{\sigma\in\mathcal S} |T_\eps (\sigma) - T(\sigma)|=0
	$
	and that for  any $\eta>0$, $\lim_{\eps\downarrow 0}T_\eta (\sigma_\eps) =T_\eta (\sigma).$
	Then $\lim_{\eps\downarrow 0}T_\eps(\sigma_\eps)=T(\sigma)$
	and
	$\lim_{\eps\downarrow 0}T(\sigma_\eps)=T(\sigma).$
\end{lemma}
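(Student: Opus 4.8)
The plan is to prove the two conclusions in sequence, using the triangle inequality and the two hypotheses. For the first conclusion, fix $\eta>0$ arbitrarily small and write, for $\eps<\eta$ (or more directly, just for any $\eps$),
\begin{align*}
|T_\eps(\sigma_\eps)-T(\sigma)|\le |T_\eps(\sigma_\eps)-T(\sigma_\eps)|+|T(\sigma_\eps)-T(\sigma)|.
\end{align*}
The first term on the right is bounded by $\sup_{\tau\in\mathcal S}|T_\eps(\tau)-T(\tau)|$, which tends to $0$ as $\eps\downarrow 0$ by the uniform convergence hypothesis. So it suffices to handle the second term, i.e.\ to prove the \emph{second} conclusion $\lim_{\eps\downarrow 0}T(\sigma_\eps)=T(\sigma)$; the first conclusion then follows immediately. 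In other words, I would reorganize the argument so that the second statement is proved first and the first statement is deduced from it together with the uniform convergence.

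To prove $\lim_{\eps\downarrow 0}T(\sigma_\eps)=T(\sigma)$, fix $\delta>0$. By the uniform convergence hypothesis, choose $\eta>0$ such that $\sup_{\tau\in\mathcal S}|T_\eta(\tau)-T(\tau)|<\delta/3$. Now decompose, for $\eps$ small,
\begin{align*}
|T(\sigma_\eps)-T(\sigma)|\le |T(\sigma_\eps)-T_\eta(\sigma_\eps)|+|T_\eta(\sigma_\eps)-T_\eta(\sigma)|+|T_\eta(\sigma)-T(\sigma)|.
\end{align*}
The first and third terms are each at most $\delta/3$ by the choice of $\eta$ (uniformly in $\eps$, since the bound is a supremum over $\mathcal S$). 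The middle term tends to $0$ as $\eps\downarrow 0$ by the second hypothesis applied with this fixed $\eta$, so it is $<\delta/3$ for all sufficiently small $\eps$. Hence $|T(\sigma_\eps)-T(\sigma)|<\delta$ for all small $\eps$, and since $\delta>0$ was arbitrary, $\lim_{\eps\downarrow 0}T(\sigma_\eps)=T(\sigma)$. Combining this with the first display gives $\lim_{\eps\downarrow 0}T_\eps(\sigma_\eps)=T(\sigma)$ as well.

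There is no real obstacle here — this is a standard $3\eps$-argument, and the only thing to be careful about is the order of quantifiers: the parameter $\eta$ controlling the uniform approximation must be chosen \emph{before} letting $\eps\downarrow 0$, and the convergence $T_\eta(\sigma_\eps)\to T_\eta(\sigma)$ is used only for that single fixed $\eta$. The lemma will then be applied in the main text with $T_\eps=Q_\eps$, $T=Q$ (using \eqref{invariancestep1:eq2} and the second part of Theorem \ref{invariancestep2}), and with the analogous ratio functionals $T_\eps(\sigma)=\e\,\prod_{l\le q}U_{l,\eps,\lambda}(\sigma)/V_{\eps,\lambda}(\sigma)^q$, $T(\sigma)=\e\,\prod_{l\le q}U_{l,\lambda}(\sigma)/V_\lambda(\sigma)^q$ (using \eqref{invariancestep1:eq1} and the first part of Theorem \ref{invariancestep2}).
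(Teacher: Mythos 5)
Your proof is correct and uses essentially the same $3\eps$-argument as the paper: the key step in both is inserting the intermediate functional $T_\eta$ with a fixed $\eta$ chosen before letting $\eps\downarrow 0$, and controlling the outer terms by the uniform convergence. The only difference is that you prove the two conclusions in the opposite order (deducing the first from the second rather than vice versa), which is an inessential reorganization.
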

\begin{proof}
	The uniform convergence readily implies   
	$\lim_{\eps,\eta\downarrow0}  \sup_{\sigma \in \mathcal S}|T_\eps (\sigma)-T_\eta (\sigma)|=0.$
	Fix $\eps,\eta>0$.
	Write
	\begin{align*}
		|T_\eps(\sigma_\eps)-T(\sigma)|&\le |T_\eps(\sigma_\eps)-T_\eta(\sigma_\eps)|+|T_\eta (\sigma_\eps)-T_\eta(\sigma)|+|T_\eta(\sigma)-T(\sigma)|\\
		&\le \sup_{\sigma \in \mathcal S}|T_\eps (\sigma)-T_\eta (\sigma)|+|T_\eta (\sigma_\eps)-T_\eta(\sigma)|+\sup_{\sigma \in \mathcal S}|T_\eps (\sigma)-T(\sigma)|.
	\end{align*}
	Taking $\eps\downarrow0$ and then $\eta\downarrow0$ yields the first assertion. Moreover, the second assertion follows from $$|T(\sigma)-T(\sigma_\eps)|\le |T(\sigma)-T_\eps (\sigma_\eps)|+|T(\sigma_\eps)-T_\eps (\sigma_\eps)|\le |T(\sigma)-T_\eps (\sigma_\eps)|+\sup_\sigma |T(\sigma)-T_\eps (\sigma)|.$$
\end{proof}

\medskip

{\noindent \bf Proof of Theorem \ref{main:thm1}.}
Our assertion follows directly by using Theorems \ref{invariancestep1}, Theorem \ref{invariancestep2}, and Corollary \ref{add:cor1} and applying Lemma \ref{functional3epsilon}.

\medskip

{\noindent \bf Proof of Theorem \ref{main:thm2}:} Recall $F_N=N^{-1}\ln Z_N.$ As we have already known from Theorem \ref{con} that $F_N$ is concentrated with respect to the $L^p$ norm for $1 \le p<\alpha,$ the validity of 
\begin{equation}\label{proof:add:eq4}
	\lim_{N\to\infty}\e F_N=\inf_{\sigma\in \mathcal{N}}Q(\sigma)=\inf_{\sigma\in \mathcal{N}_{\mathrm{inv}}}Q(\sigma)
\end{equation}
will be enough to complete our proof.
We establish the first equality in \eqref{proof:add:eq4} first.
From Theorem~\ref{whytruncation},
\begin{equation}\label{proof:add:eq3}
	\lim_{N\to\infty}\e F_N=\lim_{\eps\downarrow 0}\lim_{N\to\infty}\e F_{N,\eps}^{\VB}=\lim_{\eps\downarrow 0}\inf_{\sigma\in \mathcal{M}_\eps}Q_\eps(\sigma).
\end{equation}
Let $\sigma_0\in \mathcal{N}.$ Then there exists a sequence $(\sigma_{m})_{m\geq 1}$ for $\sigma_{m}\in \mathcal{M}_{\eps_m}$ and $\eps_m\downarrow 0$ such that $\sigma_{m}\to \sigma_0$ f.d.d. Therefore, from  Theorem \ref{invariancestep1}, Theorem \ref{invariancestep2}, and Lemma \ref{functional3epsilon}, 
\begin{equation*}
\limsup_{N\to\infty}\e F_N\leq \lim_{m\to\infty}Q_{\eps_m}(\sigma_{m})=Q(\sigma_0).
\end{equation*}
Since this holds for any $\sigma_0\in \mathcal{N},$ it follows that $\limsup_{N\to\infty}\e F_N\leq \inf_{\mathcal{N}}Q(\sigma).$ Conversely, from \eqref{proof:add:eq3}, for any $\eta>0,$ as long as $\eps$ is small enough,
$
\liminf_{N\to\infty}\e F_N\geq \inf_{\sigma\in \mathcal{M}_\eps}Q_\eps(\sigma)-\eta.
$
We can pick a sequence $(\sigma_{m})_{m\geq 1}$ for $\sigma_{m}\in \mathcal{M}_{\eps_m}$ and $\eps_m\downarrow 0$ that again converges to some $\sigma_0$ f.d.d. and that for all $m\geq 1,$
$
\inf_{\sigma\in \mathcal{M}_{\eps_m}}Q_{\eps_m}(\sigma)\geq Q_{\eps_m}(\sigma_{m})-\eta.
$
Hence, from Theorem \ref{invariancestep1}, Theorem \ref{invariancestep2},  and Lemma \ref{functional3epsilon} again,
\begin{equation*}
	\liminf_{N\to\infty}\e F_N\geq \lim_{m\to\infty}Q_{\eps_m}(\sigma_m)-2\eta=Q(\sigma_0)-2\eta.
\end{equation*}
Since this holds for any $\eta>0,$ the lower bound, $\liminf_{N\to\infty}\e F_N\geq Q(\sigma),$ follows, from which, since $\sigma_0\in \mathcal{N}$, we readily have that $\lim_{N\to\infty}\e F_N= \inf_{\mathcal{N}}Q(\sigma)$ and this establishes the first equality in \eqref{proof:add:eq4}.

As for the second inequality in \eqref{proof:add:eq4}, note that $\inf_{\sigma\in \mathcal{N}}Q(\sigma)\geq \inf_{\sigma\in \mathcal{N}_{\mathrm{inv}}}Q(\sigma).$
It suffices to show the reverse inequality, for which we follow the argument of Lemma 7 in \cite{panchenko2013}.  Recall that from Lemma 6 in \cite{panchenko2013} and the identity \eqref{identity}, for any function $\sigma\in \mathcal{S}$, we have
\begin{align*}
	\lim_{N\to\infty}\frac{1}{N}\e F_{N,\eps}^{\VB}& \leq \ln 2+\frac{\alpha}{2}\int_\eps^\infty \frac{\ln\cosh(\beta x)}{x^{\alpha+1}}dx+\frac{1}{n}\e \ln \mathcal{A}_{n,\eps}(\sigma)- \frac{1}{n}\e \ln \mathcal{B}_{n,\eps,1}(\sigma),
\end{align*}
and from Theorem \ref{whytruncation} and \eqref{proof:add:eq3}, passing to the limit gives 
\begin{equation*}
	\lim_{N\to\infty}\e F_{N}\leq \ln2 + \frac{\alpha}{2}\int_0^\infty \frac{\ln\cosh(\beta x)}{x^{\alpha+1}}dx+\frac{1}{n}\e \ln\mathcal{A}_{n}(\sigma)- \frac{1}{n}\e \ln \mathcal{B}_{n,1}(\sigma).
\end{equation*}
We claim that for any $\sigma\in \mathcal{N}_{\mathrm{inv}},$ 
\begin{align*} 
	\begin{split}
		\e\ln\frac{\mathcal A_{n+1}(\sigma)}{\mathcal A_n(\sigma)}&=\e\ln \mathcal A_1(\sigma)\,\,\mbox{and}\,\,
		\e\ln\frac{\mathcal B_{n+1}(\sigma)}{\mathcal B_n(\sigma)}=\e\ln \mathcal B_1(\sigma).
	\end{split}
\end{align*}
If these are valid, then we readily have $$ \inf_{\mathcal{N}_{\mathrm{inv}}}Q(\sigma)\leq\inf_{\mathcal{N}}Q(\sigma)=\lim_{N\to\infty}\e F_N\leq \inf_{\mathcal{N}_{\mathrm{inv}}}Q(\sigma)$$ and this establishes the second equality in \eqref{proof:add:eq4}. To prove our claim, we will only establish the proof for the first equation since the other one can be treated similarly. First of all, note that from Theorem \ref{parisistep1},  we have
\begin{equation*}
	%\e A_n(\sigma)&=	\lim_{\eta\downarrow 0}\e\ln	\e_{u,x,\delta} \prod_{i\le n}\prod_{k\ge 1} (1+\hth(\beta\xi_{i,k}) s_{i,k}\delta_{i}) ,\\
	\e A_1(\sigma)=	\lim_{\eps\downarrow 0}\e\ln \e_{u,x,\delta} \prod_{k:|\xi_{n+1,k}|\geq \eps} \bigl(1+\hth(\beta \xi_{n+1,k}) s_{n+1,k}\delta_{n+1}\bigr)
\end{equation*}
and a small modification of Theorem \ref{parisistep1} yields that
\begin{align*}
	\e A_{n+1}(\sigma)&=\lim_{\eps\downarrow 0}\e\ln \e_{u,x,\delta} \prod_{k:|\xi_{n+1,k}|\geq \eps} \bigl(1+\hth(\beta \xi_{n+1,k}) s_{n+1,k}\delta_{n+1}\bigr) \\
	&\qquad\qquad\qquad\qquad\prod_{i\le n}\prod_{k\ge 1} \bigl(1+\hth(\beta\xi_{i,k}) s_{i,k}\delta_{i}\bigr).
\end{align*}
From these, it suffices to show that for any $\eps>0,$
\begin{align*}
	&\e\ln \Bigl(\e_{u,x,\delta} \Bigl[\prod_{k:|\xi_{n+1,k}|\geq \eps} \bigl(1+\hth(\beta \xi_{n+1,k}) s_{n+1,k}\delta_{n+1}\bigr) \frac{\prod_{i\le n}\prod_{k\ge 1} (1+\hth(\beta\xi_{i,k}) s_{i,k}\delta_{i})}{\e_{u,x,\delta} \prod_{i\le n}\prod_{k\ge 1} (1+\hth(\beta\xi_{i,k}) s_{i,k}\delta_{i})}\Bigr]\Bigr)\\
	&=\e\ln \Bigl(\e_{u,x,\delta} \Bigl[\prod_{k:|\xi_{n+1,k}|\geq \eps} \bigl(1+\hth(\beta \xi_{n+1,k}) s_{n+1,k}\delta_{n+1}\bigr)\Bigr]\Bigr).
\end{align*}
Denote by $\e'$ the expectation conditionally on $(\xi_{n+1,k})_{k\geq 1}.$ Observe that conditionally on $(\xi_{n+1,k})_{k\geq 1},$ 
$$\prod_{k:|\xi_{n+1,k}|\geq \eps} \bigl(1+\hth(\beta \xi_{n+1,k}) s_{n+1,k}\delta_{n+1}\bigr)$$ 
is a finite product and it is uniformly bounded from above and is uniformly positive. Since $x\in [c,C]\to \ln x$ for some $c,C>0$ can be uniformly approximated by polynomials, to establish the equality above, we only need to show that for all integer $r\geq 1$,
\begin{align*}
	&\e'\Bigl[\Bigl(\e_{u,x,\delta}\Bigl[ \prod_{k:|\xi_{n+1,k}|\geq \eps} (1+\hth(\beta \xi_{n+1,k}) s_{n+1,k}\delta_{n+1}) \frac{\prod_{i\le n}\prod_{k\ge 1} (1+\hth(\beta\xi_{i,k}) s_{i,k}\delta_{i})}{\e_{u,x,\delta} \prod_{i\le n}\prod_{k\ge 1} (1+\hth(\beta\xi_{i,k}) s_{i,k}\delta_{i})}\Bigr]\Bigr)^r\Bigr]\\
	&=\e'\Bigl[\Bigl(\e_{u,x,\delta}\Bigl[ \prod_{k:|\xi_{n+1,k}|\geq \eps} (1+\hth(\beta \xi_{n+1,k}) s_{n+1,k}\delta_{n+1})\Bigr]\Bigr)^r\Bigr].
\end{align*}
Now, if we expand the first product on the left-hand side, the product on the right-hand side, and $(\cdot)^r$ on the two sides, then by comparison, we see that the equality above holds as long as we have that for any $I_1,\ldots,I_r\subseteq \{k:|\xi_{n+1,k}|\geq \eps\},$
\begin{equation*}
	\e'\frac{\prod_{l\leq r}U_{l,0}(\sigma)}{V_0(\sigma)^r}=\e'\prod_{l\leq r}\prod_{i\in I_l} s_i^l,
\end{equation*}
which follows from the assumption that $\sigma$ satisfies the invariance principle \eqref{main:thm1:eq1},
where
\begin{align*}
	U_{l,0}(\sigma)&=\e_{u,x,\delta}\Bigl(\prod_{i\le n}\prod_{k\ge 1} (1+\hth(\beta\xi_{i,k}) s_{i,k}\delta_{i})\Bigr)\Bigl(\prod_{k\in I_l}s_i\Bigr),\\
	V_0(\sigma)&=\e_{u,x,\delta}\Bigl(\prod_{i\le n}\prod_{k\ge 1} (1+\hth(\beta\xi_{i,k}) s_{i,k}\delta_{i})\Bigr).
\end{align*}
This ends the  proof of our claim.  

\subsection{Proof of Proposition \ref{hightemp}}

Let $0<\beta<\beta_\alpha$ be fixed. Recall the free energy $F_{N,\eps}^{\VB}(\beta)$ of the PVB model associated to the choice of parameter $(y,\gamma)$ in \eqref{input} in Section \ref{Sec:levyDilutedModel}. From \cite{Guerra2004}, it was already known that for any small enough $\eps,$ as long as $\beta$ satisfies 
$$\alpha\int_{\eps}^\infty \frac{\tanh^2(\beta x)}{x^{\alpha+1}}dx=2\gamma \e \tanh^2(\beta y)<1,$$
we have that 
\begin{equation}\label{PVB:overlap}
\lim_{N\to\infty}\e\la R_{1,2}^2\ra_\eps^{\VB}=0.
\end{equation}
In particular, recalling the definition of $\beta_\alpha$ from \eqref{beta_critical}, the above inequality is satisfied as long as $\eps$ is small enough. 

We claim that for small enough $\eps,$ any $\sigma\in \mathcal{M}_\eps$ satisfies $\e_x\sigma(w,u,v,x)=0.$ a.s.. If this is valid, then we readily have that $\e_x\sigma(w,u,v,x)=0$ a.s. for all $\sigma\in \mathcal{N}$ and this completes our proof. To establish our claim, let $\sigma\in \mathcal{M}_\eps$ and let $s_{i}^l=\sigma(w,u_l,v_i,x_{i,l})$ for all $i,l\geq 1.$ From \eqref{PVB:overlap} and using symmetry, we readily have
 $\e s_1^1s_1^2s_2^1s_2^2=0.$ Since we can write
\begin{align*}
	\e s_1^1s_1^2s_2^1s_2^2&=\e \bar s(w,u_1,v_1)\bar s(w,u_2,v_1)\bar s(w,u_1,v_2)\bar s(w,u_2,v_2)\\
	&=\e\bigl[ \bigl(\e_v\bar s(w,u_1,v)\bar s(w,u_2,v)\bigr)^2\bigr]
\end{align*} 
for $\bar s(w,u,v):=\int_0^1s(w,u,v,x)dx,$
it follows that a.s.,
$
	\e_v\bar s(w,u_1,v)\bar s(w,u_2,v)=0.
$
From the Lebesgue differentiation theorem, we have $\e_v\bar s(w,u,v)\bar s(w,u,v)=0$ holds a.s.  Thus, $\e \bar s(w,u,v)^2=0$ and $\bar s(w,u,v)=0$ a.s. This finishes the proof of our claim.

\section{Establishing the Limiting Free Energy and Edge Alignments}
\subsection{Proof of Theorem \ref{thm: generalized: alpha=1: free energy}}

From the asymptotic relations $\lim_{x\to\infty} x^{-1}\ln\ch(x)=1$ and $\lim_{x\to 0} 2 x^{-2} \ln \ch(x) =1$, one can check that the limit of the centering term in \eqref{eq: centering}  with scaling $N\ln N$ in case of $\alpha=1$ is \[\lim_{N\to\infty}\frac{1}{N\ln N} \frac{N}{2} \int_{N^{-1}}^{\frac{N-1}{2}}\frac{\ln \ch(\beta x)}{x^2}dx =\frac{\beta}{2}.\]  
Then, from Theorem \ref{fluctuationfreeenergy}, in case of $\alpha=1$, we have that for $ \beta< \beta_\alpha,$  \begin{equation}\label{eq:alpha=1}
        \frac{1}{N \ln N} \ln Z_N \stackrel{p}{\to} \frac{\beta}{2} .
    \end{equation}  On the other hand, 
    % due to the super-linear nature of the normalization as in the case of $0<\alpha<1$, 
    the free energy approximates the ground state energy from the inequalities \begin{equation}\label{eq: alpha=1: GSE and free energy}
        \frac{\beta}{N\ln N}  \max_{\sigma} \sum_{i<j} J_{ij} \sigma_i \sigma_j \le \frac{1}{N \ln N} \ln Z_N\le \frac{\ln 2}{\ln N} + \frac{\beta}{N\ln N} \max_{\sigma} \sum_{i<j} J_{ij} \sigma_i \sigma_j
    \end{equation}for any $\beta>0$. Combining \eqref{eq:alpha=1} and \eqref{eq: alpha=1: GSE and free energy}, we see that the ground state energy converges to a constant \begin{equation}\label{eq: alpha=1: GSE}
        \frac{1}{N\ln N}\max_{\sigma} \sum_{i<j} J_{ij} \sigma_i \sigma_j \stackrel{p}{\to} \frac{1}{2}
    \end{equation} as $N\to\infty$ for $\beta<\beta_\alpha$.
    Observe that the quantities in \eqref{eq: alpha=1: GSE} do not depend on $\beta$, so we conclude that \eqref{eq: alpha=1: GSE} remains to be true for all $\beta>0$.
    Thus, in view of  \eqref{eq: alpha=1: GSE and free energy},   \eqref{eq:alpha=1} is also true for all $\beta>0$.

\subsection{Proof of Theorem \ref{thm:free_energy_below_one}}

For any integer $k\geq 1,$ denote $[k]=\{1,2,\ldots,k\}.$ Let $X$ have density $$\frac{\alpha}{2} \frac{\1_{ \{ |x| \ge 1\}}}{| x|^{\alpha+1}} $$ so that $X/N^{1/\alpha}$ has the same distribution as $J$.  Set $Y=|X|$.  For $n \ge 1$, let $X_1, X_2, \ldots, X_n$ be i.i.d.\ copies of $X$ and denote $Y_i = |X_i|$.   Let $(I_1, I_2, \ldots, I_n)$ be the random permutation of $[n]$ such that 
\[  |X_{I_1}| > |X_{I_2}|  > \cdots > |X_{I_n}|. \] 
From \eqref{PPP2}, recall the PPP, $(\gamma_j)_{j\geq 1}$, with intensity $\nu(x) =  \alpha x^{ -(1+\alpha)}$ on $(0, \infty)$ ordered descendingly. Let $\mathcal{A}_{\ge}$ be the following subset of $\mathbb{R}^\infty$,
\[ \mathcal{A}_{\ge}  = \{ a=(a_1, a_2, \ldots) : a_1 \ge a_2 \ge \cdots \ge 0 \text{ such that } \lim_i a_i  = 0\}, \]
equipped with the metric $d(a,a')=\sup_{i\geq 1}|a_i-a_i'|$, which ensures that $\mathcal{A}_\ge$ is a complete, separable metric space.  The convergence
in distribution for $\mathcal{A}_{\ge}$-valued random variables is equivalent to finite dimensional weak convergence (see, for example, \cite{bertoin2006random}).

In the following lemma, we collect some well-known results for the order statistics of the heavy-tailed distribution, tailored to our special choice of distribution (see \cite{lepage1981convergence} and \cite[Lemma 2.4]{bordenave2011spectrum}).  We include a proof for completeness.

\begin{lemma} \label{lem:heavy_tail_conv_representation}
	The following statements hold:
	\begin{enumerate}
		\item[(a)]  For $0< \alpha < 2$,
		\begin{equation} \label{eq:Y_repre_poisson}
			n^{-1/\alpha} \bigl(Y_{I_1}, Y_{I_2}, \ldots, Y_{I_n}\bigr) \stackrel{d}{=} \bigl( \gamma_{n+1}/n)^{1/\alpha} ( \gamma_1^{-1/\alpha}, \gamma_2^{-1/\alpha}, \ldots, \gamma_n^{-1/\alpha} \bigr).
		\end{equation}
		Also, 
		\begin{equation} \label{eq:X_repre_poisson}
			n^{-1/\alpha}\bigl (X_1, X_2, \ldots, X_n\bigr) \stackrel{d}{=} ( \gamma_{n+1}/n)^{1/\alpha}\bigl ( \eps_1 \gamma_{r(1)}^{-1/\alpha}, \eps_2 \gamma_{r(2)}^{-1/\alpha}, \ldots, \eps_n\gamma_{r(n)}^{-1/\alpha} \bigr),   
		\end{equation}
		where $\eps_1, \eps_2, \ldots, \eps_n$ are i.i.d.\ Rademacher variables and $r:[n] \to [n]$ is an independent uniform random permutation, both independent of $(\gamma_j)_{j \ge 1}$. 
		\item[(b)]   For $0< \alpha < 2$, as $n \to \infty$, 
		\[ n^{-1/\alpha} \bigl(Y_{I_1}, Y_{I_2}, \ldots, Y_{I_n}, 0, 0, \ldots\bigr) \stackrel{d}{\to} \bigl( \gamma_1^{-1/\alpha}, \gamma_2^{-1/\alpha}, \ldots \bigr) \text{ on } \mathcal{A}_{\ge}.\]
		\item[(c)]  Assume that  $0 < \alpha < 1$. As $n \to \infty$, 
		\[ n^{-1/\alpha} \sum_{j=1}^n Y_{j} \stackrel{d}{\to} \sum_{j=1}^\infty \gamma_j^{-1/\alpha},  \]
		where the infinite sum is finite a.s.
		Also, for any fixed integer $R \ge 1$, as $n\to\infty,$
		\[ n^{-1/\alpha}\Bigl ( \sum_{j=1}^R Y_{I_j}, \sum_{j=R+1}^n Y_{I_j} \Bigr)  \stackrel{d}{\to} \Bigl( \sum_{j=1}^R \gamma_j^{-1/\alpha}, \sum_{j=R+1}^\infty \gamma_j^{-1/\alpha}\Bigr) .  \]
	\end{enumerate}
\end{lemma}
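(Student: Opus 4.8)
\textbf{Proof plan for Lemma \ref{lem:heavy_tail_conv_representation}.}

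The starting point is the classical representation for i.i.d.\ uniform order statistics: if $U_1,\dots,U_n$ are i.i.d.\ uniform on $[0,1]$ with order statistics $U_{(1)}<\dots<U_{(n)}$, then $(U_{(1)},\dots,U_{(n)})\stackrel{d}{=}(\gamma_1/\gamma_{n+1},\dots,\gamma_n/\gamma_{n+1})$, where $\gamma_j=E_1+\dots+E_j$ is the partial sum of i.i.d.\ rate-one exponentials. To prove part (a), I would first note that $Y=|X|$ has tail $\p(Y>y)=y^{-\alpha}$ for $y\ge 1$, so $Y\stackrel{d}{=}V^{-1/\alpha}$ for $V$ uniform on $[0,1]$; hence $Y_{I_1}>\dots>Y_{I_n}$ corresponds to the ordered uniforms $V_{(1)}<\dots<V_{(n)}$ via $Y_{I_j}=V_{(j)}^{-1/\alpha}$. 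Plugging in the Rényi representation of the $V_{(j)}$ gives
\[
n^{-1/\alpha}(Y_{I_1},\dots,Y_{I_n})\stackrel{d}{=}n^{-1/\alpha}\bigl((\gamma_1/\gamma_{n+1})^{-1/\alpha},\dots,(\gamma_n/\gamma_{n+1})^{-1/\alpha}\bigr)=(\gamma_{n+1}/n)^{1/\alpha}(\gamma_1^{-1/\alpha},\dots,\gamma_n^{-1/\alpha}),
\]
which is \eqref{eq:Y_repre_poisson}. For \eqref{eq:X_repre_poisson}, the unordered vector $(X_1,\dots,X_n)$ is obtained from the ordered magnitudes $(Y_{I_1},\dots,Y_{I_n})$ by attaching i.i.d.\ Rademacher signs (by symmetry of $X$) and applying a uniformly random permutation to undo the ordering; these two operations are independent of the magnitudes, so the identity follows immediately from \eqref{eq:Y_repre_poisson}.

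For part (b), I would use \eqref{eq:Y_repre_poisson} together with the strong law of large numbers: $\gamma_{n+1}/n\to 1$ a.s., so $(\gamma_{n+1}/n)^{1/\alpha}\to 1$. Since the convergence in $\mathcal{A}_{\ge}$ (with the sup metric) reduces to finite-dimensional weak convergence, it suffices to observe that for any fixed $r$, $(\gamma_1^{-1/\alpha},\dots,\gamma_r^{-1/\alpha})$ is a fixed random vector and multiplying it by $(\gamma_{n+1}/n)^{1/\alpha}\to 1$ leaves the limit unchanged; the tail coordinates $\gamma_j^{-1/\alpha}$ tend to $0$ a.s.\ since $\gamma_j\to\infty$. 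Slutsky's theorem then gives the claimed convergence on $\mathcal{A}_{\ge}$.

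For part (c), the key point is that when $0<\alpha<1$ the series $\sum_{j\ge 1}\gamma_j^{-1/\alpha}$ converges a.s.: by the SLLN $\gamma_j\sim j$, so $\gamma_j^{-1/\alpha}\sim j^{-1/\alpha}$ and $1/\alpha>1$ makes the sum finite. Using \eqref{eq:Y_repre_poisson} again, $n^{-1/\alpha}\sum_{j=1}^n Y_{I_j}\stackrel{d}{=}(\gamma_{n+1}/n)^{1/\alpha}\sum_{j=1}^n\gamma_j^{-1/\alpha}$, and since $(\gamma_{n+1}/n)^{1/\alpha}\to 1$ a.s.\ while $\sum_{j=1}^n\gamma_j^{-1/\alpha}\to\sum_{j=1}^\infty\gamma_j^{-1/\alpha}$ a.s., the product converges a.s.\ (hence in distribution) to $\sum_{j=1}^\infty\gamma_j^{-1/\alpha}$. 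The joint statement for the split $\bigl(\sum_{j=1}^R Y_{I_j},\sum_{j=R+1}^n Y_{I_j}\bigr)$ follows from the same representation applied coordinatewise: the first component is $(\gamma_{n+1}/n)^{1/\alpha}\sum_{j=1}^R\gamma_j^{-1/\alpha}$ and the second is $(\gamma_{n+1}/n)^{1/\alpha}\sum_{j=R+1}^n\gamma_j^{-1/\alpha}$, and both converge jointly by Slutsky. The only mild subtlety — and the step I would be most careful about — is justifying the a.s.\ convergence of the tail sums uniformly enough to pass to the limit, which is handled by dominated convergence for series using $\gamma_j^{-1/\alpha}\le C j^{-1/\alpha}$ eventually a.s.; everything else is a routine combination of the Rényi representation, the SLLN, and Slutsky's theorem.
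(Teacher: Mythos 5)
Your proposal is correct and follows essentially the same route as the paper: the Rényi representation of uniform order statistics, the identification $Y\stackrel{d}{=}U^{-1/\alpha}$, independence of signs and ranks from the magnitudes, and the SLLN for $\gamma_{n+1}/n$ and for the a.s.\ convergence of $\sum_j\gamma_j^{-1/\alpha}$ when $0<\alpha<1$. The only remark is that no dominated-convergence argument is needed in part (c): the partial sums of the a.s.\ convergent nonnegative series converge to the full sum monotonically, exactly as the paper argues.
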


\begin{proof}
	(a) It is easy to check that $Y \stackrel{d}{=} U^{-1/\alpha}$, where $U$ has uniform distribution on $[0, 1]$. Therefore, by independence, $$(Y_{I_1}, Y_{I_2}, \ldots, Y_{I_n}) \stackrel{d}{=} (U_{(1)}^{-1/\alpha}, U_{(2)}^{-1/\alpha}, \ldots, U_{(n)}^{-1/\alpha}),$$ where $U_{(1)} < U_{(2)}< \cdots < U_{(n)}$ are order statistics of $n$ i.i.d.\ uniform variables on $[0,1]$. \eqref{eq:Y_repre_poisson} is now a consequence of the following  identity (see, for example, \cite[Theorem 3.7.11]{durrett_2019}),
	\[ \bigl( U_{(1)}, U_{(2)}, \ldots, U_{(n)}\bigr)   \stackrel{d}{=}  \bigl( \gamma_1/\gamma_{n+1},  \gamma_2/\gamma_{n+1}, \ldots,  \gamma_n/\gamma_{n+1}\bigr). \]
	Next, since $X$ has symmetric distribution, $\mathrm{sgn}(X)$ is a Rademacher variable, independent of $Y = |X|$.
	Therefore, 
	$$\bigl(X_{I_1}, X_{I_2}, \ldots, X_{I_n}\bigr) \stackrel{d}{=} \bigl(\eps_1 Y_{I_1}, \eps_2 Y_{I_2}, \ldots, \eps_n Y_{I_n}\bigr),$$ 
	where $\eps_i$'s are independent of $Y_i$'s. Also,  the ranking $(I_1, I_2, \ldots, I_n)$ is independent of the order statistics $(Y_{I_1}, Y_{I_2}, \ldots, Y_{I_n})$, which immediately yields 
	\eqref{eq:X_repre_poisson}  from \eqref{eq:Y_repre_poisson}.
	
	(b) By strong law of large numbers, $ \gamma_{n+1}/n \to 1$ almost surely, which yields the finite dimensional convergence of $( \gamma_{n+1}/n)^{1/\alpha} ( \gamma_1^{-1/\alpha}, \gamma_2^{-1/\alpha}, \ldots, \gamma_n^{-1/\alpha}, 0, 0, \ldots )$ to $( \gamma_1^{-1/\alpha}, \gamma_2^{-1/\alpha}, \ldots )$.

	(c) By  \eqref{eq:Y_repre_poisson},  $$n^{-1/\alpha} \sum_{j=1}^n Y_{j} \stackrel{d}{=}  ( \gamma_{n+1}/n)^{1/\alpha} \sum_{j=1}^n  \gamma_j^{-1/\alpha}. $$ By strong law of large numbers, 
	$\gamma_j/j \to 1$ almost surely as $j \to \infty$. Thus, since $0 < \alpha < 1$, the series $ \sum_{j=1}^\infty  \gamma_j^{-1/\alpha} $ is convergent almost surely. It follows that
	$$( \gamma_{n+1}/n)^{1/\alpha} \sum_{j=1}^n  \gamma_j^{-1/\alpha} \to \sum_{j=1}^\infty  \gamma_j^{-1/\alpha}$$ almost surely. The second part of (c) has a similar proof. 
\end{proof}

Before proving the theorem, we need a simple lemma on weak convergence, whose proof is omitted. 
\begin{lemma} \label{lem:sanwich_weak_conv}
	Let $(F_N)_{N \ge 1}, (W_{N, R})_{N ,R \ge 1}, (W_N)_{N \ge 1} $ be sequences of random variables defined on the same probability space such that 
	\[ W_{N, R} \le F_N \le W_N \ \ \text{ for any } N, R \ge 1.\]
	Suppose that for any fixed $R \ge 1$, $W_{N, R}  \stackrel{d}{\to} W_{\infty, R}$ and $W_N \stackrel{d}{\to} W_\infty$ as $N \to \infty$. Also, assume that $W_{\infty, R} \stackrel{d}{\to} W_\infty$ as $R \to \infty$. Then $F_N \stackrel{d}{\to} W_\infty$.
	
\end{lemma}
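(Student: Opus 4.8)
The plan is to argue at the level of cumulative distribution functions and reduce everything to the Portmanteau theorem. Write $F_W(t)=\mathbb{P}(W\le t)$ for the law of a random variable $W$, and fix an arbitrary continuity point $t$ of $F_{W_\infty}$. It suffices to show that $\mathbb{P}(F_N\le t)\to F_{W_\infty}(t)$ as $N\to\infty$, and I will obtain this from a matching $\liminf$ lower bound and $\limsup$ upper bound built out of the two one-sided comparisons $W_{N,R}\le F_N\le W_N$.

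For the lower bound I use the right-hand inequality $F_N\le W_N$: since $\{W_N\le t\}\subseteq\{F_N\le t\}$ we have $\mathbb{P}(F_N\le t)\ge \mathbb{P}(W_N\le t)$, and because $W_N\stackrel{d}{\to}W_\infty$ with $t$ a continuity point of $F_{W_\infty}$, the right-hand side converges to $F_{W_\infty}(t)$; hence $\liminf_{N\to\infty}\mathbb{P}(F_N\le t)\ge F_{W_\infty}(t)$. For the upper bound I use the left-hand inequality $W_{N,R}\le F_N$: for each fixed $R$, $\{F_N\le t\}\subseteq\{W_{N,R}\le t\}$ gives $\mathbb{P}(F_N\le t)\le \mathbb{P}(W_{N,R}\le t)$, and applying the closed-set form of Portmanteau to the half-line $(-\infty,t]$ together with $W_{N,R}\stackrel{d}{\to}W_{\infty,R}$ yields $\limsup_{N\to\infty}\mathbb{P}(F_N\le t)\le F_{W_{\infty,R}}(t)$ for every $R$. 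Letting $R\to\infty$ and invoking Portmanteau once more on $(-\infty,t]$ with the hypothesis $W_{\infty,R}\stackrel{d}{\to}W_\infty$ gives $\limsup_{R\to\infty}F_{W_{\infty,R}}(t)\le F_{W_\infty}(t)$, so $\limsup_{N\to\infty}\mathbb{P}(F_N\le t)\le F_{W_\infty}(t)$. Combining the two bounds shows $\mathbb{P}(F_N\le t)\to F_{W_\infty}(t)$ at every continuity point $t$, i.e.\ $F_N\stackrel{d}{\to}W_\infty$.

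The argument is a soft sandwiching, so there is no genuine obstacle; the only point that deserves care is the iterated limit, first in $N$ and then in $R$, where $t$ need not be a continuity point of the intermediate laws $F_{W_{\infty,R}}$. This is exactly why I phrase both $\limsup$ steps through the closed-set Portmanteau inequality, which is valid at every $t$, rather than through pointwise convergence of the distribution functions. No later result from the paper is needed.
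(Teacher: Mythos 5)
Your proof is correct, and since the paper explicitly omits the proof of this lemma, your argument supplies exactly the standard sandwich argument it had in mind: the two pointwise inequalities give the event inclusions $\{W_N\le t\}\subseteq\{F_N\le t\}\subseteq\{W_{N,R}\le t\}$, the lower bound follows from convergence at the continuity point $t$ of $F_{W_\infty}$, and the iterated upper bound is handled correctly via the closed-set Portmanteau inequality on $(-\infty,t]$, which sidesteps the issue that $t$ need not be a continuity point of the intermediate laws $F_{W_{\infty,R}}$. No gaps.
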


\begin{proof}[\bf Proof of Theorem~\ref{thm:free_energy_below_one}]
% {\color{blue} Using Lemma \ref{lem:generalized heavy tail conv representation}(d), we have the upper bound \[ \frac{1}{b_N} \ln Z_N \le\frac{ 1}{a_N^{-1}a_{{N\choose 2}}} \Big( N \ln 2  + \beta\sum_{i < j} |\tilde J_{ij}| \Big)  \stackrel{d}{\to} \beta  \sum_{k=1}^\infty \gamma_k^{-1/\alpha}. \]}

 Note that $Z_N \le 2^N \exp( \beta \sum_{i < j} |J_{ij}|)$. Therefore, 	by Lemma~\ref{lem:heavy_tail_conv_representation}(c),
	\[ \frac{1}{N^{1/\alpha}} \ln Z_N \le\frac{1 }{N^{1/\alpha}} \Big( N \ln 2  + \beta \sum_{i < j} |J_{ij}| \Big)  \stackrel{d}{\to} \frac{\beta}{2^{1/\alpha}}  \sum_{k=1}^\infty \gamma_k^{-1/\alpha}. \]
For the lower bound, recall the sequence $(U(k,1),U(k,2))_{1\leq k\leq n}.$ 
	For any fixed $R \ge 1$,
	\begin{align*} 
		Z_N &\ge \max_\sigma \exp\Bigl( \beta \sum_{i < j} J_{ij} \sigma_i \sigma_j\Bigr)  \\
		&\ge  \exp \Big( \beta\max_\sigma  \sum_{k=1}^R J_{U(k, 1), U(k, 2)} \sigma_{U(k, 1)} \sigma_{U(k, 2)} - \beta \sum_{k=R+1}^n  |J_{U(k, 1), U(k, 2)}|  \Big).
	\end{align*}
	Let $\mathcal{C}_{N, R}$ be the event that the edges $\{ ( U(k, 1), U(k, 2)): 1 \le k \le R \}$ on the vertex set $[N]$ are all incident on distinct vertices. Clearly, $\mathbb{P}(\mathcal{C}_{N, R}) \to 1$ as $N \to \infty$. Given the disorders $(J_{ij})$, on  the event $\mathcal{C}_{N, R}$, we can choose $\sigma$ such that $ \mathrm{sgn}(\sigma_{I(k, 1)} \sigma_{I(k, 2)})  =  \mathrm{sgn}(J_{I(k, 1), I(k, 2)})$ for each $1 \le k \le R$. Consequently, again by Lemma~\ref{lem:heavy_tail_conv_representation}(c), we can write
	\begin{align*} 
		\frac{1}{N^{1/\alpha}} \ln Z_N &\ge \frac{\1_{\mathcal{C}^c_{N, R}}}{N^{1/\alpha}} \ln Z_N  + \frac{\beta}{N^{1/\alpha}} \Big(  \sum_{k=1}^R |J_{U(k, 1), U(k, 2)}|   -  \sum_{k=R+1}^n  |J_{U(k, 1), U(k, 2)}| \Big) \1_{\mathcal{C}_{N, R}}   \\
		&\stackrel{d}{\to}  \frac{\beta}{2^{1/\alpha}} \Big(  \sum_{k=1}^R  \gamma_k^{-1/\alpha} -  \sum_{k=R+1}^\infty  \gamma_k^{-1/\alpha} \Big).
	\end{align*}
	The proof  now follows from Lemma~\ref{lem:sanwich_weak_conv}.
\end{proof}

\subsection{Proof of Theorem \ref{thm:Gibbs_structure}}

Recall the random edges $(U(k,1),U(k,2))$ for $1\leq k\leq n=N(N-1)/2$ defined through \eqref{heavytailed:add:eq1}.
We say the edge $(i, j)$ has rank $k$, and write $r(i, j) = k$, if $|J_{ij}|$ is the $k$-th largest  among $(|J_{ij}|)_{1\leq i< j\leq N}$. In other words, for $1\leq i<j\leq N$, $r(i, j) = k$ if and only if 
$( i,j) = (U(k, 1), U(k, 2)).$ The rank function $r: \{(i,j):1\leq i<j\leq N\} \to  \{1,\ldots,n\}$ is a uniform bijection.  We adopt the convention that $ r(i, j)  = r(j, i)$ if $i> j$ and $r(i, i)= \infty$ for all~$i$. 
The proof of Theorem \ref{thm:Gibbs_structure} relies crucially on the following properties of the disorder matrix.

\begin{lemma} \label{lem:Gibbs_structure}
	Let $A$ be a $N \times N$ symmetric matrix with $A_{ii} = 0$ and $A_{ij}   = |J_{ij}|$ for $1\leq i < j\leq N$.  Let $\delta>0$ be an arbitrary small constant and let $\kappa=\min(1/2,1-\alpha)-\delta.$ Take $R=N^{\kappa}$  and  $T = N^{1/2+\delta/2}$. Then 
	\begin{itemize}
		\item[(a)]  With probability going to one, no row of $A$ has two distinct entries with ranks at most $R$ and $T$ respectively. As a result, with probability going to one, the number of entries with rank at most $R$ in each row  of $A$ is either zero or one. 
		\item[(b)]  With probability going to one, the rows of $A$ satisfy that for all $1\leq i\leq N,$ whenever there exists some $j$ such that   $r(i, j) \le R$, the following inequality holds
		$$
		\sum_{j: r(i,j) \le R} |J_{ij}|  -  \sum_{j: r(i,j) > R} |J_{ij}| \ge 10^{-1/\alpha}(N/R)^{1/\alpha}.
		$$
		
	\end{itemize}
\end{lemma}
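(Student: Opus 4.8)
The plan is to exploit the scaling $|J_{ij}|=N^{-1/\alpha}Y_{ij}$, where $(Y_{ij})_{i<j}$ are i.i.d.\ with density $\alpha y^{-(\alpha+1)}\1_{\{y\ge 1\}}$, together with the elementary fact that, conditionally on the (a.s.\ distinct) multiset of edge-values, the rank function $r$ is a uniformly random bijection from the $n$ edges onto $\{1,\dots,n\}$. Throughout, let $v_1>v_2>\dots>v_n$ denote the order statistics of $(|J_{ij}|)_{i<j}$. Part (a) needs only a first moment estimate, while part (b) uses $0<\alpha<1$ in an essential way.

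\emph{Part (a).} Let $B$ be the number of ordered triples $(i,j,j')$ with $i,j,j'$ pairwise distinct and $r(i,j)\le R$, $r(i,j')\le T$; part (a) fails exactly when $B\ge 1$, and the ``zero or one'' assertion follows since two row entries of rank $\le R$ have ranks $\le R$ and $\le T$. For a fixed triple, $(i,j)$ and $(i,j')$ are distinct edges, so $\p(r(i,j)\le R,\,r(i,j')\le T)\lesssim RT/n^2$; since there are at most $N^3$ such triples and $n\asymp N^2$, one gets $\e B\lesssim RT/N = N^{\kappa+1/2+\delta/2-1}$, which tends to $0$ because $\kappa\le 1/2-\delta$. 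Markov's inequality finishes part (a).

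\emph{Part (b).} Condition first on the event $\mathcal A$ of part (a), which holds with probability $\to 1$. On $\mathcal A$ the $R$ heaviest edges are pairwise vertex-disjoint, so there are exactly $2R$ vertices $i$ with $\min_j r(i,j)\le R$; for such an $i$ its unique heaviest incident edge $(i,j^*)$ has $\sum_{j:r(i,j)\le R}|J_{ij}|=|J_{ij^*}|\ge v_R$, and every other incident edge has rank $>T$, whence $\sum_{j:r(i,j)>R}|J_{ij}|=W_i:=\sum_{j:r(i,j)>T}|J_{ij}|$. It remains to bound $v_R$ below and $\max_i W_i$ above. A Chernoff bound on the $\mathrm{Bin}(n,\cdot)$-distributed count of edges whose value exceeds a given level (equivalently, the Poisson/LePage representation of Lemma~\ref{lem:heavy_tail_conv_representation}(a) with $\gamma_R\le 2R$ and $\gamma_{n+1}\ge n/2$) shows that with probability $\to 1$, $v_R\ge 9^{-1/\alpha}(N/R)^{1/\alpha}$; set $\eps_0:=9^{-1/\alpha}-10^{-1/\alpha}>0$. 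Using $0<\alpha<1$ (so $1/\alpha>1$) and the same representation with $\gamma_{n+1}\le 2n$ and $\gamma_k\ge k/2$ for all $k>T$ (both with probability $\to 1$ since $T\to\infty$), one gets, with probability $\to 1$,
\[ \sum_{k>T}v_k\ \le\ C\,(n/N)^{1/\alpha}\,T^{\,1-1/\alpha}\qquad\text{for a constant }C=C(\alpha). \]
Finally, conditionally on the multiset of values and on $\mathcal A$, the $N-2$ values carried by the edges at a fixed $2R$-endpoint $i$ other than $(i,j^*)$ form a uniformly random $(N-2)$-subset of $\{v_{T+1},\dots,v_n\}$, so $\e[W_i\mid\cdot]=\tfrac{N-2}{n-T}\sum_{k>T}v_k$. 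Hence, on the last good event, Markov's inequality and a union bound over the $2R$ endpoints give
\[ \p\Bigl(\exists i:\ W_i>\eps_0(N/R)^{1/\alpha}\ \Big|\ \cdot\Bigr)\ \lesssim\ \frac{R}{(N/R)^{1/\alpha}}\cdot\frac{N}{n}\cdot(n/N)^{1/\alpha}T^{1-1/\alpha}\ \asymp\ R^{1+1/\alpha}\,N^{-1}\,T^{1-1/\alpha}. \]
Substituting $R=N^{\kappa}$, $T=N^{1/2+\delta/2}$ and inspecting the exponent $\kappa(1+1/\alpha)-1+(1/2+\delta/2)(1-1/\alpha)$ separately in the regimes $\alpha<1/2$ (where $\kappa=1/2-\delta$) and $1/2\le\alpha<1$ (where $\kappa=1-\alpha-\delta$) shows it is strictly negative, so the probability $\to 0$. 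On the intersection of all the above events, every $2R$-endpoint $i$ satisfies $\sum_{j:r(i,j)\le R}|J_{ij}|-\sum_{j:r(i,j)>R}|J_{ij}|\ge v_R-\eps_0(N/R)^{1/\alpha}\ge 10^{-1/\alpha}(N/R)^{1/\alpha}$, which is (b).

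The main obstacle is the last estimate. A union bound over all $N$ vertices, or the crude bound $\sum_{j:r(i,j)>R}|J_{ij}|\le (N{-}2)v_T$, is too lossy and does not make the exponent negative once $\alpha$ is bounded away from $0$. What rescues the argument is to union only over the $O(R)$ endpoints of the heaviest edges and, crucially, to use that the light edges incident to such a vertex carry a \emph{uniformly random small subset} of the light edge-values; this gains the factor $(N-2)/(n-T)\asymp N^{-1}$ in $\e[W_i\mid\cdot]$, which is exactly what is needed to close the exponent in both ranges of $\alpha$.
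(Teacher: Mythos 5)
Your part (a) is the same first-moment estimate as the paper's, just organized by counting bad triples $(i,j,j')$ directly rather than dominating the law of a row's rank vector by i.i.d.\ uniforms; both give the bound $O(RT/N)\to 0$. Part (b), however, takes a genuinely different route. The paper adds a second structural claim — with high probability no row contains more than $\lceil\ln N\rceil$ entries of rank at most $N$ — and then bounds the light sum in a heavy row \emph{deterministically and worst-case}: at most $\lceil\ln N\rceil$ "medium" entries each at most $\sim v_{T+1}\asymp (N/T)^{1/\alpha}$, plus at most $N$ "tiny" entries each at most $\sim v_{N+1}\asymp N^{1/\alpha-1}\cdot N^{1-1/\alpha}$, and checks both are $o(R^{-1/\alpha}N^{1/\alpha})$. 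You instead control the light sum \emph{in the first moment}: the total light mass $\sum_{k>T}v_k\lesssim (n/N)^{1/\alpha}T^{1-1/\alpha}$ is summable because $1/\alpha>1$, a fixed vertex carries only a $\asymp N^{-1}$ fraction of it in expectation, and a Markov plus union bound over the $O(R)$ heavy endpoints closes the exponent. Both proofs use $\alpha<1$ and the LePage representation in an essential way; yours trades the extra combinatorial claim for a slightly more delicate averaging argument, and the exponent bookkeeping you carry out in the two regimes of $\kappa$ is correct (strictly negative throughout $0<\alpha<1$).

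One step should be tightened: the assertion that, conditionally on the values and on $\mathcal A$, the non-heavy edges at a heavy endpoint $i$ carry a \emph{uniformly random} $(N-2)$-subset of $\{v_{T+1},\dots,v_n\}$ is not literally true, since $\mathcal A$ also constrains the rank assignment at other vertices. The estimate you extract from it is nonetheless correct and easy to justify without that claim: bound
$\p(\exists i:\min_j r(i,j)\le R,\ W_i>t)\le t^{-1}\sum_i\e\bigl[\1_{\{\min_j r(i,j)\le R\}}W_i\bigr]$
and evaluate the unconditional expectation using the same domination of $(r(i,j))_{j\ne i}$ by i.i.d.\ uniforms (with density at most $e^2$) that appears in part (a); this yields exactly your $R\cdot N^{-1}\sum_{k>T}v_k/t$ up to constants, after intersecting with the high-probability event controlling $\sum_{k>T}v_k$. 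With that repair the argument is complete.
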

\begin{proof}
	(a) Let $D_1, D_2, \ldots, D_{N-1}$ be i.i.d.\ uniform variables on $[n]$. The probability that they are all distinct is given by
	\[ \prod_{i=0}^{N-2} \Bigl(1 - \frac{i}{n}\Bigr) \ge \exp\Bigl(- \frac{2}{n} \sum_{i=0}^{N-2}i\Bigr)\geq e^{-2},\]
	where we used the inequality that $1 - x \ge e^{-2x}$ for $0<x \le 1/2$. Observe that for any row $i$, the vector $(r(i, j))_{j \ne i}$ has the same distribution as $(D_1, \ldots, D_{N-1})$ conditioned on them being all distinct. Therefore,  for each $i$, 
	\begin{equation}\label{heavytail:add:eq2}
		\mathbb{P}\bigl( (r(i, j))_{j \ne i} \in \cdot\bigr) \le \frac{\mathbb{P}((D_1, \ldots, D_{N-1}) \in \cdot )}{ \mathbb{P} (\text{$D_1, \ldots, D_{N-1}$ are all distinct} )}. 
	\end{equation} 
	Now since the probability of the event  that there exist $j_1 \ne j_2$  such that $D_{j_1} \le R$ and $D_{j_2} \le T$ is bounded above by $ (N-1)^2 (R/n) (T/n) = 4 RT N^{-2},$ from the display above, the probability that there exist $j_1 \ne j_2$ such that 
	$r(i, j_1) \le R$ and $r(i, j_2) \le T$ is bounded above by $ 4 e^2 RT N^{-2}.$ By a union bound, the probability of the event that there is at least one row of $A$  having two distinct entries with ranks at most $R$ and $T$ is bounded above by $4 e^2 RT N^{-1},$ which goes to $0$. This proves part (a).
	
	(b)  First, we claim that with probability going to one, no row of $A$ has at least $\lceil\ln N\rceil$ entries with ranks at most $N$. To see this, from \eqref{heavytail:add:eq2}, for a particular row, such probability is bounded above by 
	\[ e^{2} {N - 1 \choose \lceil\ln N\rceil} \Big(\frac{N}{n}\Big)^{\lceil\ln N\rceil} \le  e^{2} \Big(\frac{e (N-1)}{\lceil\ln N\rceil}\Big)^{\lceil\ln N\rceil}  \Big(\frac{N}{n}\Big)^{\lceil\ln N\rceil} =  e ^{2} \Big(\frac{2e}{\lceil\ln N\rceil}\Big)^{\lceil\ln N\rceil}, \]
	where we have used the bound $${m \choose k} \le \Bigl(\frac{em}{k}\Bigr)^k$$ for $0 < k \le m.$ The claim now follows from taking a union bound over $N$ rows.  
	
	Next, by Lemma~\ref{lem:heavy_tail_conv_representation}(a),  we can write $$|J_{ij}| = (n/N)^{1/\alpha} (\gamma_{n+1}/n)^{1/\alpha} \gamma_{r(i, j)}^{-1/\alpha},\,\,1\leq i\neq j\leq N,$$ where $(\gamma_j)_j$ is independent of $r$. By the strong law of large numbers,   for any sequence $j_0(N) \to \infty$, with probability tending to one,  
	$$
	0.9 j^{-1/\alpha}  \le \gamma_j^{-1/\alpha}  \le  1.1 j^{-1/\alpha},\,\,\forall j\geq j_0(N).  
	$$
	Now suppose that $i$-th row of $A$ has an entry with rank at most $R$. By part(a) of the lemma and our claim above, with probability tending to one, the following three statements hold: (i) the number of the entries in that row with rank at most $R$ is exactly one, (ii) the rest of the entries in the row has rank at least $T+1$ and (iii) except at most $\lceil\ln N\rceil$ entries, all other entries in the row has rank at least $N+1$. On the intersection of these events, 
	\begin{align} \label{eq:row_lb1}
		\nonumber &\sum_{j: r(i,j) \le R} |J_{ij}|  -  \sum_{j: r(i,j) > R} |J_{ij}|  \\
		&\ge (\gamma_{n+1}/N)^{1/\alpha} \big (  0.9   R^{-1/\alpha}  - \ln N  \cdot 1.1 T^{-1/\alpha} - N \cdot 1.1  N^{-1/\alpha}   \big) \nonumber\\
		&\ge (0.9) 2^{-1/\alpha} (N - 1)^{1/\alpha} \big (  0.9   R^{-1/\alpha}  - \ln N  \cdot 1.1 T^{-1/\alpha} - N \cdot 1.1  N^{-1/\alpha}   \big).
	\end{align}
	From the choice of $\kappa$, we have\[ \frac{ T^{-1/\alpha} \ln N }{R^{-1/\alpha} }  \le \frac{ (N^{1/2 + \delta/2})^{-1/\alpha} \ln N }{ (N^{1/2 - \delta})^{-1/\alpha} } \to 0\,\,\mbox{and}\,\,
	\frac{ N^{1-1/\alpha} }{R^{-1/\alpha} }  \le \frac{ N^{ - (1-\alpha)/\alpha} }{N^{ - ((1- \alpha) - \delta)/\alpha } } \to 0. \]
	Thus,  for large $N$, \eqref{eq:row_lb1} is bounded below by  $10^{-1/\alpha}(N/R)^{1/\alpha}$, completing the proof. 
\end{proof}

\begin{proof}[\bf Proof of Theorem~\ref{thm:Gibbs_structure}]  Let $\mathcal{C}_N$ be the event  that the disorders $(J_{ij})$ satisfy the statements (a) and (b) of Lemma~\ref{lem:Gibbs_structure} and $\mathbb{P}(\mathcal{C}_N) \to 1$. We assume that the event $\mathcal{C}_N$ holds for the rest of the proof. First of all, define the subset $\mathcal{V}$ of $[N]$ as 
	\[ \mathcal{V} = \{  U(k, p):  p \in \{1, 2\}, 1 \le k \le R \} .   \]
	By the statement (a) of $\mathcal{C}_N$,  the edges $(U(k, 1), U(k, 2))$ for $1 \le k \le R$ cannot share any common vertex. Indeed, if $(U(k,1),U(k,2))$ and $(U(k',1),U(k',2))$ for some $k,k'\leq R$ share a vertex, say $a,$ then the $a$-th row of $A$ will contain two entries that are ranked $k,k'\leq R$, a contradiction. Hence, the elements in $\mathcal{V}$ are distinct and the size of $\mathcal V$ is $2R$.

		Write $\sigma \in \{-1,1\}^N$ as $\sigma = (\sigma_{\mathcal{V}}, \sigma_{\mathcal{V}^c})\in \{-1,1\}^{\mathcal{V}}\times \{-1,1\}^{[N]\setminus\mathcal{V}^c}$. For any  $B \subseteq [R]$,  define
	\[ \Sigma_{\mathcal{V}}^B = \Bigl\{  \tau\in \{-1, 1\}^{\mathcal{V}} : \chi_k(\tau) = 0   \text{ for all } k \in B \text{ and } \chi_k(\tau) = 1 \text{ for all } k \in B^c := [R] \setminus B \Bigr\}\]
	and set 
	\[ \Sigma_{\mathcal{V}} = \Sigma_{\mathcal{V}}^\emptyset =  \Bigl\{  \tau\in \{-1, 1\}^{\mathcal{V}} : \chi_k(\tau) = 1  \text{ for all } 1 \le k \le R    \Bigr\} ,\]
	where, with a slightly abuse of notation, for $\tau\in \{-1,1\}^{\mathcal{V}}$ and $k\in [R],$
	$$\chi_k(\tau):=\1_{\{\mbox{sgn}(J_{U(k,1),U(k,2)}\tau_{U(k,1)}\tau_{U(k,2)})=1\}}.$$ 
	Observe that for $\sigma\in \{-1,1\}^N,$
	\begin{equation}\label{eq:equiv_xi}
		\chi_k(\sigma)  = 1, \  \forall \  1 \le k \le R  \ \ \Leftrightarrow  \ \  \sigma_{\mathcal{V}}\in  \Sigma_{\mathcal{V}}. 
	\end{equation}
	Note that $| \Sigma_{\mathcal{V}}^B|   =2^{2R}$  for any $B \subseteq [R]$ . Let  $f_B$  be a bijection from $ \Sigma_{\mathcal{V}}^B$ to   $\Sigma_{\mathcal{V}} $ defined as follows. For each $1\leq k\leq R,$ $f_B(\tau)_{U(k,2)}=\tau_{U(k,2)}$ and $f_B(\tau)_{U(k,1)}=-\tau_{U(k,1)}$ or $\tau_{U(k,1)}$ depending on whether $k\in B$ or $k\in B^c.$ 
	
Now, for any $\sigma$ such that $\sigma_{\mathcal{V}} \in  \Sigma_{\mathcal{V}}$ and  for any  $ \emptyset \ne B \subseteq [R]$, using the convention that $J_{ij} = J_{ji}$ and $J_{ii} = 0$, observe that the spin configuration,
$
\bigl(f_B^{-1}(\sigma_{\mathcal{V}}), \sigma_{{\mathcal{V}}^c}\bigr),
$
 is obtained by keeping the spins in $\sigma$ fixed on the coordinates $(U(k,1):k\in B^c)$, $(U(k,2):k\in [R]),$ and $k\in \mathcal{V}^c$ and flipping the sign of $\sigma$ on the coordinates $(U(k,1):k\in B).$ With this, we readily check that
	\begin{align*}
		-\bigl(H_N(f_B^{-1}(\sigma_{\mathcal{V}}), \sigma_{{\mathcal{V}}^c})-H_N(\sigma)\bigr)
		&=2\beta\sum_{k\in B}\sum_{j\in [N]}J_{U(k,1),j}(-\sigma_{U(k,1)})\sigma_j\\
		&=-2\beta\sum_{k\in B}\Bigl(|J_{U(k,1),U(k,2)}|+\sigma_{U(k,1)}\sum_{j:j\neq U(k,2)}J_{U(k,1),j}\sigma_j\Bigr)\\
		&\leq -2\beta\sum_{k\in B}\Bigl(|J_{U(k,1),U(k,2)}|-\sum_{j:j\neq U(k,2)}|J_{U(k,1),j}|\Bigr)\\		
		&\leq - 2 \beta |B|  10^{-1/\alpha}(N/R)^{1/\alpha},
	\end{align*}
	where the last inequality follows from condition (a) and (b) of $\mathcal{C}_N$. 
		% For any $\sigma$ such that $\sigma_V \in  \Sigma_V$ and  for any non-empty $B \subseteq [R]$, 
	%\begin{align*} \sum_{ \sigma_{\mathcal{}^c}  } \exp( -  \beta H_N (f_B^{-1}(\sigma_{\mathcal{V}}), \sigma_{V^c}))  &= 
	%2^{N-2R} \prod_{k \in B} e^{- \beta| J_{I(k, 1), I(k, 2)}|} \big  \langle   e^{  \beta \sum_{k=1}^R \sum_{j : p \in {1, 2}, j \in \mathcal{V}^c }  J_{ I(k, p), j}  \sigma_{I(k, p)}  \sigma_j  } \big \rangle' \\
	% &\le  \prod_{k \in B} e^{-2 \beta \big( | J_{I(k, 1), I(k, 2)|} +    \sum_{j : j \ne I(k, 1), I(k,2) }   |J_{ I(k, 1), j}|  \big)   }\\
	%  &\le  \prod_{k \in B} e^{-2 \beta N^{1/\alpha} ( c'R^{ - 1/\alpha}  +    C'\sum_{j =T+1}^\infty   j^{-1/\alpha} )   } \\
	%  &\le  e^{-2 |B|  \beta N^{1/\alpha} ( c'R^{ - 1/\alpha}  +    C'T^{-1/\alpha +1/2} )   } \\
	%    &\le  e^{- |B|  \beta N^{1/\alpha}  R^{ - 1/\alpha}     } \\
	%\end{align*}
	%Conditionally on   $ (I(k, 1), I(k, 2))_{ 1 \le k \le n}$, $ ( |J|_{I(k, 1), I(k, 2)})_{1 \le k \le n}$, $ (\mathrm{sgn}( J_{ij}) )_{i, j \in \mathcal{V}^c}$, and then 
	%conditionally on  $\sigma_{\mathcal{V}^c}$ drawn from $\langle \cdot \rangle'$,  $\eps_{kj} = \mathrm{sgn}( J_{ I(k, 1), j}  )$ are i.i.d.\ rademacher variables. 
	%
	%By Hoeffding inequality,
	%\[ \mathbb{P} (  | \sum_{ k \in B}  \sum_{j : j \ne I(k, 1), I(k,2) } \eps_{kj}  |J_{ I(k, 1), j} \sigma_j | > C \sqrt{\ln n}  \sum_{j =T+1}^\infty   j^{-1/\alpha} ) \le Cn^{-1}.    \]
	%
	Therefore, for $N$ sufficiently large,  on the event $\mathcal{C}_N$, 
	\begin{align}
		\sum_{ \emptyset \ne B \subseteq [R]}  \max_{ \sigma:  \sigma_{\mathcal{V}} \in \Sigma_{\mathcal{V} } }  \frac{   e^{-  H_N (f_B^{-1}(\sigma_{\mathcal{V}}), \sigma_{{\mathcal{V}}^c}) }  }{ e^{-  H_N(\sigma)} } &\le \bigl(1+ e^{-  2 \beta 10^{-1/\alpha}(N/R)^{1/\alpha}      } \bigr)^R  -1 \le e^{ R  e^{-  2  \beta 10^{-1/\alpha}(N/R)^{1/\alpha} }  }  -1 \nonumber\\
		&\le 2  e^{-  2 \beta 10^{-1/\alpha}(N/R)^{1/\alpha} +\ln R}\le 2  e^{-  2  \beta 10^{-1/\alpha} N^{1/(2\alpha)} },\label{eq:bound using R}
	\end{align}
	where we have used the elementary inequalities $ 1 + x \le e^x, x \in \mathbb{R}$ and $ e^x \le 1+ 2x, 0 \le x <1$. Now from \eqref{eq:equiv_xi}, we obtain 
	\begin{align*}
		1 - \langle \1_{\{ \chi_k  = 1,  \ 1 \le k \le R \} } \rangle &= \frac{\sum_{\sigma: \sigma_{\mathcal{V}}  \not \in \Sigma_{\mathcal{V}} } e^{-  H_N(\sigma)}  }{\sum_{\sigma: \sigma_{\mathcal{V}} \in \Sigma_{\mathcal{V}} } e^{-  H_N(\sigma)}  + 
			\sum_{\sigma: \sigma_{\mathcal{V}} \not \in  \Sigma_{\mathcal{V}} } e^{-  H_N(\sigma)}  } \\
		&\le \frac{  \sum_{ \emptyset \ne B \subseteq [R]} \sum_{\sigma: \sigma_{\mathcal{V}} \in \Sigma^B_{\mathcal{V}} }      e^{-  H_N (\sigma_{\mathcal{V}}, \sigma_{{\mathcal{V}}^c}) }  }{\sum_{\sigma: \sigma_{\mathcal{V}} \in \Sigma_{\mathcal{V}} } e^{-  H_N(\sigma)}    } \\
		&\le \sum_{ \emptyset \ne B \subseteq [R]}  \max_{ \sigma:  \sigma_{\mathcal{V}} \in \Sigma_{\mathcal{V} } }   \frac{   e^{-  H_N (f_B^{-1}(\sigma_{\mathcal{V}}), \sigma_{{\mathcal{V}}^c}) }  }{ e^{-  H_N(\sigma)} } \le 2  e^{-  2  \beta 10^{-1/\alpha} N^{1/(2\alpha)} }.
	\end{align*}
	The theorem now follows from dominated convergence theorem.
\end{proof}

\bibliographystyle{acm}

{\footnotesize\bibliography{references}}

\newpage

\appendix

\section{Establishing Theorem \ref{invariancestep1}}\label{ProofStep1}

\subsection{Uniform Convergence of Auxiliary Functionals}

In this subsection, we will state the uniform convergence of two auxiliary functionals as a preparation for the proof of Theorem \ref{invariancestep1}.
 Let $n\geq 1,$ $\varepsilon>0$, and $\lambda>0.$ From Subsection \ref{sec:inv+var}, recall $s_I,\hat s_I$ and the PPPs $(\xi_{i,k})_{k\geq 1},$ $(\xi_{i,k}')_{k\geq 1},$ $(\xi_k')_{k\geq 1}.$ For $r\geq 1$ and $I\in \mathbb{N}^r$, denote $$\bar s_I = \bar \sigma(w,u,v_I) =  \int_0^1 \sigma(w,u,v_I,x) d x.$$ 
%Set $\gamma=\eps^{-\alpha}.$ 
For any $\sigma\in \mathcal{S},$ define
$A_{n,\eps} (\sigma) =\ln \mathcal{A}_{n,\eps}(\sigma)$ and $B_{n,\eps,\lambda} (\sigma)=\ln \mathcal{B}_{n,\eps,\lambda}(\sigma), $
where
\begin{align}\begin{split}\label{AB:add:eq1}
		\mathcal{A}_{n,\eps}(\sigma)&=\e_{u, \delta} \prod_{i\le n}\prod_{k:|\xi_{i,k}|\geq \eps} ( 1+ \hth(\beta \xi_{i,k}) \bar s_{i, k} \delta_i ),\\
		\mathcal{B}_{n,\eps,\lambda}(\sigma)&=\e_{u}\prod_{i\le n} \prod_{k:|\xi_{i,k}'|\geq \eps} ( 1+ \hth(\beta \xi_{i,k}')\bar s_{1,i,k} \bar s_{2,i,k} ).
	\end{split}
\end{align}
To describe the limit of these objectives, we define
${A}_{n} (\sigma) =\ln \calA_{n }(\sigma)$ and ${B}_{n,\lambda} (\sigma)=\ln \calB_{n,\lambda}(\sigma),$
where
\begin{align*}
	\calA_n(\sigma)&=\e_{u, \delta} \prod_{i\le n}\prod_{k \geq 1} ( 1+ \hth(\beta \xi_{i,k}) \bar s_{i, k} \delta_i ),\\
	\calB_{n,\lambda}(\sigma)&=\e_{u}\prod_{i\le n} \prod_{k \geq 1} ( 1+ \hth(\beta \xi_{i,k}')\bar s_{1,i,k} \bar s_{2,i,k} ).
\end{align*}
The following proposition justifies the well-definiteness and integrability of the four functionals above.

\begin{proposition}\label{epsto0loglimitA}
	Let $n\geq 1$, $\eps>0,$ and $\lambda>0.$ 	For any $\sigma$, ${A}_{n,\eps} (\sigma)$, ${B}_{n,\eps,\lambda} (\sigma),$ ${A}_n(\sigma)$, and $B_{n,\lambda} (\sigma)$ are a.s. finite and integrable.
\end{proposition}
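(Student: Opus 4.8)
\textbf{Proof proposal for Proposition \ref{epsto0loglimitA}.}

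The plan is to treat the four functionals in parallel, since they have the same structure: each is the logarithm of an $\e_{u,\delta}$- or $\e_u$-expectation of an infinite product of factors of the form $1+\hth(\beta\xi)ab$, where $(\xi_k)$ comes from a PPP with intensity proportional to $\mu$ from \eqref{intensitymeasure} and $a,b\in\{-1,1\}$ (values $\bar s_I\in[-1,1]$ in the general case, but the convergence argument is cleanest first for spins and then extends by $|\bar s_I|\le 1$). The key point is that $|\hth(\beta\xi_k)|\le\min(\beta|\xi_k|,1)$, and for a PPP $(\xi_k)$ with intensity $c\mu$ we have $\sum_{k}\min(\xi_k^2,1)<\infty$ a.s., because $\int_{\mathbb R\setminus\{0\}}\min(x^2,1)\,\mu(dx)<\infty$ for $1<\alpha<2$ (this is exactly the Lévy-measure integrability that makes the stable law in the Remark after Theorem \ref{fluctuationfreeenergy} well-defined). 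Consequently $\sum_k \hth^2(\beta\xi_k)<\infty$ a.s., which is the standard criterion for convergence of the infinite product $\prod_k(1+\hth(\beta\xi_k)ab)$ to a nonzero limit once one also checks $\sum_k|\hth(\beta\xi_k)|\1_{\{|\xi_k|\le\eta\}}$ behaves well; more precisely I would use that for the finitely many $k$ with $|\xi_k|$ large the factors are bounded away from $0$ and $\infty$, and for the tail of small $\xi_k$ one has $\ln(1+\hth(\beta\xi_k)ab)=\hth(\beta\xi_k)ab+O(\hth^2(\beta\xi_k))$, so the log-sum converges absolutely a.s. This shows each inner product is a.s.\ a finite positive random variable (conditionally on the PPP and on $w$), hence the $\e_{u,\delta}$/$\e_u$ average is finite, and its logarithm is a.s.\ finite.

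The next step is integrability, i.e.\ $\e|A_{n}(\sigma)|<\infty$ and likewise for the other three. I would bound $A_n(\sigma)=\ln\e_{u,\delta}\prod_{i\le n}\prod_k(1+\hth(\beta\xi_{i,k})\bar s_{i,k}\delta_i)$ from above and below. For the upper bound, $\prod_k(1+\hth(\beta\xi_{i,k})\bar s_{i,k}\delta_i)\le\prod_k(1+|\hth(\beta\xi_{i,k})|)\le\exp(\sum_k|\hth(\beta\xi_{i,k})|)$; here $\sum_k|\hth(\beta\xi_{i,k})|$ may be infinite, so instead I would split at level $1$: write $\prod_k(1+\hth(\beta\xi_{i,k})\bar s_{i,k}\delta_i)$, pull the $\delta_i$-average inside on the small-$\xi$ part (where $\e_{\delta_i}$ of a product $\prod(1+c_k\delta_i)$ over a fixed $i$ equals $\frac12\prod(1+c_k)+\frac12\prod(1-c_k)\le\exp(\sum_k c_k^2/2)$ type bound, since the odd terms cancel), and control the large-$\xi$ part by its finitely many factors. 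This yields $A_n(\sigma)\le \sum_{i\le n}\big(C\#\{k:|\xi_{i,k}|>1\}+C\sum_k\hth^2(\beta\xi_{i,k})\big)$ up to constants, whose expectation is finite because $\e\#\{k:|\xi_{i,k}|>1\}=c\mu(\{|x|>1\})<\infty$ and $\e\sum_k\hth^2(\beta\xi_{i,k})=c\int\hth^2(\beta x)\mu(dx)<\infty$ (finiteness of the last integral is noted right after \eqref{beta_critical}). For the lower bound one needs $A_n(\sigma)\ge -C(\ldots)$: since $\e_{u,\delta}$ of a nonnegative quantity is at least any single realization times its probability, and since $\ln$ of the product is $\ge -\sum_k|\ln(1-|\hth(\beta\xi_{i,k})|)|$, the only danger is factors $1-|\hth(\beta\xi_{i,k})|$ close to $0$, i.e.\ $|\xi_{i,k}|$ large; but there are only finitely many such $k$ and $\ln(1-|\hth(\beta\xi)|)\asymp -2\beta|\xi|$ as $|\xi|\to\infty$, and $\e\sum_k|\xi_{i,k}|\1_{\{|\xi_{i,k}|>1\}}=c\int_{|x|>1}|x|\mu(dx)<\infty$ since $\alpha>1$. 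This gives $\e|A_n(\sigma)|<\infty$, uniformly in $\sigma$ since all bounds used only $|\bar s_I|\le1$. The arguments for $B_{n,\lambda}$, $A_{n,\eps}$, $B_{n,\eps,\lambda}$ are identical (in the $\eps$-truncated cases the products are genuinely finite, so finiteness is immediate and only the integrability bound, uniform in $\eps$, needs the same estimates).

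The main obstacle I anticipate is handling the $\e_{u,\delta}$ average correctly when verifying a.s.\ finiteness of $A_n(\sigma)$ from \emph{above}: one cannot just bound the product inside the expectation by $\exp(\sum|\hth|)$ because $\sum_k|\hth(\beta\xi_{i,k})|$ diverges a.s.\ (the PPP has infinitely many small points), so the naive bound is useless and it is essential to exploit the cancellation in $\e_{\delta_i}$ over the small-$\xi$ block — i.e.\ to reorganize $\e_{\delta_i}\prod_k(1+c_{i,k}\delta_i)$ as a quantity controlled by $\sum_k c_{i,k}^2$ rather than $\sum_k|c_{i,k}|$. A clean way is to first restrict the inner product to $|\xi_{i,k}|>\eta$ (finite product, trivially integrable), then let $\eta\downarrow0$ and show the resulting sequence of logs is Cauchy in $L^1$ using $\e\big|\ln\frac{\mathcal A_n^{(\eta)}}{\mathcal A_n^{(\eta')}}\big|\le C\,c\int_{\eta'<|x|\le\eta}\hth^2(\beta x)\mu(dx)\to0$, which simultaneously delivers a.s.\ convergence of $\mathcal A_{n,\eps}(\sigma)$ to $\mathcal A_n(\sigma)$ as $\eps\to0$ (needed elsewhere) and the claimed integrability of the limit. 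Everything is uniform in $\sigma\in\mathcal S$ because only $\|\bar s_I\|_\infty\le1$ enters.
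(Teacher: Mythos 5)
Your overall architecture is the paper's: split each point process at $|\xi|=1$, control the finitely many large points using $\int_{|x|>1}|x|\,\mu(dx)<\infty$ (this is where $\alpha>1$ enters), control the accumulation of small points by exploiting the cancellation in the $\delta$-average so that only $\sum_k\hth^2(\beta\xi_{i,k})$ matters, and obtain the untruncated functionals as limits of the $\eps$-truncated ones. The paper implements this via the pointwise bounds $\calA_{\eps}\le(\prod_{|\xi|\ge1}(1+|\hth|))\,W_{1,\eps}^2$ and $\calA_{\eps}^{-1}\le(\prod_{|\xi|\ge1}(1-|\hth|)^{-1})\,W_{2,\eps}^2$, computes $\e W_{1,\eps}^2=1$ and bounds $\e W_{2,\eps}^2$ by an explicit Poisson exponential moment (using Jensen on $\e_{u,\delta}[\cdot]^{-1}$), and then gets a.s. convergence of the infinite product from the positive-martingale structure of the partial products, finishing with Fatou plus uniform integrability.

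Two steps in your write-up are wrong as stated. First, the log-sum $\sum_k\ln(1+\hth(\beta\xi_k)ab_k)$ does \emph{not} converge absolutely: its linear part dominates $\sum_k|\hth(\beta\xi_k)|\1_{\{|\xi_k|\le1\}}$, which is a.s. infinite since $\int_{|x|\le1}|x|\,\mu(dx)=\infty$ for $\alpha\ge1$; the series converges only conditionally (by symmetry of the signs), and the clean way to get a.s. convergence of the infinite product is the martingale convergence theorem applied to the partial products, which your sketch never invokes (your $L^1$-Cauchy route only yields a.s. convergence along a subsequence, and the claimed bound $\e|\ln(\calA_n^{(\eta)}/\calA_n^{(\eta')})|\le C\int_{\eta'<|x|\le\eta}\hth^2\,d\mu$ is not immediate because the ratio of two $\e_{u,\delta}$-averages is not an $\e_{u,\delta}$-average of a ratio). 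Second, and more seriously, your lower bound $\ln\e_{u,\delta}[\mbox{prod}]\ge-\sum_k|\ln(1-|\hth(\beta\xi_{i,k})|)|$ is vacuous: that sum is a.s. $+\infty$ for exactly the same reason, so the "only danger is large $|\xi|$" diagnosis is incorrect — the small points are equally dangerous for the lower bound, and the cancellation device you correctly identify for the upper bound must also be deployed here (e.g. via Jensen, $\e_{u,\delta}[\mbox{prod}]^{-1}\le\e_{u,\delta}[\mbox{prod}^{-1}]$, together with the identity $(1+c\delta)^{-1}-1+c\delta=c^2\delta^2/(1+c\delta)$, which is how the paper bounds $\e W_{2,\eps}^2$; alternatively AM--GM on $\frac12\prod(1+c_k)+\frac12\prod(1-c_k)\ge\prod(1-c_k^2)^{1/2}$ would do). With those two repairs your argument closes, and it is then essentially the paper's proof.
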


\begin{theorem}\label{parisistep1}
	Let $n\geq 1$ and $\lambda>0.$ We have that
	\begin{align}
		\label{add:eq2}
		\lim_{\eps\downarrow 0}\sup_\sigma \e |A_{n,\eps}(\sigma)- A_n(\sigma)|&=0,\\
			\lim_{\eps\downarrow 0}\sup_\sigma \e|B_{n,\eps,\lambda}(\sigma)-B_{n,\lambda}(\sigma)|&=0 \nonumber.   
	\end{align}
\end{theorem}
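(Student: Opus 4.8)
\textbf{Proof plan for Theorem \ref{parisistep1}.}

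The plan is to prove the two uniform convergences by reducing everything to a tail estimate on the contribution of small-weight Poisson points. The key structural fact I would exploit is that for fixed $w,u$ and Rademacher signs, the quantities $\bar s_{i,k}$ lie in $[-1,1]$, so each factor $1+\tanh(\beta\xi_{i,k})\bar s_{i,k}\delta_i$ lies in $[1-|\tanh(\beta\xi_{i,k})|,\,1+|\tanh(\beta\xi_{i,k})|]$, and similarly for the $\mathcal B$-functional. Taking logs, the discrepancy $A_n(\sigma)-A_{n,\eps}(\sigma)$ is controlled, uniformly in $\sigma$, by the log of the ratio of $\mathcal A_n$ to $\mathcal A_{n,\eps}$; since $\mathcal A_n(\sigma)\ge \e_{u,\delta}\prod_{i\le n}\prod_{k\ge1}(1-|\tanh(\beta\xi_{i,k})|)$ after a sign-symmetrization in $\delta_i$ (the product over all $k$ is the smaller of the two, while restricting to $|\xi_{i,k}|\ge\eps$ only adds positive factors), one gets a deterministic-in-$\sigma$ bound
\[
|A_{n,\eps}(\sigma)-A_n(\sigma)|\ \le\ \sum_{i\le n}\ \sum_{k:\,|\xi_{i,k}|<\eps}\ \bigl|\ln\bigl(1-|\tanh(\beta\xi_{i,k})|\bigr)\bigr|\ +\ (\text{a symmetric upper term}),
\]
and the analogous statement for $B$. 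I would make this precise using the elementary inequality $|\ln(1+tx)|\le -\ln(1-|t|)$ valid for $|x|\le1$, $|t|<1$, together with $-\ln(1-|t|)\le 2|t|$ for $|t|\le1/2$ (and a crude bound for the finitely many large $\xi_{i,k}$, of which there are a.s. finitely many).

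Next I would take the expectation over the PPPs. By the Campbell/Mecke formula for a Poisson point process with mean measure $\mu(dx)=\tfrac{\alpha}{2}|x|^{-1-\alpha}dx$, the expectation of $\sum_{k:\,|\xi_{i,k}|<\eps}|\tanh(\beta\xi_{i,k})|$ equals $\int_{|x|<\eps}|\tanh(\beta x)|\,\mu(dx)=\alpha\int_0^\eps \tanh(\beta x)x^{-1-\alpha}dx$, which is finite because $\tanh(\beta x)\le\beta x$ makes the integrand $O(x^{-\alpha})$ near $0$ and $\alpha<2$; crucially this bound is $o(1)$ as $\eps\downarrow0$ and does not depend on $\sigma$. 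Summing over $i\le n$ gives a bound $n\cdot\alpha\int_0^\eps\tanh(\beta x)x^{-1-\alpha}dx\to0$. The same computation handles the $\mathcal B$-functional, where the mean measure is $\lambda\mu/2$ and each factor involves $\bar s_{1,i,k}\bar s_{2,i,k}\in[-1,1]$, so the identical inequalities apply; the only change is the constant $\lambda/2$ in front of the integral. This yields \eqref{add:eq2} and its $B$-counterpart simultaneously, with explicit rates. The finiteness and integrability claims needed to make the left-hand sides meaningful are exactly the content of Proposition \ref{epsto0loglimitA}, which I would invoke; in fact the same Campbell-formula bound shows $\e|A_n(\sigma)|<\infty$ uniformly in $\sigma$, so no separate work is required there beyond citing it.

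The main obstacle is the lower bound on $\mathcal A_n(\sigma)$ (and $\mathcal B_{n,\lambda}(\sigma)$) that legitimizes passing from a bound on the product to a bound on its logarithm: one must check that $\mathcal A_n(\sigma)$ is bounded below by a \emph{strictly positive} quantity that is integrable after taking $-\ln$. The clean way is the symmetrization trick: since $\delta_i$ is Rademacher and independent, $\e_{\delta_i}\prod_k(1+\tanh(\beta\xi_{i,k})\bar s_{i,k}\delta_i)=\tfrac12\prod_k(1+\tanh(\beta\xi_{i,k})\bar s_{i,k})+\tfrac12\prod_k(1-\tanh(\beta\xi_{i,k})\bar s_{i,k})\ge \prod_k(1-|\tanh(\beta\xi_{i,k})|)$ by AM--GM applied to the two nonnegative products, and this lower bound is $\sigma$-free. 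For $\mathcal B_{n,\lambda}$ there is no $\delta$ to symmetrize over, so I would instead bound directly $\prod_k(1+\tanh(\beta\xi_{i,k}')\bar s_{1,i,k}\bar s_{2,i,k})\ge\prod_k(1-|\tanh(\beta\xi_{i,k}')|)>0$ a.s., and then note $\e_u$ of a positive quantity is positive; the integrability of $-\ln$ of the resulting bound again follows from the Campbell estimate since $\e\sum_k|\tanh(\beta\xi_k')|<\infty$. Once these positivity/integrability points are nailed down, the rest is the routine $\eps\downarrow0$ dominated-convergence argument outlined above.
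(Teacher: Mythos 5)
Your central estimate fails in the regime where the theorem actually lives. You bound $|A_{n,\eps}(\sigma)-A_n(\sigma)|$ by $\sum_{i\le n}\sum_{k:|\xi_{i,k}|<\eps}\bigl|\ln(1-|\tanh(\beta\xi_{i,k})|)\bigr|$ (plus a symmetric term) and then invoke Campbell's formula to claim
$\e\sum_{k:|\xi_{i,k}|<\eps}|\tanh(\beta\xi_{i,k})|=\alpha\int_0^\eps\tanh(\beta x)x^{-1-\alpha}\,dx<\infty$ because the integrand is $O(x^{-\alpha})$ near $0$ "and $\alpha<2$". But $\int_0^\eps x^{-\alpha}\,dx<\infty$ only when $\alpha<1$, whereas Theorem \thref{parisistep1} is invoked for $1<\alpha<2$. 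For $\alpha\ge 1$ the integral diverges, and by the standard convergence criterion for Poisson sums the random sum $\sum_{k:|\xi_{i,k}|<\eps}|\tanh(\beta\xi_{i,k})|$ is almost surely infinite. Consequently your majorant is a.s.\ $+\infty$, the products $\prod_{k:|\xi_{i,k}|<\eps}(1\pm|\tanh(\beta\xi_{i,k})|)$ are a.s.\ $0$ and $+\infty$ respectively, and your claimed lower bounds $\mathcal A_n(\sigma)\ge\e_{u,\delta}\prod_{i,k}(1-|\tanh(\beta\xi_{i,k})|)$ and $\prod_k(1-|\tanh(\beta\xi_{i,k}')|)>0$ are vacuous (the right-hand sides vanish a.s.). The same divergence kills the "$\e\sum_k|\tanh(\beta\xi_k')|<\infty$" step for the $\mathcal B$-functional.

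The underlying point your plan misses is that the infinite products defining $\mathcal A_n$ and $\mathcal B_{n,\lambda}$ do \emph{not} converge absolutely when $1<\alpha<2$; they converge only because of cancellation coming from the symmetry of $\mu$ and the independent signs, i.e.\ from a martingale structure with $\int\tanh^2(\beta x)\,\mu(dx)<\infty$ (note $\tanh^2$ gives $x^{1-\alpha}$ near $0$, which \emph{is} integrable for $\alpha<2$). Any argument that replaces each factor by its worst case $1\pm|\tanh(\beta\xi_{i,k})|$ discards exactly this cancellation and cannot work. This is why the paper's proof instead (i) establishes uniform $L^m$ convergence of the \emph{products} $\mathcal A_{n,\eps}(\sigma)\to\mathcal A_n(\sigma)$ via explicit Poisson exponential-moment computations in which the odd terms vanish by symmetry, (ii) shows the products are uniformly bounded away from $0$ only \emph{in probability} (via a Markov bound on $\e|\ln\mathcal A|$, not an a.s.\ pointwise lower bound), and (iii) upgrades uniform convergence in probability of $A_{n,\eps}$ to uniform $L^1$ convergence through a separate uniform-integrability estimate. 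Your approach would be fine for $0<\alpha<1$, but for the case at hand you need to rebuild the argument around second moments and the symmetry of $\mu$ rather than absolute values.
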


The proofs of the two results are presented in the next two subsections.

\subsection{Proof of Proposition \ref{epsto0loglimitA}} \label{prop1}

We only establish Proposition \ref{epsto0loglimitA} for $A_{n,\eps}$ and $A_n$ since the case for $B_{n,\eps,\lambda}$ and $B_{n,\lambda}$ are nearly identical. To simplify our notation, we shall denote $\mathcal{A}_{n,\eps}$ and $\mathcal{A}_n$ by $\calA_{\eps}$ and $\mathcal{A},$ respectively. For any measurable set $S\subseteq (0,\infty),$ define $$\mu_S(dx)=\frac{\alpha I(|x|\in S)}{2|x|^{\alpha+1}}dx$$
and the conditional measure of $\mu$ on the set $S$ as
$$
\mu_{\cdot|S}(dx)=\frac{\mu_S(dx)}{\mu_S(\{x\in \mathbb{R}:|x|\in S\})}.
$$
Note that
\begin{equation*}
	\mu_{[\eps,\infty)}=\mu_{[1,\infty)}(|x|\geq 1)\cdot \mu_{\cdot|[1,\infty)}+\mu_{[\eps,1)}(\eps\leq |x|<1)\cdot \mu_{\cdot|[\eps,1)}=\mu_{\cdot|[1,\infty)}+(\gamma-1)\mu_{\cdot|[\eps,1)}.
\end{equation*}
Let $(\hat g_{i,k})$ be i.i.d. copies from $\mu_{\cdot|[1,\infty)}$ and  $(\hat g_{i,k,\eps})$ be i.i.d. copies from $\mu_{\cdot|(\eps,1)}$ As usual, they are independent of each other and also of other randomness. From the above identity,
we can decompose $(g_{i,k,\eps})_{1\leq k\leq \pi_i(\gamma)}$ in distribution as 
$
(\hat g_{i,k})_{1\leq k\leq \pi_i(1)}\cup
(\hat g_{i,k,\eps})_{1\leq k\leq \pi_i(\gamma-1)}.
$
Let $\sigma\in \mathcal{S}.$ From this decomposition, we can write
\begin{align*}
	\begin{split} \calA_{\eps}(\sigma)
		&\stackrel{d}{=}   \e_{u, \delta} \Bigl(\prod_{i\le n} \prod_{ k  \le \pi_i(1) } ( 1+ \hth(\beta \hat g_{i,k} )  \bar \sigma(w, u, v_{i,k}) \delta_i )\Bigr)\\
		&\qquad\quad\Bigl( \prod_{i\le n}\prod_{ k \le \pi_i( \gamma - 1) } ( 1+ \hth(\beta \hat g_{i,k, \eps}) \bar \sigma(w, u, \hat v_{i,k}) \delta_i )\Bigr).
	\end{split}
\end{align*}
From this,
\begin{align*}
	\begin{split}
		\calA_{\eps} (\sigma)&\stackrel{d}{\le}  \Bigl(\prod_{i\le n}\prod_{ k  \le \pi_i(1) } ( 1+ |\hth(\beta \hat g_{i,k} )| )\Bigr)
		W_{1,\eps}(\sigma)^2,\\
		\calA_{\eps}^{-1}(\sigma)  &\stackrel{d}{\le}  \Bigl(\prod_{i\le n}\prod_{ k  \le \pi_i(1) } ( 1 -  |\hth(\beta \hat g_{i,k} )| )^{-1}\Bigr) W_{2,\eps}(\sigma)^2,
	\end{split}
\end{align*}
where
\begin{align*}
	W_{1,\eps}(\sigma)&=\Bigl(\e_{u, \delta} \prod_{i\leq n}\prod_{ k \le \pi_i( \gamma  - 1) } \big ( 1+ \hth(\beta \hat g_{i,k, \eps}) \bar \sigma(w, u, \hat v_{i,k}) \delta_i \big) \Bigr)\Bigr)^{1/2},\\
	W_{2,\eps}(\sigma)&=\Bigl(\e_{u, \delta} \prod_{i\le n} \prod_{ k \le \pi_i( \gamma-1) }  \big ( 1+ \hth(\beta \hat g_{i,k, \eps}) \bar \sigma(w, u, \hat v_{i,k}) \delta_i \big )\Bigr)^{-1/2}.
\end{align*}
Denote by $\ln_+ x  = \max( \ln x, 0)$ for $x> 0$. Note that $| \ln x | \le \ln_+ x + \ln_+ x^{-1},$  $\ln_+ x \le  \sqrt{x}$ for $x > 0$, and 
$\ln_+ (ab) \le  \ln_+ (a) + \ln_+ (b)$ for $a, b > 0$. It follows that if $x,a,b,a',b'\geq 0$ satisfy $x\leq ab$ and $x^{-1}\leq a'b'$, then $|\ln x|\leq \ln_+a+\ln_+a'+\sqrt{b}+\sqrt{b'}.$ Consequently,
\begin{equation}\label{add:eq-5}
	|A_{\eps}(\sigma) | \stackrel{d}{\le}  Z_{1} + Z_{2} + W_{1, \eps}(\sigma) + W_{2, \eps}(\sigma),
\end{equation}
where 
\begin{equation*}
	Z_{1} :=   \sum_{i\le n}\sum _{ k  \le \pi_i(1) } \ln ( 1+ |\hth(\beta \hat g_{i,k} )| ) \ge 0\,\,\mbox{and}\,\,Z_{2} :=   \sum_{i\le n}\sum _{ k  \le \pi_i(1) }  - \ln ( 1 -  |\hth(\beta \hat g_{i,k} )| ) \ge 0.
\end{equation*}
To show that ${A}_{\eps}(\sigma)\in L^1,$ it suffices to establish that $Z_1, Z_2 \in L^1$ and both $W_{1, \eps}$ and $W_{2, \eps}$ are uniformly $L^2$ bounded. To this end, we bound
\[ \ln ( 1+ |\hth(u)| ) \le |u|, \quad  -  \ln ( 1 -  |\hth(u)| ) \le |u|.  \]
Since $1<\alpha<2$, these inequalities imply that  $\e \ln ( 1+ |\hth(\beta \hat g_{1,1} )| ) < \infty $ and $ - \e   \ln ( 1 -  |\hth(\beta\hat g_{1,1} )| ) < \infty $,  
and therefore, $\e Z_{1} < \infty$ and $\e Z_{2} < \infty.$
Next, for any $\eps > 0$, 
\begin{equation}
	\label{add:eq-7}
	\e W_{1, \eps}(\sigma)^2 = \e  \prod_{i\le n} \prod_{ k \le \pi_i (\gamma-1) } \big ( 1+ \hth(\beta \hat g_{i,k, \eps}) \bar \sigma(w, u, \hat v_{i,k}) \delta_i\big) =1.
\end{equation}
On the other hand, by Jensen's inequality, we have 
\begin{align}
	\nonumber	&\e W_{2, \eps}(\sigma)^2 \\
	% = \e \biggl(\e_{u,\delta}   \prod_{i\le n}  \prod_{ k \le \pi_i( \eps^{-\alpha}  - 1) } \big ( 1+ \hth(\beta \hat g_{i,k, \eps}) \bar \sigma(w, u, \hat v_{i,k}) \delta_i %\big)\biggr)^{-1} \\
	\nonumber	&\le \e   \prod_{i\le n}  \prod_{ k \le   \pi_i(\gamma-1) } \big ( 1+ \hth(\beta \hat g_{i,k, \eps}) \bar \sigma(w, u, \hat v_{i,k}) \delta_i \big)^{-1} \\
	\nonumber	&= \e_{w, u, \delta} \exp \Big ( n (\gamma  - 1) \Big (  \e_{g, v} \big ( 1+ \hth(\beta \hat g_{\eps}) \bar \sigma(w, u, v) \delta \big)^{-1}  -1 \Big) \Big)\\
	\nonumber	&= \e_{w, u, \delta} \exp \Big ( n (\gamma  - 1) \Big (  \e_{g, v} \Big [  \big ( 1+ \hth(\beta \hat g_{\eps}) \bar \sigma(w, u, v) \delta \big)^{-1}  -1 + \hth(\beta \hat g_{\eps}) \bar \sigma(w, u, v) \delta \Big] \Big) \Big)\\
	\nonumber	&= \e_{w,u,\delta} \exp \Big (  n (\gamma-1)\e_{g,v}   \frac{\hth^2(\beta \hat g_{\eps})\bar\sigma^2(w,u,v)\delta^2}{1+\hth(\beta \hat g_{\eps})\bar\sigma(w,u,v)\delta}  \Big)\\
	\label{add:eq-6}	&\le \exp \Big (   n  \int_{-1}^1 \frac{\hth^2(\beta x)}{1-\hth(\beta )} \mu(dx) \Big) <\infty,
\end{align}
where $\hat g_{\eps}$ is a copy of $\hat g_{i,k,\eps}$ and the last inequality holds since
$$
\frac{\hth^2(\beta \hat g_{\eps})\bar\sigma^2(w,u,v)\delta^2}{1+\hth(\beta \hat g_{\eps})\bar\sigma(w,u,v)\delta}\leq \frac{\hth^2(\beta \hat g_{\eps})}{1-\hth(\beta)}.
$$
These together imply that $(A_{\eps}(\sigma))_{\eps>0}$ is uniformly integrable.

Next, we proceed to show that $A (\sigma)$ is a.s. finite  and integrable. We need the following lemma.

\begin{lemma}\label{normofYbounded}
	For $m\ge 1$, we have $\sup_{\sigma, \eps} \e \calA_{\eps}(\sigma)^m <\infty.$
\end{lemma}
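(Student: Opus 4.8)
The goal is to bound $\e \calA_{\eps}(\sigma)^m$ uniformly in $\sigma\in\mathcal S$ and $\eps>0$, for each fixed integer $m\ge 1$. Recall that
\[
\calA_{\eps}(\sigma) = \e_{u,\delta}\prod_{i\le n}\prod_{k:|\xi_{i,k}|\ge\eps}\bigl(1+\hth(\beta\xi_{i,k})\bar s_{i,k}\delta_i\bigr),
\]
so $\calA_{\eps}(\sigma)$ is a conditional expectation over $(u,\delta)$ of a product which, for each fixed realization of the PPPs, is a genuine (finite, positive) product. The first move is to expand the $m$-th power as a product of $m$ independent copies of the $(u,\delta)$-expectation: introduce i.i.d.\ copies $(u^{(t)},\delta^{(t)})_{t\le m}$ of $(u,\delta)$ and write
\[
\calA_{\eps}(\sigma)^m = \e_{u^{(1)},\delta^{(1)},\dots,u^{(m)},\delta^{(m)}}\ \prod_{i\le n}\prod_{k:|\xi_{i,k}|\ge\eps}\ \prod_{t\le m}\bigl(1+\hth(\beta\xi_{i,k})\bar\sigma(w,u^{(t)},v_{i,k})\delta_i^{(t)}\bigr).
\]
Now take $\e$ over the PPP $(\xi_{i,k})$ first (Fubini is justified since everything is nonnegative), using the fact that a Poisson point process functional satisfies $\e\prod_{k}h(\xi_k) = \exp\bigl(\int (h(x)-1)\,\mu(dx)\bigr)$ provided the integral makes sense. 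This turns the PPP product into
\[
\exp\Bigl(n\int_{|x|\ge\eps}\ \e_{v}\Bigl[\prod_{t\le m}\bigl(1+\hth(\beta x)\bar\sigma(w,u^{(t)},v)\delta^{(t)}\bigr)-1\Bigr]\mu(dx)\Bigr),
\]
sitting inside $\e_{w,u^{(1)},\delta^{(1)},\dots}$, where I have used that the $v_{i,k}$ are i.i.d.\ for distinct $(i,k)$ and that $\mu$ restricted to $|x|\ge\eps$ has finite total mass $\gamma=\eps^{-\alpha}$ so the exponential formula is valid.

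The crux is to bound the bracketed integrand uniformly. Expanding $\prod_{t\le m}(1+a_t\delta^{(t)})$ with $a_t = \hth(\beta x)\bar\sigma(w,u^{(t)},v)$ and subtracting $1$, every surviving term after taking $\e_v$ and $\e_{\delta^{(t)}}$ has \emph{at least two} factors of $\hth(\beta x)$: the single-index terms $a_t\delta^{(t)}$ vanish under $\e_{\delta^{(t)}}$, so the leading contribution is $O(\hth^2(\beta x))$, and all higher terms are also $O(\hth^2(\beta x))$ since $|\bar\sigma|\le 1$ and $|\hth(\beta x)|\le 1$. Hence the integrand is bounded in absolute value by $C_m\hth^2(\beta x)$ for a constant $C_m$ depending only on $m$ (the number of subsets of $\{1,\dots,m\}$ of size $\ge 2$), and therefore
\[
\int_{|x|\ge\eps}\Bigl|\,\e_v[\cdots]-1\,\Bigr|\,\mu(dx)\ \le\ C_m\int_{\mathbb R}\hth^2(\beta x)\,\mu(dx) = C_m\cdot\alpha\int_0^\infty\frac{\hth^2(\beta x)}{x^{1+\alpha}}\,dx = C_m\beta_\alpha^{-\alpha}\beta^\alpha < \infty,
\]
uniformly in $\eps$, by finiteness of the defining integral for $\beta_\alpha$. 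This gives $\e\calA_{\eps}(\sigma)^m \le \exp\bigl(nC_m\beta^\alpha/\beta_\alpha^\alpha\bigr)$, a bound free of $\sigma$ and $\eps$, as desired.

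The main obstacle — and the only subtle point — is justifying the interchange of expectations and making the combinatorial expansion rigorous uniformly in $\eps$: one must be careful that the exponential PPP formula is applied on the finite-mass region $\{|x|\ge\eps\}$ (where it is elementary), and that the resulting exponent is controlled by the $\eps$-independent integral $\int\hth^2(\beta x)\mu(dx)$ rather than by something blowing up as $\eps\downarrow 0$. The bound $|\hth(\beta x)|\le\min(\beta|x|,1)$ handles both the small-$x$ and large-$x$ behaviour of that integral. Everything else is bookkeeping: the dependence on $n$ is harmless since $n$ is fixed, and positivity of all quantities removes any integrability worries in the Fubini steps.
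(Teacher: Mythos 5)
Your overall architecture is sound and close to the paper's: pass to the Poisson exponential formula over the finite-mass region $\{|x|\ge\eps\}$, expand the product, and observe that only terms carrying at least two factors of $\hth(\beta x)$ survive, so the exponent is controlled by $\int\hth^2(\beta x)\,\mu(dx)$ uniformly in $\eps$. (You handle the $m$-th power with $m$ independent replicas of $(u,\delta)$, whereas the paper pushes the power inside $\e_{u,\delta}$ via Jensen and reduces to $n=1$ by H\"older; either device works.)

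There is, however, one genuinely incorrect step. You claim the single-index terms $a_t\delta^{(t)}$ in the expansion of $\prod_{t\le m}(1+a_t\delta^{(t)})-1$ ``vanish under $\e_{\delta^{(t)}}$.'' But in your own display the expectation over $\delta^{(t)}$ sits \emph{outside} the exponential: the exponent $n\int_{|x|\ge\eps}\e_v[\cdots]\,\mu(dx)$ is still a random variable in $(w,u^{(t)},\delta^{(t)})$, and since $\e_\delta e^{X}\ne e^{\e_\delta X}$ you cannot invoke $\e_{\delta^{(t)}}$ to kill odd terms inside the exponent. The correct reason they vanish is the symmetry of the intensity measure $\mu$ in $x$ (equivalently, of $g_\eps$): a term indexed by $S\subseteq\{1,\dots,m\}$ with $|S|$ odd carries the odd function $\hth(\beta x)^{|S|}$, while the factor $\bigl(\prod_{t\in S}\delta^{(t)}\bigr)\e_v\prod_{t\in S}\bar\sigma(w,u^{(t)},v)$ does not depend on $x$, so its integral against $\mu$ restricted to $\{|x|\ge\eps\}$ is exactly zero. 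This is precisely the point the paper flags (``the third equality holds due to the symmetry of $g_{1,k,\eps}$''), and the distinction matters: without it the $|S|=1$ terms are only bounded by $\int_{|x|\ge\eps}|\hth(\beta x)|\,\mu(dx)$, which is of order $\eps^{1-\alpha}$ and blows up as $\eps\downarrow 0$ for $\alpha>1$, destroying the uniformity in $\eps$ that the lemma asserts. With the symmetry argument substituted for the erroneous appeal to $\e_{\delta^{(t)}}$, your bound $\e\,\calA_{\eps}(\sigma)^m\le\exp\bigl(n\,C_m\int\hth^2(\beta x)\,\mu(dx)\bigr)$ is correct and the proof goes through.
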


\begin{proof}
	From the H\"older inequality, it suffices to establish lemma only for $n=1.$  Our assertion follows directly from 
	\begin{align*}  
		\e \calA_{\eps}(\sigma)^m	&=\e    \prod_{k \le \pi(\eps^{-\alpha})}  \big ( 1+ \hth(\beta g_{1,k, \eps}) \bar s_{1,k} \delta_1 \big)^m\\
		&= \e_{w, u, \delta} \exp \Big (   \eps^{-\alpha} \Big (  \e_{g, v} \big ( 1+ \hth(\beta g_{1,1,\eps}) \bar s_{1,k} \delta_1 \big)^m  -1 \Big) \Big)\\
		&=  \e_{w, u} \exp \Big (   \eps^{-\alpha} \Big (  \e_{g, v} \Big [ \sum_{1\le \ell\le m/2} {m\choose 2\ell} \hth^{2\ell}(\beta g_{1,1,\eps}) \bar s_{1,k}^{2\ell} \Big]  \Big) \Big)\\
		&\le \e_{w, u} \exp \Big (   \e_{v}  \bar s_{1,k}^2 2^m \int_{ |x| > \eps}  \hth^2(\beta x)   \mu(dx) \Big)\\
		&\le  \exp \Big (  2^m \int_{\mathbb{R}}  \hth^2(\beta x)   \mu(dx) \Big)<\infty,
	\end{align*}
	where  the third equality holds due the symmetry of $g_{1,k,\eps}.$
\end{proof}

Observe that 
\begin{equation*}
	Y_{m}(\sigma):=\prod_{i\leq n}\prod_{1\leq k\leq m } ( 1+ \hth(\beta \xi_{i,k}) \bar s_{i, k} \delta_i ),\,\,m\geq 1
\end{equation*}
is a positive martingale with respect to the filtration 
$(\sigma(\xi_{i,k},\bar s_{i,k}:1\leq k\leq m,1\leq i\leq n)\bigr)_{m\geq 1}.$ It follows that $Y_{m}(\sigma)$ converges a.s. as $m\to\infty$. Denote this limit by $Y(\sigma).$ Lemma \ref{normofYbounded} further implies that this convergence holds in $L^m$ for any $m\geq 1.$
This concludes that $\calA (\sigma)=\e_{u,\delta}Y(\sigma)$ is a.s. finite and nonnegative and furthermore,
\begin{equation}
	\label{add:eq001}	\calA(\sigma)=\e_{u,\delta}Y(\sigma)=\lim_{m\to\infty}\e_{u,\delta}Y_{m}(\sigma)=\lim_{\eps\downarrow 0}\calA_{\eps}(\sigma).
\end{equation}
Hence,
by using Fatou's lemma and the fact that $({A}_{\eps}(\sigma))_{\eps>0}$ is uniformly integrable, we arrive at 
\begin{equation}\label{add:eq-8}
	\e |A(\sigma) | \leq\liminf_{\eps\downarrow 0}\e |A_{\eps}(\sigma) |<\infty.
\end{equation}
This bound ensures that $A(\sigma)$ is finite a.s. and is integrable, completing our proof. 

%\begin{remark}
%	\rm Recall that $(\mathcal{A}_{\eps}(\sigma))_{\eps>0}$ is uniformly integrable. It follows from \eqref{add:eq3} that $$\lim_{\eps\downarrow 0}\e %\mathcal{A}_{\eps}(\sigma)=\lim_{\eps\downarrow 0}\e\ln X_{\eps}=\lim_{\eps\downarrow 0}=\e \mathcal{A}_n(\sigma).$$ 
%\end{remark}

\subsection{Proof of Theorem \ref{parisistep1}}

We will again only establish the uniform convergence only for ${A}_{n,\eps}$ since the case for ${B}_{n,\eps,\lambda}(\sigma)$ can be treated similarly. Recall our notation $\cal A,$ $A,$ $\calA_{\eps},$ $A_\eps,$ $W_{1,\eps},W_{2,\eps},Z_1,Z_2$ from the previous subsection. We need a few lemmas. First of all, we show that $\calA$ and $\calA_\eps$ are  uniformly positive in $\sigma$ and $\eps$ and that $\calA_\eps$ converges to $\calA$ uniformly in $\sigma.$
%write
%\begin{align}\label{add:eq-4}
%	A_{\eps}(\sigma)=\e_{u,\delta}\Bigl(\prod_{i\leq n}\prod_{k:|\xi_{i,k}|>1}\bigl(1+\tanh(\beta \xi_{i,k})\bar s_{i,k}\delta_i\bigr)\Bigr)\Bigl(\prod_{i\leq %n}\prod_{k:1\geq|\xi_{i,k}|>\eps}\bigl(1+\tanh(\beta \xi_{i,k})\bar{s}_{i,k}\delta_i\bigr)\Bigr).
%\end{align}

\begin{lemma}\label{uniform X}
	We have
	$$\lim_{\eta\downarrow0}\inf_{\sigma,\eps>0}\Prob(\calA_{\eps}(\sigma) >\eta)=1\,\,\mbox{and}\,\,\lim_{\eta\downarrow0}\inf_{\sigma}\Prob(\calA(\sigma) >\eta)=1.$$
	In particular, for any $\eps>0$, $\min\{\calA(\sigma),\calA_{\eps}(\sigma)\}>0$ a.s..
\end{lemma}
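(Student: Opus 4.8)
\textbf{Proof proposal for Lemma \ref{uniform X}.}

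The statement has two halves. For the second half (concerning $\calA(\sigma)$), the natural plan is to deduce it from the first half together with the convergence $\calA_\eps(\sigma)\to\calA(\sigma)$ established in \eqref{add:eq001}: since $\calA(\sigma)=\lim_{\eps\downarrow0}\calA_\eps(\sigma)$ a.s. and the events $\{\calA_\eps(\sigma)>\eta\}$ have probability bounded below uniformly in $\eps$, Fatou-type reasoning (or the portmanteau-style inequality $\Prob(\calA(\sigma)>\eta)\ge\limsup_{\eps\downarrow0}\Prob(\calA_\eps(\sigma)>2\eta)$ after a slight adjustment of the level) transfers the uniform lower bound to $\calA(\sigma)$. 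So the crux is the first half.

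For the first half I would go back to the decomposition of $\calA_\eps(\sigma)$ used in Subsection \ref{prop1}, namely
\[
\calA_\eps^{-1}(\sigma)\stackrel{d}{\le}\Bigl(\prod_{i\le n}\prod_{k\le\pi_i(1)}\bigl(1-|\hth(\beta\hat g_{i,k})|\bigr)^{-1}\Bigr)W_{2,\eps}(\sigma)^2,
\]
where the first factor does not depend on $\eps$ or $\sigma$ and is an a.s.\ finite random variable (its logarithm is $Z_2$, which we already know is integrable), and where $W_{2,\eps}(\sigma)^2$ was shown in \eqref{add:eq-6} to satisfy the uniform bound $\sup_{\sigma,\eps}\e W_{2,\eps}(\sigma)^2\le\exp\bigl(n\int_{-1}^1\hth^2(\beta x)(1-\hth(\beta))^{-1}\mu(dx)\bigr)<\infty$. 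From the uniform $L^1$ bound on $W_{2,\eps}(\sigma)^2$, Markov's inequality gives $\sup_{\sigma,\eps}\Prob(W_{2,\eps}(\sigma)^2>M)\to0$ as $M\to\infty$; combined with $\Prob(Z_2>M)\to0$ (which is independent of $\sigma,\eps$) this yields $\sup_{\sigma,\eps}\Prob(\calA_\eps^{-1}(\sigma)>M^2)\to0$ as $M\to\infty$, i.e.\ exactly $\lim_{\eta\downarrow0}\inf_{\sigma,\eps}\Prob(\calA_\eps(\sigma)>\eta)=1$. Finally, the ``in particular'' clause is immediate: for fixed $\eps$, $\calA_\eps(\sigma)$ is an expectation over $u,\delta$ of a finite product of factors each lying in $(1-|\hth(\beta\cdot)|,1+|\hth(\beta\cdot)|)\subset(0,\infty)$, hence strictly positive a.s.; and the positivity of $\calA(\sigma)$ follows from the uniform lower bound just proved (pick $\eta$ with $\inf_\sigma\Prob(\calA(\sigma)>\eta)>0$ for every such $\eta$, so $\Prob(\calA(\sigma)=0)=0$).

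The only mildly delicate point — and the one I'd be most careful about — is the passage from the uniform lower bounds for $\calA_\eps(\sigma)$ to the one for $\calA(\sigma)$, since a.s.\ convergence of $\calA_\eps(\sigma)$ to $\calA(\sigma)$ for each fixed $\sigma$ does not instantly give a bound uniform over $\sigma$. The clean way around this is to observe that the bound $\calA_\eps^{-1}(\sigma)\stackrel{d}{\le}(\text{$\eps,\sigma$-free r.v.})\cdot W_{2,\eps}(\sigma)^2$ passes to the limit: by Fatou along $\eps\downarrow0$ one gets $\e\calA^{-1}(\sigma)$ (or $\e(\calA^{-1}(\sigma)\wedge M)$) controlled by the same $\eps$-uniform quantities, so the identical Markov argument applies directly to $\calA(\sigma)$ without ever invoking uniformity in $\sigma$ of the convergence. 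This keeps the whole argument within the estimates already assembled in Subsection \ref{prop1}.
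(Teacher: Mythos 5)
Your proposal is correct, but it takes a different (and somewhat longer) route than the paper. The paper's proof is two lines: since Subsection \ref{prop1} already established $\sup_{\sigma,\eps}\e|A_\eps(\sigma)|<\infty$ and, via Fatou in \eqref{add:eq-8}, $\sup_\sigma\e|A(\sigma)|<\infty$, one simply applies Markov's inequality to the logarithm, $\Prob(\calA(\sigma)\le\eta)=\Prob\bigl(\ln(1/\calA(\sigma))\ge\ln(1/\eta)\bigr)\le\sup_\sigma\e|A(\sigma)|/\ln(1/\eta)$, and likewise for $\calA_\eps$; this treats both halves symmetrically with no limiting argument in $\eps$. You instead re-derive tightness of $\calA_\eps^{-1}(\sigma)$ from the raw decomposition $\calA_\eps^{-1}\stackrel{d}{\le}e^{Z_2}W_{2,\eps}(\sigma)^2$, via a union bound plus Markov on $W_{2,\eps}^2$ — which is sound (note $\e e^{Z_2}=\infty$ because of the $e^{2\beta|x|}$ growth of $(1-|\tanh(\beta x)|)^{-1}$ against the heavy tail, so you correctly use only the a.s.\ finiteness of $Z_2$ rather than its exponential moment) — and then you need the extra Fatou/portmanteau step to transfer the bound to $\calA(\sigma)$. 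That step does work: for each fixed $\sigma$, $\1_{\{\calA(\sigma)<\eta\}}\le\liminf_{\eps\downarrow0}\1_{\{\calA_\eps(\sigma)<\eta\}}$ a.s.\ by \eqref{add:eq001}, and Fatou bounds $\Prob(\calA(\sigma)<\eta)$ by the $\sigma$- and $\eps$-uniform quantity from the first half, so the uniformity-in-$\sigma$ concern you flag is not actually an obstruction. The paper's route buys brevity by reusing the already-proved $L^1$ bound on $A(\sigma)$ itself (where the Fatou step was done once and for all); yours buys nothing extra here, so if you want the shortest write-up, cite \eqref{add:eq-8} and apply Markov to $\ln(1/\calA)$ directly.
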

\begin{proof}
	Using \eqref{add:eq-8}, we have $\sup_{\sigma} \e|A(\sigma)|\le \sup_{\sigma,\eps} \e|A_\eps(\sigma)| <\infty$.
	For $0<\eta<1$, we have \begin{equation*}
		\sup_\sigma \Prob(\calA(\sigma) \le\eta)=\sup_\sigma \mathbb{P}(1/\calA(\sigma)\ge 1/\eta)\le \frac{\sup_\sigma \e|\ln(1/\calA(\sigma))|}{\ln (1/\eta)}
	\end{equation*}
	and taking $\eta\downarrow 0$ yields the result.
	The other claim can be treated similarly.
\end{proof}

\begin{lemma}\label{uniform Y}
	For any $m\ge 1$, we have $\lim_{\eps\downarrow 0}\sup_{\sigma} \e  |\calA_\eps(\sigma) -\calA(\sigma)|^m =0.$
\end{lemma}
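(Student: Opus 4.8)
\textbf{Proof plan for Lemma \ref{uniform Y}.}
The plan is to reduce the $L^m$-estimate to the case $n=1$ and then exploit the explicit Poissonian structure of $\mathcal A_\eps(\sigma)$ and $\mathcal A(\sigma)$. First I would note that both $\mathcal A_\eps(\sigma)$ and $\mathcal A(\sigma)$ factor over the $n$ cavity coordinates once we condition on $w,u,\delta$; more precisely, writing $\mathcal A_\eps(\sigma) = \e_{u,\delta}\prod_{i\le n}\mathcal Y_{i,\eps}$ with $\mathcal Y_{i,\eps} = \prod_{k:|\xi_{i,k}|\ge\eps}(1+\tanh(\beta\xi_{i,k})\bar s_{i,k}\delta_i)$, and similarly $\mathcal A(\sigma) = \e_{u,\delta}\prod_{i\le n}\mathcal Y_i$, a telescoping of the product $\prod_i \mathcal Y_{i,\eps} - \prod_i \mathcal Y_i$ together with the triangle inequality, Hölder's inequality, and the uniform $L^p$ bounds on $\mathcal A_\eps(\sigma)$ from Lemma \ref{normofYbounded} reduces matters to showing $\lim_{\eps\downarrow0}\sup_\sigma \e|\mathcal A_{1,\eps}(\sigma) - \mathcal A_1(\sigma)|^m = 0$, i.e.\ the $n=1$ case. (One has to be mildly careful that the telescoped factors also have uniformly bounded $L^{m'}$ norms for $m'$ slightly larger than $m$, which again follows from the computation in Lemma \ref{normofYbounded} applied coordinatewise.)

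For $n=1$, the martingale $Y_{\cdot}(\sigma)$ from the proof of Proposition \ref{epsto0loglimitA} converges in $L^m$ for every $m$, and $\mathcal A(\sigma) = \e_{u,\delta}Y(\sigma)$, $\mathcal A_\eps(\sigma)=\e_{u,\delta}Y_{(\eps)}(\sigma)$ where $Y_{(\eps)}(\sigma)$ is the partial product over $|\xi_{1,k}|\ge\eps$. By Jensen's inequality $\e|\mathcal A_\eps(\sigma)-\mathcal A(\sigma)|^m \le \e\, \e_{u,\delta}|Y_{(\eps)}(\sigma) - Y(\sigma)|^m$, so it suffices to bound $\e_{w,u,\delta}\e_{\xi}|Y_{(\eps)}(\sigma)-Y(\sigma)|^m$ uniformly in $\sigma$. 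Here I would condition on $w,u,\delta$, under which $\bar s_{1,k}\delta_1$ is a deterministic (bounded) sequence indexed by the PPP points, and compute $\e_\xi|Y_{(\eps)}-Y|^m$ directly: expanding $|Y_{(\eps)}-Y|^m$ and using the independence of the PPP restricted to $\{|x|<\eps\}$ from the part restricted to $\{|x|\ge\eps\}$, together with $\e_\xi Y_{(\eps)}^{m}$ and $\e_\xi Y^{m}$ both being of the form $\exp(\int \text{(bounded)}\,\mu(dx))$, one gets an expression whose leading order as $\eps\to0$ is controlled by $\int_{|x|<\eps}\tanh^2(\beta x)\mu(dx) = O(\eps^{2-\alpha})\to0$ because $1<\alpha<2$ makes $\tanh^2(\beta x)/|x|^{1+\alpha}$ integrable near the origin. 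All constants here depend only on $\alpha,\beta,n,m$, not on $\sigma$, since $|\bar s_{1,k}|\le1$ uniformly; that is exactly what gives uniformity in $\sigma$.

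The cleanest way to organize the $n=1$ estimate is probably to write $Y = Y_{(\eps)}\cdot R_\eps$ where $R_\eps = \prod_{k:|\xi_{1,k}|<\eps}(1+\tanh(\beta\xi_{1,k})\bar s_{1,k}\delta_1)$ is independent of $Y_{(\eps)}$ given $w,u,\delta$, so $Y_{(\eps)} - Y = Y_{(\eps)}(1-R_\eps)$, and then bound $\e|\mathcal A_\eps(\sigma)-\mathcal A(\sigma)|^m \le \e\big(\e_{u,\delta}Y_{(\eps)}|1-R_\eps|\big)^m$ via Hölder in $(u,\delta)$ and then in the outer expectation, splitting off a uniformly $L^{m'}$-bounded factor $\e_{u,\delta}Y_{(\eps)}^{m'}$ (bounded by Lemma \ref{normofYbounded}) and a factor involving $\e_\xi|1-R_\eps|^{m''}$. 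The latter is handled by the elementary bound $\e_\xi|1-R_\eps|^{2} \le \e_\xi R_\eps^2 - 1 = \exp\big(\int_{|x|<\eps}\tanh^2(\beta x)\mu(dx)\big) - 1 = O(\eps^{2-\alpha})$ after using symmetry of $\mu$ to kill the linear term, plus the uniform boundedness of higher moments of $R_\eps$ to interpolate to the $m''$ we need. The main obstacle, and where care is required, is bookkeeping the Hölder exponents so that every factor that is not the small $O(\eps^{2-\alpha})$ term stays bounded uniformly in $\sigma$ \emph{and} in $\eps$ — this is purely technical and is underwritten entirely by Lemma \ref{normofYbounded} and the near-origin integrability coming from $\alpha<2$; there is no genuinely new idea beyond what already appears in the proof of Proposition \ref{epsto0loglimitA}.
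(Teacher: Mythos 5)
Your proposal is correct and follows essentially the same route as the paper: reduce to $n=1$, factor the difference as $Y_{(\eps)}(1-R_\eps)$ (the paper does this with a finite truncation $Y_{\eps'}$ and Fatou's lemma rather than the infinite product $R_\eps$, but it is the same decomposition), split by H\"older using the uniform moment bounds of Lemma \ref{normofYbounded}, and finish with the Poisson exponential formula, where symmetry of $\mu$ kills the linear term and $\int_{|x|\le\eps}\tanh^2(\beta x)\,\mu(dx)=O(\eps^{2-\alpha})\to 0$ gives uniformity in $\sigma$.
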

\begin{proof}
	We only consider the case that $n=1$ as the general case can be treated similarly. Let $m\geq 1$ be fixed.
	Set
	\begin{equation*}
		Y(\sigma) =\prod_{k\ge 1} ( 1+ \hth(\beta \xi_{1,k} ) \bar s_{1, k} \delta_1 ),\,\,
		Y_\eps(\sigma)=\prod_{k: |\xi_{1,k}| > \eps} ( 1+ \hth(\beta \xi_{1,k} ) \bar s_{1, k} \delta_1 ).
	\end{equation*}
	Recall from the end of the proof of Proposition \ref{epsto0loglimitA} that the infinite product in $Y(\sigma)$ converges a.s.. Thus, $Y_{\eps}(\sigma)\to Y(\sigma)$ a.s. as $\eps\downarrow 0.$
	From Fatou's lemma, $$ \e |Y(\sigma)-Y_\eps(\sigma)|^m\le \liminf_{\eps'\downarrow0} \e|Y_{\eps'}(\sigma) -Y_\eps(\sigma)|^m.$$
	For $\eps>0,$, we bound
	\begin{align*}
		|Y_{\eps'}(\sigma) -Y_\eps(\sigma)|^m&\leq 2^m\bigl(|Y_{\eps'}(\sigma)|^m+|Y_\eps(\sigma)|^m\bigr)|Y_{\eps'}(\sigma)-Y_\eps(\sigma)|\\
		&=2^m\bigl(|Y_{\eps'}(\sigma)|^m+|Y_\eps(\sigma)|^m\bigr)|Y_\eps(\sigma)|\Bigl(1-\prod_{k: \eps'<|\xi_{1,k}|\le\eps}(1+\hth(\beta \xi_{1,k})\bar s_{1,k} \delta_1)\Bigr).
	\end{align*}
	From the Cauchy-Schwarz inequality and Lemma \ref{normofYbounded}, it suffices to show that \[\sup_\sigma \sup_{0< \eps'<\eps}\e \Bigl(1-\prod_{k: \eps'<|\xi_k|\le\eps}(1+\hth(\beta \xi_{1,k})\bar s_{1,k} \delta_1)\Bigr)^2 \xrightarrow[\eps\downarrow0]{} 0.\]
	To see this, we compute \begin{align*}
		&\e \Bigl(1-\prod_{k: \eps'<|\xi_{1,k}|<\eps}(1+\hth(\beta \xi_{1,k})\bar s_{1,k} \delta_1)\Bigr)^2\\
		&=1-2\e\prod_{k: \eps'<|\xi_{1,k}|\le\eps}(1+\hth(\beta \xi_{1,k})\bar s_{1,k} \delta_1)+\e\prod_{k: \eps'<|\xi_{1,k}|\le\eps}(1+\hth(\beta \xi_{1,k})\bar s_{1,k} \delta_1)^2.
	\end{align*}
	Note that for $g_{\eps,\eps',k}$ i.i.d. sampled from $\mu_{\cdot|(\eps',\eps]}$, the random set $\{\xi_{1,k}:\eps'<|\xi_{1,k}|\leq \eps\}$ is in distribution the same as $\{g_{\eps,\eps',k}:1\leq k\leq \pi((\eps')^{-\alpha}-\eps^{-\alpha})\}.$ Therefore, using the symmetry of $g_{\eps,\eps',k}$,  the first two terms together equals $-1$. As for the third term, write
	\begin{align*}
		&\e\prod_{k: \eps'<|\xi_{1,k}|\le\eps}(1+\hth(\beta \xi_{1,k})\bar s_{1,k} \delta_1)^2\\
		&=\e\prod_{k\le \pi(\eps^{-\alpha}  - \eps'^{-\alpha})}(1+\hth(\beta g_{\eps,\eps',k})\bar s_{1,k} \delta_1)^2\\
		&=\e_{w,u,\delta}\exp\Bigl((\eps^{-\alpha}  - \eps'^{-\alpha})\bigl(\e_{g,v}(1+\hth(\beta g_{\eps,\eps',k})\bar s_{1,k} \delta_1)^2-1\bigr)\Bigr)\\
		&=\e_{w,u}\exp\Bigl((\eps^{-\alpha}  - \eps'^{-\alpha})\e_{g,v}\hth^2(\beta g_{\eps,\eps',k})\bar s_{1,k}^2)\Bigr)\\
		&=\e_{w,u}\exp\Bigl(\e_{v}\bar s_{1,k}^2 \int_{\eps'<|x|\le \eps}\hth^2(\beta x)\mu(dx)\Bigr)\\
		&\le \exp\Bigl( \int_{|x|\le \eps}\hth^2(\beta x)\mu(dx)\Bigr) \xrightarrow[\eps\downarrow0]{}1,
	\end{align*}
	which completes our proof.
\end{proof}

By using the two lemmas above, we continue to show that $A_\eps$ converges to $A$ uniformly in probability.

\begin{lemma}\label{add:lem1}
	For any $\eta>0$,
	$\lim_{\eps\downarrow 0}\sup_\sigma \Prob(|A_\eps(\sigma) -A(\sigma)| > \eta)=0.$
\end{lemma}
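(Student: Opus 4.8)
\textbf{Proof plan for Lemma \ref{add:lem1}.}
The plan is to upgrade the $L^m$-closeness of $\calA_\eps(\sigma)$ to $\calA(\sigma)$ (Lemma \ref{uniform Y}) together with the uniform positivity of both quantities (Lemma \ref{uniform X}) into a uniform-in-$\sigma$ statement about the logarithms. Since $A_\eps(\sigma)=\ln\calA_\eps(\sigma)$ and $A(\sigma)=\ln\calA(\sigma)$, the elementary bound $|\ln a-\ln b|\le |a-b|/\min(a,b)$ valid for $a,b>0$ is the workhorse. First I would fix $\eta>0$ and, using Lemma \ref{uniform X}, pick $\theta=\theta(\eta)>0$ small enough that $\sup_{\sigma,\eps>0}\Prob(\calA_\eps(\sigma)\le\theta)<\eta/3$ and $\sup_\sigma\Prob(\calA(\sigma)\le\theta)<\eta/3$; on the complementary event $\{\calA_\eps(\sigma)>\theta,\ \calA(\sigma)>\theta\}$ we have $|A_\eps(\sigma)-A(\sigma)|\le \theta^{-1}|\calA_\eps(\sigma)-\calA(\sigma)|$.

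Next I would estimate, uniformly in $\sigma$,
\begin{align*}
\Prob\bigl(|A_\eps(\sigma)-A(\sigma)|>\eta\bigr)
&\le \Prob(\calA_\eps(\sigma)\le\theta)+\Prob(\calA(\sigma)\le\theta)
+\Prob\bigl(\theta^{-1}|\calA_\eps(\sigma)-\calA(\sigma)|>\eta\bigr)\\
&\le \frac{2\eta}{3}+\frac{1}{\theta\eta}\,\e|\calA_\eps(\sigma)-\calA(\sigma)|,
\end{align*}
using Markov's inequality in the last step. Taking the supremum over $\sigma$ and invoking Lemma \ref{uniform Y} with $m=1$, the last term tends to $0$ as $\eps\downarrow 0$, uniformly in $\sigma$, so $\limsup_{\eps\downarrow 0}\sup_\sigma\Prob(|A_\eps(\sigma)-A(\sigma)|>\eta)\le 2\eta/3<\eta$. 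Since $\eta>0$ is arbitrary (and the bound is monotone, so one can actually conclude the $\limsup$ is $0$), this gives the claim.

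I do not expect any serious obstacle here: the two preparatory lemmas do all the heavy lifting, and the only subtlety is to make sure the threshold $\theta$ is chosen before sending $\eps\downarrow 0$ so that the probability estimate stays uniform in $\sigma$. The one point to be careful about is that $\calA(\sigma)$ and $\calA_\eps(\sigma)$ are genuinely positive almost surely (again from Lemma \ref{uniform X}), so that the event on which the logarithmic comparison fails is exactly captured by the two small-probability events above.
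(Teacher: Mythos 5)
Your proposal is correct and follows essentially the same route as the paper: both arguments combine the uniform positivity of $\calA(\sigma)$ and $\calA_\eps(\sigma)$ from Lemma \ref{uniform X} with the uniform $L^1$-convergence of Lemma \ref{uniform Y} and Markov's inequality, differing only in that you pass through the bound $|\ln a-\ln b|\le |a-b|/\min(a,b)$ while the paper splits directly into the two one-sided relative-error events $\{\calA_\eps>\calA(1+\eta)\}$ and $\{\calA>\calA_\eps(1+\eta)\}$. Your reuse of the single parameter $\eta$ for both the threshold and the probability bound is slightly awkward but is saved, as you note, by the monotonicity of $\eta\mapsto\Prob(|A_\eps(\sigma)-A(\sigma)|>\eta)$; the paper avoids this by introducing a second parameter $\eta'$.
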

\begin{proof}
	Fix $\eta, \eta'>0$.
	From Lemma \ref{uniform X}, we can choose $\eta''>0$ such that $\inf_{\sigma}\Prob(\calA(\sigma)>\eta'')>1-\eta'$. 
	From Lemma \ref{uniform Y}, we can choose $\eps_0>0$ such that for any $0<\eps<\eps_0$, $\sup_\sigma \e|\calA(\sigma)-\calA_\eps(\sigma)|<\eta\eta'\eta''.$
	For any $0<\eps<\eps_0$, write \begin{align*}
		\Prob(|A_\eps(\sigma) -A(\sigma)| > \ln(1+\eta)) &= \Prob(\calA_\eps(\sigma)>\calA(\sigma)(1+\eta))+\Prob(\calA(\sigma)>\calA_\eps(\sigma) (1+\eta))\\
		&=\Prob\biggl(\frac{\calA_{\eps}(\sigma)-\calA(\sigma)}{\calA(\sigma)}>\eta\biggr)+\Prob\biggl(\frac{\calA(\sigma)-\calA_{\eps}(\sigma)}{\calA_{\eps}(\sigma)}>\eta\biggr).
	\end{align*}
	We bound the first term by \begin{align*}
		\Prob\Bigl(\frac{\calA_{\eps}(\sigma)-\calA(\sigma)}{\calA(\sigma)}>\eta\Bigr)&\le \Prob\biggl(\frac{\calA_{\eps}(\sigma)-\calA(\sigma)}{\calA(\sigma)}>\eta, A(\sigma)>\eta''\biggr)+\eta' \\
		&\le \Prob(\calA_{\eps}(\sigma)-\calA(\sigma)>\eta\eta'')+\eta'  \\
		&\le \frac{\e|\calA(\sigma)-\calA_{\eps}(\sigma)|}{\eta\eta''}+\eta'\le 2\eta'.
	\end{align*}
	It follows that \[\sup_\sigma \Prob\biggl(\frac{\calA_{\eps}(\sigma)-\calA(\sigma)}{\calA(\sigma)}>\eta\biggr)\le 2\eta'. \]
	We can similarly bound the second term and get the desired result.
\end{proof}

The last lemma we need establishes a uniform control on the large deviation between $A_\eps$ and $A$.

\begin{lemma}\label{lemma}
	For $M>0$ and $1<\alpha<2$, $$\sup_{\sigma,\eps}\e \bigl[|A_{\eps}(\sigma) -A(\sigma)|;|A_{\eps}(\sigma) -A(\sigma)|>M\bigr]$$ is bounded above by \begin{align*}
		&16\bigl(\e Z_1\1_{\{Z_1>M/8\}}+Z_2\1_{\{Z_2>M/8\}}\bigr)+2\e|A(\sigma)|\1_{\{A(\sigma)|>M/2\}}\\
		&\qquad+\frac{128}{M}\Bigl(1+\exp\int_{-1}^{1}\frac{\hth^2(\beta x)}{1-\hth(\beta)}\mu (dx)\Bigr) .
	\end{align*}
\end{lemma}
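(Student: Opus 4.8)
\textbf{Proof strategy for Lemma \ref{lemma}.} The plan is to combine the pointwise bound \eqref{add:eq-5} for $|A_\eps(\sigma)|$, the analogous bound for $|A(\sigma)|$ obtained by passing to the limit $\eps\downarrow 0$ in \eqref{add:eq-5} (using \eqref{add:eq001} and Fatou), with the second-moment estimates on the auxiliary quantities $W_{1,\eps},W_{2,\eps}$ established in the proof of Proposition \ref{epsto0loglimitA}. The starting observation is the elementary decomposition of the truncated first moment: for any random variable $X$ and any way of writing $X=\sum_{j}X_j$ as a finite sum (here four summands), one has $\{|X|>M\}\subseteq\bigcup_j\{|X_j|>M/(\text{number of summands})\}$, hence $\e[|X|;|X|>M]\le\sum_j\sum_k\e[|X_j|;|X_k|>M/(\#)]$, and each mixed term is controlled by Cauchy--Schwarz once the $L^2$ norms are uniformly bounded. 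So I would first split $A_\eps(\sigma)-A(\sigma)$ according to $|A_\eps(\sigma)-A(\sigma)|\le |A_\eps(\sigma)|+|A(\sigma)|$, then split $|A_\eps(\sigma)|$ using \eqref{add:eq-5} into the four nonnegative pieces $Z_1,Z_2,W_{1,\eps}(\sigma),W_{2,\eps}(\sigma)$.

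\textbf{Key steps.} First I would record that on the event $\{|A_\eps(\sigma)-A(\sigma)|>M\}$ at least one of the five quantities $Z_1,Z_2,W_{1,\eps}(\sigma),W_{2,\eps}(\sigma),|A(\sigma)|$ exceeds $M/8$ (taking $|A(\sigma)|>M/2$ absorbs half the gap, and the remaining $|A_\eps(\sigma)|>M/2$ forces one of the four nonnegative ingredients past $M/8$ since they sum to at least $|A_\eps(\sigma)|$). Next, for the contribution where $Z_1$ (resp. $Z_2$) is the large quantity, I would simply bound $\e[\,\cdot\,;Z_1>M/8]\le \e[(Z_1+Z_2+W_{1,\eps}+W_{2,\eps}+|A(\sigma)|);Z_1>M/8]$ and estimate each cross term by Cauchy--Schwarz: $Z_1,Z_2\in L^2$ uniformly (indeed $\e Z_i<\infty$ and they have all moments since $\ln(1\pm|\tanh u|)$ is bounded by $|u|$ and $\hat g_{1,1}$ is $L^p$ for $p<\alpha$, or more cleanly $Z_1,Z_2$ are compound Poisson with $L^2$ jumps), while $W_{1,\eps}(\sigma),W_{2,\eps}(\sigma)$ are uniformly $L^2$-bounded by \eqref{add:eq-7} and \eqref{add:eq-6}, and $|A(\sigma)|\in L^2$ uniformly by \eqref{add:eq-8} together with \eqref{add:eq-5}. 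Collecting the ``clean'' terms $\e Z_1\1_{\{Z_1>M/8\}}$, $\e Z_2\1_{\{Z_2>M/8\}}$, and $\e|A(\sigma)|\1_{\{|A(\sigma)|>M/2\}}$ without further manipulation (these are exactly the first line in the claimed bound, with the factor $16$ coming from counting the cross terms and the two places $Z_i$ can appear), what remains is to bound the ``Markov'' terms where the large quantity is $W_{1,\eps}$ or $W_{2,\eps}$: here $\Prob(W_{j,\eps}(\sigma)>M/8)\le (8/M)\,\e W_{j,\eps}(\sigma)\le (8/M)\sqrt{\e W_{j,\eps}(\sigma)^2}$, which by \eqref{add:eq-7} and \eqref{add:eq-6} is at most $(8/M)\max\{1,\exp(\tfrac12 n\int_{-1}^1\hth^2(\beta x)(1-\hth\beta)^{-1}\mu(dx))\}$, and then one more Cauchy--Schwarz against the $L^2$ norms of the five ingredients produces the $(128/M)(1+\exp\int_{-1}^1\hth^2(\beta x)(1-\hth\beta)^{-1}\mu(dx))$ term. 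Throughout, all the $L^2$ bounds and the constants are uniform in $\sigma$ and in $\eps>0$, which is the whole point.

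\textbf{Main obstacle.} The routine-but-delicate part is the bookkeeping of which cross terms are genuinely needed and the verification that \emph{every} ingredient has a uniform (in $\sigma,\eps$) finite $L^2$ norm — in particular that $|A(\sigma)|$ is uniformly $L^2$-bounded, which does not follow verbatim from \eqref{add:eq-8} (that only gives uniform $L^1$) but does follow by applying Fatou to the $L^2$ version of \eqref{add:eq-5}: $\e|A(\sigma)|^2\le\liminf_\eps\e|A_\eps(\sigma)|^2\le C\liminf_\eps\e(Z_1+Z_2+W_{1,\eps}+W_{2,\eps})^2$, and the last quantity is uniformly bounded by the same computations as in Proposition \ref{epsto0loglimitA}. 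So the plan is: (i) set up the five-way event split; (ii) on each piece apply Cauchy--Schwarz or Markov; (iii) invoke the already-established uniform $L^2$ bounds \eqref{add:eq-7}, \eqref{add:eq-6}, \eqref{add:eq-8} (strengthened to $L^2$ via Fatou) and the moment finiteness of $Z_1,Z_2$; (iv) tally the constants to match the stated form. I expect step (iv), matching the exact constants $16$, $2$, $128$, to be the only genuinely fiddly point, and it is harmless since the lemma is only used qualitatively (the constants are not sharp and are not reused).
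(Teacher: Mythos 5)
Your overall plan (reduce everything to the five ingredients $Z_1,Z_2,W_{1,\eps},W_{2,\eps},|A(\sigma)|$ and treat the tail of each) is the right one, but the way you propose to handle the cross terms contains a genuine error. Your five-way event split forces you to estimate mixed quantities such as $\e\bigl[Z_2;Z_1>M/8\bigr]$ and $\e\bigl[|A(\sigma)|;Z_1>M/8\bigr]$ by Cauchy--Schwarz, and for that you assert that $Z_1,Z_2$ are ``compound Poisson with $L^2$ jumps'' and that $|A(\sigma)|$ is uniformly $L^2$. This is false for $Z_2$: its jumps are $-\ln\bigl(1-|\tanh(\beta\hat g)|\bigr)\sim 2\beta|\hat g|$ as $|\hat g|\to\infty$, and $\hat g\sim\mu_{\cdot|[1,\infty)}$ has infinite second moment when $1<\alpha<2$, so $\e Z_2^2=\infty$ (your fallback ``$\hat g$ is $L^p$ for $p<\alpha$'' also does not reach $p=2$). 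The same problem propagates to your proposed bound $\e|A(\sigma)|^2\le C\liminf_\eps\e(Z_1+Z_2+W_{1,\eps}+W_{2,\eps})^2$, whose right-hand side is infinite. Moreover, even if those moments were finite, the Cauchy--Schwarz cross terms like $\sqrt{\e Z_2^2}\,\sqrt{\Prob(Z_1>M/8)}$ are not of the form appearing in the lemma, so you would be proving a different (if qualitatively similar) estimate rather than the stated one.

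The paper sidesteps all of this by never generating cross terms. It uses the elementary inequality
\begin{align*}
\e|X+Y|\1_{\{|X+Y|>M\}}\le 2\bigl(\e|X|\1_{\{2|X|>M\}}+\e|Y|\1_{\{2|Y|>M\}}\bigr),
\end{align*}
which follows from $|X+Y|\le 2\max(|X|,|Y|)$ and $\max(a,b)\1_{\{\max(a,b)>c\}}\le a\1_{\{a>c\}}+b\1_{\{b>c\}}$. Applying it once to $X=A_\eps(\sigma)$, $Y=-A(\sigma)$ isolates the term $2\e|A(\sigma)|\1_{\{|A(\sigma)|>M/2\}}$ with no further moment assumptions; iterating it on the distributional majorant $Z_1+Z_2+W_{1,\eps}+W_{2,\eps}$ of $|A_\eps(\sigma)|$ (legitimate because $x\mapsto x\1_{\{x>c\}}$ is nondecreasing) yields the $Z_i$-tails directly, and only the two $W$-terms are estimated via $\e W\1_{\{W>M/8\}}\le(8/M)\e W^2$ using \eqref{add:eq-7} and \eqref{add:eq-6}, which are the only second moments actually needed. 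If you restructure your argument around this single-variable (no cross term) splitting, the bookkeeping you were worried about in step (iv) disappears and the constants $16$, $2$, $128/M$ come out immediately.
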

\begin{proof}
	For any random variables $X,Y$ and $M>0,$ it is easy to see that
	\begin{align*}
		&\e|X+Y|\1_{\{|X+Y|>M\}}\le 2 \bigl( \e |X|\1_{\{2|X|>M\}}+\e |Y|\1_{\{2|Y|>M\}} \bigr).
	\end{align*}
	Applying the above to $A_\eps$ and $A$ yields \begin{align*}
		\e |A_{\eps}(\sigma) -A(\sigma)|\1_{\{|A_\eps(\sigma) -A(\sigma)|>M\}} \le 2 \e |A_\eps(\sigma)| \1_{\{|A_\eps(\sigma)|>M/2\}}+2\e |A(\sigma)| \1_{\{|A(\sigma)|>M/2\}}.
	\end{align*}
	Recall from \eqref{add:eq-5} that $|A_\eps(\sigma)| \stackrel{d}{\le}Z_1+Z_2+W_{1,\eps}(\sigma)+W_{2,\eps}(\sigma)$.
	Then, by iteration, we have \begin{align*}
		&\e |A_\eps(\sigma)| \1_{\{|A_\eps(\sigma)|>M/2\}}\\
		&\le \e(Z_1+Z_2+W_{1,\eps}(\sigma)+W_{2,\eps}(\sigma))\1_{\{Z_1+Z_2+W_{1,\eps}(\sigma)+W_{2,\eps}(\sigma)>M\}}
		\\&\le 8(\e Z_1\1_{\{Z_1>M/8\}}+\e Z_2\1_{\{Z_2>M/8\}}+\e W_{1,\eps}(\sigma)\1_{\{W_{1,\eps}(\sigma)>M/8\}}+\e W_{2,\eps}(\sigma)\1_{\{W_{2,\eps}(\sigma)>M/8\}}).
	\end{align*}
	The proof is now complete by noticing that from \eqref{add:eq-7} and \eqref{add:eq-6},
	\[\e W_{1,\eps}(\sigma)\1_{\{W_{1,\eps}(\sigma)>M/8\}} \le \frac{8}{M}\e W_{1,\eps}(\sigma)^2=\frac{8}{M}\] 
	and \[\e W_{2,\eps}(\sigma)\1_{\{W_{2,\eps}(\sigma)>M/8\}}\le \frac{8}{M}\e W_{2,\eps}(\sigma)^2\leq\frac{8}{M}\exp \Big (   n  \int_{-1}^1 \frac{\hth^2(\beta x)}{1-\hth(\beta )} \mu(dx) \Big).\]
\end{proof}

\begin{proof}[\bf Proof of \eqref{add:eq2} in Theorem \ref{parisistep1}] 
	Denote by $ {L}_\eps(\sigma)=A_\eps(\sigma)-A(\sigma).$ 
	Fix $0<\eta<1$.
	From the statement and the proof of Proposition \ref{epsto0loglimitA}, we see that $A(\sigma),Z_1,Z_2$ are integrable.
	As a result, by Lemma \ref{lemma}, we can choose $M>1$ sufficiently large so that $\sup_{\sigma,\eps}\e [| {L}_\eps(\sigma)|;| {L}_\eps(\sigma)|>M]\le \eta.$
	From Lemma \ref{add:lem1}, we can choose $\eps_0>0$ small such that for $0<\eps<\eps_0,$ $\sup_\sigma \Prob(| {L}_\eps(\sigma)| > \eta)<\eta/M.$
	Putting these together yields that for any $0<\eps<\eps_0$, 
	\begin{align*}
		\sup_{\sigma}	\bigl|\e {L}_\eps(\sigma)\bigr|&	\leq \sup_{\sigma}\e| {L}_\eps(\sigma)|\\
		&	\leq \sup_{\sigma}\e\bigl[	| {L}_\eps(\sigma)|;| {L}_\eps(\sigma)|\geq M\bigr]+\sup_{\sigma}\e\bigl[	| {L}_\eps(\sigma)|;\eta\leq | {L}_\eps(\sigma)|<M\bigr]+\eta\\
		&	\leq \sup_{\sigma}\e\bigl[	| {L}_\eps(\sigma)|;| {L}_\eps(\sigma)|\geq M\bigr]+M\sup_{\sigma}\mathbb{P}(\eta\leq | {L}_\eps(\sigma)|<M)+\eta\\
		&\leq 3\eta,
	\end{align*}
	completing our proof.
\end{proof}

\subsection{Proof of Theorem \ref{invariancestep1}}

Note that $U_{l,\lambda}(\sigma)$ and $V_\lambda(\sigma)$ are almost surely finite and are integrable.

\medskip

{\noindent \bf Proof of \eqref{invariancestep1:eq1}:}
First of all, from the same argument for the a.s. finiteness and integrability for $\calA_{n,\epsilon}$ and $\calA_n$ in Proposition \ref{epsto0loglimitA} (see Lemma \ref{normofYbounded} and \eqref{add:eq001}), we see that $U_{l,\lambda}$ and $V_{\lambda}$ are a.s. finite and have any finite $m$-th moments. In addition, we can have that
\begin{align}\label{add:eq-17}
	\sup_{\sigma}\e|\ln V_\lambda|\leq \sup_{\sigma,\eps}\e|\ln V_\lambda|<\infty.
\end{align}
Second, from Lemma \ref{uniform Y}, we can further see that for any $m\geq 1,$
\begin{align}
	\begin{split}\label{add:eq-15}
		\lim_{\eps\downarrow 0}\sup_{\sigma}\e|U_{l,\eps,\lambda}-U_{l,\lambda}|^m=0,\,\,\lim_{\eps\downarrow 0}\sup_{\sigma}\e|V_{\eps,\lambda}-V_{\lambda}|^m=0.
	\end{split}
\end{align}
Now similar to the proof of Lemma \ref{uniform X}, \eqref{add:eq-17} readily implies that  
\begin{align}
	\label{add:eq-18}
	\lim_{\eta\downarrow 0}	\bigl(\sup_{\sigma, \eps} \Prob(V_{\eps,\lambda}\le \eta )+\Prob(V_\lambda \le \eta ) \bigr)=0.
\end{align}
From this and noting that $|U_{l,\eps,\lambda}|\le V_{\eps,\lambda}$ implies
$$
\frac{\prod_{l\le q} |U_{l,\eps,\lambda}|}{V_{\eps,\lambda} ^q}\leq 1\,\,\mbox{and}\,\,\frac{\prod_{l\le q} |U_{l,\lambda}|}{V_\lambda^q}\leq 1,
$$
we have
\begin{equation*}
	\lim_{\eta\downarrow 0}\limsup_{\eps\downarrow 0}\sup_\sigma \e \biggl| \frac{\prod_{l\le q} U_{l,\eps,\lambda}}{V_{\eps,\lambda} ^q}\1_{\{V_{\eps,\lambda} \le\eta\}}- \frac{\prod_{l\le q} U_{l,\lambda}}{V_\lambda^q}\1_{\{V_\lambda\le\eta\}}\biggr|=0.
\end{equation*}
On the other hand, 
\begin{align*}
	\e \biggl| \frac{  \prod_{l\le q} U_{l,\eps,\lambda}}{V_{\eps,\lambda} ^q}\1_{\{V_{\eps,\lambda} >\eta\}}- \frac{\prod_{l\le q} U_{l,\lambda} }{V_\lambda^q} \1_{\{V_\lambda>\eta\}}\biggl| 
	&\le \e \biggl| \prod_{l\le q} U_{l,\eps,\lambda}-\prod_{l\le q} U_{l,\lambda}\biggr|\frac{\1_{\{V_\lambda>\eta\}}}{V_\lambda^q}\\
	&+ \e  \biggl(\prod_{l\le q} |U_{l,\eps,\lambda}|\biggr) \biggl|\frac{1}{V_{\eps,\lambda} ^q}-\frac{1}{V_\lambda^q}\biggr|\1_{\{V_\lambda>\eta\}}\\
	&+\e  \frac{  \prod_{l\le q} |U_{l,\eps,\lambda}|}{V_{\eps,\lambda} ^q} |\1_{\{V_{\eps,\lambda}>\eta\}} -\1_{\{V_\lambda>\eta\} }|.
\end{align*}
Here, the first term is obviously bounded by 
$
	\eta^{-q}\e | \prod_{l\le q} U_{l,\eps,\lambda}-\prod_{l\le q} U_{l,\lambda}|
$
and the second term is bound by
\begin{equation*}
	\e  V_{\eps,\lambda}^q \biggl|\frac{1}{V_{\eps,\lambda} ^q}-\frac{1}{V_\lambda^q}\biggr|\1_{\{V_\lambda>\eta\}} \leq 	\eta^{-q}\e  |{V_{\eps,\lambda} ^q}-{V_\lambda^q}|.
\end{equation*}
As for the third term, it is bounded above by $\mathbb{P}(V_{\eps,\lambda}\geq \eta)+\mathbb{P}(V_{\lambda}\geq \eta).$ Our assertion then follows from \eqref{add:eq-17}, \eqref{add:eq-15}, and \eqref{add:eq-18}.

\medskip

{\noindent \bf Proof of \eqref{invariancestep1:eq2}:} Note that from Proposition \ref{epsto0loglimitA}, $P(\sigma)=\e A_1(\sigma)-\e B_{1,1}(\sigma)$ is finite. From the definition of $P_\eps$ in \eqref{add:eq-12}, since $(g_{i,k,\eps})_{1\leq k\leq \pi_i(\gamma)}$ and $(g_{i,k,\eps})_{1\leq k\leq \pi_i(\gamma/2)}$ are PPPs with mean measures $\mu_{[\eps,\infty)}$ and $\mu_{[\eps,\infty)}/2$, these imply that, in terms of random sets,
\begin{align*}
	\{g_{i,k,\eps}:1\leq k\leq \pi_i(\gamma)\}\stackrel{d}{=}\{\xi_{i,k}:|\xi_{i,k}|\geq \eps\,\,\mbox{for $k\geq 1$}\},\\
	\{g_{i,k,\eps}:1\leq k\leq \pi_i(\gamma/2)\}\stackrel{d}{=}\{\xi_{i,k}':|\xi_{i,k}'|\geq \eps\,\,\mbox{for $k\geq 1$}\}.
\end{align*}
Consequently, $P_\eps(\sigma)=\e A_{1,\eps}(\sigma)-\e B_{1,\eps,1}(\sigma)$ and \eqref{invariancestep1:eq2} follows from Theorem \ref{parisistep1} and \eqref{Qeps}.

\section{Establishing Theorem \ref{invariancestep2}}\label{ProofStep2}

Recall $\mathcal{A}_{n,\eps}$ and $B_{n,\eps,\lambda}$ from \eqref{AB:add:eq1}. We need a key lemma.

\begin{lemma}\label{UVconvergence}
	Suppose $\sigma_m \to \sigma_0$ f.d.d. as $m\to \infty$. For any $\eps>0$ and $\lambda>0,$ we have that as $m\to\infty,$
	\[\bigl(U_{1,\eps,\lambda}(\sigma_m),\ldots,U_{q,\eps,\lambda}(\sigma_m), V_{\eps,\lambda} (\sigma_m)\bigr)\stackrel{d}{\to}\bigl(U_{1,\eps,\lambda}(\sigma_0),\ldots,U_{q,\eps,\lambda}(\sigma_0), V_{\eps,\lambda} (\sigma_0)\bigr)\]
\end{lemma}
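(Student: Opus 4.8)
\textbf{Proof proposal for Lemma \ref{UVconvergence}.}

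The plan is to reduce the f.d.d.\ convergence of the vector $\bigl(U_{1,\eps,\lambda}(\sigma_m),\ldots,U_{q,\eps,\lambda}(\sigma_m),V_{\eps,\lambda}(\sigma_m)\bigr)$ to the f.d.d.\ convergence $\sigma_m\to\sigma_0$ that we are given. The crucial simplification at truncation level $\eps>0$ is that all the products appearing in $U_{l,\eps,\lambda}$ and $V_{\eps,\lambda}$ are \emph{finite} products: the number of factors indexed by $k$ is $\pi_i(\gamma)$ (with $\gamma=\eps^{-\alpha}$) for the cavity blocks and $\pi(\lambda\gamma/2)$ for the $\hat s$ block, and these Poisson random variables are almost surely finite and independent of the spins. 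So first I would condition on the realization of all the Poissonian cardinalities $(\pi_i(\gamma))_{i\le n}$ and $\pi(\lambda\gamma/2)$, together with the PPP weights $(\xi_{i,k}),(\xi_k')$; on this event, $U_{l,\eps,\lambda}(\sigma)$ and $V_{\eps,\lambda}(\sigma)$ are each of the form $\e_{u,\delta,x}$ applied to a \emph{fixed finite} polynomial in the entries $s_I=\sigma(w,u,v_I,x_I)$ and $\hat s_I=\sigma(w,u,\hat v_I,\hat x_I)$, where the index set $I$ ranges over a fixed finite collection determined by the conditioning data.

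Next I would exploit the structure of the array more carefully. Writing $\bar s_I=\int_0^1\sigma(w,u,v_I,x)\,dx$ and noting that $\e_\delta$ kills the odd powers of each $\delta_i$, after carrying out $\e_{\delta,x}$ the functionals $U_{l,\eps,\lambda}(\sigma)$ and $V_{\eps,\lambda}(\sigma)$ become $\e_u$ of a fixed finite polynomial in the variables $\bar s_I$ (one variable per relevant index $I$), whose coefficients depend only on the conditioning data. The joint law of the finite family $(\bar s_I)_I$ under $\e_{w,u}$ (equivalently, the joint law of the corresponding finitely many entries of the array $(s_i^l)$, integrated against finitely many $x$-coordinates) is exactly what f.d.d.\ convergence $\sigma_m\to\sigma_0$ controls: by Definition \ref{def:fdd}, any finite sub-array of $(\sigma_m(w,u_l,v_i,x_{i,l}))$ converges weakly to the corresponding sub-array for $\sigma_0$, and since $\bar s_I$ is a bounded ($[-1,1]$-valued) continuous functional of such finitely many entries (an average over $x$), the vector $(\bar s_I)_I$ converges weakly as well. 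A bounded-continuous polynomial test function then gives, conditionally on the Poisson/PPP data, the convergence of the joint law of $\bigl(U_{1,\eps,\lambda}(\sigma_m),\ldots,V_{\eps,\lambda}(\sigma_m)\bigr)$; integrating out the conditioning by dominated convergence (all quantities are uniformly bounded in absolute value by $\prod(1+|\tanh\beta\xi|)$, which is integrable and independent of $m$) yields the unconditional weak convergence claimed.

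I would then deduce the two displayed limits in Theorem \ref{invariancestep2} as corollaries: the first by combining Lemma \ref{UVconvergence} with the uniform positivity of $V_{\eps,\lambda}$ (the analogue of Lemma \ref{uniform X}, which makes $\prod_l U_{l,\eps,\lambda}/V_{\eps,\lambda}^q$ a bounded-a.s.-continuous functional of the converging vector, so $\e$ passes to the limit by bounded convergence once we truncate $V_{\eps,\lambda}$ away from zero uniformly in $m$); the second, $\lim_m Q_\eps(\sigma_m)=Q_\eps(\sigma_0)$, by the same conditioning-and-polynomial-approximation scheme applied to $\ln\mathcal A_{1,\eps}$ and $\ln\mathcal B_{1,\eps,1}$, using that on the conditioning event these logarithms are continuous bounded functionals of finitely many $\bar s_I$ bounded away from $0$ and $\infty$, together with uniform integrability to remove the conditioning.

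The main obstacle I anticipate is not the f.d.d.\ passage itself but the interchange of the outer expectation $\e$ (over $w$, the PPP, and the Poisson cardinalities) with the limit $m\to\infty$: one must check that the conditional weak limits are dominated/uniformly integrable so that $\e$ and $\lim_m$ commute, and, for the ratio $\prod_l U_{l,\eps,\lambda}/V_{\eps,\lambda}^q$ and for $\ln\mathcal A_{1,\eps}$, $\ln\mathcal B_{1,\eps,1}$, that the denominator/argument stays bounded away from $0$ uniformly in $m$ — this is exactly where the uniform lower bounds of the Lemma \ref{uniform X} type (now needed with $\inf$ over $m$ rather than over $\sigma$, which is fine since each $\sigma_m\in\mathcal S$) are used. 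Everything else is bookkeeping of finite products.
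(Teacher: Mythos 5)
Your setup is right as far as it goes: at fixed $\eps$ the products are a.s.\ finite, and after carrying out $\e_{\delta,x}$ each of $U_{l,\eps,\lambda}(\sigma)$ and $V_{\eps,\lambda}(\sigma)$ is $\e_u$ of a \emph{multilinear} polynomial in the variables $\bar s_I$. But the central step --- ``the vector $(\bar s_I)_I$ converges weakly because $\bar s_I$ is a continuous functional of finitely many array entries, and a bounded-continuous polynomial test function then gives convergence of the joint law of the $U$'s and $V$'' --- has a genuine gap, at two levels. First, $\bar s_I=\e_x[\sigma(w,u,v_I,x)]$ is \emph{not} a function of finitely many entries of the array: it is a partial integration over the $x$-coordinate, and its law is not controlled by f.d.d.\ convergence. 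Concretely, $\e[\bar s_{m,I}^2]=\e[\sigma_m(w,u,v_I,x)\sigma_m(w,u,v_I,x')]$ requires two independent $x$-coordinates attached to the \emph{same} pair $(u,v_I)$, which is not a finite-dimensional distribution of the array $(\sigma_m(w,u_l,v_i,x_{i,l}))_{i,l}$, where each entry carries exactly one $x_{i,l}$; so weak convergence of the finite vector $(\bar s_{m,I})_I$ does not follow from Definition \ref{def:fdd}. Second, even granting that convergence, $U_{l,\eps,\lambda}=\e_u[\,\cdot\,]$ is again a conditional expectation (integrating out $u$ but not $w$ or the $v$'s), and conditional expectations are not continuous with respect to weak convergence of the integrand; so the convergence cannot be transferred to the $U$'s and $V$ by a test-function or continuous-mapping argument.

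What rescues the lemma --- and what the paper's proof does --- is that the joint \emph{moments} $\e\bigl[\prod_{l\le q}U_{l,\eps,\lambda}(\sigma_m)^{a_l}\,V_{\eps,\lambda}(\sigma_m)^{b}\bigr]$ only ever involve multilinear expressions: each power of $U_l$ or of $V$ is realized with a fresh independent replica $u_{l,\ell}$ of the $u$-variable and fresh $x$-variables, so that in the expanded product every entry $\sigma_m(w,u_{l,\ell},v_I,\cdot)$ appears to power at most one. These are genuine finite-dimensional moments of a relabeled Aldous--Hoover array and hence converge under f.d.d.\ convergence. Restricting to the event that the Poisson cardinalities are all at most $M$ makes the variables uniformly bounded by a deterministic constant, so moment convergence plus Cram\'er--Wold gives joint weak convergence of the truncated vector, and the truncation is then removed via uniform second-moment bounds and Slutsky's theorem. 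Your proposal never introduces the $u$- and $x$-replicas, and without them the reduction to f.d.d.\ convergence does not go through.
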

and
\begin{equation*}
	\mathcal{A}_{n,\eps}(\sigma_m)\stackrel{d}{\to} \mathcal{A}_{n,\eps}(\sigma_0),\,\,\mathcal{B}_{n,\eps,\lambda}(\sigma_m)\stackrel{d}{\to} \mathcal{B}_{n,\eps,\lambda}(\sigma_0).
\end{equation*}
\begin{proof}
	We will only establish our first assertion since the arguments for the other two are identical.
	Denote by $(\delta_{l,\ell,i})_{\ell,i\geq 1}$ for $l,\ell,i\geq 1$ i.i.d. Rademacher random variables and $u_{l,\ell},x_{l,\ell,i},x_{l,\ell,i,k},\hat x_{l,\ell,i,k}$ for $l,\ell,i,k\geq 1$ i.i.d. uniform random variables on $[0,1]$. These are independent of each other and everything else. Denote
	\begin{align*}
		s_{m,l,\ell,i}&=\sigma_m(w,u_{l,\ell},v_i,x_{l,\ell,i}),\\
		s_{m,l,\ell,i,k}&=\sigma_m(u,u_{l,\ell},v_{i,k},x_{l,\ell,i,k}),\\
		\hat s_{m,l,\ell,i,k}&=\sigma_m(w,u_{l,\ell},\hat v_{i,k},\hat x_{l,\ell,i,k}).
	\end{align*}
	Note that for any $a_1,\ldots,a_q,b\geq 1,$ we can write by introducing independent copies,
	\begin{align*}
		\Bigl(	\prod_{l\leq q}U_{l,\eps,\lambda}(\sigma_m)^{a_l}\Bigr) V_{\eps,\lambda}(\sigma_m)^b&=\e_{u,x,\delta} L_m,
	\end{align*}
	where 
	\begin{align*}
		L_m&:=\Bigl(\prod_{l\leq q}\prod_{\ell\leq a_l}\prod_{i\in C_{l} ^1}\delta_{l,\ell,i}\Bigr) \Bigl(\prod_{l\leq q}\prod_{\ell\leq a_l+b}\prod_{i\le n}\prod_{k\le \pi_i(2\gamma)}\big(1+\hth (\beta g_{i,k,\eps }) \delta_{l,\ell,i}  s_{m,l,\ell,i,k}\big)\Bigr)\\
		&\qquad\Bigl(\prod_{l\leq q}\prod_{\ell\leq a_l}\prod_{i\in C_{\ell}^2}  s_{m,l,\ell,i} \Bigr)\Bigl(\prod_{l\leq q}\prod_{\ell\leq a_l+b}\prod_{k\le \pi(\gamma)}\bigl(1+\hth(\beta \hat g_{k,\eps }) { \hat{s}}_{m,l,\ell,1,k} { \hat{s}}_{m,l,\ell,2,k}\bigr)\Bigr).
	\end{align*}
	From the convergence of $\sigma_m\to \sigma_0$ in f.d.d., we see that
$
		\e_{u,v,x}L_m\to  \e_{u,v,x}L_0$ as $m\to \infty.$
	On the other hand,  note that
	\begin{align*}
		|\e_{u,v,x}L_m|&\leq 2^{\sum_{i=1}^n\pi_i(2\gamma)\cdot \sum_{l=1}^q(a_i+b)+\pi(\gamma)\sum_{l=1}^q(a_l+b)},\,\,\forall m\geq 0.
	\end{align*}
	For any $M\geq 1,$ let $I_M:=\1_{\{\pi_1(2\gamma)\leq M,\ldots,\pi_n(2\gamma)\leq M,\pi(\gamma)\leq M\}}$ and $I_M^c:=1-I_M.$
	It follows from the bounded convergence theorem,
	\begin{align*}
		\e \Bigl(	\prod_{l\leq q}U_{l,\eps,\lambda}(\sigma_m)^{a_l}\Bigr) V_{\eps,\lambda}(\sigma_m)^bI_M&=\e\bigl[\e_{u,\delta,x}[ L_m] I_M\bigr]=\e\bigl[\e_{u,v,x}[ L_m] I_M\bigr]\\
		&\to \e\bigl[\e_{u,v,x}[ L_0] I_M\bigr]=\e\bigl[\e_{u,\delta,x}[ L_0] I_M\bigr]\\
		&=\e \Bigl(	\prod_{l\leq q}U_{l,\eps,\lambda}(\sigma_0)^{a_l}\Bigr) V_{\eps,\lambda}(\sigma_0)^bI_M\,\,\mbox{as $m\to \infty$}.
	\end{align*}
	From this and noting that $U_{l,\eps,\lambda}(\sigma_m)I_M$ and $V_{\eps,\lambda}(\sigma_m)I_M$ are $m$-uniformly bounded by a deterministic constant, the Cram\'er-Wold theorem then implies that
	$$
	\bigl(U_{1,\eps,\lambda}(\sigma_m)I_M,\ldots,U_{q,\eps,\lambda}(\sigma_m)I_M,V_{\eps,\lambda}(\sigma_m)I_M\bigr)\stackrel{d}{\to} \bigl(U_{1,\eps,\lambda}(\sigma_0)I_M,\ldots,U_{q,\eps,\lambda}(\sigma_0)I_M,V_{\eps,\lambda}(\sigma_0)I_M\bigr).
	$$
	Finally, since $\eps$ is fixed, it can be checked directly that
	$$
	\sup_{m\geq 0}\e\bigl|U_{l,\eps,\lambda}(\sigma_m)\bigr|^2<\infty\,\,\mbox{and}\,\,\sup_{m\geq 0}\e\bigl|V_{\eps,\lambda}(\sigma_m)\bigr|^2<\infty,
	$$
	which implies that as $M\to\infty,$
	\begin{align*}
		\sup_{m\geq 0}\e\bigl|U_{l,\eps,\lambda}(\sigma_m)I_M^c\bigr|\leq \bigl(\sup_{m\geq 0}\e\bigl|U_{l,\eps,\lambda}(\sigma_m)\bigr|^2\bigr)^{1/2}\bigl(\e I_M^c\bigr)^{1/2}\to 0,\\
		\sup_{m\geq 0}\e V_{\eps,\lambda}(\sigma_m)I_M^c \leq\bigl( \sup_{m\geq 0}\e V_{\eps,\lambda}(\sigma_m)^2\bigr)^{1/2}\bigl(\e I_M^c\bigr)^{1/2}\to 0.
	\end{align*}
	From Slutsky’s theorem, our first assertion follows.
\end{proof}

\begin{proof}[\bf Proof of Theorem \ref{invariancestep2}]
	It is easy to see that $V_{\eps,\lambda}(\sigma_0)>0$ a.s.. The continuous mapping theorem and Lemma \ref{UVconvergence} imply that \[\frac{\prod_{l\le q}U_{l,\eps,\lambda}(\sigma_\eta) }{V_{\eps,\lambda}^q (\sigma_\eta)} \xrightarrow[\eps\downarrow 0]{d}\frac{\prod_{l\le q}U_{l,\eps,\lambda}(\sigma_0) }{V_{\eps,\lambda}^q (\sigma_0)}.\]
	Next, since these are bounded random variables, we can apply the bounded convergence theorem to conclude the proof for our first statement. As for the second one, note that $\mathcal{A}_{n,\eps}(\sigma_0)>0$ and $\mathcal{B}_{n,\eps,\lambda}(\sigma_0)>0$ a.s.. It follows from the continuous mapping theorem and Lemma \ref{UVconvergence}, $A_{n,\eps}(\sigma_\eta)$ converges to $A_{n,\eps}(\sigma_0)$ and $B_{n,\eps,\lambda}(\sigma_0)$ converges to $B_{n,\eps,\lambda}(\sigma_0)$ weakly as $\eta\to 0.$ To show that their expectations also converge, note that the argument before Lemma \ref{normofYbounded} in Subsection \ref{prop1} readily implies that $(A_{n,\eps}(\sigma_\eta))_{\eta>0}$ is uniformly integrable and the same argument also yields the uniform integrability of $(B_{n,\eps,\lambda}(\sigma_\eta))_{\eta>0}$. Consequently,
	$$
	\lim_{\eta \downarrow 0}\e {A}_{n,\eps}(\sigma_\eta)=\e {A}_{n,\eps}(\sigma_0)\,\,\mbox{and}\,\,	\lim_{\eta \downarrow 0}\e {B}_{n,\eps,\lambda}(\sigma_\eta)=\e {B}_{n,\eps,\lambda}(\sigma_0).
	$$ 
	In particular, letting $n=1$ and $\lambda=1$ implies our second assertion.
\end{proof}

\section{Proof of Lemma \ref{expectationlemma}}\label{sec:proof of expectationlemma}
% Consider the L\'evy Hamiltonian, \eqref{hamiltonian:eq1}, equipped with the disorder $\tilde J_{ij}=\tilde X_{ij}/a_N$ for i.i.d. $ \tilde X_{ij}\sim \tilde X$, where $\tilde X$ and $a_N$ are defined in Section \ref{extension to general heavy tail}. 
Recall the L\'evy disorder $J$ defined in  \eqref{def:J}. In this section, we will establish a lemma which is crucial for proving Theorems  \ref{freeenergy}, \ref{fluctuationfreeenergy}, \ref{hightempoverlap}, and \ref{main:thm2} for the general L\'evy model.
% \ref{thm:free_energy_below_one}, and \ref{thm:Gibbs_structure}. 
% We need the following technical lemma for Theorems \ref{freeenergy}, \ref{hightempoverlap}, and \ref{main:thm2}, whose proof is deferred to Appendix \ref{sec:proof of expectationlemma}.

\begin{lemma} \label{expectationlemma}
	%Suppose $1<\alpha<2$. 
	Suppose $0<\alpha<2$. Let $f:\mathbb{R}\to \mathbb{R}$ be continuously differentiable and let $\bar f(x):=(f(x)+f(-x))/2$ for $x>0$.
	\begin{itemize}
		\item[$(i)$] Suppose $f(0)=0$, $ |f'(x)|=O(|x|^\delta)$ as $x\to0$, and $| f'(x)|=O(|x|^\eps)$ as $x\to\pm\infty$ for some $-\infty<\eps<\alpha-1<\delta$. Then \[\lim_{N\to\infty }N\e f(  J) = \alpha \int_0^\infty \frac{\bar f(x)}{x^{1+\alpha}}dx.\]
		
		\item[$(ii)$] Let $\eta>0.$ Suppose $|f'(x)|=O(|x|^\eps)$ as $x\to\pm\infty$ for some $-\infty<\eps<\alpha-1$. Then \[\lim_{N\to\infty} N\e f(  J) \1_{|  J|\geq \eta}=\alpha\int_{\eta}^{\infty} \frac{\bar f(x)}{x^{1+\alpha } }dx.\]
		\item[$(iii)$] Let $\eta>0.$ Suppose $f(0)=0$ and $|f'(x)|=O(|x|^\delta)$ as $x\to0$ for some $\alpha-1<\delta<\infty$. Then \[\lim_{N\to\infty} N\e f(  J)\1_{|  J|\le \eta}=\alpha \int_0^\eta \frac{\bar f(x)}{x^{1+\alpha}}dx.\]
	\end{itemize}
\end{lemma}

Before we proceed with the proof, we first collect some useful properties.  We refer the readers to \cite{resnick} as a detailed reference. 
First of all, by the definition of $a_N$, it is easy to see that \begin{align}
	1\ge  N\p(|  X|>a_N)\to 1 \label{a_N}
\end{align} as $N\to\infty$. A well-known fact is that \begin{align}
	a_N=N^{1/\alpha} \ell(N) \label{a_N asymptotics}
\end{align} for some slowly varying function $\ell$ at $\infty$ which depends on $L$. Moreover, the following uniform convergence holds for any $0<a<\infty$:  \begin{align}
	\lim_{y\to\infty}\sup_{t\in [a,\infty)}\Bigl|\frac{\p (|  X|>ty)}{\p(|  X|>y)}-\frac{1}{t^\alpha}\Bigr|=0. \label{unifconv}
\end{align}  
Potter's bound says that if $L$ is a slowly varying function, for any $\eta>0$, there exists $y_0>0$ such that for all $y\ge y_0$ and $t\ge 1$, \begin{align}
	\frac{1-\eta}{t^{\eta}}<\frac{L(ty)}{L(y)} < \frac{1+\eta}{t^{-\eta}}.\label{potter}
\end{align}
An immediate consequence of \eqref{a_N asymptotics} and \eqref{potter} is that for any $\eps>0$, \begin{equation} 
 \label{order of a_N}
	N^{-\eps+1/\alpha}\ll a_N \ll N^{\eps+1/\alpha}.
\end{equation}

We only prove Lemma \ref{expectationlemma}$(i)$. 
Conditions on $f$ and Karamata's theorem imply that $\e|f(  J)| <\infty$, so we have $\e f(  J)=\e f(-  J)=\e \bar f (  J)= \e \bar f (|  J|)$ by the symmetry of $  J$. Then, $f(0)=0$ and Fubini's theorem  gives \begin{align*}
	\e f(  J)= \int_0^\infty (\bar f)'(x) \p (|  J|>x)dx. 
\end{align*}  Note that \[N\p(|  J|>x)=N\p(|  X|>a_N)\frac{\p(|  X|>xa_N)}{\p(|  X|>a_N)},\] so \eqref{a_N} and \eqref{unifconv} imply that for $x>0$, \begin{align}
	\lim_{N\to\infty}N\p(|  J|>x)= \frac{1}{x^{\alpha}}. \label{eq:probconv}
\end{align} Inequality \eqref{potter} implies that for any $0<\eta<\min\{\delta-\alpha+1, \alpha-\eps-1 \}$, there exists $x_0>0$ such that for all $x>0$ and $N$ large enough satisfying $a_N\wedge xa_N>x_0$, we have \begin{align}
	\1_{a_N\wedge x a_N >x_0}\frac{\p(|  X|>xa_N)}{\p(|  X|>a_N)}\le \frac{1+\eta}{x^{\alpha-\eta}}\1_{x\ge 1}+\frac{1}{(1-\eta)x^{\alpha+\eta}}\1_{x_0/a_N<x<1}.\label{majorant}
\end{align}
On the interval $[1,\infty)$, applying the dominated convergence theorem with \eqref{eq:probconv} and \eqref{majorant}, we have \begin{align}
	\lim_{N\to\infty}\int_1^\infty (\bar f)'(x) N\p(|  J|>x) dx= \int_1^\infty \frac{(\bar f)'(x)}{x^\alpha}dx.\label{dct1}
\end{align}
There exists $\delta'>0$ small enough such that $|f'(x)|\le C|x|^{\delta}$ for all $|x|<\delta'$, where $C$ is a large enough constant.
If  $N$ is large enough so that $x_0/a_N<\delta'$, \begin{align*}
	\int_{0}^{x_0/a_N}|(\bar f)'(x)|N\p (|  J|>x)dx \le   CN\int_{0}^{x_0/a_N}x^{\delta} dx =\frac{Cx_0^{1+\delta}}{1+\delta} N a_N^{-1-\delta}.
\end{align*}  \eqref{a_N asymptotics} and \eqref{potter} imply that $\lim_{N\to\infty}Na_N^{-1-\delta}=0$, so \begin{align}
	\lim_{N\to\infty}\int_{0}^{x_0/a_N}(\bar f)'(x)N\p (|  J|>x)dx=0. \label{dct2}
\end{align}
Now, if $N$ is large enough so that $x_0/a_N<1$, then from \eqref{a_N} and \eqref{majorant}, \begin{align*}
	\1_{x_0/a_N<x<1}|(\bar f)'(x)|N\p(|  X|>xa_N)&\le \frac{|(\bar f)'(x)|}{(1-\eta)x^{\alpha+\eta}}\1_{x_0/a_N<x<1}\\
	&\le \frac{Cx^\delta \1_{0<x<\delta'}+|(\bar f)'(x)|\1_{\delta'\le x\le1}}{(1-\eta)x^{\alpha+\eta}} \\
	&\le \frac{1}{1-\eta} \bigl(Cx^{\delta-\alpha-\eta}+|(\bar f)'(x)|(\delta')^{-\alpha-\eta} \bigr),
\end{align*}
where the last line is integrable on $(0,1)$. Thus, the dominated convergence theorem gives \begin{align}
	\lim_{N\to\infty}\int_{x_0/a_N}^{1}(\bar f)'(x)N\p (|  J|>x)dx= \int_0^1 \frac{(\bar f)'(x)}{x^\alpha}dx.\label{dct3}
\end{align} 

Finally, combining \eqref{dct1},\eqref{dct2}, and \eqref{dct3} yields the desired result  \begin{align*}
	\lim_{N\to\infty}N\e f(  J)=\lim_{N\to\infty}\int_0^\infty (\bar f)'(x) N\p(|  J|>x) dx= \int_0^\infty \frac{(\bar f)'(x)}{x^\alpha}dx=\alpha\int_{0}^{\infty} \frac{\bar f(x)}{x^{\alpha +1} }dx,
\end{align*}  where the assumptions on $f$ assure that the integration by parts holds in the last equality.

\section{Proofs for General Power-Law Distributions}\label{sec:generalization}

% \subsection{Extension of Theorem \ref{freeenergy}}
% We follow the proof of Theorem \ref{freeenergy} in Section \ref{subsec:add1}, in which the moment computations remain unchanged and we only need to establish the limits \eqref{freeenergy:eq1} and \eqref{freeenergy:eq2} and the concentration of the free energy. 
% Applying Lemma \ref{expectationlemma}$(i)$ straightforwardly to the functions $\ln \ch(\beta x)$ and $\hth^2 (\beta x)$ yields \eqref{freeenergy:eq1} and \eqref{freeenergy:eq2} respectively. As for the concentration of the free energy, observe that we must replace the factor $N^{-{1/\alpha}}$  in \eqref{eq:D} by $a_N^{-1}$ and the factor $N^{-p-p/\alpha+2}$ in  \eqref{eq:D final bound} becomes $a_N^{-p}N^{-p+2}$. Since $a_N=N^{1/\alpha}\ell (N)$ and $\ell(N)$ decays and grows at most sub-polynomially fast at $\infty$, under the same assumption as Theorem \ref{con}, for any $0<\eps<p+p/\alpha-2$, we obtain the bound $KN^{-p-p/\alpha+2+\eps}$ for the $L^p$-norm of the normalized free energy.

Recall the heavy-tailed distribution $X$, the L\'evy disorder $J$, and the free energy $F_N$ for the general L\'evy model at the beginning of Section \ref{Sec:results}.

\subsection{Theorem \ref{con}}
In the general heavy-tailed setting, \eqref{eq:D} and \eqref{eq:D_bd} become 
\begin{align*}
    ND_k&= \e \bigl[\ln \la \exp(\beta a_N^{-1} X_{e_k} \sigma_{e_k})\ra_k\big|\mathcal F_{k}\bigr]- \e \bigl[\ln \la \exp(\beta a_N^{-1} X_{e_k} \sigma_{e_k})\ra_k\big|\mathcal F_{k-1}\bigr],
    \\
    |D_k |&\le \beta N^{-1}a_N^{-1}\bigl(|X_{e_k}|+\e|X_{e_k}|\bigr),
    \end{align*}
 respectively.
Then, \eqref{eq:D final bound} becomes
\begin{equation*}
    \e \Bigl| \frac{1}{N}\ln Z_N  - \frac{1}{N} \e  \ln Z_N \Bigr|^p\le \frac{C \e|X|^p}{    N^{p-2} a_N^{p}},
\end{equation*} 
and \eqref{order of a_N} guarantees the desired bound for $N^{-1}\ln Z_N$. We can proceed in a similar way for $\ln \bar Z_N$.

\subsection{Theorems \ref{freeenergy} and \ref{main:thm2}}\label{sec:extension of main:thm2}

Let $X^P$, $J^P$, and $F_N^P$ be respectively $X,$ $J$, and $F_N$ associated to the Pareto distribution. 
To establish the universality for Theorems \ref{freeenergy} and \ref{main:thm2}, it suffices to show

\begin{theorem}[Universality of free energy]\label{thm:universality}
 For $1<\alpha<2$ and $\beta>0$, we have \[\limsup_{N\to\infty}|\e F_N-\e F_N^P|=0.\]
\end{theorem}
\begin{remark}
\cite[Proposition 1.2]{Starr} established  a quantitative universality for general SK-type free energies in terms of the statistics of disorder distributions. The following proof adapts a similar approach.
\end{remark}

\begin{proof}
	Consider an arbitrary enumeration $\{(i_t,j_t):1\leq t\leq N(N-1)/2\}$ of $\{(i,j):1\leq i<j\leq N\}$. For any $1\leq t\leq N(N-1)/2+1$, define 
	\begin{equation*}
		   F_{N,t}=\frac{1}{N}\ln \sum_{\sigma}\exp\Bigl(\beta\sum_{r<t}J_{i_rj_r}^P\sigma_{i_r}\sigma_{j_r}+\beta\sum_{r\geq t}   J_{i_rj_r}\sigma_{i_r}\sigma_{j_r}\Bigr).
	\end{equation*}
	Write
	\begin{align*}
		F_{N}^P-   F_{N}  &=\sum_{t=1}^{N(N-1)/2}\bigl(   F_{N,t+1}-   F_{N,t}\bigr)\\
		&=\frac{1}{N}\sum_{t=1}^{N(N-1)/2}\bigl(\ln \bigl\la e^{\beta  J_{i_tj_t}^P\sigma_{i_t}\sigma_{j_t}}\bigr\ra_{t}-\ln\bigl \la e^{\beta    J_{i_tj_t}\sigma_{i_t}\sigma_{j_t}}\bigr\ra_{t}\bigr),
	\end{align*}
	where $\la \cdot\ra_t$ is the Gibbs expectation associated to the Hamiltonian
	$$
	   H_{N,t}(\sigma)=\beta\sum_{r<t}J_{i_rj_r}^P\sigma_{i_r}\sigma_{j_r}+\beta\sum_{r>t}   J_{i_rj_r}\sigma_{i_r}\sigma_{j_r}.
	$$
	Now, similar to \eqref{add:add:eq1}, we can write
	\begin{equation*}
		%	\ln \bigl\la e^{\beta  J_{i_tj_t}\sigma_{i_t}\sigma_{j_t}}\bigr\ra_{t}&=\ln \cosh \beta J_{i_tj_t}-\sum_{r=1}^\infty \frac{1}{2r}\e \tanh^{2r}(\beta %J_{i_tj_t})\cdot \la \sigma_{i_t}\sigma_{j_t}\ra_t^{2r},\\
		\e \ln \bigl\la e^{\beta    J_{i_tj_t}\sigma_{i_t}\sigma_{j_t}}\bigr\ra_{t}= \e \ln \cosh \beta    J_{i_tj_t}- \e \sum_{r=1}^\infty \frac{1}{2r} \tanh^{2r}(\beta   J_{i_tj_t})\cdot  \la \sigma_{i_t}\sigma_{j_t}\ra_t^{2r}.
	\end{equation*}
	Let $K>0$ and $k\geq 1.$ From this and using the independence between $   J_{i_tj_t}$ and $\la \cdot\ra_t,$
	\begin{align*}
		\e\ln \bigl\la e^{\beta     J_{i_tj_t}\sigma_{i_t}\sigma_{j_t}}\bigr\ra_{t}&=\e\bigl[\ln \bigl\la e^{\beta     J \sigma_{i_t}\sigma_{j_t}}\bigr\ra_{t};|   J |> K\bigr]+\e\bigl[\ln \cosh \beta    J ;|   J |\leq K\bigr]\\
		&-\sum_{r=1}^\infty \frac{1}{2r}\e \bigl[\tanh^{2r}(\beta    J );|   J |\leq K\bigr]\cdot \e \la \sigma_{i_t}\sigma_{j_t}\ra_t^{2r},
	\end{align*}
	where $   J$ is independent of $\la \cdot\ra_t.$
	Note that $ |\ln  \la e^{\beta     J \sigma_{i_t}\sigma_{j_t}} \ra_{t} |\leq \beta|   J|$
	and
	\begin{align*}
		\sum_{r=k+1}^\infty \frac{1}{2r}\e \bigl[\tanh^{2r}(\beta    J );|   J |\leq K\bigr]\cdot \e \la \sigma_{i_t}\sigma_{j_t}\ra_t^{2r}
		&\leq \e\bigl[\tanh^2(\beta    J );|   J |\leq K\bigr]\sum_{r=k}^\infty \tanh^{2r}(\beta K)\\
		&=\e\bigl[\tanh^2(\beta    J);|   J|\leq K\bigr]\frac{\tanh^{2k}(\beta K)}{1-\tanh^2(\beta K)}.
	\end{align*}
	Consequently,
	\begin{align*}
		&\Bigl|\e\ln \bigl\la e^{\beta     J \sigma_{i_t}\sigma_{j_t}}\bigr\ra_{t}-\e\bigl[\ln \cosh \beta    J ;|   J |\leq K\bigr]
		+\sum_{r=1}^k \frac{1}{2r}\e \bigl[\tanh^{2r}(\beta    J );|   J |\leq K\bigr]\cdot \e \la \sigma_{i_t}\sigma_{j_t}\ra_t^{2r}\Bigr|\\
		&\leq \beta \e\bigl[|    J |;|   J |> K\bigr]+\e\bigl[\tanh^2(\beta    J );|   J |\leq K\bigr]\frac{\tanh^{2k}(\beta K)}{1-\tanh^2(\beta K)}.
	\end{align*}
	Using Lemma \ref{expectationlemma}, we have
	\begin{align*}
		&\limsup_{k\to\infty}\limsup_{N\to\infty}\Bigl|\frac{1}{N}\sum_{t=1}^{N(N-1)/2}\e\ln \bigl\la e^{\beta     J_{i_tj_t}\sigma_{i_t}\sigma_{j_t}}\bigr\ra_{t}-\frac{\alpha}{2}\int_0^K\frac{\ln \cosh(\beta x)}{x^{\alpha+1}}dx\\
		&\qquad\qquad+\sum_{r=1}^k \frac{\alpha}{2r}\int_{0}^K\frac{\tanh^{2r}(\beta x)}{x^{\alpha+1}}dx\cdot \frac{1}{N^2}\sum_{t=1}^{N(N-1)/2}\e \la \sigma_{i_t}\sigma_{j_t}\ra_t^{2r}\Bigr|\\
		&\leq\limsup_{k\to\infty}\Bigl( \frac{\alpha\beta}{2}\int_{K}^\infty \frac{1}{x^\alpha}dx+\frac{\alpha}{2}\int_{0}^K\frac{\tanh^2(\beta x)}{x^{\alpha+1}}dx\cdot\frac{\tanh^{2k}(\beta K)}{1-\tanh^2(\beta K)}\Bigr)\\
		&=\frac{\alpha\beta}{2}\int_{K}^\infty \frac{1}{x^\alpha}dx=\frac{\alpha \beta}{2(\alpha-1)}K^{1-\alpha}.
	\end{align*}
	Note that by substituting $   J$ with $J^P$, the same computation still carries through and yields the same inequality. As a result,
	\begin{align*}
		\limsup_{N\to\infty}\Bigl|\frac{1}{N}\sum_{t=1}^{N(N-1)/2}\bigl(\e\ln \bigl\la e^{\beta     J_{i_tj_t}\sigma_{i_t}\sigma_{j_t}}\bigr\ra_{t}-\e\ln \bigl\la e^{\beta  J_{i_tj_t}^P\sigma_{i_t}\sigma_{j_t}}\bigr\ra_{t}\bigr)\Bigr|\leq \frac{\alpha \beta}{\alpha-1}K^{1-\alpha}.
	\end{align*}
	Since this holds for any $K>0,$ sending $K\to\infty$ completes our proof.
\end{proof}

\subsection{Theorem \ref{fluctuationfreeenergy}}
\subsubsection*{Proof of Theorem \ref{thm: generalized: fluctuation of Zbar}}
% The modified statement should be as follows.\begin{theorem}\label{thm: generalized: fluctuation of Zbar}
% 	For $0<\alpha <2$ and $\beta>0$, we have \[\frac{1}{a_N^{-1}a_{{N\choose 2}}}\Bigl(\ln \bar Z_N -{N\choose 2}\e \Bigl(\ln \ch (\beta   J) \1_{\{a_N\ln\ch(\beta   J)\le a_{N\choose 2}\}}\Bigr)\Bigr)\stackrel{d}{\to}\beta Y_\alpha,\] 
% 	as $N \to \infty$, where the characteristic function of $Y_\alpha$ is given by \eqref{eq:Y_a:charac}.
% \end{theorem}
We adapt the proof of Theorem 3.8.2 in \cite{durrett_2019}. Recall $n=N(N-1)/2.$
Let $\eps>0$ and   \begin{align*}
	&S_1^\eps= \sum_{i<j}a_N\ln \ch (\beta   J_{ij})\1_{\{a_N\ln \ch (\beta   J_{ij}) \le \eps a_n \}}, \\ &S_2^\eps= \sum_{i<j}a_N\ln \ch (\beta   J_{ij})\1_{\{a_N\ln \ch (\beta   J_{ij}) > \eps a_n \}},
\end{align*} so that \[a_N\ln \bar Z_N =S_1^\eps+S_2^\eps.\]
% \textcolor{green}{$b_N$ is a contradictory notation.}
Furthermore, let auxiliary quantities \begin{align*}
	&d_N= n\e \bigl(a_N\ln \ch (\beta   J) \1_{\{a_N\ln\ch(\beta   J)\le \beta a_n\}}\bigr),\\
	&d^\eps_N= n\e \bigl(a_N\ln \ch (\beta   J) \1_{ \{\eps a_n< a_N\ln\ch(\beta   J)\le \beta a_n\}} \bigr).
\end{align*}
Define the centered random variable \begin{align*}
	\bar S_1 ^\eps = S_1^\eps-(d_N-d^\eps_N)&= \sum_{i<j}\bigl(a_N\ln \ch (\beta   J) \1_{\{a_N\ln\ch(\beta   J)\le \eps a_n\}}\\&\qquad -\e a_N\ln \ch (\beta   J) \1_{\{a_N\ln\ch(\beta   J)\le \eps 
		a_n\}}\bigr).
\end{align*}
From these, we can write \begin{align}
	\frac{\ln \bar Z_N-a_N^{-1}d_N}{a_N^{-1}a_n}= \frac{\bar S_1^\eps}{a_n}+\frac{S_2^\eps-d^\eps_N}{a_n}. \label{eq:general fluctuation: decomp}
\end{align}
We claim that the first term in \eqref{eq:general fluctuation: decomp} can be controlled uniformly in $N$:  \begin{align}
	\limsup_{N\to\infty}\e (a_n^{-1}\bar S_1^\eps)^2 \le \frac{2\beta^\alpha \eps^{2-\alpha}}{2-\alpha}. \label{eq:general fluctuation: pt 1}
\end{align}
Indeed, \begin{align*}
	\e (a_n^{-1}\bar S_1^\eps)^2&\le n \e \Bigl(\frac{a_N}{a_n}\ln \ch \Bigl(\beta \frac{  X}{a_N}\Bigr) \1_{\{a_N\ln\ch(\beta   Xa_N^{-1})\le \eps a_n\}}\Bigr)^2 \\&= \int_0^\eps 2x n\p \bigl(|  X|> \beta^{-1}a_N\mathrm{arccosh}(\exp(xa_N^{-1}a_n))\bigr)dx\\
	&=n \int_0^\eps 2x \p \bigl(|  X|> \beta^{-1}a_Na_n^{-1}\mathrm{arccosh}(\exp(xa_N^{-1}a_n)) a_n\bigr) dx \\
	&\le n \int_0^\eps 2x \p \bigl(|  X|> \beta^{-1}x a_n\bigr) dx,
\end{align*} where we used the fact that $\mathrm{arccosh}(e^x)\ge x$ for $x\ge0$ in the last inequality. From \eqref{a_N}, \eqref{unifconv}, and \eqref{majorant}, the dominated convergence theorem implies that the last expression converges to the right-hand side of \eqref{eq:general fluctuation: pt 1} as required.

% Next, the second term in \eqref{eq:general fluctuation: decomp} converges to \begin{align}
%     \lim_{N\to\infty }\e \exp(itS_2^\eps/a_n)=\exp\Bigl(\beta^\alpha\int_\eps^\infty  (e^{itx}-1)\alpha x^{-1-\alpha}  dx\Bigr).\label{eq:general fluctuation: pt 2}
% \end{align}  Setting $I_N (\eps)\colonequals\{(i,j): a_N\ln \ch (\beta   J_{ij}) > \eps a_n\}$, we can expect \eqref{eq:general fluctuation: pt 2} since given $|I_N(\eps)| =k$, the quantity $S_2^\eps/a_n$ is a sum of $k$ independent random variables supported on the interval $[\eps,\infty)$ with the tail distribution converging to $\eps^\alpha x^{-\alpha}$ for $x\ge \eps$, and $|I_N(\eps)| \sim \mathrm{Bin}(n,\p(a_N\ln \ch (\beta   J) > \eps a_n))$.
Next, we consider the second term in \eqref{eq:general fluctuation: decomp}.
We claim that \begin{align}
	\lim_{N\to\infty }\e \exp(itS_2^\eps/a_n)=\exp\Bigl(\beta^\alpha\int_\eps^\infty  (e^{itx}-1)\alpha x^{-1-\alpha}  dx\Bigr).\label{eq:general fluctuation: pt 2}
\end{align}
Let $I_N (\eps)\colonequals|\{(i,j): a_N\ln \ch (\beta   X_{ij}/a_N) > \eps a_n\}|$. Given $I_N(\eps) =k$, the quantity $S_2^\eps/a_n$ is a sum of $k$ independent random variables supported on the interval $[\eps,\infty)$ with the tail distribution \begin{equation}
	\frac{\p(a_N\ln \ch (\beta   X/a_N) > x a_n)}{\p(a_N\ln \ch (\beta   X/a_N) > \eps a_n)}, \,\,\forall x\ge \eps. \label{eq:conditional distribution}
\end{equation} Let $\psi^\eps_N$ be the characteristic function corresponding to the conditional distribution \eqref{eq:conditional distribution}. Note that from \eqref{unifconv} and  $\lim_{x\to\infty} \mathrm{arccosh}(e^{x})/x=1$,  the conditional distribution converges as $N\to\infty,$ \begin{align*}
	\frac{\p(a_N\ln \ch (\beta   X/a_N) > x a_n)}{\p\bigl(a_N\ln \ch (\beta   X/a_N) > \eps a_n\bigr)} &= \frac{\p(a_N\ln \ch (\beta   X/a_N) > x a_n)}{\p (\beta   X> xa_n)}\frac{\p (\beta   X> \eps a_n)}{\p(a_N\ln \ch (\beta   X/a_N) > \eps a_n)} \\ &\quad \cdot \frac{\p (\beta   X> xa_n)}{\p (\beta   X> \eps a_n)} \to \eps^\alpha x^{-\alpha},
\end{align*} which implies that, as $N\to\infty$, $\psi_N^\eps$ converges to the limit \[\psi^\eps(t)= \int_\eps^\infty  e^{itx}\alpha \eps^\alpha x^{-1-\alpha}  dx, \ t\in\mathbb R. \] Moreover, \eqref{unifconv}, \eqref{eq:probconv}, and $\lim_{x\to\infty} \mathrm{arccosh}(e^{x})/x=1$ imply  \begin{equation}
	\lim_{N\to\infty}n\p(a_N\ln \ch (\beta   X/a_N) > \eps a_n)= \beta^\alpha \eps^{-\alpha}.\label{eq:general fluctuation prob conv}
\end{equation}  It is easy to see that for any fixed $k\ge 1$,  as $n\to\infty$, \[{n\choose k}\frac{1}{n^k}\uparrow \frac{1}{k!}. \] Moreover, $\lim_{N\to\infty}(1-f_n/n)^n= e^{-f}$ and $(1-f_n/n)^n\le e^{-0.5 f}$ for large enough $n$ whenever a sequence $(f_n)_n$ satisfies $\lim_{N\to\infty}f_n=f$. Thus, by the dominated convergence theorem, \begin{align*}
	\e \exp\Bigl(\frac{itS_2^\eps}{a_n}\Bigr)&= \sum_{k=0}^n{n \choose k}\p\bigl(a_N\ln \ch (\beta   X/a_N) > \eps a_n\bigr)^k\\
 &\qquad\cdot\bigl(1-\p(a_N\ln \ch (\beta   X/a_N) > \eps a_n)\bigr)^{n-k}(\psi_N^\eps(t))^k\\ &\to \sum_{k\ge 0}\frac{1}{k!}(\beta^\alpha\eps^{-\alpha})^k \exp(-\beta^\alpha\eps^{-\alpha})(\psi^\eps(t))^k\\
 &= \exp\bigl(-\beta^\alpha\eps^{-\alpha}(1-\psi^\eps(t))\bigr)=\exp\Bigl(\beta^\alpha\int_\eps^\infty  (e^{itx}-1)\alpha x^{-1-\alpha}  dx\Bigr),
\end{align*} as $N\to\infty$ for any $t\in\mathbb R$, thereby proving \eqref{eq:general fluctuation: pt 2}.

% Lastly, a straightforward computation using the relation $\lim_{N\to\infty}n\p(a_N\ln \ch (\beta   J) > \eps a_n)= \beta^\alpha \eps^{-\alpha}$ shows  \begin{align}
%     \lim_{N\to\infty}\frac{c_N}{a_n}= \beta^\alpha\int_\eps^1 x\alpha x^{-1-\alpha}dx. \label{eq:general fluctuation: pt 3}
% \end{align}
Lastly, from \eqref{eq:general fluctuation prob conv},  \begin{align}
	\frac{d^\eps_N}{a_n}&=  n\e \Bigl(\frac{a_N}{a_n}\ln \ch \Bigl(\beta \frac{  X}{a_N}\Bigr) \1_{ \{\eps a_n< a_N\ln\ch(\beta   X/a_N)\le \beta a_n\}} \Bigr)\nonumber\\
	&= \int_0^\infty  n\p \Bigl(\eps \vee x<\frac{a_N}{a_n}\ln \ch \Bigl(\beta \frac{  X}{a_N}\Bigr)\le \beta\Bigr)dx \nonumber \\
	&= \eps n\p \Bigl(\eps <\frac{a_N}{a_n}\ln \ch \Bigl(\beta \frac{  X}{a_N}\Bigr)\le \beta \Bigr)+\int_\eps ^\beta n\p \Bigl( x<\frac{a_N}{a_n}\ln \ch \Bigl(\beta \frac{  X}{a_N}\Bigr)\le \beta \Bigr) dx \nonumber\\
	&\to \eps \beta^\alpha(\ \eps^{-\alpha}-\beta^\alpha)+\int_\eps^\beta \beta^\alpha(x^{-\alpha}-\beta^\alpha)dx= \beta^\alpha\int_\eps^\beta x\alpha x^{-1-\alpha}dx. \label{eq:general fluctuation: pt 3}
\end{align}
From \eqref{eq:general fluctuation: pt 2} and \eqref{eq:general fluctuation: pt 3}, we have \begin{align}
\nonumber	\lim_{N\to\infty}\e \exp\Bigl(it\frac{S_2^\eps-d^\eps_N}{a_n}\Bigr)& = \exp\Bigl(\beta^\alpha\int_\beta^\infty  (e^{itx}-1)\alpha x^{-1-\alpha}  dx \\
 &\,\,+\beta^\alpha\int_\eps ^\beta   (e^{itx}-1-itx)\alpha x^{-1-\alpha}  dx\Bigr). \label{eq:general fluctuation: pt 4}
\end{align}
Note that the second term on the right-hand side converges as $\eps\to0$ since $e^{itx}-1-itx =O(x^2)$ as $x\to 0$.
Combining \eqref{eq:general fluctuation: decomp}, \eqref{eq:general fluctuation: pt 1}, and \eqref{eq:general fluctuation: pt 4}, it is straightforward to see that Lemma \ref{cut} yields the desired result.

\subsubsection*{Proof of Theorem \ref{fluctuation hat Z}} 
For $\eps>0$, from \eqref{a_N}, \[  J\1_{\{|  J|>\eps\}}\stackrel{d}{=}Bg_{\eps,N}, \] where $B\sim \mathrm {Ber}((1+o(1))\eps^{-\alpha}/N)$ and $g_{\eps,N}$ has a symmetric distribution with the tail probability
$\p(|g_{\eps,N}| > x) = (\eps/x)^\alpha L(a_N x)/L(a_N \eps), x > \eps$ and they are independent. It is easy to see that  $g_{\eps,N}\stackrel{d}{\to}g_\eps$ as $N\to\infty$, where $g_\eps$ is as given in \eqref{eq:g_eps density}.
%$2^{-1}\alpha \eps^{\alpha}|x|^{-1-\alpha} \1_{\{|x|\ge \eps\}}$. 

Choose $\eta>0$ small such that $(1+\eta)\beta/\beta_\alpha <1$. The inequality in the statement of Lemma \ref{Bigraphdie} should read as \[\sup_{N \ge 1} \e \Bigl(\sum_{\partial \Gamma = \emptyset, |\Gamma|\geq m} w(\Gamma)  \Bigr)^2 \leq  Ce^{\sqrt{2m}}((1+\eta)\beta/\beta_{\alpha})^{\alpha m}, \,\,m \ge 1.\] Similarly, the claim relevant to \eqref{eq:cycle_claim} is now that for sufficiently large $N$, we have \[\sum_{\gamma: |\gamma|=m} \e w(\gamma)^2 \le \frac{1}{2m}((1+\eta)\beta/\beta_{\alpha})^{\alpha m},\] which in the proof we use that 
\[\limsup_{N\to\infty}N\e \hth^2(\beta J)\le ((1+\eta)\beta/\beta_\alpha)^\alpha\] as apparent from Lemma \ref{expectationlemma}$(i)$.
The proof of Lemma \ref{lem:fixed_m_approx} remains essentially unchanged. 
As for the proof of Theorem \ref{fluctuation hat Z}, the convergence in distribution given by \eqref{eq:charac1} holds since Lemmas \ref{cyclepoisson} and \ref{vertexdisjoint} continue to hold if we change the parameter $p$ with $p_N = p(1+o(1))$, and the rest of the argument remains the same.

\subsection{Theorem \ref{hightempoverlap}}
We recall two useful facts that 
$a_N=N^{1/\alpha}\ell (N)$ for some slowly varying function $\ell$ at $\infty$ and that slowly varying functions decay or grow at most sub-polynomially fast at $\infty$, see \cite{resnick}. Moreover, $X$ is $L^p$-integrable for any $0<p<\alpha$. 

\subsubsection*{Proof of \eqref{siteoverlapzero}}
% We adapt the argument in Section \ref{subsec:add2}. 
% Recall the sequence $\gamma_\ell$ in \eqref{eq: gamma}. 
First of all, we claim that Proposition \ref{prop1'} also holds in the limit for the general L\'evy model,
% The statement of Proposition \ref{prop1'} remains the same.
\begin{equation}\label{eq: generalized: prop1'}
	\lim_{N\to\infty}\Bigl|\e\frac{1}{N}\sum_{i<j}  J_{ij}\la \sigma_i\sigma_j\ra-\frac{c_1}{2}\bigl(1- \e\bigl\la R_{  L}^2\bigr\ra\bigr)\Bigr|=0.
\end{equation}
% where for $\ell\geq 1$, $c_\ell:=N\e   J\tanh^\ell(\beta   J)$ with $\lim_{N\to\infty}c_\ell = \gamma_\ell$ and $ {L}$ has the distribution, for $k\geq 1,$
% $$\p( {L}=2k)=\frac{c_{2k-1}-c_{2k+1}}{c_1}.$$
To see this, recall the proof of Proposition \ref{prop1'}. Note that the bounds in \eqref{cavityineq} and \eqref{overlap:eq3} should be replaced by $C_0a_N^{-1}\e|  X|$ and that \[  \e\Bigl[\frac{|  J|}{1-|\tanh(\beta   J)|} ;|  J|\leq K\Bigr] \le e^{2\beta K} \frac{\e|  X|}{a_N}.
\]
It follows that for any $K>0$,
\begin{align*}
	&	\e\Bigl|\frac{1}{N}\sum_{i<j}  J_{ij}\la \sigma_i\sigma_j\ra-\frac{1}{N}\sum_{i<j}  J_{ij}f\bigl(\la \sigma_i\sigma_j\ra_{ij}',\beta   J_{ij}\bigr)\Bigr|\\
	&\leq \frac{(\e|  X|)^2 e^{2\beta K}}{N^{2/\alpha-1} \ell(N)^2}+N\e[|  J|;|  J|\geq K].
\end{align*}
For fixed $K,$ the first vanishes as $N\to\infty$, and from Lemma \ref{expectationlemma}$(ii)$, the second term has the limit $\alpha(\alpha-1)^{-1}K^{-\alpha+1}.$ Therefore, we can send $K\to\infty$ to obtain
\begin{align*}
	&	\limsup_{N\to\infty}\e\Bigl|\frac{1}{N}\sum_{i<j}  J_{ij}\la \sigma_i\sigma_j\ra-\frac{1}{N}\sum_{i<j}  J_{ij}f\bigl(\la \sigma_i\sigma_j\ra_{ij}',\beta   J_{ij}\bigr)\Bigr|=0.
\end{align*}
The proof of our claim is finished by \eqref{add:equation1} and  \eqref{eq:summationbyparts}.
% From \eqref{add:equation1} and 
% using \eqref{eq:summationbyparts} for $(c_\ell)_{\ell\geq 1}$, 
% we obtain the desired limit and from Lemma \ref{expectationlemma}$(i)$, we have $
% \lim_{N\to\infty}c_\ell=\gamma_\ell
% $, completing the proof of our claim. 
Now, the proof of \eqref{siteoverlapzero} is identical to the one presented in Section \ref{subsec:add2}.
% To establish the proof of \eqref{siteoverlapzero}, we apply Griffith's lemma as in Section \ref{subsec:add2} and use our claim to conclude that $\lim_{N\to\infty}\e\bigl\la R_{  L}^2\bigr\ra=0$, which finishes our proof since for any $\ell\geq 1,$
% \begin{align*}
% 	\frac{\gamma_{\ell-1}-\gamma_{\ell+1}}{\gamma_1} \limsup_{N\to\infty}\e\la R_{2\ell}^2\ra\leq \lim_{N\to\infty}  \e\bigl\la R_{  L}^2\bigr\ra=0.
% \end{align*}

\subsubsection*{Proof of \eqref{eq: overlap and temperature}}
Note that $c_k\to \gamma_k$ by Lemma \ref{expectationlemma}.  From \eqref{eq: generalized: prop1'}, \eqref{eq: limit of R^2} still holds for general heavy-tails.
The proof is finished by noticing that Theorem \ref{superadditivity} and Theorem \ref{thm:universality} imply \eqref{eq: expectation of Hamiltonian bounded in beta} for general heavy-tails.

\subsubsection*{Proof of \eqref{bondoverlapnonzero}}

We need to match the first two moments as in Proposition \ref{bondoverlap:firstmoment}. With the help of Lemma \ref{expectationlemma}$(ii)$ and $(iii)$, for any $K>0,$  we have that 
$$
\lim_{N\to\infty}N\e\tanh^2(\beta   J)1_{\{|  J|\geq K\}}=2C_K
$$
and that as long as  $N$ is large,
\begin{align*}
	\e\tanh^2(\beta   J)1_{\{|  J|\leq K\}}&\leq \frac{2\alpha \beta^2}{N(2-\alpha)}K^{2-\alpha},\,\,\p(|  J|\geq K)\le \frac{2}{NK^\alpha}.
\end{align*}
These readily ensure that Proposition \ref{bondoverlap:firstmoment} also holds for the general L\'evy model by an identical argument. Finally, note that $  M:=\sum_{i<j}\mathbbm{1}_{\{|  J|\geq K\}}\sim \mbox{Binomial}(N(N-1)/2,\p(|  J|\geq K))$ satisfies $N  M^{-1}\mathbbm{1}_{\{  M\geq 1\}}\to 2K^\alpha$ in probability. Following the proof in Section \ref{subsection:add:add3} yields \eqref{bondoverlapnonzero} for the general L\'evy model.

\subsection{Theorem \ref{thm: generalized: alpha=1: free energy}}
% Let us first estimate the order of the centering term in Theorem \ref{thm: generalized: fluctuation of Zbar} when $\alpha=1$. 
	Let us write $c_N\sim d_N$ if $\lim_{N\to\infty} c_N/ d_N =1$ for any positive real sequences $(c_N)_{N\ge 1}$ and $(d_N)_{N\ge 1}$. 
 Let us write $g=g(x)=\mathrm{arccosh}(e^x)$ for the inverse function of the strictly increasing function $x\mapsto \ln \ch(x)$ on $[0,\infty)$ throughout this section.
	% Let $g=g(x)$ denote the inverse function of the strictly increasing function $x\mapsto \ln \ch(x)$ on $[0,\infty)$.
	Recall the notations $a_N$, $b_N= a_N^{-1} a_{{N\choose 2}}$, and \[h_N =\int_0^{b_N}   \hth( y)N \p (|  X|> ya_N)dy, \quad N\ge 1.\]
	
	\begin{lemma}\label{lem: generalized: centering}
		For $0<\alpha<2$ and $\beta >0$, it holds that \begin{equation*}
			N\e \Bigl(\ln \ch (\beta   J) \1_{\{a_N\ln\ch(\beta   J)\le a_{N\choose 2}\}}\Bigr) \sim \beta^{\alpha} h_N. 
			% \int_0^{\beta^{-1}g(b_N)} \frac{\beta  \hth(\beta y)}{y}dy \simeq \beta \ln N.}
		\end{equation*}
		Furthermore, if $0< \alpha\le 1$, then $1\ll h_N\ll N^{1-\alpha+\eps}$ for any $\eps>0$. 
		If $1\le \alpha <2$, then $Nh_N/b_N \to \infty$.
	\end{lemma}

	\begin{proof}
		Let us first show the last two assertions.
		Suppose $0<\alpha\le 1$. 
		By Fatou's lemma and \eqref{unifconv},  \begin{align*}
			% \label{eq: generalized: centering when alpha=1: 5}
			\liminf_{N\to\infty}h_N &\ge \int_0^\infty \frac{ \hth( y)}{y^{\alpha}}dy=\infty .
		\end{align*} 
		On the other hand, using Potter's bound \eqref{potter}, we have that for any $\eps>0$, \[h_N \ll \int_0^{b_N}\hth(y)(1+o(1))y^{-\alpha+\eps}dy \ll b_N^{1-\alpha+\eps}\ll N^{1-\alpha +2\eps}.\]
		Note that the integral for $h_N$ is finite on any finite interval $[0,K)$ for $K>0$ by \eqref{dct2} and \eqref{dct3}.
		
		Suppose $1\le \alpha <2$.
		By a change of variable and Fatou's lemma,
		\begin{align*}
			\liminf_{N\to\infty}\frac{Nh_N}{b_N}&=  \liminf_{N\to\infty}\int_0^1 \hth(b_N y) N^2 \p \bigl(|  X|>y a_{{N\choose 2}}\bigr) dy
			\\ & \quad \ge\int_0^1 \frac{2}{y^{\alpha}}dy =\infty.
		\end{align*}
		
		Now, let us assume $0<\alpha<2.$
		Since the derivative of $ \ln \ch(x)$ is $\hth(x)$, Fubini's theorem shows \begin{equation*}
			N\e \Bigl(\ln \ch (\beta   J) \1_{\{a_N\ln\ch(\beta   J)\le a_{N\choose 2}\}}\Bigr)= \int_0^{\beta^{-1}g(b_N)} \beta  \hth(\beta y)N \p (|  X|> ya_N)dy.
		\end{equation*} 
		By a change of variable, the previous display is equal to \begin{equation} \label{eq: generalized: centering when alpha=1: 1}
			\int_0^{g(b_N)}\hth(y)N\p (|  X|>ya_N/\beta) dy.
		\end{equation} 
		
		We first analyze \eqref{eq: generalized: centering when alpha=1: 1} and show that \begin{equation}\label{eq: generalized: centering when alpha=1: step 1}
			\int_0^{g(b_N)}\hth(y)N\p (|  X|>ya_N/\beta) dy \sim \beta^{\alpha} \int_0^{g(b_N)}\hth(y)N\p (|  X|>ya_N) dy.
		\end{equation}
		The proof of Equations \eqref{dct2} and \eqref{dct3} applies to our case since the function $x\mapsto \ln\ch(x)$ is well-behaved near zero, so the integral on the interval $[0,\beta)$ of \eqref{eq: generalized: centering when alpha=1: 1} converges to a finite value: \begin{align} \label{eq: generalized: centering when alpha=1: 2}
			\lim_{N\to\infty}\int_0^{\beta}\hth(y)N\p (|  X|>ya_N/\beta) dy&=  \lim_{N\to\infty}\int_0^{1} \beta  \hth(\beta y)N \p (|  X|> ya_N)dy \nonumber
			\\&=  \int_0^{1}  \frac{\beta  \hth(\beta y)}{y^{\alpha}} dy <\infty.
		\end{align}
		Similarly, taking the integral only on the interval $[0,\beta)$ in the RHS of \eqref{eq: generalized: centering when alpha=1: step 1}, we have \begin{equation}\label{eq: generalized: centering when alpha=1: 3}
			\lim_{N\to\infty}\beta^{\alpha} \int_0^{\beta}   \hth( y)N \p (|  X|> ya_N)dy= \beta^{\alpha} \int_0^\beta \frac{\hth(y)}{y^{\alpha}}dy=\int_0^{1}  \frac{\beta  \hth(\beta y)}{y^{\alpha}} dy,
		\end{equation} which matches with \eqref{eq: generalized: centering when alpha=1: 2}.
		Notice that the integral on the interval $[\beta, g(b_N))$ of the \eqref{eq: generalized: centering when alpha=1: 1} can be rewritten \begin{align} \label{eq: generalized: centering when alpha=1: 4}
			&\int_\beta ^{g(b_N)} \hth( y)N \p (|  X|> ya_N/\beta)dy \nonumber
			\\&= \int_\beta^{g(b_N)} \hth( y) N \p( |  X|> y a_N) \frac{\p (|  X|> ya_N/\beta)}{\p( |  X|> ya_N)}dy.
		\end{align}
		Combining \eqref{eq: generalized: centering when alpha=1: 2}, \eqref{eq: generalized: centering when alpha=1: 3}, and \eqref{eq: generalized: centering when alpha=1: 4},  in order to demonstrate \eqref{eq: generalized: centering when alpha=1: step 1}, it suffices to show \begin{align}
			\nonumber   &\int_\beta ^{g(b_N)} \hth( y) N \p( |  X|> y a_N) \Bigl( \frac{\p (|  X|> ya_N/\beta)}{\p( |  X|> ya_N)} - \beta^{\alpha}\Bigr)dy\\
			\label{eq: generalized: centering when alpha=1: 6}   &= o\Bigl(  \int_0^{g(b_N)}\hth(y)N\p (|  X|>ya_N) dy \Bigr).
		\end{align}
		From \eqref{unifconv}, \[ \sup_{y\in [\beta,\infty)}\Bigl| \frac{\p (|  X|> ya_N/\beta)}{\p( |  X|> ya_N)} - \beta^{\alpha}\Bigr| = o(1),\] which readily affirms \eqref{eq: generalized: centering when alpha=1: 6}.
		
		Next, we show that \begin{equation} \label{eq: generalized: centering when alpha=1: step 2}
			\Bigl|\int_0^{g(b_N)}\hth(y)N\p (|  X|>ya_N) dy - \int_0^{b_N}\hth(y)N\p (|  X|>ya_N) dy\Bigr| \to 0.
		\end{equation}
		It is elementary to see that $x\mapsto g(x)- x$ is an increasing non-negative function converging to $\ln 2$.
		Using \eqref{a_N} and \eqref{unifconv},  the modulus of the difference in \eqref{eq: generalized: centering when alpha=1: step 2} is bounded by \begin{align*}
			\int_{b_N}^{g(b_N)}\hth(y)N\p (|  X|>ya_N) dy  &\le \sup_{y\in[b_N,\infty)} \Bigl| \frac{\p(|  X|>ya_N)}{\p(|  X|>a_N)}-\frac{1}{y^{\alpha}}\Bigr| \int_{b_N}^{g(b_N)}\hth(y) dy
			\\& \qquad + \int_{b_N}^{g(b_N)}\frac{\hth(y)}{y^{\alpha}}dy
			\\&\le o(1)+ \frac{\ln 2}{b_N^{\alpha}},
		\end{align*}  which vanishes as $N\to\infty$.
		
		Finally, we can combine \eqref{eq: generalized: centering when alpha=1: step 1} and \eqref{eq: generalized: centering when alpha=1: step 2} to conclude the desired asymptotic behavior.
		% To deal with the integral on the interval $[1,\beta^{-1}g(b_N)),$ consider a sequence of functions  \[h_N(y)\colonequals\beta \hth(\beta y) N \p(|  X|> y a_N) \1_{\{[1,\beta^{-1}g(b_N))\}}(y), \quad N\ge 1, \ y\in[1,\infty).\]
		% Equations \eqref{a_N} and \eqref{unifconv} imply that for all $y\in [1,\infty)$, $\lim_{N\to\infty}h_N(y)= \beta \hth(\beta y)/y$.  Let us denote the Lebesgue measure by $\mathrm{Leb}$. For any Lebesgue measurable set $A\subseteq [1,\infty)$ with $\mathrm{Leb}(A)<\infty$, \eqref{unifconv} implies  \begin{equation*}
		%     \int_A h_N(y)dy = \int_A \beta \hth(\beta y)\Bigl(\frac{1}{y}+o(1)\Bigr)dy \le \beta (1+o(1)) \mathrm{Leb}(A),
		% \end{equation*} which shows that the family of functions $(h_N)_{N\ge 1}$ is uniformly integrable, i.e., \[\lim_{\mathrm{Leb}(A)\downarrow 0}\sup_{N\ge 1}\int_A h_N(y)dy = 0.\]
		% Moreover, $h_N$ 
		% We have shown the first asymptotic equality.
		%   It is easy to check that $\lim_{x\to\infty} g(x)/x=1$ and that
		%   Potter's bound \eqref{potter} implies $\ln b_N \simeq \ln N$.  
		%   Then, a simple calculus shows that \[\int_0^{\beta^{-1}g(b_N)} \frac{\beta  \hth(\beta y)}{y}dy \simeq \beta \ln g(b_N).\] The proof is complete by noticing $\ln g(b_N) \simeq \ln b_N \simeq \ln N$.
	\end{proof}

	\begin{lemma}\label{lem: generalized: auxiliary}
		For $0<\alpha<2$ and $\beta >0$, it holds that 
            \begin{align*}
	&		 {N \choose 2} \e \Bigl(\ln \ch (\beta   J) \1_{\{\ln\ch(\beta   J)\le \beta b_N\}}\Bigr) - {N \choose 2} \e \Bigl(\ln \ch (\beta   J) \1_{\{\ln\ch(\beta   J)\le  b_N\}}\Bigr) 
    \\& = b_N \Bigl( \int_{\beta^{-1}}^1 \frac{\beta}{y^{\alpha}} dy +o(1) \Bigr).
		\end{align*}
	\end{lemma} 
\begin{proof}
    By a change of variable, the LHS of the above is 
    \begin{align*}
        &\int_{\beta^{-1}g(b_N)}^{\beta^{-1}g(\beta b_N)} \beta  \hth(\beta y) {N\choose 2} \p (|X|> ya_N)dy 
        \\& =   b_N\int_{(\beta b_N)^{-1}g(b_N)}^{(\beta b_N)^{-1}g(\beta b_N)}\beta \hth(\beta b_N y) {N\choose 2} \p \bigl(|X|> y a_{N\choose 2}\bigr) dy
        \\& = b_N  \Bigl( \int_{\beta^{-1}}^1 \frac{\beta}{y^{\alpha}} dy +o(1) \Bigr),
    \end{align*} where we have used \eqref{a_N}, \eqref{unifconv}, and the fact that $\lim_{x\to\infty}g(x)/x =1$ in the last equality of the previous display.
\end{proof}

	\begin{proof}[Proof of Theorem \ref{thm: generalized: alpha=1: free energy}]
		Lemma \ref{lem: generalized: centering} furnishes the desired properties of the sequence $h_N.$
		Let us derive the limit of the free energy.
By Lemma \ref{lem: generalized: centering} and \ref{lem: generalized: auxiliary}, we may rewrite \ref{thm: generalized: fluctuation of Zbar} as \[\frac{1}{b_N}\Bigl(\ln \bar Z_N - (1+o(1))\frac{N \beta h_N}{2} + e_N  \Bigr)\stackrel{d}{\to} \beta Y_\alpha\] for some sequence $|e_N|= O(b_N) $.
  Dividing both sides by $Nh_N/b_N$ which diverges to $\infty$ by Lemma \ref{lem: generalized: centering}, we have that for all $\beta>0$, \begin{equation*}
			\frac{1}{N h_N}\ln \bar Z_N  \stackrel{p}{\to} \frac{\beta}{2}
		\end{equation*} as $N\to\infty$ by Lemma \ref{lem: generalized: centering}.
		Again by Lemma \ref{lem: generalized: centering}, taking a logarithm and dividing by $Nh_N$ in both sides of Theorem \ref{fluctuation hat Z}, we have that for $0<\beta<\beta_\alpha$, \[\frac{1}{N h_N}\ln \widehat Z_N\stackrel{p}{\to}0.\] From the previous displays,  \begin{equation}\label{eq: generalized: alpha=1: Z}
			\frac{1}{N h_N}\ln Z_N\stackrel{p}{\to} \frac{\beta}{2}
		\end{equation}  as $N\to\infty$  for all $0<\beta<\beta_\alpha$.
		Similar to \eqref{eq: alpha=1: GSE and free energy}, we have \begin{equation} \label{eq: generalized: alpha=1: GSE and free energy}
			\frac{\beta}{N h_N}\max_{\sigma} \sum_{i<j} J_{ij} \sigma_i \sigma_j \le \frac{1}{N h_N}\ln Z_N \le \frac{\ln 2}{h_N} + \frac{\beta}{N h_N}\max_{\sigma} \sum_{i<j} J_{ij} \sigma_i \sigma_j.
		\end{equation} 
		% By Fatou's lemma, $h_N\to\infty$ as $N\to\infty.$
		Noticing that $h_N\to\infty$ by Lemma \ref{lem: generalized: centering}, Equations \eqref{eq: generalized: alpha=1: Z} and \eqref{eq: generalized: alpha=1: GSE and free energy} imply \begin{equation}\label{eq: generalized: alpha=1: GSE limit}
			\frac{1}{N h_N}\max_{\sigma} \sum_{i<j} J_{ij} \sigma_i \sigma_j \stackrel{p}{\to} \frac{1}{2}
		\end{equation} as $N\to\infty$ whenever $0<\beta<\beta_\alpha.$ 
		Observe that the quantities in \eqref{eq: generalized: alpha=1: GSE limit} are independent of $\beta$, so it holds for all $\beta>0$. 
		In view of \eqref{eq: generalized: alpha=1: GSE and free energy}, it follows that \eqref{eq: generalized: alpha=1: Z} also holds for any $\beta>0$, which finishes the proof.
	\end{proof}

\subsection{Theorem \ref{thm:free_energy_below_one}}

% For general L\'{e}vy model, we need to modify the statement of Theorem \ref{thm:free_energy_below_one} can as follows.
% \begin{theorem} \label{thm:free_energy_below_one}
% 	For $0 < \alpha < 1$ and $\beta > 0$, we have that as $N\to\infty,$
% 	\[ \frac{1}{b_N}\ln Z_N \stackrel{d}{\to} \beta  \sum_{j=1}^\infty \gamma_j^{-1/\alpha},\]
%  where $b_N = a_N^{-1} a_{{N \choose 2}}$.
% \end{theorem}

Our proof relies on the following generalization of Lemma \ref{lem:heavy_tail_conv_representation}. Let $G(t) = \mathbb{P}(|  X| > t)$ for $t > 0$ and define its inverse 
\[ G^{-1}(u) = \inf \{ y> 0: G(y) \le u \},  \ u \in (0, 1). \] We can morally think of $G^{-1}$ as $ x^{-1/\alpha}$ for $x>0$. Let $(  X_i)_{i\geq 1}$ be i.i.d. copies of $  X$ and let $  Y_i=| {X}_i|$ for $i\geq 1.$

\begin{lemma} \label{lem:generalized heavy tail conv representation}
	The following statements hold:
	\begin{enumerate}
		\item[(a)]  Let $0< \alpha < 2$. For $n\geq 1,$
		\begin{equation*} 
			% \label{eq:Y_repre_poisson}
			a_n^{-1} \bigl(  Y_{I_1},   Y_{I_2}, \ldots,   Y_{I_n}\bigr) \stackrel{d}{=} a_n^{-1} \bigl(  G^{-1} (\gamma_1/\gamma_{n+1}), G^{-1} (\gamma_2/\gamma_{n+1}), \ldots, G^{-1} (\gamma_n/\gamma_{n+1}) \bigr).
		\end{equation*}
		Also, the random vector $a_n^{-1}(  X_1,   X_2, \ldots,   X_n) $ has the same distribution as that of 
		\begin{equation*} 
			% \label{eq:X_repre_poisson}			
			a_n^{-1} \bigl(  \eps_1 G^{-1} (\gamma_{r(1)}/\gamma_{n+1}), \eps_2 G^{-1} (\gamma_{r(2)}/\gamma_{n+1}), \ldots, \eps_n G^{-1} (\gamma_{r(n)}/\gamma_{n+1}) \bigr),  
		\end{equation*}
		where $\eps_1, \eps_2, \ldots, \eps_n$ are i.i.d.\ Rademacher variables and $r:[n] \to [n]$ is an independent uniform random permutation, both independent of $(\gamma_j)_{j \ge 1}$. 
		\item[(b)] Let $0<\alpha<2$. For any $\eta> 0$, with probability tending to one, for all $1 \le j \le n/\log n,$ we have 
		\[ (1-\eta) \gamma_j^{-(1/\alpha + \eta) } \le a_n^{-1} G^{-1}(\gamma_j/\gamma_{n+1}) \le (1+ \eta) \gamma_j^{-(1/\alpha - \eta) },  \]
		and for all $n/ \log n \le j \le n$, 
		\[  a_n^{-1} G^{-1}(\gamma_j/\gamma_{n+1}) \le (1+ \eta) n^{-(1/\alpha - \eta) }.  \]
		
		\item[(c)]   Let $0< \alpha < 2$. As $n \to \infty$, 
		\[ a_n^{-1} \bigl(  Y_{I_1},   Y_{I_2}, \ldots,   Y_{I_n}, 0, 0, \ldots\bigr) \stackrel{d}{\to} \bigl( \gamma_1^{-1/\alpha}, \gamma_2^{-1/\alpha}, \ldots \bigr) \text{ on } \mathcal{A}_{\ge}.\]
		
		\item[(d)]  Assume that  $0 < \alpha < 1$. As $n \to \infty$, 
		\[ a_n^{-1} \sum_{j=1}^n   Y_{j} \stackrel{d}{\to} \sum_{j=1}^\infty \gamma_j^{-1/\alpha},  \]
		where the infinite sum is finite a.s..
		Also, for any fixed integer $R \ge 1$, as $n\to\infty,$
		\[ a_n^{-1}\Bigl ( \sum_{j=1}^R   Y_{I_j}, \sum_{j=R+1}^n   Y_{I_j} \Bigr)  \stackrel{d}{\to} \Bigl( \sum_{j=1}^R \gamma_j^{-1/\alpha}, \sum_{j=R+1}^\infty \gamma_j^{-1/\alpha}\Bigr) .  \]
	\end{enumerate}
\end{lemma}

\begin{proof}
	The proof of Lemma \ref{lem:generalized heavy tail conv representation}(a) can be traced back to \cite{bordenave2011spectrum}. Here we only give a sketch of the proof of part (b), and parts (c) and (d) will follow.  Define $V=1/G$ which is non-decreasing, and let $V^{-1}(t)= \inf\{x>0: 1/ G(x)\ge t \}=G^{-1}(1/t)$ for $t>0$. Since $G$ is regularly varying with index $-\alpha$, we have that $V$ is regularly varying with index $\alpha$. From Proposition 2.6 of \cite{resnick}, $V^{-1}$ is regularly varying with index $1/\alpha$. Note that $a_n=V^{-1}(n)$ for any $n\in \mathbb N$. Then, we have \begin{equation}
		a_n^{-1}G^{-1}(\gamma_j/\gamma_{n+1})= \frac{V^{-1}(\gamma_{n+1}/\gamma_j)}{V^{-1}(n)}. \label{eq: V inverse}
	\end{equation} For $1\le j\le n/\log n$, we may use \eqref{potter} for the function $V^{-1}$ and the fact that $\lim_{n\to\infty}\gamma_{n+1}/n = 1$ with probability one to conclude the first half of the result from \eqref{eq: V inverse}. For $n/\log n \le j \le n$, the other half of the result follows from the monotonicity $a_n^{-1}G^{-1}(\gamma_j/\gamma_{n+1})\le a_n^{-1}G^{-1}(\gamma_{\lfloor n/\log n \rfloor }/\gamma_{n+1}),$ and then applying \eqref{potter}. 
	
\end{proof}

\begin{proof}[\bf Proof of Theorem \ref{thm:free_energy_below_one}] Using Lemma \ref{lem:generalized heavy tail conv representation}(d), we have the upper bound \[ \frac{1}{b_N} \ln Z_N \le\frac{ 1}{a_N^{-1}a_{{N\choose 2}}} \Big( N \ln 2  + \beta\sum_{i < j} |  J_{ij}| \Big)  \stackrel{d}{\to} \beta  \sum_{k=1}^\infty \gamma_k^{-1/\alpha}. \]
	The lower bound can similarly be derived.
\end{proof}

\subsection{Theorem \ref{thm:Gibbs_structure}}
Due to the presence of possible atoms, we can have ties in the order statistics of $  Y_i$'s. This necessitates that we redefine the ranks carefully. In this case, we can pick a uniform decreasing ordering $\sigma$ maintaining $  Y_{\sigma(1)}\ge \dots \ge   Y_{\sigma(n)}$ and define the rank of $(i,j)$ as $r(i,j)=\sigma^{-1}(i,j)$. Note that this definition retains the key property that the resulting rank function still forms a uniform bijection.

The exact distribution of the disorder is irrelevant to the proof of Lemma \ref{lem:Gibbs_structure}(a). However, in view of Lemma \ref{lem:generalized heavy tail conv representation}, recalling $n=N(N-1)/2,$ the inequality in Lemma \ref{lem:Gibbs_structure}(b) should be modified to \[
\sum_{j: r(i,j) \le R} |  J_{ij}|  -  \sum_{j: r(i,j) > R} |  J_{ij}| \ge c a_N^{-1}a_n R^{-(1/\alpha +\eta)},
\] where $c$ is a small absolute constant and $0<\eta<\min\{3\delta/(2\alpha(1-\delta/2), \delta/(\alpha(2-\alpha-\delta)), 1/\alpha\}$. In fact, we can write \[|  J_{ij}|= a_N^{-1}G^{-1}(\gamma_{r(i,j)}/\gamma_{n+1}), \ 1\le i\neq j\le N,\] and \eqref{eq:row_lb1} becomes \[a_N^{-1}a_n\bigl(0.9(1-\eta)R^{-1/\alpha-\eta} -\ln N\cdot 1.1(1+\eta)T^{-1/\alpha+\eta}-N\cdot 1.1(1+\eta)N^{-1/\alpha+\eta} \bigr).\]
Then, the final bound in \eqref{eq:bound using R} is \[2 \exp\bigl(-  2c  \beta a_N^{-1}a_n N^{-(1/\alpha+\eta)/2}\bigr),\] which converges to $0$ as $N\to\infty$.

\end{document}